\DeclarePairedDelimiter\floor{\lfloor}{\rfloor}
\numberwithin{equation}{section}
\newtheorem{theorem}{Theorem}[section]
\newtheorem{lemma}[theorem]{Lemma}
\newtheorem{conjecture}[theorem]{Conjecture}
\newtheorem{remark}[theorem]{Remark}
\newtheorem{proposition}[theorem]{Proposition}
\begin{document}

\title{On the non-degenerate and degenerate generic singularities formed by mean curvature flow}
\author{Zhou Gang\footnote{gzhou@math.binghamton.edu, partly supported by Simons Collaboration Grant 709542}}
\maketitle
\centerline{Department of Mathematical Sciences, Binghamton University, Binghamton, NY, 13850}
\setlength{\leftmargin}{.1in}
\setlength{\rightmargin}{.1in}
\normalsize \vskip.1in
\setcounter{page}{1} \setlength{\leftmargin}{.1in}
\setlength{\rightmargin}{.1in}
\large

\date

\normalsize
\section*{Abstract}
In this paper we study a neighborhood of generic singularities formed by mean curvature flow (MCF). We limit our consideration to the singularities modelled on $\mathbb{S}^3\times\mathbb{R}$ because, compared to the cases $\mathbb{S}^k\times \mathbb{R}^{l}$ with $l\geq 2$, the present case has the fewest possibilities to be considered. For various possibilities, we provide a detailed description for a small, but fixed, neighborhood of singularity, and prove that a small neighborhood of the singularity is mean convex, and the singularity is isolated. For the remaining possibilities, we conjecture that an entire neighborhood of the singularity becomes singular at the time of blowup, and present evidences to support this conjecture. A key technique is that, when looking for a dominating direction for the rescaled MCF, we need a normal form transformation, as a result, the rescaled MCF is parametrized over some chosen curved cylinder, instead over a standard straight one. 

This is a long paper. The introduction is carefully written to present the key steps and ideas.

\tableofcontents

\section{Introduction}\label{sec:mcf}
In the present paper we study mean curvature flow (MCF), an evolution of a family of $n-$dimensional hypersurfaces embedded in $\mathbb{R}^{n+1}$ satisfying the equation
\begin{align*}
\partial_{t}X_{t}=-h(X_{t}),
\end{align*} where $X_{t}$ is the immersion at time $t$, $h(X_{t})$ is the mean curvature vector at the point $X_{t}.$ It is well known that, in general, the MCF will form a singularity in a finite time. We are interested in studying a small space-and-time neighborhood of the generic singularities. Specifically, we consider initial hypersurfaces $\Sigma_0$ satisfy the condition 
\begin{align}\label{eq:generic}
\lambda(\Sigma_0)<\infty,
\end{align} where the functional $\lambda$ was defined in \cite{colding2012generic},
\begin{align*}
 \lambda(\Sigma_0):=\sup_{t_0,\ x_0} (4\pi t_0)^{-\frac{n}{2}}\int_{\Sigma_0} e^{-\frac{|x-x_0|^2}{4t_0}} d\mu,
\end{align*}and $\mu$ is the volume element.

Under the condition (\ref{eq:generic}), in \cite{ColdingMiniUniqueness} Colding and Minicozzi proved that, suppose that a singularity is formed at the time $T$ and the spatial origin, then for some $k+j=n$, with $j\geq 1,$ up to a rotation,
\begin{align}
\frac{1}{\sqrt{T-t}}X_t\rightarrow \mathbb{R}^k\otimes \mathbb{S}_{\sqrt{2j}}^j,\ \text{as}\ t\rightarrow T,\label{eq:limitCylinder}
\end{align}where $\mathbb{S}^{j}_{\sqrt{2j}}$ is the $j-$dimensional sphere with radius $\sqrt{2j}.$ 
Based on this, in our previous papers \cite{GZ2018,GZ2017} we studied the nondegenerate case, which is to be defined below, when the limit cylinder is $\mathbb{R}^3\otimes \mathbb{S}_{\sqrt{2}}$, and provided a detailed description for a small, but fixed, neighborhood of the singularity, and proved that it is mean convex and the singularity is isolated. We expect our techniques will work for the nondegenerate case when the limit cylinder is any of $\mathbb{R}^k\otimes \mathbb{S}_{\sqrt{2j}}^j$. The reason for choosing $k=3$ is that, compared to $k=1$, there are very few results for that case. 

In the present paper we study both the nondegenerate and degenerate cases when the limit cylinder is $\mathbb{R}^1\otimes \mathbb{S}^3_{\sqrt{6}}$. The reason for not choosing to study the cases $\mathbb{R}^{k}\otimes \mathbb{S}^{j}_{\sqrt{2j}}$ with $k\geq 2$ is that there are many more possibilities to be considered. This will be discussed around (\ref{eq:matrixB}) below, and we will address this problem in future.

In what follows we present the results and discuss some difficulties in proving them. 

By the results in \cite{ColdingMiniUniqueness}, in a possibly shrinking neighborhood of the singularity, the MCF takes the form, for some positive function $u: \ \mathbb{R}\times \mathbb{S}^3\times [0,T)\rightarrow \mathbb{R}^{+},$
\begin{align}
\left[
\begin{array}{ccc}
z\\
u(z, \omega, t)\omega
\end{array}
\right],
\end{align} where $\omega\in \mathbb{S}^3.$ The rescaled MCF is defined by the identity
\begin{align}\label{eq:crude}
\left[
\begin{array}{ccc}
y\\
v(y, \omega,\tau)\omega
\end{array}
\right]=\frac{1}{\sqrt{T-t}}\left[
\begin{array}{ccc}
z\\
u(z, \omega, t)\omega
\end{array}
\right]
\end{align} where $\tau$ and $y$ are rescaled time and spatial variables, defined as
\begin{align}
\tau:=|ln(T-t)|,\ \text{and}\ y:=\frac{z}{\sqrt{T-t}}=e^{-\frac{1}{2}\tau}z
\end{align} the functions $u$ and $v$ are related by the identity
\begin{align}
u(z,\omega,t)=\sqrt{T-t} v(y,\omega,\tau).\label{eq:uvidentity}
\end{align} By \cite{colding2012generic}, for each fixed $y\in \mathbb{R}$, $v(y,\omega,\tau)$ will converge to $\sqrt{6}$ as $\tau\rightarrow \infty.$ 

We will start with studying the rescaled MCF, look for some favorable information, then from there study the original MCF.

To find detailed information of the rescaled MCF, it is natural to decompose $v$ as,
\begin{align}
v(y,\omega,\tau)=\sqrt{6+\xi(y,\omega,\tau)}.\label{eq:decomVXi}
\end{align} Then $\xi$ satisfies the equation
\begin{align}
\partial_{\tau}\xi=-L\xi+NL(\xi)
\end{align} where the term $NL(\xi)$ is nonlinear in terms of $\xi$, and $L$ is a linear operator defined as
\begin{align}
L:=-\partial_{y}^2+\frac{1}{2}y\partial_{y}-1-\frac{1}{6}\Delta_{\mathbb{S}^3}=L_0-\frac{1}{6}\Delta_{\mathbb{S}^3}-1,\label{def:linearL}
\end{align} where the operator $L_0$ is naturally defined.

To understand the evolution of $\xi$, we study the spectrum of $L$. Its two components $L_0$ and $-\Delta_{\mathbb{S}^3}$ commute, thus it suffices to study them separately. The spectrum of $-\Delta_{\mathbb{S}^3}$ is
\begin{align}
\sigma(-\Delta_{\mathbb{S}^3})=&\Big\{ \ k(k+2)\ \Big|\ k=0,1,2,\cdots\Big\}.\label{eq:specOmega}
\end{align}The corresponding eigenfunctions, denoted by $f_{k,l}$, are the $k-$th order spherical harmonic functions and $l\in \{1,\cdots,\ \Omega_k\}$ for some $\Omega_k\geq 1.$ Thus $f_{k,l}$ satisfies the equation
\begin{align}
-\Delta_{\mathbb{S}^3}f_{k,l}=k(k+2) f_{k,l},\ \text{with}\ k=0,1,2,\cdots;\ \text{and}\  l=1,\cdots,\Omega_k.
\end{align} 
In what follows the explicit forms for $f_{k,l}$, $k=0$, $1$ are used often. Here the eigenfunctions are
\begin{align}
\begin{split}
f_{0,1}=&1, \\
f_{1,l}=&\omega_l,\ l=1,2,3,4,
\end{split}
\end{align} and $\omega_l$ is from $\omega=(\omega_1,\omega_2,\omega_3,\omega_4)$.
The operator $L_0$ is conjugate to the harmonic oscillator $\mathcal{L}_0:=e^{-\frac{1}{8}y^2} L_0 e^{-\frac{1}{8}y^2}$ with spectrum
\begin{align}
\sigma(\mathcal{L}_0)=\Big\{\frac{n}{2}\ \Big| \  n=0,1,2,\cdots\Big\}\label{eq:specL0}
\end{align} and the eigenfunction $e^{-\frac{1}{8}y^2}H_{n}(y).$ Here $H_n$ is the $n-th$ Hermite polynomial of the form $H_n(y)=\sum_{k=0}^{\floor*{\frac{n}{2}}}h_{n-2k,n} y^{n-2k}$ with $h_{n,n}\not=0.$ In the present paper the sign and size of $h_{n,n}$ are important, to facilitate later discussions we assume that $h_{n,n}=1.$ Hence, $H_n$ takes the form,
\begin{align}
    H_n(y)=y^n+\sum_{k=1}^{\floor*{\frac{n}{2}}}h_{n-2k,n} y^{n-2k}.\label{eq:hermit}
\end{align}

Returning to (\ref{eq:decomVXi}), we decompose $v$ according to the spectrum, for some real functions $\alpha_{n,k,l},$
\begin{align}
v(y,\omega,\tau)=\sqrt{6+\sum_{n,k,l}\alpha_{n,k,l}(\tau)H_{n}(y)f_{k,l}(\omega)}.\label{eq:ideas}
\end{align}  These functions satisfy the equation
\begin{align}
    \frac{d}{d\tau}\alpha_{n,k,l}=-\Big(\frac{k(k+2)}{6}+\frac{n-2}{2}\Big)\alpha_{n,k,l}+\text{NL}_{n,k,l},\label{eq:linearRate}
\end{align} where $\text{NL}_{n,k,l}$ are nonlinear in terms of $\alpha_{n,k,l}.$

We start with studying $\alpha_{n,k,l}$ with $$(n,k)=(0,0),\ (0,1),\ (1,0),\ (1,1), (2,0).$$Here $-(\frac{k(k+2)}{6}+\frac{n-2}{2})\geq 0$. This is adverse since, by (\ref{eq:linearRate}), at least on the linear level, $\alpha_{n,k,l}$ are not decaying, contrary to our expectation that all the directions decay since $\lim_{\tau\rightarrow \infty}v(y,\omega,\tau)= \sqrt{6}$.

Among those, the following ones are easier: $(n,k)=(0,0),\ (0,1)\ (1,0)$. They make $-(\frac{k(k+2)}{6}+\frac{n-2}{2})> 0$. Results from \cite{ColdingMiniUniqueness} imply that $|\alpha_{n,k,l}(\tau)|\rightarrow 0$ as $\tau\rightarrow \infty$. Rewrite (\ref{eq:linearRate}) to find that
\begin{align}
\alpha_{n,k,l}(\tau)=-\int_{\tau}^{\infty} e^{[\frac{n-2}{2}+\frac{k(k-1)}{6}](\tau-\sigma)} \text{NL}_{n,k,l}(\sigma)\ d\sigma.
\end{align}Thus these functions actually decay rapidly, provided that the nonlinear terms decay rapidly.

The ones with $(n,k)=(1,1)$ are not difficult either, since by choosing optimal tilts and centers for the coordinate system, we can make $\alpha_{1,1,l}=0.$ In fact by the identity 
$$\alpha_{1,1,l}(\tau)=-\int_{\tau}^{\infty} \text{NL}_{1,1,l}(\sigma)\ d\sigma,$$
we will prove these functions decay rapidly.

Consequently among the difficult directions, we need to focus on the function $\alpha_{2,0,1}$. To simplify the notation we define $$b(\tau):=\alpha_{2,0,1}(\tau).$$
Depending on how $b$ behaves, we have the so-called nondegenerate and degenerate cases. It satisfies the equation 
\begin{align}
\frac{d}{d\tau}b=-\frac{1}{3} b^2+\text{NL}_{2,0,0}.\label{eq:beqn}
\end{align} The equation $\frac{d}{d\tau}
b=-\frac{1}{3}b^2$ has a family solutions $b(\tau)=\frac{1}{c+\frac{1}{3}\tau}$. Based on this there are two (and only two) different cases:
\begin{itemize}
    \item[(A)] The nondegenerate case, where
    \begin{align}
 b(\tau)=3\tau^{-1}(1+o(1)),\ \text{as}\ \tau\rightarrow \infty.
 \end{align}
    
In this case our technical advantage allows us to study the rescaled MCF in the region 
          \begin{align}
           |y|\leq 10\tau^{\frac{1}{2}+\frac{1}{20}},\label{eq:nondRegion}
\end{align} and prove that the function $v$ satisfies the following estimates,
    \begin{align}
    \begin{split}\label{eq:profile}
        \Big|v-\sqrt{6+b(\tau)y^2}\Big|+
        \sum_{j+|i|=1,2}v^{j-1}|\partial_{y}^j\nabla_{E}^{i}v| \ll  \tau^{-\frac{3}{10}},
    \end{split}
    \end{align}
    where $\nabla$ denotes covariant differentiation on $\mathbb{S}^3$, and $(E_1,E_2,E_3)$ is an orthonormal basis of $T_{\omega}\mathbb{S}^3.$

  \eqref{eq:profile} is crucial for us to find a detailed description of a small neighborhood of the singularity of MCF. It implies that \begin{align}
  v(y,\omega,\tau)\gg 1,\ \text{when}\ 10\tau^{\frac{1}{2}+\frac{1}{20}}\geq |y|\gg \tau^{\frac{1}{2}},
  \end{align} in contrast, when $y=0,$ 
 \begin{align}
  v(0,\omega,\tau)=\sqrt{6}+o(1).
  \end{align} Before returning to MCF we recall that $z$ and $t$ are rescaled into $y:=\frac{z}{\sqrt{T-t}}=e^{\frac{1}{2}\tau}z$ and $\tau(t):=-\ln(T-t)$. Thus for each fixed small $z_0\not=0$, there exists a unique time $\tau_1$ such that,  $$|y_1|=e^{\frac{1}{2}\tau_1}|z_1|=\tau_1^{\frac{1}{2}+\frac{1}{20}}.$$
  We are ready to study MCF. By \eqref{eq:profile} and \eqref{eq:uvidentity},
  \begin{align}
      u(z_1,\omega,t(\tau_1))=\sqrt{T-t(\tau_1)} v(y_1, \omega,\tau_1)=\sqrt{T-t(\tau_1)} \tau_1^{\frac{1}{20}}\Big(1+o(1)\Big) ,
  \end{align} in contrast, recall that the MCF blows up at $z=0$ and the time $T$,
  \begin{align}
      u(0,\omega,t(\tau_1))=\sqrt{6}\sqrt{T-t(\tau_1)} \Big(1+o(1)\Big),
  \end{align} hence, recall that $\tau_1\gg 1,$
  \begin{align}
      \frac{u(0,\omega,t(\tau_1))}{u(z_1,\omega,t(\tau_1))}\ll 1.
  \end{align}
  These, together with the smoothness estimates provided by \eqref{eq:profile} and the identity
  \begin{align}
      v^{k-1}\partial_{y}^{j} \nabla_{E}^{i} v(y,\omega,\tau)=u^{k-1}\partial_{z}^{j} \nabla_{E}^{i} u(z,\omega, t),
  \end{align} and 
  the well known techniques of local smooth extension, see e.g. \cite{EckerBook}, imply that when the MCF forms a singularity at $z=0$, for any small $z\not=0$, a small neighborhood remains smooth.

 \item[(B)] If the first possibility does not work, then in the same region 
 the function $v$ is of the form
    \begin{align}
        v(y,\omega,\tau)=\sqrt{6+\eta(y,\omega,\tau)}
    \end{align}
   and $\eta$ is small:
    \begin{align}\label{eq:prenondege}
              \begin{split}
  \sum_{j+|k|\leq 2}\Big\|e^{-\frac{1}{8}y^2}\partial_{y}^{j}\nabla_{E}^{k}\eta(\cdot,\tau)\Big\|_{2}\lesssim & e^{-\frac{3}{10}\tau},\\
|\eta(y,\omega,\tau)|\leq & \tau^{-\frac{3}{10}}.
          \end{split}
\end{align}  

\end{itemize}

The results above are a part of the main Theorem \ref{THM:lIS1}.

We remark that instead of considering the region $|y|\leq 10\tau^{\frac{1}{2}+\frac{1}{20}}$, one can consider a neighborhood $|y|\leq M\tau^{\frac{1}{2}+\epsilon}$ for any sufficiently small $\epsilon>0$ and large $M.$ We choose $M=10$, $\epsilon=\frac{1}{20}$ so that we will not have too many statements like ``for some sufficiently constant $\epsilon$". In the present paper $10$ or $20$ signifies something sufficiently large.

We continue studying the degenerate cases. Here we have to limit our consideration to Type I blowup, specifically, let $A(p,t)$ be the second fundamental form at the point $p$ and time $t$, we require that, for some constant $c>0,$
\begin{align}
\max_{p}|A(p,t)|\leq \frac{c}{\sqrt{T-t}}.\label{eq:secFunForm1}
\end{align} This will be needed in the discussion around (\ref{eq:rescFunForm}) below.

We start with simplifying the problem by arguing that we only need to focus on the directions $H_m,$ $m\geq 3$ and $H_{n}\omega_l$, $n\geq 2$. It is easy to see the reason, when $k\geq 2$, \eqref{eq:ideas} implies that the linear decay rates of $\alpha_{n,k,l}$'s are not slower than $e^{-(\frac{n}{2}+\frac{1}{3})\tau},$ thus they play a diminishing role in MCF: if $|\alpha_{n,k,l}(\tau)|\lesssim e^{-(\frac{n}{2}+\frac{1}{3})\tau}$, then uniformly in the $z=e^{-\frac{1}{2}\tau}y$ variable, $$\Big| \alpha_{n,k,l}(\tau)H_{n}(y)f_{k,l}(\omega) \Big|\leq e^{-\frac{1}{3}\tau}\Big|f_{k,l}(\omega)\Big( z^n+\sum_{k=1}^{\floor*{\frac{n}{2}}} e^{-k\tau}h_{n-2k,n}z^{n-2k}\Big)\Big|\rightarrow 0,\ \text{as}\ \tau\rightarrow \infty,$$
where, the constants $h_{n-2k,n}$ are from \eqref{eq:hermit}.

The problem can be simplified one more time by observing that, by parameterizing the rescaled MCF properly, the parts $\alpha_{n,1,l}H_{n}\omega_l$ can be removed.
And moreover it is necessary to remove them. To illustrate this by an example, suppose that $v$ is of the form
\begin{align}
    v(y,\omega,\tau)=\sqrt{6+\alpha_{2,1,1}(\tau) H_{2}(y)\omega_1+ \alpha_{6,0,1}(\tau) H_{6}(y)+\text{OtherTerms}}\label{eq:badexam}
\end{align} and $\alpha_{2,1,1}$, $\alpha_{6,0,1}$ decay at the rates predicted by (\ref{eq:linearRate}), i.e. for some nonzero constants $c_2$ and $c_6$,
\begin{align*}
    \alpha_{2,1,1}(\tau)= c_2 e^{-\frac{1}{2}\tau} \Big(1 +o(1)\Big),\\
    \alpha_{6,0,1}(\tau)=c_6 e^{-2\tau}\Big(1+o(1)\Big).
\end{align*} The difficulty is that it is impossible to build a profile similar to that in \eqref{eq:nondRegion} and \eqref{eq:profile}: when $|\alpha_{3,0,1}H_6(y)|\gg 1$, $y$ must satisfy $|y|\gg e^{\frac{1}{3}\tau}$, then  $\alpha_{2,1,1}H_2 \omega_1$ can be even larger, adversely!

It turns out, see e.e. (\ref{eq:2repara}) below, to remove the main part of $\alpha_{n,1,l},$ it suffices to parametrize the rescaled MCF as follows: for some polynomial-valued vector $Q_{N}(y,\tau)\in \mathbb{R}^4$
\begin{align}
Q_{N}(y,\tau)=\sum_{n=2}^{N}H_{n}(y) e^{-\frac{n-1}{2}\tau}\Big(a_{n,1},a_{n,2},a_{n,3},a_{n,4}
\Big)^{T}
\end{align} with $a_{k,l}$ being constants, we parametrize the rescaled MCF as
\begin{align}
\Psi_{Q_{N},v}
:=\left[
\begin{array}{ccc}
y\\
Q_{N}(y,\tau)
\end{array}
\right]+v(y,\omega,\tau)\left[
\begin{array}{ccc}
-\partial_{y}Q_{N}(y,\tau)\cdot\omega\\
\omega
\end{array}
\right].
\end{align}

Based on the reasons presented above, we will look for dominating directions among $H_{n},\ n\geq 3$, and in the process remove the main part of the directions $H_{k} \omega_l,$ $2\leq k\leq n-1$ and $l=1,2,3,4,$ by finding certain vector-valued functions $Q_{n-1}\in \mathbb{R}^4$ in $\Psi_{Q_{n-1},v}$.

To illustrate the ideas, we present our strategy for studying the directions $H_3$ and $H_4$. We have to study the direction $H_3$ first, then $H_4,$ by reason similar to what was discussed around (\ref{eq:badexam}).

We start with studying the direction $H_3$ in the region 
\begin{align}
|y|\leq R(\tau):=10\tau^{\frac{1}{2}+\frac{1}{20}},
\end{align} so that we can use the results in (\ref{eq:prenondege}), also because we are not ready to study a much larger region before proving that what is in the square root is nonnegative.

For the rescaled MCF $\Psi_{0,v}$,
we decompose $v$ such that
\begin{align}
v(y,\omega,\tau)=\sqrt{6+\alpha_{3,0,1}(\tau)H_3(y)+\sum_{n=0}^2 \sum_{k=0}^N \sum_l \alpha_{n,k,l}(\tau) H_ n(y)f_{k,l}(\omega)+\eta(y,\omega,\tau)}\label{eq:trialVdecom}
\end{align}
where, $N$ is a large integer, and we prove that, for any $\delta_0>0,$ there exists a $C_{\delta_0}$ such that
\begin{align}
\begin{split}
\Big\|1_{\leq 5\tau^{\frac{1}{2}+\frac{1}{20}}}\eta(\cdot,\tau)\Big\|_{\mathcal{G}}\leq & C_{\delta_0} e^{-(1-\delta_0)\tau},
\end{split}
\end{align}
and the governing equations for $\alpha_{n,k,l}$ take the forms
\begin{align}
\begin{split}
(\frac{d}{d\tau}+\frac{n-2}{2}+\frac{k(k+2)}{6})\alpha_{n,k,l}=&\mathcal{O}(e^{-\tau}), \ n=1,2,3,\\
(\frac{d}{d\tau}+\frac{k(k+2)}{6})\alpha_{0,k,l}=&\mathcal{O}(e^{-\frac{2}{3}\tau}).
\end{split}
\end{align} When $(n,k)=(3,0),\ (2,1)$, they can be rewritten into
\begin{align}
\alpha_{n,k,l}(\tau)=& e^{-\frac{1}{2}\tau}\Big[\alpha_{n,k,l}(0)+\int_{0}^{\tau}e^{\frac{1}{2}\sigma}\mathcal{O}(e^{-\sigma})\ d\sigma\Big]\nonumber\\
=&d_{n,k,l} e^{-\frac{1}{2}\tau}-e^{-\frac{1}{2}\tau}\int_{\tau}^{\infty} e^{\frac{1}{2}\sigma} \mathcal{O}(e^{-\sigma})\ d\sigma\nonumber\\
=&d_{n,k,l} e^{-\frac{1}{2}\tau}+\mathcal{O}(e^{-\tau})\label{eq:leadPart}
\end{align} where $d_{n,k,l}$ are constants defined as $d_{n,k,l}:=\alpha_{n,k,l}(0)+\int_{0}^{\infty}e^{\frac{1}{2}\sigma}\mathcal{O}(e^{-\sigma})\ d\sigma.$ 

For the others, if $\frac{n-2}{2}+\frac{k(k+2)}{6}\leq 0$, then since $\lim_{\tau\rightarrow \infty}\alpha_{n,k,l}(\tau)=0$, 
\begin{align}
    \alpha_{n,k,l}(\tau)=-\int_{\tau}^{\infty}e^{-(\frac{n-2}{2}+\frac{k(k+2)}{6})(\tau-\sigma)}\ \mathcal{O}(e^{-\sigma})\ d\sigma=\mathcal{O}(e^{-\tau});
\end{align} if $\frac{n-2}{2}+\frac{k(k+2)}{6}>0$, then we compute directly.
Consequently, besides \eqref{eq:leadPart}, we obtain
\begin{align}
\begin{split}
   e^{\frac{5}{6}\tau}|\alpha_{1,k,l}(\tau)|+e^{\frac{\tau}{3}}|\alpha_{0,k,l}(\tau)|\lesssim & 1 \\
   |\alpha_{2,k,l}(\tau)|\lesssim & e^{-\tau}, \ \text{if}\ k\not=1.
   \end{split}
\end{align}

 Now we remove the part $\sum_{l=1}^4 d_{2,1,l}e^{-\frac{1}{2}\tau}H_2 \omega_l,$ which is the main part on the direction $H_2 \omega_l,$ by reparametrize the rescaled MCF as $\Psi_{Q_2,v}$, where $Q_2\in \mathbb{R}^4$ takes the form, for some constants $a_l$, 
\begin{align}\label{eq:2repara}
Q_2(y,\tau)=H_{2}(y) e^{-\frac{1}{2}\tau}\Big(a_{1},a_{2},a_{3},a_{4}
\Big)^{T},
\end{align} and $v$ takes the form
\begin{align}
v(y,\omega,\tau)=\sqrt{6+\tilde\alpha_{3,0,1}(\tau)H_3(y)+\sum_{n=0}^2 \sum_{k=0}^{N}\sum_l \tilde\alpha_{n,k,l}(\tau)H_n(y) f_{k,l}(\omega)+\tilde{\eta}(y,\omega,\tau)}
\end{align}  $N$ is a large integer, and after reasoning as in \eqref{eq:leadPart} we prove, for some $d_3\in \mathbb{R}$,
\begin{align}
\tilde\alpha_{3,0,1}(\tau)=d_3 e^{-\frac{1}{2}\tau}+\mathcal{O}(e^{-\tau}).\label{eq:d3First}
\end{align} The others satisfy the estimates,
\begin{align}
    e^{\tau}|\tilde\alpha_{2,k,l}(\tau)|+e^{\frac{5}{6}\tau}|\tilde\alpha_{1,k,l}(\tau)|+e^{\frac{\tau}{3}}|\tilde\alpha_{0,k,l}(\tau)|\lesssim 1,\label{eq:trialAlpha}
\end{align} and for any $\delta_0>0,$
\begin{align}
\Big\|1_{\leq 10\tau^{\frac{1}{2}+\frac{1}{20}}}\tilde\eta(\cdot,\tau)\Big\|_{\mathcal{G}}\leq & C_{\delta_0} e^{-(1-\delta_0)\tau}.\label{eq:esttildeta}
\end{align}

Next we prove $d_3=0$, in order not to contradict the condition (\ref{eq:secFunForm1}). Suppose that $d_3\not=0,$ then the cases for $d_3>0$ and $d_3<0$ can be considered identically. Without loss of generality we assume $d_3>0$. Then our techniques, specifically sufficiently sharp estimates on certain weighted $L^{\infty}-$norms of $\tilde\eta$, allow us to study the following region, for any small positive constant $\epsilon_0,$ 
\begin{align}\label{eq:d3Region}
\Big\{\ y\ \Big|\ - \Big( \frac{6-\epsilon_0}{d_3}\Big)^{\frac{1}{3}}(1+\epsilon_0^2) \leq ye^{-\frac{1}{6}\tau} \leq - \Big( \frac{6-\epsilon_0}{d_3}\Big)^{\frac{1}{3}}(1-\epsilon_0^2)\Big\}
\end{align} and prove that,
\begin{align}
\begin{split}
v(y,\omega,\tau)=\sqrt{6+d_3 e^{-\frac{1}{2}\tau }y^3}\Big(1+\mathcal{O}&(e^{-\sqrt{\tau}})\Big)=\sqrt{\epsilon_0}\Big(1+o(\epsilon_0)\Big),\\
\sum_{j+|i|=1,2}\Big|v^{j-1}\partial_{y}^j\nabla_{E}^i v(y,\omega,\tau)\Big|\leq & e^{-\sqrt{\tau}}.
\end{split}
\end{align} Since, in the considered region, $v$ can be interpreted as the radius of the cylinder, these estimates imply that the second fundamental form $\tilde{A}(y,\omega,\tau)$ of the rescaled MCF is not uniformly bounded, 
\begin{align}\label{eq:rescFunForm}
\sup_{y,\omega}|\tilde{A}(y,\omega,\tau)|\rightarrow \infty\ \text{as}\ \epsilon_0\rightarrow 0.
\end{align}
This contradicts to the condition (\ref{eq:secFunForm1}) for MCF. Consequently $d_3$ must be zero!

If $d_3=0$ we continue to study the direction $H_4$. Initially we study the old parametrization $\Psi_{Q_2,v}$. After going through the process between \eqref{eq:trialVdecom} and \eqref{eq:esttildeta}, we parametrize the rescaled MCF as $\Psi_{Q_3,v}$
where $Q_3$ is of the form
\begin{align}
    Q_{3}(y,\tau)=\sum_{n=2}^{3}H_{n}(y) e^{-\frac{n-1}{2}\tau}\Big(a_{n,1},a_{n,2},a_{n,3},a_{n,4}
\Big)^{T}
\end{align} for some constants $a_{n,k}$, and compared to \eqref{eq:2repara}, $a_{2,l}=a_{l}$. The function $v$ is of the form
\begin{align}
    v(y,\omega,\tau)=\sqrt{6+ \beta_{4,0,1}(\tau)H_4(y)+\sum_{n=0}^3\sum_{k=0}^{N}\sum_l \beta_{n,k,l}(\tau) H_n(y) f_{k,l}(\omega)+\xi(y,\omega,\tau)}
\end{align}
where, $N$ is a large integer, and for some $d_4\in \mathbb{R}$,
\begin{align}
    \beta_{4,0,1}(\tau)=d_4 e^{-\tau}+\mathcal{O}(e^{-\frac{3}{2}\tau}),
\end{align} 
for the other $\beta_{n,k,l}$, 
\begin{align}
e^{\tau}|\beta_{2,k,l}(\tau)|+ e^{\frac{3}{2}\tau}|\beta_{3,k,l}(\tau)|+
e^{\frac{5}{6}\tau} |\beta_{1,k,l}|+ e^{\frac{\tau}{3}}|\beta_{0,k,l}|\lesssim 1,
\end{align} and for the remainder $\xi,$
\begin{align}
\|e^{-\frac{1}{8}y^2}\xi(\cdot,\tau)\|_{2}\leq & e^{-(\frac{3}{2}-\frac{1}{4})\tau}.
\end{align}

Now we discuss the possible signs of the constant $d_4$.

By the same strategy of ruling out the possibility $d_3\not=0$ we rule out the possibility that $d_4<0.$ 

If $d_4>0$, then similar to the nondegenerate case, we can build a profile similar to that in \eqref{eq:profile}, and from there we study MCF.
Specifically, we use our technical advantage to study the region 
\begin{align}
|y|\leq Z_{4}(\tau):=e^{\frac{1}{4}\tau}e^{10\sqrt{\tau}},
\end{align} and prove that $v$ satisfies the estimates
\begin{align}\label{eq:largeV}
\begin{split}
    v(y,\omega,\tau)=&\sqrt{6+d_4 e^{-\tau} y^4}\Big(1+o(e^{-\sqrt{\tau}})\Big),\\
    \sum_{j+|l|=1,2}&v^{j-1}\Big|\partial_{y}^{j}\nabla_{E}^{l}v(y,\omega,\tau)\Big|\leq  e^{-\sqrt{\tau}}.
\end{split}
\end{align} 
From here we study the MCF, very similar to the nondegenerate case. For the details, see Section \ref{sec:step1Third} below.

If $d_4=0$ we continue to study the direction $H_{5}$. The treatment is similar to $H_3$ since the corresponding constant $d_5$ must be zero. After that we study $H_6$, and so on.

By the ideas presented above, to exhaust all the possibilities, it is better to use induction. This is how we will formulate Theorem \ref{THM:sec} below.

Next we discuss the possibility that $d_m=0$ for all $m\geq 3$. We will formulate a conjecture and want to convince the readers that it is reasonable.

The main argument is that, by the induction process in Theorem \ref{THM:sec}, there exists some $Q_{m-1}$ of the form, for some constants $a_{k,l},$
\begin{align}
Q_{m-1}(y,\tau)=\displaystyle\sum_{k=2}^{m-1}e^{-\frac{k-1}{2}\tau}H_{k}(y)\Big(a_{k,1},\ a_{k,2},\ a_{k,3},\ a_{k,4}\Big)^{T},
\end{align}
and a function $v_{m}$, such that if the rescaled MCF is parametrized as $\Psi_{Q_{m-1},v_m}$,
then $v_{m}$ takes the following form, recall that $z$ of MCF and $y$ of rescaled MCF are related by $z=e^{-\frac{1}{2}\tau} y,$
\begin{align}
\begin{split}
v_m(y,\omega,\tau)=&\sqrt{6+\beta_{m}(e^{-\frac{1}{2}\tau}y,\omega)+Re_{m}(y,\omega,\tau)+\eta_m(y,\omega,\tau)}
\end{split}
\end{align} where $\beta_{m}$ is considered the main part, for some constants $\beta_{n,k,l},$ independent of $m$, and integer $N$,
\begin{align}
    \beta_{m}(e^{-\frac{1}{2}\tau} y ,\omega)=\sum_{n=2}^{m}\sum_{k\leq N}\sum_l\beta_{n,k,l}e^{-\frac{n}{2}\tau} y^n f_{k,l}(\omega)
\end{align} 
and $Re_{m}$ is of the form, for some uniformly bounded functions $\alpha_{n,k,l},$
\begin{align}
Re_{m}(y,\omega,\tau)=e^{-\frac{1}{3}\tau}\sum_{n=0}^{m}\sum_{k=0}^{N}\sum_l \alpha_{n,k,l}(\tau) e^{-\frac{n}{2}\tau} y^n  f_{k,l}(\omega)
\end{align}
and $\eta_m$ satisfies the estimate,
\begin{align}
\|e^{-\frac{1}{8}y^2}\eta_m(\cdot,\tau)\|_{2}\lesssim e^{-\frac{m-1}{2}\tau}.
\end{align}

It is important to point out that $\beta_m$ and $Re_m$ are uniformly bound functions of variable $z$ when $|z|\leq \epsilon$. And $\beta_m$ is generated, directly or indirectly, by $Q_{m-1}$: if $Q_{m-1}\equiv 0$, then $\beta_{m}\equiv 0;$ and moreover the governing equation $v$ reads, for some constants $c_1$ and $c_2,$
\begin{align}
\partial_{\tau}v=-Lv+c_1 v^{-1} |\partial_{y}Q_{m}|^2+c_2 v^{-1}(\omega\cdot \partial_{y}Q_{m})^2+\text{OtherTerms}.\label{eq:qmdnkl}
\end{align} Through this, the parts $|\partial_{y}Q_{m}|^2$ and $(\omega\cdot \partial_{y}Q_{m})^2$ produce $\beta_m$. These two parts are from the terms $J_7$ and $J_8$, defined in \eqref{def:J7J8}, in the equation for $v^2$ below.

Also for the vector $Q_{m-1}$, $e^{-\frac{1}{2}\tau}Q_{m-1}(y,\tau)$ is uniformly bounded in the variable $z$ when $|z|\leq \epsilon .$

We conjecture that, for some $\epsilon>0,$ in the region $|z|\leq e^{-\frac{1}{2}\tau}|y|\leq \epsilon$, the following limits exist: for some smooth functions $Q_{\infty}$ and $v_{\infty}$,
\begin{align}\label{eq:PiInf}
\begin{split}
Q_{\infty}(y,\tau)=&\lim_{m\rightarrow \infty}Q_{m-1}(y,\tau),\\
v_{\infty}(y,\omega,\tau)=& \lim_{m\rightarrow \infty}v_{m-1}(y,\omega,\tau).
\end{split}
\end{align}
If the limits hold, then they imply that a whole neighborhood of $z=0$ of MCF will become singular at the time $T$.
 
It seems the conjecture (\ref{eq:PiInf}) is natural since one obtains $Q_{\infty}$ by removing $\sum_{n=2}^{\infty}\sum_{l=1}^4\alpha_{n,1,l} H_n \omega_l$ from $v$ in \eqref{eq:ideas}, hence probably it is of finite energy in some sense, and then the main parts of $v_{\infty}$ is generated by $Q_{\infty}$, as discussed above.
Moreover our conjecture is consistent with the known results for nonlinear heat equations, see Theorem \ref{THM:val} below.

Now we discuss our techniques. Our main technical advantage is to control the solution by the norms $\|(1+| y|)^{-m}\cdot\|_{\infty}, \ m\geq 0$. Compare to the previously widely used norm $\|e^{-\frac{1}{8}|y|^2}\cdot \|_2$, we can retrieve information of $v$ for large $|y|$. We can use these norms because we apply propagator estimates, see Lemma \ref{LM:frequencyWise} below. It is also important to make normal form transformations to remove the directions $H_n\omega_l$, as discussed above.

In what follows we compare our results to the known ones.

We need ideas of Herrero and Vel\'{a}zquez in \cite{HerrVel1993, herrero1992, Vel1993}, where they studied Type-I blowup of nonlinear heat equations (NLH)
\begin{align}
\partial_{t}u=\Delta u+|u|^{p}u.
\end{align}The case $p=2$ is the most relevant, since a transformation $u\rightarrow u^{-1}$ will make the equation very similar to that of spherically symmetric MCF. 

Now we discuss their results for one dimensional NLH with $p=2$. Suppose the solution blows up at time $T$ and $z=0$, and the blowup is Type I, then define a new function $v$ by 
\begin{align}
u(z,t)=(T-t)^{-\frac{1}{2}}v(y,\tau),\ \text{with} \ y:=(T-t)^{-\frac{1}{2}}z,\ \text{and} \ \tau:=-\ln(T-t).
\end{align} Here $v=\frac{1}{\sqrt{2}}$ is a static solution. Define a function $w$ by $v=\frac{1}{\sqrt{2+w}}$ and derive an equation for it
\begin{align}
\partial_{\tau}w=-(L_0-1) w+NonLinearity(w)\label{eq:linearize}
\end{align} with the linear operator $L_0$ defined in \eqref{def:linearL}.
Decompose $v$ according to the spectrum,
\begin{align}\label{eq:decomNLH}
v(y,\tau)=\Big(2+\sum_{n=0}^{\infty} \alpha_{n}(\tau)H_{n}(y)\Big)^{-\frac{1}{2}}.
\end{align} 
The following result was proved in \cite{HerrVel1993, herrero1992}: see Theorems A and 1 in \cite{herrero1992},
\begin{theorem}\label{THM:val}
The solution \eqref{eq:decomNLH} must is one of the following three possibilities:
\begin{itemize}
\item[(1)] $\alpha_2(\tau)=\tau^{-1}(1+o(1))$ as $\tau\rightarrow\infty$, and for any other $k$, $|\alpha_k(\tau)|=\mathcal{O}(\tau^{-2});$
\item[(2)] there exists an integer $m\geq 2$ such that $H_{2m}$ dominates, specifically for some $d_{2m}>0$, 
\begin{align}
\alpha_{2m}(\tau)=d_{2m} e^{-(m-1)\tau}(1+o(1)),
\end{align} and
\begin{align}
\alpha_{n}(\tau)=o(e^{-(m-1)\tau})\ \text{if}\ n\not=2m,
\end{align}
\item[(3)] $v=\frac{1}{\sqrt{2}}$, i.e. $c_{n}\equiv 0$ for all $n\geq 0.$
\end{itemize}
\end{theorem}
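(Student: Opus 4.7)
The approach I would take is to project \eqref{eq:linearize} onto the Hermite basis that diagonalizes $L_0$ and perform a mode-by-mode asymptotic analysis. Writing $w(y,\tau)=\sum_n \alpha_n(\tau) H_n(y)$, each coefficient satisfies
\[
\frac{d}{d\tau}\alpha_n=-\Bigl(\frac{n}{2}-1\Bigr)\alpha_n + N_n(\alpha_0,\alpha_1,\ldots),
\]
where $N_n$ is the projection of the nonlinearity onto $H_n$ in the Gaussian-weighted inner product. The linear spectrum $\{n/2-1\}_{n\geq 0}$ splits as two unstable modes ($n=0,1$), one neutral mode ($n=2$), and a stable half-line ($n\geq 3$). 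The Type-I assumption $v\to 1/\sqrt{2}$ forces $\alpha_n(\tau)\to 0$ for every $n$, and this convergence is the only global input from which everything else is extracted.

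The first step is to slave the unstable and higher stable modes. Since $\alpha_0,\alpha_1\to 0$, Duhamel from $+\infty$ yields
\[
\alpha_n(\tau)=-\int_\tau^\infty e^{-(\frac{n}{2}-1)(\tau-s)}N_n(s)\,ds,\qquad n=0,1,
\]
so these modes are strictly slaved to the nonlinearity and must decay faster than any genuinely driving mode; in addition a translation in $z$ (a symmetry of NLH) can be used to set $\alpha_1\equiv 0$. For the stable modes $n\geq 3$, integrating forward gives the baseline bound $|\alpha_n(\tau)|\lesssim e^{-(n/2-1)\tau}$ plus a Duhamel correction controlled by whichever mode ultimately dominates.

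The decisive step is a Riccati analysis of the neutral mode. Expanding the nonlinearity of \eqref{eq:linearize} to quadratic order and projecting onto $H_2$ produces, with an explicit nonzero coupling constant $c$,
\[
\frac{d}{d\tau}\alpha_2 = c\,\alpha_2^2 + R(\tau),
\]
where $R$ is quadratic in the other coefficients. A dichotomy follows. If $\alpha_2$ decays no faster than polynomially, then $R$ is strictly smaller than $\alpha_2^2$, the Riccati balance is enforced, and integration gives $\alpha_2(\tau)=\tau^{-1}(1+o(1))$; the sign is fixed because only one sign of the leading coefficient keeps $v=(2+w)^{-1/2}$ globally positive, and feeding this rate back into the Duhamel formula for the other $\alpha_n$'s yields $|\alpha_n(\tau)|=\mathcal O(\tau^{-2})$, which is case (1). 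If instead $\alpha_2(\tau)=o(\tau^{-1})$, a bootstrap on the Hermite-weighted $L^2$ norm shows $\alpha_2$ actually decays faster than every polynomial, and the leading behaviour must come from some stable mode $\alpha_{n_\ast}$ attaining its full linear rate. Parity together with positivity constraints (sign of $v$, non-eliminability of higher modes by translation) forces $n_\ast=2m$ for some $m\geq 2$ with $d_{2m}>0$, and a second Duhamel iteration substituting $\alpha_{2m}(\tau)=d_{2m}e^{-(m-1)\tau}+\rho(\tau)$ yields $\rho=o(e^{-(m-1)\tau})$ and the companion bounds for all $n\neq 2m$, i.e.\ case (2); if no such $n_\ast$ exists then every $\alpha_n$ vanishes and case (3) follows by real-analyticity of the rescaled flow.

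The main obstacle is the rigidity of this dichotomy: ruling out intermediate polynomial rates $\tau^{-\beta}$ with $\beta\notin\{1,\infty\}$ and intermediate exponential rates $e^{-\gamma\tau}$ with $\gamma\notin\{m-1:m\geq 2\}$. Because the nonlinearity couples all modes, this cannot be read off from any single projection and requires simultaneous control in a norm strong enough to handle the polynomial borderline; the Gaussian-weighted $L^2$ is natural but barely sufficient, and one may well need pointwise weighted control of the type $\|\langle y\rangle^{-m}w\|_\infty$ advocated elsewhere in this paper. A secondary obstacle is the strict positivity $d_{2m}>0$ in case (2), which is a genuinely nonlinear piece of information and must be obtained from a barrier or maximum-principle argument on the unrescaled $u$ rather than from the spectral picture.
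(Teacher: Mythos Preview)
This theorem is not proved in the present paper. It is quoted from Herrero and Vel\'azquez \cite{HerrVel1993, herrero1992} (the text immediately preceding the statement says ``The following result was proved in \cite{HerrVel1993, herrero1992}: see Theorems A and 1 in \cite{herrero1992}''), and the paper supplies no argument for it; it is cited purely as background and as motivation for the MCF analysis that follows. So there is no ``paper's own proof'' to compare your proposal against.

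That said, your outline is broadly in the spirit of the Herrero--Vel\'azquez strategy and, not coincidentally, of the paper's own MCF arguments: spectral decomposition in the Hermite basis, slaving of the unstable modes via Duhamel from $+\infty$, a Riccati equation for the neutral mode $\alpha_2$ (cf.\ \eqref{eq:beqn} and \eqref{eq:effAlpha2} here), and then a dichotomy between the $\tau^{-1}$ regime and an exponentially decaying regime governed by some $H_{2m}$. The paper explicitly adopts these ideas, e.g.\ around \eqref{eq:leadPart} and in Proposition~\ref{prop:exponential}. Two points in your sketch that are genuinely delicate and that the cited papers (and this one, in its MCF setting) work hard on: the passage from $\alpha_2=o(\tau^{-1})$ to exponential decay is not a soft bootstrap but requires a careful argument of the type in Subsection~\ref{subsec:alpha02Eta}; and the positivity $d_{2m}>0$ together with the exclusion of odd $n_\ast$ is obtained in the MCF setting via the Type~I bound on the second fundamental form (Section~\ref{sec:ATHMsec}), which has no direct analogue for NLH---there the argument is different and comes from the positivity of $u$ and matching with the outer region.
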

We will use their ideas, for example, in looking for the main part of a direction in \eqref{eq:leadPart}. We expect that the same results will hold for rotationally symmetric MCF.

However MCF differs significantly from NLH. Specifically, Theorem \ref{THM:val} says that if there is no dominating direction, then $v$ must be homogeneous in $y$. This is not true for MCF, for example, if the initial hypersurface is $\mathbb{S}^{3}_{q}\otimes \mathbb{S}_{\sqrt{2}}$ for some $q\gg 1$, where $q$ and $\sqrt{2}$ are radius of $\mathbb{S}^3$ and $\mathbb{S}$, then the blowup takes place everywhere at the same time.
Consequently for MCF it is important to find a proper ``curved space'' and parametrize the hypersurfaces on it. 

For MCF, Choi, Haslhofer and Hershkovits proved the mean convexity conjecture for hypersurfaces in $\mathbb{R}^3$ in \cite{choi1810ancient}, and Choi, Haslhofer, Hershkovits and White proved the mean convexity when the limit cylinder of rescaled MCF is $\mathbb{R}^{l}\times \mathbb{S}^n$, $l=0,1$, in \cite{choi2019ancient}. For the related work, see also \cite{HuiskenSur2009, BrHui2016, MR3602529, MR3662439}. For the other related works, see \cite{Hamilton1997, AltAngGiga1995, sesum2008, AnDaSE15, AnDaSE18}.
Compared to these works, in the present paper we obtained a detailed description for the various cases, besides mean convexity, we can prove that the singularity is isolated. Another highlight is that our techniques are applicable for the cases where the limit cylinder is $\mathbb{R}^{k}\times \mathbb{S}^{l}$, $k\geq 2$, which is to be discussed below. However our result is less complete in the sense that we do not have a complete picture, instead, for some cases we only have a conjecture.

In our earlier paper \cite{GZ2018}, we considered only the nondegenerate case when the limit cylinder is $\mathbb{R}^{3}\times \mathbb{S}$. The corresponding degenerate cases will be more involved than the present problem because we need to consider more possibilities. For example, the scalar function $b$ in \eqref{eq:beqn} becomes a $3\times 3$-matrix $B(\tau)$ in \cite{GZ2018} and we proved that it takes the form
\begin{align}\label{eq:matrixB}
    B(\tau)=\tau^{-1}\left[
    \begin{array}{lll}
    c_1&0&0\\
    0&c_2&0\\
    0&0&c_3
    \end{array}
    \right]+\mathcal{O}(\tau^{-2}),\ c_{k}=0\ \text{or}\ 1.
\end{align} In \cite{GZ2018} we only considered the case $c_1=c_2=c_3=1$. The degenerate cases includes many new cases, for example $c_1=1$ and $c_2=c_3=0$, which require interpretation and are not problems in the present consideration. 

We will address the degenerate cases of the regimes where the singularity is modeled on $\mathbb{R}^{k}\times \mathbb{S}^{l},\ k\geq 2,$ in subsequent papers.

The paper is organized as follows. The main theorems are stated in Section \ref{sec:MTHM}. In Theorem \ref{THM:lIS1} we discuss the nondegenerate case, and in Theorem \ref{THM:sec} we study the degenerate cases, and we formulate the results by induction for the reasons presented earlier. Theorem \ref{THM:lIS1} will be proved in Section \ref{sec:MainTHM1}. Theorem \ref{THM:sec} will be proved in subsequent sections.

In the present paper we use the following conventions. $A\lesssim B$ signify that there exists a universal constant $C>0$ such that $A\leq CB$, and we define $\langle y\rangle^k, \ k\in \mathbb{R},$ as  $$\langle y\rangle^{k}:=(1+|y|^2)^{\frac{k}{2}}.$$ The inner product $\langle \cdot,\cdot \rangle_{\mathcal{G}}$ is defined as,
for any functions $f$ and $g$
\begin{align}
\langle f,\ g\rangle_{\mathcal{G}}:=\int_{0}^{2\pi}\int_{\mathbb{R}^3} e^{-\frac{1}{4}|y|^2} f(y,\omega) \overline{g}(y,\omega)\ d^3y dS(\omega),\label{eq:GInner}
\end{align} and we use the notation $f\perp_{\mathcal{G}} g$ to signify that $\langle f,\ g\rangle_{\mathcal{G}}=0,$ and from this we define the norm $\|\cdot\|_{\mathcal{G}}$.

\section{Main Theorem}\label{sec:MTHM}
We are interested in the cases where the initial hypersurface $\Sigma_0$ satisfies 
\begin{align}\label{eq:generic5}
\lambda(\Sigma_0)<\infty
\end{align} where the functional $\lambda$ was defined in \eqref{eq:generic}.  Suppose the blowup time is $T$ and blowup point is the origin, 
then Colding and Minicozzi proved in \cite{ColdingMiniUniqueness} that the rescaled MCF $\frac{1}{\sqrt{T-t}}X_{t}$ will converge to a unique cylinder, i.e. up to a rotation, for some $k=1,\cdots,n,$
\begin{align}
\frac{1}{\sqrt{T-t}}X_{t}\rightarrow \mathbb{S}^{k}_{\sqrt{2k}}\times \mathbb{R}^{n-k},\ \text{as}\ t\rightarrow T.
\end{align}

In the present paper we consider the case $k=3$ and $n-k=1$, i.e. the limit cylinder is $\mathbb{S}^{3}_{\sqrt{6}}\times \mathbb{R}$.
\cite{ColdingMiniUniqueness} implies that there exists some positive function $u: \mathbb{R}\times \mathbb{S}^3\times [0,
\ T)\rightarrow \mathbb{R}^{+}$ such that, in a (possibly shrinking) neighborhood of the origin, the MCF and the rescaled MCF can be parametrized as, 
\begin{align}\label{eq:origPara10}
\Psi(z,\omega, t)= \left[
\begin{array}{ccc}
z\\
u(z, \omega, t)\omega
\end{array}
\right]:=\sqrt{T-t} \left[
\begin{array}{ccc}
y\\
v(y, \omega,\tau)\omega
\end{array}
\right],
\end{align}
where $\omega\in \mathbb{S}^3,$ and $y$ and $\tau$ are new spatial and time variables defined as
\begin{align}
y:=\frac{z}{\sqrt{T-t}} &\ \text{and}\ \tau:=|ln(T-t)|,
\end{align}
and the function $v$ is defined as\begin{align}
v(y,\omega,\tau)&:=\frac{1}{\sqrt{T-t}}u(z,\omega,t).
\end{align}
The results in \cite{ColdingMiniUniqueness} imply that
\begin{align}\label{eq:fixedYconv}
\text{for any fixed} \ y, \ v(y,\omega,\tau)\rightarrow \sqrt{6} \ \text{as} \ \tau\rightarrow \infty.
\end{align}

As discussed earlier, to identify the dominant direction we need some normal form transformation to parametrize the MCF. Specifically, let $\Pi_{N}$ be a vector-valued function of the form
\begin{align}\label{eq:generalP}
\Pi_{N}(z,t)= \sum_{n=2}^{N}H_{n}(\frac{z}{\sqrt{T-t}}) (T-t)^{\frac{n}{2}}\Big(a_{n,1},a_{n,2},a_{n,3},a_{n,4}
\Big)^{T}
\end{align}where $N\geq 2$ is a natural number, $H_n$ is the $n-$the Hermite polynomial, and $a_{n,k}\in \mathbb{R}$ are constants, then in a possibly shrinking neighborhood of the origin, we can parametrize the MCF as, 
\begin{align}\label{eq:parametrization}
\Phi_{\Pi_{N},u} (z,\omega,t)
=\left[
\begin{array}{ccc}
z\\
\Pi_{N}(z,t)
\end{array}
\right]+u(z,\omega,t)\left[
\begin{array}{ccc}
-\partial_{z}\Pi_{N}(z,t)\cdot\omega\\
\omega
\end{array}
\right],
\end{align} where $u$ is a positive function, and $\omega\in \mathbb{S}^3$. 

The corresponding rescaled MCF, denoted by $\Psi_{Q_{N},v}$,
\begin{align}\label{def:PsiQv}
\Psi_{Q_{N},v}(y,\omega,\tau):=\frac{1}{\sqrt{T-t}}\Phi_{\Pi_{N},u},
\end{align} takes the form
\begin{align}\label{eq:norPara}
\Psi_{Q_{N},v}(y,\omega,\tau)&=\left[
\begin{array}{ccc}
y\\
Q_{N}(y,\tau)
\end{array}
\right]+v(y,\omega,\tau)\left[
\begin{array}{ccc}
-\partial_{y}Q_{N}(y,\tau)\cdot\omega\\
\omega
\end{array}
\right].
\end{align} Here $y,$ $\tau$ and $v$ are defined in terms of $x,$ $t$ and $u$ by the identities,
\begin{align}\label{eq:ytau}
\begin{split}
y:=\frac{z}{\sqrt{T-t}},\ & \ \text{and}\ \tau:=|ln(T-t)|,\\
v(y,\omega,\tau):=& \frac{1}{\sqrt{T-t}}u(z,\omega,t),
\end{split}
\end{align}
and the vector-valued function $Q_{N}$ is defined in terms of $\Pi_{N}$ in \eqref{eq:generalP} by the identity
\begin{align}\label{eq:QPIdentity}
Q_{N}(y,\tau):=\frac{1}{\sqrt{T-t}}\Pi_{N}(z,t)= \sum_{n=2}^{N}H_{n}(y) e^{-\frac{n-1}{2}\tau}\Big(a_{n,1},a_{n,2},a_{n,3},a_{n,4}
\Big)^{T}.
\end{align}

Now we are ready to state the first result.
\begin{theorem}\label{THM:lIS1}
Suppose the initial hypersurface satisfies the condition \eqref{eq:generic5}, and the limit cylinder of the rescaled MCF is $\mathbb{R}\times\mathbb{S}^3_{\sqrt{6}}$. Then one and only one of the following two possibilities must hold.

\begin{itemize}
\item[(1)] For the first possibility, the rescaled MCF is parametrized as $\Psi_{0,v}$ in the region 
\begin{align}
 \Big\{y\ \Big|\ |y|\leq 10\tau^{\frac{1}{2}+\frac{1}{20}}\Big\},
\end{align} 
for some the function $v$ of the form 
\begin{align}
v(y,\omega,\tau)=\sqrt{6+\sum_{n=0}^2\alpha_n(\tau)H_n(y)+\sum_{k=0,1}\sum_{l=1}^4\alpha_{k,l}(\tau)H_{k}(y)\omega_l+\eta(y,\ \omega,\tau)},\label{eq:firstCase}
\end{align} where the functions $a_n$ and $a_{k,l}$ satisfy the estimates
\begin{align}\label{eq:firstB}
   \begin{split}
    \Big|\alpha_2(\tau)-3\tau^{-1}\Big|+\sum_{n=0,1}|\alpha_n(\tau)|+\sum_{k=0,1}\sum_{l=1}^4|\alpha_{k,l}(\tau)|\lesssim & \tau^{-2},
 \end{split}
\end{align} and the remainder $\eta$ satisfies the estimates,
\begin{align}
\Big\|\eta(\cdot,\tau) 1_{|y|\leq 10\tau^{\frac{1}{2}+\frac{1}{20}}}\Big\|_{\mathcal{G}}\lesssim & \tau^{-2},
\end{align}and
          \begin{align}
\Big|v(y,\omega,\tau)-\sqrt{6+3\tau^{-1}y^2}\Big|+\sum_{j+|i|=1,2}v^{j-1}\Big|\partial_{y}^{j}\nabla_{E}^i v(y,\omega,\tau)\Big|\leq &  \tau^{-\frac{3}{10}}.
\end{align}

For the corresponding MCF, a small neighborhood of singularity is mean convex and the singularity is isolated. Moreover for any small $\epsilon>0$, in the space-and-time region $$0\leq T-t\leq \epsilon\ \text{and}\ |z|\leq \epsilon,$$ $u$ satisfies the estimate
\begin{align}\label{eq:epsiMT}
    u(z,\omega,t)>0 \ \text{if}\ (z,t)\not=(0,T),
\end{align} for any positive integer $M,$ there exists some constant $\delta_{M}(\epsilon),$ with $\lim_{\epsilon\rightarrow 0^{+}}\delta_{M}(\epsilon)=0,$ s.t.
\begin{align}\label{eq:deriUMT}
    \sum_{1\leq j+|i|\leq M}\Big|u^{j-1}\partial_{z}^{j}\nabla_{E}^{i}u(z,\omega,t)\Big|\leq \delta_{M}(\epsilon).
\end{align}

\item[(2)] For the second possibility, the rescaled MCF is parametrized by $\Psi_{Q_2,v}$ in the region
\begin{align}
\Big\{ y\ \Big| \   |y|\leq 8\tau^{\frac{1}{2}+\frac{1}{20}}\Big\},
\end{align} $Q_2$ is a vector-valued polynomial defined as, for some constants $a_k,$
\begin{align}
Q_2(y,\tau)=e^{-\frac{1}{2}\tau}H_{2}(y)\Big(a_{1},\ a_{2},\ a_{3},\ a_{4}\Big)^{T},
\end{align} for any sufficiently large $N,$ the function $v$ takes the form
\begin{align}
    v(y,\omega,\tau)=\sqrt{6+\sum_{n=0}^3\sum_{k=0}^{N}\sum_l \alpha_{n,k,l}(\tau) H_n(y)f_{k,l}(\omega)+\eta(y,\omega,\tau)}
\end{align} where, for any $k$ and $l$, for any $j\geq 1$, and for some constants $d_{2,k,l}$,
\begin{align}
   e^{\frac{1}{3}\tau} |\alpha_{0, k,l}(\tau)|+e^{\frac{5}{6}\tau}|\alpha_{1,k,l}(\tau)|+e^{\frac{4}{3}\tau}\Big|\alpha_{2,k,l}(\tau)-d_{2,k,l} e^{-\tau} \Big|+(1+\tau)^{-1}e^{\tau}|\alpha_{3,j,l}(\tau)|\lesssim 1,
\end{align}
and for some constant $d_3\in \mathbb{R}$,
\begin{align}
|\alpha_{3,0,1}(\tau)-d_3 e^{-\frac{1}{2}\tau}|\leq C_{\delta_0}e^{-(\frac{3}{2}-\delta_0)\tau},
\end{align}
the remainder $\eta$ satisfies the following estimates, 
\begin{align}
\sum_{i+|j|\leq 2}\Big\|1_{\leq 5 \tau^{\frac{1}{2}+\frac{1}{20}}} \partial_{y}^{i}\nabla_{E}^j\eta \Big\|_{\mathcal{G}}\leq  &  C_{\delta_0}e^{-(1-\delta_0)\tau},\label{eq:mGnormeta}\\
\sum_{i+|j|\leq 2}\Big|1_{\leq 5 \tau^{\frac{1}{2}+\frac{1}{20}}} \partial_{y}^{i}\nabla_{E}^j\eta \Big| \lesssim &  \tau^{-\frac{3}{10}}.
\end{align} Here $\delta_0$ is any positive constant, and $C_{\delta_0}$ is a constant depending on $\delta_0.$
\end{itemize}

\end{theorem}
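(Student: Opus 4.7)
\medskip

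\noindent\textbf{Proof proposal.} The plan is to run a spectral decomposition of the rescaled equation, isolate the critical mode $b(\tau)=\alpha_{2,0,1}(\tau)$, and then split into the two alternatives based on its behavior. Starting from the parametrization $\Psi_{0,v}$ and the decomposition \eqref{eq:decomVXi}, I would project the evolution $\partial_\tau\xi=-L\xi+NL(\xi)$ onto the eigenmodes of $L$ to obtain the infinite ODE system \eqref{eq:linearRate}. The convergence statement \eqref{eq:fixedYconv} from \cite{ColdingMiniUniqueness} guarantees $\alpha_{n,k,l}(\tau)\to 0$ for all indices, which is the decisive information needed to integrate modes with nonpositive linear rate forward rather than backward in $\tau$. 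The modes $(n,k)=(0,0),(0,1),(1,0)$ then satisfy integrated identities with strictly positive linear weight, so they decay at least as fast as the nonlinearity; the modes $(1,1,l)$ are killed by choosing optimal translation/tilt of the coordinate frame; and every mode with $k\geq 2$ enjoys linear decay $e^{-(n/2+1/3)\tau}$.

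The heart of the argument is the analysis of $b(\tau)$ via \eqref{eq:beqn}. Once all other low modes are shown to feed back with errors negligible compared to $b^2$, the ODE $\dot b=-\tfrac13 b^2+\mathrm{NL}_{2,0,0}$ reduces by standard dichotomy (as in Herrero--Vel\'azquez, Theorem~\ref{THM:val}) to exactly two possibilities: either $b(\tau)=3\tau^{-1}(1+o(1))$, or $b$ decays faster, in which case it is genuinely subdominant. This furnishes the branching into (1) and (2). The key technical step throughout is to propagate sharp estimates on the remainder $\eta$, and I would control $\eta$ simultaneously in the $\mathcal{G}$-norm and in the weighted $L^\infty$ norms $\|\langle y\rangle^{-m}\,\cdot\,\|_\infty$ by applying the propagator estimates of Lemma \ref{LM:frequencyWise} to the equation for $\eta$, using a bootstrap on a time interval determined by the a priori bound $|y|\leq 10\tau^{1/2+1/20}$.

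In case (1), the $L^\infty$ control allows the profile $v\approx \sqrt{6+3\tau^{-1}y^2}$ to be justified over the full region $|y|\leq 10\tau^{1/2+1/20}$, giving the smoothness bounds on $\partial_y^j\nabla_E^i v$. To pass back to the original MCF and deduce the estimates \eqref{eq:epsiMT}--\eqref{eq:deriUMT}, I would use the pointwise identity $v^{k-1}\partial_y^j\nabla_E^i v=u^{k-1}\partial_z^j\nabla_E^i u$ together with standard local smooth extension (e.g.\ \cite{EckerBook}) at each nonzero $z_0$: for $z_0$ small but fixed choose $\tau_1$ with $e^{\tau_1/2}|z_0|=\tau_1^{1/2+1/20}$, note $u(z_0,\omega,t(\tau_1))\gg u(0,\omega,t(\tau_1))$, then run the smooth extension up to $t=T$. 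Mean convexity and isolation of the singularity both follow from the fact that the radius $u$ stays strictly positive and bounded below away from $z=0$ while the profile is cylinder-like.

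In case (2) the critical task is the normal form transformation \eqref{eq:2repara}: the coefficients $a_\ell$ are chosen precisely to cancel the leading $e^{-\tau/2}H_2\omega_\ell$ part of the reparametrized solution, following the recipe around \eqref{eq:leadPart}. After this reparametrization I would re-run the spectral analysis to obtain \eqref{eq:d3First} and the subordinate bounds \eqref{eq:trialAlpha}--\eqref{eq:esttildeta}; the appearance of $e^{-(1-\delta_0)\tau}$ and the loss $C_{\delta_0}$ is the standard consequence of slightly degraded decay when one integrates a quadratic nonlinearity through a resonance. I expect the main obstacle to be exactly this combined bootstrap: the $\eta$ equation is driven by nonlinear interactions of the low modes (quadratic in $\alpha_{n,k,l}$ through $v^{-1}|\partial_y Q|^2$ type terms, cf.\ \eqref{eq:qmdnkl}), so the weighted $L^\infty$ control and the $\mathcal{G}$-norm control must be closed together on the expanding region $|y|\leq 10\tau^{1/2+1/20}$, with cutoff errors from multiplying by $1_{|y|\leq 5\tau^{1/2+1/20}}$ absorbed into the $e^{-(1-\delta_0)\tau}$ margin. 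All other pieces (the ODE dichotomy, the normal form, and the transfer back to $u$) are essentially forced once this remainder estimate is in hand.
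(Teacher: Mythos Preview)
Your outline matches the paper's strategy closely: the initial dichotomy via Proposition~\ref{prop:mostgeneric}, the propagator estimates of Lemma~\ref{LM:frequencyWise} for the weighted $L^\infty$ control, the normal form $\Psi_{Q_2,v}$ in case~(2), and the transfer back to $u$ via the scaling identity and local smooth extension are all exactly what the paper does.

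The one place where you compress a genuinely nontrivial step is the passage from ``$b$ decays faster'' to the exponential rates stated in case~(2). The ODE dichotomy \`a la Herrero--Vel\'azquez, together with the initial analysis, only yields $|\alpha_2(\tau)|\lesssim\tau^{-2}$ (this is Proposition~\ref{prop:mostgeneric}), not exponential decay. The paper bridges this gap in Proposition~\ref{prop:exponential} by a device you only partially invoke: for each large $X$ one chooses an optimal frame making $\alpha_{k,l}^{(X)}(X)=0$, proves exponential decay of the $\omega$-dependent modes on $[X_0,X]$ uniformly in $X$, and then sends $X\to\infty$ (Proposition~\ref{prop:step1} and Lemma~\ref{LM:odd}). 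Only afterwards does a separate contradiction argument (\S\ref{subsec:alpha02Eta}) upgrade $\alpha_2$ itself to exponential decay, by showing that if $\alpha_2^2$ ever dominates the error $E(\alpha_2,\tau)$ then $\alpha_2$ would follow the $3/\tau$ branch, contradicting $|\alpha_2|\lesssim\tau^{-2}$. Your line ``the modes $(1,1,l)$ are killed by choosing optimal translation/tilt'' captures the right mechanism but understates its scope: the optimal-frame trick is not a one-time normalization but a family of frames indexed by $X$, and it is what converts polynomial to exponential decay for \emph{all} $\omega$-dependent modes. Without this intermediate upgrade the bootstrap you describe for $\eta$ in case~(2) cannot close at the rate $e^{-(1-\delta_0)\tau}$.
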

Here $1_{\leq D}$, for any $D>0,$ is the Heaviside function taking value $1$ when $|y|\leq D$ and $0$ otherwise; $\nabla$ denotes covariant differentiation on $\mathbb{S}^3$, and $(E_1,E_2,E_3)$ is an orthonormal basis of $T_{\omega}\mathbb{S}^3.$ Recall the $\mathcal{G}-$norm defined in \eqref{eq:GInner}.

The proof will be presented in Section \ref{sec:MainTHM1}.

Next we continue to study the second possibility in Theorem \ref{THM:lIS1}.
As explained in Introduction, we will use induction, and need the condition that the blowup is Type I. Specifically let $A(p,t)$ be the second fundamental form of MCF at the point $p$ and time $t,$ we need the following condition, 
\begin{align}
\text{for some fixed constant}\ \beta>0,\  \sup_{t\leq T,\ p}\sqrt{T-t}|A(p,t)|\leq \beta.\label{eq:secFun}
\end{align} 
Now we state our second result.
\begin{theorem}\label{THM:sec}
We assume all the conditions in Theorem \ref{THM:lIS1}, and the condition \eqref{eq:secFun}.

In the first step of induction, we suppose that for some integer $m\geq 3$, there exists a vector-valued polynomial of the form, for some constants $a_{k,l}\in \mathbb{R},$
\begin{align}\label{eq:qmminus1}
Q_{m-1}(y,\tau)=\displaystyle\sum_{k=2}^{m-1}e^{-\frac{k-1}{2}\tau}H_{k}(y)\Big(a_{k,1},\ a_{k,2},\ a_{k,3},\ a_{k,4}\Big)^{T},
\end{align}
and a function $v$ s.t. the rescaled MCF is parametrized as $\Psi_{Q_{m-1},v}$, defined in \eqref{eq:parametrization},
in the region, 
\begin{align}\label{eq:defwm}
\Big\{y\ \Big|\ |y|\leq  8\tau^{\frac{1}{2}+\frac{1}{20}}\Big\}
\end{align} 
and $v$ takes the following form, for any sufficiently large $N\in \mathbb{N}$,
\begin{align}
v(y,\omega,\tau)=\sqrt{6+\sum_{n=0}^{m}\sum_{k=0}^{N}\sum_{l} \gamma_{n,k,l}(\tau)H_{n}(y)f_{k,l}(\omega)+\xi(y,\ \omega,\tau)},\label{eq:newdec}
\end{align} 
and for any $2\leq n\leq m-1$, for any $k$ and $l$, and for any $j\geq 1$, there exist constants $d_{n,k,l}$ s.t.
\begin{align}
\begin{split}\label{eq:n01Rapid}
    e^{\frac{1}{3}\tau}|\gamma_{0,k,l}(\tau)|+e^{\frac{5}{6}\tau}|\gamma_{1,k,l}(\tau)|+e^{(\frac{n}{2}+\frac{1}{3})\tau}\Big|\gamma_{n,k,l}(\tau)-& d_{n,k,l} e^{-\frac{n}{2}\tau}\Big|\\
    & + (1+\tau)^{-1}e^{\frac{m}{2}\tau}|\gamma_{m,j,l}(\tau)|\lesssim 1,
\end{split}
\end{align}
for some real constant $d_m$,
\begin{align}\label{eq:dm}
|\gamma_{m,0,1}(\tau)- d_m e^{-\frac{m-2}{2}\tau}|&\lesssim  e^{-\frac{m-1}{2}\tau},
\end{align}
the remainder $\xi$ satisfies the following estimates, for any $\delta_0>0$ there exists a $C_{\delta_0}>0$ such that
\begin{align}
\sum_{k+|l|\leq 2}\Big\|  1_{\leq 8 \tau^{\frac{1}{2}+\frac{1}{20}}}\partial_{y}^k \nabla_{E}^{l}\xi(\cdot,\tau)\Big\|_{\mathcal{G}}\leq & C_{\delta_0} e^{-\frac{m-1-\delta_0}{2}\tau},\label{eq:GLEtaM2}\\
\sum_{k+|l|\leq 2}\Big|  1_{\leq 8 \tau^{\frac{1}{2}+\frac{1}{20}}}\partial_{y}^k \nabla_{E}^{l}\xi(\cdot,\tau)\Big|_{\infty}\lesssim & \tau^{-\frac{3}{10}}.\label{eq:GLEtaM}
\end{align} 

If the results \eqref{eq:qmminus1}-\eqref{eq:GLEtaM} above hold, then we will prove the following results.

\begin{itemize}
\item[(A)]By the condition \eqref{eq:secFun}, $d_m=0$ if $m$ is odd; and $d_m\geq 0$ if $m$ is even.
\item[(B)] The results here hold for the following two cases: (1) the constant $d_m$ is positive and $m\geq 4$ is even; (2) $d_m=0$ and $m\geq 3$ (even or odd). 

The parametrization $\Psi_{Q_{m-1},v}$, with $Q_{m-1}$ defined in \eqref{eq:qmminus1}, works in a much larger region,
\begin{align}
|y|\leq   e^{\frac{m-2}{2m}\tau+8 \sqrt{\tau}}\label{eq:region}
\end{align}
and $v$ satisfies the estimates
\begin{align}\label{eq:pointwisedm}
\Big| v(y,\omega,\tau)-\sqrt{6+d_m e^{-\frac{m-2}{2}\tau} H_m(y)}\Big|+\sum_{j+|i|=1,2}\Big|v^{j-1}\partial_{y}^j\nabla_{E}^i v(y,\omega,\tau)\Big|\ll e^{-5\sqrt{\tau}}.
\end{align}
Here the constant $d_m$ in \eqref{eq:pointwisedm} is the same to that in \eqref{eq:dm}. 

\item[(C)]
If in \eqref{eq:dm} $d_m$ is positive and $m$ is even, then the following results hold for the MCF.

A fixed space-and-time neighborhood of singularity of MCF is mean convex and the singularity is isolated. Moreover there exists a constant $\epsilon>0$ s.t. in the space-and-time region,
\begin{align}\label{eq:regionMC}
    \epsilon\geq T-t\geq 0,\ \text{and}\ |z|\leq \epsilon,
\end{align}
the MCF takes the form
\begin{align}
    \left(
    \begin{array}{c}
         z\\
         \tilde{Q}_{m}(z,t) 
    \end{array}
    \right)+\left(
    \begin{array}{c}
         -\omega\cdot\partial_{z}\tilde{Q}_m(z,t)\\
         \omega
    \end{array}
    \right)u(x,\omega,t)
\end{align} where $\tilde{Q}_{m}(z,t)=\sqrt{T-t}Q_{m}(y,\tau)\in \mathbb{R}^4$ takes the form
\begin{align}
    \tilde{Q}_{m}(z,t)=\tilde{Q}_{m-1,1}(z)+(T-t)\tilde{Q}_{m-3,2}(z)+(T-t)^2\tilde{Q}_{m-5,3}(z)+\cdots
\end{align} and each of $\tilde{Q}_{k,l}\in \mathbb{R}^4$ is a vector-valued polynomial of degree $\leq k$, and $u$ satisfies the estimates:
\begin{align}
    u(z,\omega,t)>0 \ \text{when}\ (z,t)\not=(0,T)
\end{align}
and for any integer $M$, there exists some constant $\delta_{M}(\epsilon)$ satisfying $\lim_{\epsilon\rightarrow 0}\delta_{M}(\epsilon)=0,$ such that in the region \eqref{eq:regionMC},
\begin{align}
    \Big|u^{k-1}\partial_{z}^k\nabla_{E}^{l}u(z,\omega,t)\Big|\leq \delta_{M}(\epsilon),\ k+|l|=1,\cdots, M.\label{eq:MCSmDe}
\end{align}

\item[(D)] If $d_m=0,$ then we prepare for the next step of induction.

Specifically there exists a vector-valued polynomial, for some real constants $a_{k,l}$, 
\begin{align}
    Q_{m}(y,\tau)=\displaystyle\sum_{k=2}^{m}e^{-\frac{k-1}{2}\tau}H_{k}(y)\Big(a_{k,1},\ a_{k,2},\ a_{k,3},\ a_{k,4}\Big)^{T}
\end{align} a function $v>0$ such that the rescaled MCF can be parametrized as $\Psi_{Q_{m},v}$ in the region
\begin{align}
 \Big\{ y\ \Big|\   |y|\leq 8 \tau^{\frac{1}{2}+\frac{1}{20}} \Big\}.
\end{align} Here the constants $a_{k,l}$, $k\leq m-1$, are the same to those in \eqref{eq:qmminus1}.
For the function $v$, there exists an integer $N_{m+1}$ such that for any $N\geq N_{m+1}$, $v$ takes the form,
\begin{align}
v(y,\omega,\tau)=\sqrt{6+\sum_{n=0}^{m+1}\sum_{k=0}^{N} \sum_{l}\gamma_{n,k,l}(\tau) H_n(y) f_{k,l}(\omega)+\xi(y,\omega,\tau)},
\end{align} 
where, for any $2\leq n\leq m$, $k$ and $l$, and for any $j\geq 1$, there exist constants $d_{n,k,l}$ such that
\begin{align}
\begin{split}
    e^{\frac{1}{3}\tau}|\gamma_{0,k,l}(\tau)|+e^{\frac{5}{6}\tau}|\gamma_{1,k,l}(\tau)|+e^{(\frac{n}{2}+\frac{1}{3})\tau}\Big|\gamma_{n,k,l}(\tau)-& d_{n,k,l} e^{-\frac{n}{2}\tau}\Big|\\
    & + (1+\tau)^{-1}e^{\frac{m}{2}\tau}|\gamma_{m+1,j,l}(\tau)|\lesssim 1,
\end{split}
\end{align}
the focus is on $\gamma_{m+1,0,1}$, for some constant $d_{m+1}\in \mathbb{R}$,
\begin{align}
    \Big|\gamma_{m+1,0,1}(\tau)-d_{m+1}e^{-\frac{m-1}{2}\tau}\Big|\lesssim e^{-\frac{m}{2}\tau},
\end{align} 
and $\xi$ satisfies the estimates, for any $\delta_0>0$ there exists a $C_{\delta_0}>0$ such that
\begin{align}
    \sum_{k+|j|\leq 2}\Big\| 1_{\leq 8 \tau^{\frac{1}{2}+\frac{1}{20}}} \partial_{y}^{k}\nabla_{E}^{l}\xi(\cdot,\tau)\Big\|_{\mathcal{G}}\leq C_{\delta_0}e^{-\frac{m-\delta_0}{2}\tau},\\
    \sum_{k+|j|\leq 2}| 1_{\leq 8 \tau^{\frac{1}{2}+\frac{1}{20}}}\partial_{y}^{k}\nabla_{E}^{l}\xi(\cdot,\tau)|\leq  e^{-\sqrt{\tau}}.
\end{align} 
\end{itemize}
\end{theorem}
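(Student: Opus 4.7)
The plan is to prove parts (A)--(D) in order, since each rests on its predecessor. For (A), I would argue by contradiction following the sketch in the introduction around \eqref{eq:rescFunForm}: suppose $d_m\neq 0$ with $m$ odd, or $d_m<0$ with $m$ even. In either case one can pick $y_\ast$ with $|y_\ast|e^{-(m-2)\tau/(2m)}\approx (6/|d_m|)^{1/m}$ so that $6+d_m e^{-(m-2)\tau/2}H_m(y_\ast)$ equals a prescribed small $\epsilon_0>0$. The leading profile would then give $v(y_\ast,\omega,\tau)\approx\sqrt{\epsilon_0}$, and the derivative bounds in \eqref{eq:pointwisedm} would force the rescaled second fundamental form to diverge like $1/\sqrt{\epsilon_0}$, contradicting \eqref{eq:secFun}. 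Evaluating $v$ at such $y_\ast$ requires first extending $\Psi_{Q_{m-1},v}$ to the region \eqref{eq:region}, which is precisely the content of (B); so (A) is genuinely proved together with (B).

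For (B), the core task is to upgrade the $\mathcal{G}$-norm and weak pointwise bounds \eqref{eq:GLEtaM2}--\eqref{eq:GLEtaM} to the sharp decay $\ll e^{-5\sqrt{\tau}}$ on the enlarged domain $|y|\leq e^{(m-2)\tau/(2m)+8\sqrt{\tau}}$. The plan is to write $\zeta:=v^2-[6+d_m e^{-(m-2)\tau/2}H_m(y)]$, derive its evolution equation from that of $v^2$, project off the already-captured low modes, and run Duhamel in the weighted $L^\infty$ norms $\|\langle y\rangle^{-k}\cdot\|_\infty$ against the semigroup of $L$. The propagator estimate in Lemma \ref{LM:frequencyWise} converts the Gaussian-$L^2$ decay on the inner annulus $|y|\sim 8\tau^{1/2+1/20}$ into pointwise decay throughout \eqref{eq:region}. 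The nonlinear sources that deserve the most care are $v^{-1}|\partial_y Q_{m-1}|^2$ and $v^{-1}(\omega\cdot\partial_y Q_{m-1})^2$ from \eqref{eq:qmdnkl}, which create polynomial-in-$y$ contributions; the normal form choice of $Q_{m-1}$ is exactly what makes their residues small enough to fit under $e^{-5\sqrt{\tau}}$.

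Part (C) is then a change of variables. With $y=ze^{\tau/2}$ and $T-t=e^{-\tau}$ one has $u^2=(T-t)v^2\approx 6(T-t)+d_m z^m+(\text{lower order})$, which is strictly positive for $z\neq 0$ once $d_m>0$ and $m$ is even. The derivative bounds in \eqref{eq:pointwisedm} and the local smooth extension theory of \cite{EckerBook} then yield \eqref{eq:MCSmDe}, mean convexity in a fixed neighborhood of the singularity, and the isolation claim. The polynomial structure of $\tilde Q_m(z,t)$ follows from $\tilde Q_m=\sqrt{T-t}\,Q_m$ together with the expansion \eqref{eq:hermit}: each $e^{-(k-1)\tau/2}H_k(y)=\sum_j h_{k-2j,k}(T-t)^{j-1/2}z^{k-2j}$, and multiplying by $\sqrt{T-t}$ and regrouping by integer powers of $(T-t)$ produces the decomposition $\tilde Q_{m-1,1}(z)+(T-t)\tilde Q_{m-3,2}(z)+\cdots$.

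For (D), when $d_m=0$ I would perform a normal form transformation exactly analogous to \eqref{eq:2repara}: the $H_m f_{1,l}$ components of $v$ have leading parts $d_{m,1,l}e^{-(m-1)\tau/2}H_m(y)\omega_l$, which are absorbed by appending $e^{-(m-1)\tau/2}H_m(y)(a_{m,1},a_{m,2},a_{m,3},a_{m,4})^T$ to $Q_{m-1}$ to form $Q_m$. Rewriting $v$ in the new parametrization and repeating the ODE analysis of \eqref{eq:leadPart} produces the leading term $d_{m+1}e^{-(m-1)\tau/2}$ for $\gamma_{m+1,0,1}$, while the remaining $\gamma_{n,k,l}$ estimates follow by direct integration of the linearised equation with the improved nonlinear remainder, and the estimates on the new $\xi$ follow from (B). The main obstacle throughout is (B): controlling $\xi$ in weighted $L^\infty$ past the Gaussian-weighted region, where the Gaussian weight no longer helps, is entirely driven by Lemma \ref{LM:frequencyWise} together with careful bookkeeping of how each nonlinear source scales under $y=ze^{\tau/2}$; this is simultaneously the engine of the argument and its narrowest passage.
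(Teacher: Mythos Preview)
Your overall architecture matches the paper's, and your sketches for (C) and (D) are essentially what the paper does. But there is a genuine logical gap in (A), and your plan for (B) misses the two structural mechanisms that make the argument close.

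For (A): you cannot invoke (B), because (B) is stated and proved only under the hypotheses $d_m>0$ with $m$ even, or $d_m=0$. In the cases you want to exclude (odd $m$ with $d_m\neq 0$, or even $m$ with $d_m<0$) the profile $6+d_m e^{-(m-2)\tau/2}H_m(y)$ vanishes at some $y_0$ with $|y_0|\sim e^{(m-2)\tau/(2m)}$, so the potential $1/q_M$ in the linear operator blows up there and the region \eqref{eq:region} does not even support the parametrization. The paper treats this separately in Section~\ref{sec:ATHMsec}: it works on an \emph{asymmetric} region \eqref{def:YM3} that stops just short of the vanishing locus, replaces $1/q_M$ by a bounded surrogate $\tilde V$ as in \eqref{eq:tildeV1} so that the propagator estimates still apply, and only then runs the second-fundamental-form contradiction (Lemma~\ref{LM:contra}). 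This is a modest but real additional ingredient absent from your plan.

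For (B): two features are missing. First, a single weighted norm does not close. The $\eta$-linear terms $D_1,D_2$ of \eqref{eq:samD1D2} (from $J_4,J_5$, not the $J_7,J_8$ you flag, which are $\eta$-independent and go into the source $F$) contain $\partial_y^2\eta$ with prefactors that decay only after sacrificing a power of $\langle y\rangle$; this forces a whole hierarchy of norms $\mathcal{M}_{n,j}$, $(n,j)\in\Upsilon_m$, involving up to $2m+2$ $y$-derivatives with weight exponent $n=m+1-j/2$ (see \eqref{def:majorants}--\eqref{eq:major2} and the discussion after Proposition~\ref{prop:ZMajorRefor}). Second, the Duhamel argument via Lemma~\ref{LM:frequencyWise} is used only for the low $\omega$-frequencies $\eta_{k,l}$, $k\leq N$; the high-frequency tail $\tilde\eta=P_{\omega,N}\eta$ is handled instead by a maximum-principle argument (Section~\ref{sec:MaxTiEta}), using $\frac{(N+1)(N+3)}{6}\geq \frac{m}{2}$ to get the needed decay. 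These are packaged into the bootstrap inequality \eqref{eq:etaMsup}, closed in tandem with the local smoothing step of Proposition~\ref{prop:localSmo}.
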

The four parts of the Theorem will be proved in Sections \ref{sec:step1Third}, \ref{sec:BTHMSec}, \ref{sec:ATHMsec} and \ref{sec:ProofTHMsecC} respectively. We will prove Parts B and C in detail, since technically these are the most involved parts.

If $d_m=0$ for all $m\geq 3,$ then we will formulate a conjecture, and it implies that a fixed neighborhood of the origin will blowup at time $T$. 

We start with retrieving some useful information from Part C of Theorem \ref{THM:sec}. If $d_2=d_4=\cdots=d_{m+1}=0$, then in the region
\begin{align}
  \Big\{\ y\ \Big|\  |y|\leq e^{-\frac{m-2}{2m}\tau}\Big\}
\end{align}
we parameterize the rescaled MCF as
\begin{align}
    \Psi_{Q_m, v_m}=
\left[
\begin{array}{ccc}
y\\
Q_m(y,\tau)
\end{array}
\right]+v_m(y,\omega,\tau)\left[
\begin{array}{ccc}
-\partial_{y}Q_m(y,\tau)\cdot\omega\\
\omega
\end{array}
\right],
\end{align} for some $Q_m$ of the form, for some constants $a_{n,l},$
\begin{align}\label{eq:conQm}
    Q_{m}(y,\tau)=e^{\frac{1}{2}\tau}\displaystyle\sum_{n=2}^{m}e^{-\frac{n}{2}\tau}H_{k}(y)\Big(a_{n,1},\ a_{n,2},\ a_{n,3},\ a_{n,4}\Big)^{T}
\end{align} and some function $v_m$ of the form 
\begin{align}\label{eq:conVm}
    v_{m}(y,\omega,\tau)=\sqrt{6+\sum_{n=0}^m\sum_{k=0}^{N_m}\sum_l\alpha_{n,k,l}(\tau)e^{-\frac{n}{2}\tau} H_n(y)f_{k,l}(\omega)+\eta_{m}(y,\omega,\tau)},
\end{align} where $N_m$ is a large integer, and the functions satisfy the following estimates, for some $d_{n,k,l}\in \mathbb{R}$,
$$\alpha_{n,k,l}(\tau) =d_{n,k,l} +\mathcal{O}(e^{-\frac{1}{3}\tau});$$
moreover 
\begin{align*}
|\alpha_{0,k,l}(\tau)|+|\alpha_{1,k,l}(\tau)|\lesssim e^{-\frac{1}{3}\tau},
\end{align*} and the remainder satisfies the estimate
\begin{align}
\|\eta_{m}(\cdot,\tau)\|_{\mathcal{G}}\lesssim e^{-\frac{m-1}{2}\tau}.
\end{align}

As discussed in the introduction, the important information is that $e^{-\frac{n}{2}\tau}H_n(y)$, in \eqref{eq:conQm} and \eqref{eq:conVm}, are uniformly small in a region $|y|\leq \epsilon_{m} e^{\frac{1}{2}\tau}$, provided that $\epsilon_m$ is small enough. For the corresponding MCF, this region corresponds to $|z|=e^{-\frac{1}{2}\tau} |y|\leq \epsilon_{m}$. Also we pointed out, around \eqref{eq:qmdnkl}, that $\sum_{n=2}^{m}\sum_{k,l}e^{-\frac{n}{2}\tau} d_{n,k,l} y^n f_{k,l}  $ is generated by $\partial_{y}Q_{m}.$

Now we are ready to state our conjecture,
\begin{conjecture}
If $d_m=0$ for all $m\geq 3$ then there exists a $\epsilon_{\infty}>0$ such that when $|y|\leq \epsilon_{\infty}e^{\frac{\tau}{2}}$, the limits $\displaystyle\lim_{m\rightarrow \infty}e^{-\frac{1}{2}\tau}Q_m$ and $\displaystyle\lim_{m\rightarrow \infty}v_{m}$ exists and the limits are smooth.
\end{conjecture}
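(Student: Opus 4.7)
My plan is to pass to the slow variable $z = e^{-\tau/2} y$, in which each building block of $Q_m$ and $v_m$ becomes, to leading order, $\tau$-independent, and then extract from the induction in Theorem~\ref{THM:sec} quantitative bounds strong enough to force convergence of formal power series in $z$ on a fixed disk. Concretely, using \eqref{eq:hermit} one has $e^{-\tau/2}Q_m(y,\tau) = \sum_{n=2}^{m} z^n (a_{n,1},\ldots,a_{n,4})^T + \sum_{n,k}e^{-k\tau}z^{n-2k}h_{n-2k,n}(a_{n,1},\ldots)^T$, and the lower-order correction vanishes uniformly on $|z|\leq\epsilon$ as $\tau\to\infty$. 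The same transformation applied to the $d_{n,k,l}$-part of $v_m$ produces the formal series $\sqrt{6 + \sum_{n,k,l}d_{n,k,l}\,z^n f_{k,l}(\omega)}$. Thus the conjecture reduces to two questions: (i) the formal series $\sum_n a_{n,l}z^n$ and $\sum_{n,k,l}d_{n,k,l}z^n f_{k,l}(\omega)$ converge on some disk $|z|\leq \epsilon_\infty$; (ii) the $\eta_m$-remainders together with the dropped $e^{-k\tau}$ pieces are uniformly small on that disk as $m,\tau\to\infty$.

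The second question is essentially handled by the induction: the estimate $\|\eta_m\|_{\mathcal{G}}\lesssim e^{-(m-1)\tau/2}$, interpolated against the pointwise $\tau^{-3/10}$ control from \eqref{eq:GLEtaM} and the local smooth-extension machinery used elsewhere in the paper, converts $L^2$-smallness into $C^k$-smallness on any compact subset of $|z|\leq \epsilon_\infty$, and the $e^{-k\tau}$ corrections from $H_n$ decay manifestly. So the heart of the matter is (i): producing bounds of geometric type
\begin{equation*}
|a_{n,l}| + \sum_{k,l}|d_{n,k,l}| \,\leq\, C_\ast\,\rho^{-n}
\end{equation*}
for some $\rho>0$ independent of $n,m$; any such $\rho$ would serve as $\epsilon_\infty$.

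To attack (i) I would re-run the induction of Theorem~\ref{THM:sec} but track the dependence of the constants on the Hermite index $n$. The key equation \eqref{eq:qmdnkl} shows that each new $a_{m,l}$ (and each $d_{m,k,l}$) is produced, via \eqref{eq:leadPart}-type integrations in $\tau$, from a nonlinearity that is a polynomial in the previous $a_{n,l}, d_{n,k,l}$ and rational in $v$ (which is bounded away from zero). Rewriting this recursion in the $z$-variable turns the MCF nonlinearity into a genuinely $z$-analytic source, and the recursion for the coefficient sequence $\{a_{n,l},d_{n,k,l}\}$ is formally identical to the recursion one obtains for an analytic fixed point of that source. I would then try to dominate this recursion by a scalar majorant series and run a Cauchy--Kovalevskaya- or Nishida-type argument to conclude analyticity of the limit $v_\infty(z,\omega,\tau)$ in $z$ on $|z|\leq \epsilon_\infty$, for some $\epsilon_\infty>0$ depending on the MCF nonlinearity and on an a priori bound for $v_\infty$. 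Identifying $v_m$ with the truncation of this analytic solution up to an $e^{-m\tau/2}$ error would then yield both limits in \eqref{eq:PiInf} and their smoothness.

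The main obstacle is exactly the geometric bound $|a_{n,l}|+\sum_{k,l}|d_{n,k,l}|\lesssim \rho^{-n}$. The induction of Theorem~\ref{THM:sec} is designed to handle one Hermite mode at a time and, at each stage, tolerates constants that depend on $m$ (the window $8\tau^{1/2+1/20}$ and the truncation parameter $N_m$ both grow with $m$); it gives no $n$-dependent information. Controlling the $n$-dependence requires replacing the $\mathcal{G}$-norm estimates by an analytic / majorant estimate that uses the specific algebraic form of the rescaled MCF nonlinearity, the positivity of $v$, and the fact that $\partial_y Q_{m}$ only contributes through $|\partial_y Q_m|^2$ and $(\omega\cdot\partial_y Q_m)^2$ in \eqref{eq:qmdnkl}. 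This is the step that goes beyond what the induction in Theorem~\ref{THM:sec} supplies, and it is the reason the statement is offered as a conjecture rather than a theorem; the heat-equation analogue in Theorem~\ref{THM:val} (which ends with the homogeneous solution $v\equiv 1/\sqrt{2}$) makes the analogous analyticity plausible but does not imply it, since for MCF the limit $v_\infty$ need not be $y$-homogeneous, as the $\mathbb{S}^3_q\times\mathbb{S}_{\sqrt{2}}$ example shows.
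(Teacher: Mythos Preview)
The statement you are addressing is labelled \emph{Conjecture} in the paper and is left open; the paper offers no proof. It only records the heuristic that $e^{-\frac{1}{2}\tau}Q_{m-1}$ and the main part $\beta_m$ of $v_m$ are uniformly bounded functions of $z=e^{-\frac{1}{2}\tau}y$ on $|z|\leq\epsilon_m$, points to \eqref{eq:qmdnkl} as the mechanism by which $Q_m$ generates $\beta_m$, and cites consistency with Theorem~\ref{THM:val} for the nonlinear heat equation. There is therefore nothing in the paper to compare your argument against.

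Your proposal is a reasonable research outline and you correctly locate the real obstruction: the induction in Theorem~\ref{THM:sec} is mode-by-mode and carries $m$-dependent constants, so it yields no geometric bound $|a_{n,l}|+\sum_{k,l}|d_{n,k,l}|\lesssim\rho^{-n}$. One point where you are too optimistic is item~(ii). The $\mathcal G$-norm bound $\|\eta_m\|_{\mathcal G}\lesssim e^{-\frac{m-1}{2}\tau}$ carries the Gaussian weight $e^{-\frac14 y^2}$ and gives essentially no pointwise information on the target region $|y|\leq\epsilon_\infty e^{\tau/2}$; the pointwise and weighted-$L^\infty$ estimates actually proved (e.g.\ \eqref{eq:GLEtaM}, \eqref{eq:pointwisedm}) live only on $|y|\leq 8\tau^{\frac12+\frac1{20}}$ or $|y|\leq e^{\frac{m-2}{2m}\tau+8\sqrt\tau}$, both of which are, for each fixed $m$, vastly smaller than $\epsilon_\infty e^{\tau/2}$. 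Extending uniform pointwise control of $\eta_m$ to $|z|\leq\epsilon_\infty$ is thus not ``essentially handled'' by the existing induction; it is part of the open problem, intertwined with~(i).
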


If the conjecture is true, then the rescaled MCF take the form
$
    \Psi_{Q_{\infty},v_{\infty}},
$ and
the corresponding MCF is $\Phi_{\Pi_{\infty},u_{\infty}}$ defined as
\begin{align}
    \Phi_{\Pi_{\infty}, u_{\infty}}:=\sqrt{T-t} \Psi_{Q_{\infty},v_{\infty}}.
\end{align} It will blowup in the set $|z|\leq \epsilon_{\infty}$ at the time $T$.


\section{Proof of Theorem \ref{THM:lIS1}}\label{sec:MainTHM1}
We start with deriving some estimates for $v$ from \cite{ColdingMiniUniqueness}, in Lemma \ref{LM:ColdMini} below.
To measure the size of the controlled neighborhood of $y=0$, we define a function $ R_1:\ \mathbb{R}^{+}\rightarrow \mathbb{R}^{+}$ as
\begin{align}
 R_{1}(\tau):=\sqrt{\frac{28}{5}}\sqrt{\ln\tau}.\label{eq:defR1T}
\end{align}

The result is derived from \cite{ColdingMiniUniqueness}. The derivation is the same to
that in our previous paper \cite{GZ2017}.
\begin{lemma}\label{LM:ColdMini}
There exists a constant $\tau_0$ such that if $\tau\geq \tau_0$ and $|y|\leq R_1(\tau)$, $v$ satisfies the estimates
\begin{align}
\Big|v-\sqrt{2}\Big|,\ \Big|\partial_y v\Big|,\ \Big|\nabla_{E} v\Big|\leq \tau^{-\frac{18}{25}} e^{\frac{1}{8}|y|^2},\label{eq:cm1}
\end{align} and there exists a constant $C$ such that if $k\in (\mathbb{Z}^{+})^3$ and $l\in \mathbb{Z}^{+} $ satisfy $2\leq |k|+l\leq 10,$ then
\begin{align}
|\partial_{y}^{l}\nabla_{E}^{k}v|\leq C,\label{eq:cm2}
\end{align}
And there exists a constant $\beta>0$ such that
\begin{align}
\Big\|\Big(v(\cdot,\tau)-\sqrt{6}\Big)1_{|y|\leq R_1}\Big\|_{\infty}+ \sum_{|k|+l\leq 4}\Big\|1_{|y|\leq R_1}\nabla_{E}^{k}\partial_{y}^{l}v(\cdot,\tau)\Big\|_{\infty}\leq \tau^{-\beta}.\label{eq:IniWeighted}
\end{align}

\end{lemma}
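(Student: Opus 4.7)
The plan is to bootstrap from the Colding-Minicozzi uniqueness theorem, which supplies a weighted $L^2$ rate of convergence of $v$ to $\sqrt{6}$, using parabolic smoothness for the rescaled MCF to upgrade to the pointwise statements (\ref{eq:cm1})-(\ref{eq:IniWeighted}). Because the radius $R_1(\tau)=\sqrt{28/5}\sqrt{\ln\tau}$ grows only logarithmically, the region in question is essentially compact on each scale, so the standard machinery of parabolic regularity is available.

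First I would invoke \cite{ColdingMiniUniqueness} to extract a polynomial decay $\|(v(\cdot,\tau)-\sqrt{6})\,1_{|y|\leq R_1}\|_{\mathcal{G}}\lesssim \tau^{-\beta_0}$ for some $\beta_0>0$; this is the base rate coming out of their entropy-monotonicity / integrability analysis. Next, since the rescaled MCF is smoothly converging on compact sets to the limit cylinder $\mathbb{R}\times \mathbb{S}^3_{\sqrt{6}}$, Ecker-Huisken type interior regularity supplies uniform $C^M$ bounds $\sum_{|k|+l\leq M}|\partial_y^l \nabla_E^k v|\leq C_M$ on the slowly-growing region $\{|y|\leq R_1(\tau)\}$ for any fixed $M$; taking $M=10$ gives (\ref{eq:cm2}).

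Then I would interpolate between the $\mathcal{G}$-weighted $L^2$-decay and the uniform $C^M$-bounds. Converting the Gaussian weight $e^{-|y|^2/4}$ to pointwise control via Gagliardo-Nirenberg or Sobolev embedding in the slab $\{|y|\leq R_1\}$ produces an $L^\infty$ bound with a prefactor of $e^{|y|^2/8}$, losing only a fractional power of $\tau^{-\beta_0}$ determined by the dimension and the number of derivatives used. The parameters are tuned so that $R_1(\tau)=\sqrt{28/5}\sqrt{\ln\tau}$ makes $e^{R_1^2/8}=\tau^{7/10}$, which combined with the chosen $\beta_0$ leaves precisely the rate $\tau^{-18/25}$ in (\ref{eq:cm1}) (and, after a further logarithmic loss from restricting $|y|\leq R_1$ rather than going all the way out, the slower rate $\tau^{-\beta}$ in (\ref{eq:IniWeighted})). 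Applying the same interpolation to $\partial_y v$ and $\nabla_E v$ gives the corresponding derivative bounds in (\ref{eq:cm1}) and (\ref{eq:IniWeighted}).

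The substance of the argument is not new: essentially the same derivation was carried out in \cite{GZ2017}. The main thing to watch is the arithmetic coupling between the three parameters $\beta_0$ (input rate from Colding-Minicozzi), the coefficient $\sqrt{28/5}$ in $R_1$, and the number $10$ of derivatives controlled; these are chosen simultaneously so that the Gaussian absorption exactly produces the exponents $\tfrac{18}{25}$ and $\beta$ stated. Once this bookkeeping is in place, the proof reduces to a direct application of Sobolev embedding inside the slab $\{|y|\leq R_1(\tau)\}$, with no genuinely new analytic input beyond what is already recorded in \cite{ColdingMiniUniqueness,GZ2017}.
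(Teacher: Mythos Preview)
Your proposal is correct and matches the paper's own treatment: the paper does not actually prove this lemma but simply states that it is derived from \cite{ColdingMiniUniqueness} with the derivation identical to that in \cite{GZ2017}, which is precisely the route you outline (Colding--Minicozzi weighted $L^2$ rate, parabolic regularity for uniform higher-derivative bounds, then interpolation/Sobolev embedding on the logarithmically growing slab). Your sketch is in fact more detailed than what the paper records, including the arithmetic check $e^{R_1^2/8}=\tau^{7/10}$ that explains the choice of the constant $\sqrt{28/5}$.
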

Here $1_{|y|\leq R_1}$ is the standard Heaviside function taking value $1$ when $|y|\leq R_1(\tau),$ and $0$ otherwise.

Now we expand the considered region and find a finer description, identically to what we did in our previous papers \cite{GZ2017,GZ2018}. The result is following:

\begin{proposition}\label{prop:mostgeneric}

Suppose the conditions before \eqref{eq:origPara10} hold. We parametrize the rescaled MCF as $\Psi_{0,v}$, which is defined in \eqref{eq:norPara}, for some function $v>0$ in the region 
\begin{align}
\Big\{y\ \Big|\ |y| \leq 10 \tau^{\frac{1}{2}+\frac{1}{20}}\Big\}\label{eq:IniRegion}
\end{align} 
$v$ is of the form
\begin{align}
v(y,\omega,\tau)=\sqrt{6+\sum_{n=0}^2\alpha_n(\tau) H_n(y)+\sum_{k=0,1}\sum_{l=1}^{4}\alpha_{k,l}(\tau)H_{k}(y)\omega_l +\eta(y,\ \omega,\tau)}.
\end{align} 
Then one and only one of the following two possibilities must hold.
\begin{itemize}
\item[(1)] For the first possibility, the following estimates hold,
\begin{align}
          \begin{split}
    \sum_{n=0,1}|\alpha_{n}(\tau)|+|\alpha_2(\tau)-3\tau^{-1}|&+\sum_{k=0,1}\sum_{l=1}^{4}|\alpha_{n,l}(\tau)|\\
    &+\sum_{j+|i|\leq 2}\Big\| 1_{\leq 10 \tau^{\frac{1}{2}+\frac{1}{20}}}\partial_{y}^{j}\nabla_{E}^{i}\eta(\cdot,\tau) \Big\|_{\mathcal{G}}\lesssim \tau^{-2},
           \end{split}
\end{align} 
and
\begin{align}
        \sum_{k+|l|\leq 2} | \partial_{y}^k\nabla_{E}^{l}
         \eta(\cdot,\tau) |\lesssim& \tau^{-\frac{3}{10}},\label{eq:pointWiseR}
\end{align}
Here the singularity is isolated and a small neighborhood of singularity of MCF is mean convex, and the estimates in \eqref{eq:epsiMT} and \eqref{eq:deriUMT} hold.

\item[(2)] For the second possibility the function $\alpha_2$ decays more rapidly,
\begin{align}\label{eq:secab}
   \begin{split}
    \sum_{n=0}^2|\alpha_{n}(\tau)|+\sum_{k=0,1}\sum_{l=1}^{4}|\alpha_{k,l}(\tau)|+\sum_{j+|i|\leq 2}\Big\| 1_{\leq 10 \tau^{\frac{1}{2}+\frac{1}{20}}}\partial_{y}^{j}\nabla_{E}^{i}\eta(\cdot,\tau) \Big\|_{\mathcal{G}}\lesssim \tau^{-2},
    \end{split}
\end{align}
and the estimates in \eqref{eq:pointWiseR} still hold.
\end{itemize}
\end{proposition}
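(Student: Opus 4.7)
The plan is to upgrade the small-region estimates of Lemma \ref{LM:ColdMini} (valid only on $|y|\leq R_1(\tau)=\sqrt{28/5}\sqrt{\ln\tau}$) to the much larger region $|y|\leq 10\tau^{1/2+1/20}$, and then on top of this control run the dichotomy for $b(\tau):=\alpha_{2,0,1}(\tau)$ already sketched in the introduction. First I would record the spectral expansion $v^2=6+\sum_{n,k,l}\alpha_{n,k,l}(\tau)H_n(y)f_{k,l}(\omega)+\eta$ and derive, from the rescaled MCF equation, the ODE system \eqref{eq:linearRate} for the coefficients together with the PDE $\partial_\tau\eta=-L\eta+(\text{mode interactions})+\mathrm{NL}$. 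The coefficients split into three groups: (i) the strictly linearly-decaying modes $(n,k)\in\{(0,0),(0,1),(1,0)\}$, which are recovered from their tail integrals and thus inherit the decay rate of the nonlinearity; (ii) the $(n,k)=(1,1)$ modes, which are killed by an optimal choice of tilt and spatial center (translating $y$ and $\omega$); and (iii) the marginal mode $b(\tau)=\alpha_{2,0,1}$, obeying $\dot b=-\tfrac{1}{3}b^2+\mathrm{NL}_{2,0,0}$.

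The main technical work is the bootstrap extending the domain from $R_1(\tau)$ out to $10\tau^{1/2+1/20}$. I would introduce cutoffs $1_{\leq C\tau^{1/2+1/20}}$ at several nested scales $C$, assume a priori the target bounds $\|1_{\leq 10\tau^{1/2+1/20}}\eta\|_{\mathcal{G}}\lesssim\tau^{-2}$ together with the pointwise weighted bound $\|\langle y\rangle^{-m}\eta\|_\infty\lesssim\tau^{-3/10}$ for some large $m$, and improve both constants by combining (a) the propagator estimates of Lemma \ref{LM:frequencyWise}, which provide both $\mathcal{G}\to\mathcal{G}$ smoothing and smoothing from $\mathcal{G}$ into the weighted $\|\langle y\rangle^{-m}\cdot\|_\infty$ norm for $e^{-\tau L}$, and (b) the quadratic smallness of $\mathrm{NL}(\eta)$ in $\eta$. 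The key point is that the $\mathcal{G}$-norm alone is effectively blind at $|y|\sim\tau^{1/2+1/20}$ because the Gaussian weight $e^{-|y|^2/4}$ is negligible there; it is the weighted $L^\infty$ bound, obtained via the propagator, that actually controls $v$ out to the edge of the region. The initial data for the bootstrap is supplied by \eqref{eq:cm2}--\eqref{eq:IniWeighted} at $|y|\leq R_1(\tau)$, cut off smoothly and glued to $v\equiv\sqrt{6}$ just outside.

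With the large-region control in hand, the dichotomy for $b$ follows from integrating $\dot b=-\tfrac{1}{3}b^2+\mathrm{NL}_{2,0,0}$ with $b\to 0$. Either $b(\tau)=3\tau^{-1}(1+o(1))$, in which case writing $b=3\tau^{-1}+c$ and analyzing the linear-plus-small-quadratic ODE $\dot c=-2c/\tau-c^2/3+\mathrm{NL}$ with source $O(\tau^{-3})$ yields $|c|\lesssim\tau^{-2}$ via variation of parameters, giving the first estimate in \eqref{eq:firstB}; or $b$ decays strictly faster than $\tau^{-1}$, in which case $-\tfrac{1}{3}b^2=o(\tau^{-2})$ and a reinsertion argument feeding back into $\dot b$ forces $|b|\lesssim\tau^{-2}$. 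In both cases the remaining low-mode coefficients and $\eta$ are upgraded to $\tau^{-2}$ by a second pass through the linearized system, now treating $3\tau^{-1}H_2(y)$ (respectively $0$) as part of the main profile and exploiting the strictly positive linear rates $\frac{n-2}{2}+\frac{k(k+2)}{6}>0$ of all other low modes.

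The pointwise estimate \eqref{eq:pointWiseR} then follows by applying the weighted-$L^\infty$ half of Lemma \ref{LM:frequencyWise} to the upgraded $\mathcal{G}$-bound and to the higher modes. For Case (1), the MCF consequences \eqref{eq:epsiMT}--\eqref{eq:deriUMT} are obtained as in the discussion following \eqref{eq:profile}: the profile $v\sim\sqrt{6+3\tau^{-1}y^2}$ shows that at each small $z_0\neq 0$ there is a rescaled time $\tau_1$ with $u(z_0,\omega,t(\tau_1))/u(0,\omega,t(\tau_1))\gg 1$, after which standard local smooth extension of MCF (cf.\ \cite{EckerBook}) propagates $u>0$ and the derivative bounds forward to $t=T$, yielding both mean convexity and isolation of the singularity. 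The main obstacle I anticipate is the bootstrap in the second paragraph: propagating pointwise weighted $L^\infty$ control all the way to the edge $|y|\sim\tau^{1/2+1/20}$, where the Gaussian-weighted $\mathcal{G}$-norm is effectively blind, is precisely the technical innovation the author flags, and closing it requires the propagator estimates to be sharp enough to beat the polynomial growth $\langle y\rangle^m$ against the quadratic smallness of $\mathrm{NL}(\eta)$.
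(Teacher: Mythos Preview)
Your proposal is essentially correct and matches the approach the paper points to; note that the paper itself skips this proof entirely, referring to \cite{GZ2017,GZ2018} and to the techniques developed later in Section~\ref{sec:BTHMSec} (the bootstrap with weighted $L^\infty$ majorants $\mathcal{M}_{n,j}$ and the propagator estimates of Lemma~\ref{LM:frequencyWise}), so there is no in-paper proof to compare against line by line.

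One small correction: you write that the $(n,k)=(1,1)$ modes are ``killed by an optimal choice of tilt and spatial center.'' In this proposition the parametrization is $\Psi_{0,v}$ with no normal form, and the $\alpha_{k,l}$ coefficients are retained in the decomposition and estimated, not removed. The optimal-coordinate device does appear in the paper (Appendix~\ref{sec:exponential}, Subsection~\ref{sub:oddOmega}) but in a subtler form: one passes to an $X$-dependent coordinate system where $\alpha_{k,l}^{(X)}(X)=0$, proves exponential decay of the $\omega$-dependent components on $[X_0,X]$ uniformly in $X$, and then lets $X\to\infty$. This is a technical refinement of what you wrote rather than a different idea, but it is worth getting right since the naive ``fix the tilt once and for all'' reading does not close the estimate.
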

Here we choose to skip the proof of this proposition by the following reasons:
\begin{itemize}
    \item the proof will be very similar to those in \cite{GZ2017, GZ2018};
    \item more importantly, all the needed techniques will be used in the present paper, for example, in Section \ref{sec:BTHMSec} below, where we will prove better estimates in a much larger region.
\end{itemize}

Next we prove the second part of Theorem \ref{THM:lIS1}. 
We will prove the desired results in two steps. 
In the first step we prepare for the normal form transformation by studying the direction $H_2\omega_l$, or the functions $\alpha_{2,1,l}$, $l=1,2,3,4,$ in Proposition \ref{prop:step2} below. In the second step, which is Proposition \ref{prop:afterNorForm}, we preform a normal form transformation to remove the main part of $\alpha_{2,1,l}H_2\omega_l.$

In the next result we are interested in the region 
\begin{align}
\Big\{y\ \Big| \ |y|\leq (1+\epsilon)R(\tau) \Big\}
\end{align} where $R$ is a function defined as
\begin{align}
R(\tau):= 8\tau^{\frac{1}{2}+\frac{1}{20}},\label{def:R}
\end{align} and $\epsilon>0$ is a fixed small constant to appear in \eqref{eq:defChi3} below. Here the considered region is smaller than 
that in \eqref{eq:IniRegion}, the reason is that, we need to change the coordinate in the proof, thus have to choose a smaller region to accommodate this.

To limit our consideration to the desired set, we need a cutoff function $\chi_{R}$, defined as
\begin{align}
    \chi_{R}(y)=\chi(\frac{y}{R}).\label{eq:chiRy}
\end{align}
Here $\chi$ is an even cutoff function in $C^{29,1}$ such that for some small $\epsilon>0,$
\begin{align}\label{eq:defChi3}
\chi(z)=\chi(|z|)=\ \Big[
\begin{array}{lll}
1,\ \text{if}\ |z|\leq 1,\\
0,\ \text{if}\ |z|\geq 1+\epsilon.
\end{array}
\end{align}


The main result is following:
\begin{proposition}\label{prop:step2}
For the second case of Proposition \ref{prop:exponential}, there exist functions $\alpha_{n,k,l}$ such that
\begin{align}
v(y,\omega,\tau)=\sqrt{6+\sum_{n=0}^3 \sum_{k=0}^{N} \sum_{l} \alpha_{n,k,l}(\tau)H_n(y)f_{k,l}(\omega)+\eta(y,\omega,\tau)}
\end{align} where $N$ is a large integer, and $\chi_{R}\eta$ satisfies the orthogonality conditions
\begin{align}
\chi_{R}\eta\perp_{\mathcal{G}} H_{n} f_{k,l},\ n=0,1,2,3; \ k=0,\cdots, N,
\end{align}the focus is on $\alpha_{2,1,l}:$ for some real constants $d_{2,l}$,
\begin{align}
\alpha_{2,1,l}(\tau)=&d_{2,l}e^{-\frac{1}{2}\tau}+\mathcal{O}(e^{-\tau}),\ l=1,2,3,4,\label{eq:d21Order}
\end{align}
and for any $k$ and $l$, and for any $n\not=1$ and $j\not=0$, 
\begin{align}
\begin{split}\label{eq:d3d}
e^{\frac{1}{3}\tau}|\alpha_{0,k,l}(\tau)|+e^{\frac{5}{6}\tau}|\alpha_{1,k,l}(\tau)|+e^{\tau}|\alpha_{2,n,l}(\tau)|+e^{\frac{1}{2}\tau}|\alpha_{3,0,1}(\tau)|+
    (1+\tau)^{-1} e^{\tau}|\alpha_{3,j,l}| \lesssim & 1,
\end{split}
\end{align}and for any $\delta_0>0$ there exists some constant $C_{\delta_0}$such that
\begin{align}
 \sum_{k+|l|\leq 2}\Big\|\partial_{y}^k\nabla_{E}^l\chi_{R}\eta(\cdot,\tau) \Big\|_{\mathcal{G}}\leq & C_{\delta_0} e^{-(1-\delta_0)\tau},\label{eq:RetaSharp}\\
\Big|\partial_{y}^k\nabla_{E}^l\chi_{R}\eta(\cdot,\tau)\Big|\lesssim \tau^{-\frac{3}{10}} .\label{eq:etaPoin34}
\end{align}

\end{proposition}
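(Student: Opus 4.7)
The plan is to upgrade the rough exponential decay provided by the second case of Proposition \ref{prop:exponential} to the modulated asymptotics stated here, by projecting $v^2-6$ onto the finite-dimensional spectral subspace spanned by $\{H_n(y)f_{k,l}(\omega):0\le n\le 3,\ 0\le k\le N\}$, defining the coefficients $\alpha_{n,k,l}(\tau)$ via the orthogonality condition in $\langle\cdot,\cdot\rangle_{\mathcal{G}}$ localized by $\chi_R$, and analyzing the resulting ODE system mode by mode. Concretely, I would write
\begin{align*}
v^2-6=\sum_{n=0}^{3}\sum_{k=0}^{N}\sum_{l}\alpha_{n,k,l}(\tau)\,H_n(y)f_{k,l}(\omega)+\eta(y,\omega,\tau),
\end{align*}
imposing $\chi_R\eta\perp_{\mathcal{G}}H_nf_{k,l}$. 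Differentiating the orthogonality identities and using the evolution $\partial_\tau(v^2-6)=-L(v^2-6)+\text{NL}$ yields
\begin{align*}
\frac{d}{d\tau}\alpha_{n,k,l}+\Bigl(\tfrac{n-2}{2}+\tfrac{k(k+2)}{6}\Bigr)\alpha_{n,k,l}=\text{NL}_{n,k,l}(\tau)+\text{Cut}_{n,k,l}(\tau),
\end{align*}
where $\text{Cut}_{n,k,l}$ is the commutator residue from $\partial_\tau\chi_R$ and $[L,\chi_R]$, supported on $|y|\ge R(\tau)\sim\tau^{1/2+1/20}$ and hence super-polynomially small thanks to the Gaussian weight in $\langle\cdot,\cdot\rangle_{\mathcal{G}}$.

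Next, I would analyze each mode according to the sign of its linear rate. For modes with strictly positive rate other than $(2,1)$, integrate forward from the initial data; since $\text{NL}$ is quadratic in $(\alpha_{\cdot},\eta)$ and Proposition \ref{prop:exponential} supplies a rough bound $\sum|\alpha_{n,k,l}|+\|\chi_R\eta\|_{\mathcal{G}}\lesssim e^{-\mu\tau}$ with $\mu\ge 1/2$, one obtains the exponential estimates in \eqref{eq:d3d} by direct comparison. For the modes $(n,k)\in\{(0,0),(0,1),(1,0),(1,1)\}$ with nonpositive rate, the construction around \eqref{eq:leadPart} applies: using $\lim_{\tau\to\infty}\alpha_{n,k,l}(\tau)=0$, integrate from $+\infty$ to convert the nonpositive rate into a pure Duhamel integral against the (already small) nonlinearity. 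The distinguished case is $(n,k)=(2,1)$, whose linear rate equals $+\tfrac{1}{2}$, identical to the decay rate coming from the forcing; here I apply variation of constants in the form
\begin{align*}
\alpha_{2,1,l}(\tau)=e^{-\tau/2}\Bigl[\alpha_{2,1,l}(0)+\int_{0}^{\infty}e^{\sigma/2}\bigl(\text{NL}_{2,1,l}+\text{Cut}_{2,1,l}\bigr)(\sigma)\,d\sigma\Bigr]-e^{-\tau/2}\!\int_{\tau}^{\infty}\!e^{\sigma/2}\bigl(\text{NL}_{2,1,l}+\text{Cut}_{2,1,l}\bigr)(\sigma)\,d\sigma,
\end{align*}
and observe that the bound $|\text{NL}_{2,1,l}(\sigma)|\lesssim e^{-\sigma}$ makes the improper integral absolutely convergent; the bracketed expression defines $d_{2,l}$, and the tail integral is bounded by $C\int_\tau^\infty e^{-\sigma/2}\,d\sigma=O(e^{-\tau/2})$, so the overall remainder is $O(e^{-\tau})$, which is \eqref{eq:d21Order}.

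Finally, I would control the remainder $\chi_R\eta$ using the propagator/frequency-wise estimate of Lemma \ref{LM:frequencyWise}. Because $\chi_R\eta$ lies in the orthogonal complement of the spectral subspace used above, the smallest real part in the spectrum of $L$ restricted to this complement is $\ge 1$ (corresponding to $H_4,\ H_2f_{2,\cdot},\ H_0 f_{\sqrt{6},\cdot}$, etc.), so Duhamel iteration yields $\|\chi_R\eta\|_{\mathcal{G}}\le C_{\delta_0}e^{-(1-\delta_0)\tau}$; the $\delta_0$ slack absorbs the slowly growing loss from the cutoff $\chi_R$ and from the approximate-projection error produced by $[L,\chi_R]$. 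Differentiating the equation and applying the propagator once more yields the derivative version \eqref{eq:RetaSharp}, and the pointwise bound \eqref{eq:etaPoin34} then follows by a Sobolev embedding on $|y|\le R(\tau)$ combined with the a priori higher-derivative bounds of Lemma \ref{LM:ColdMini}. The principal obstacle is making the propagator estimate sharp enough to deliver the rate $1-\delta_0$ uniformly on the weighted cutoff: one has to carefully track how the commutator $[L,\chi_R]$ and the time-dependence $\partial_\tau\chi_R$ interact with the orthogonality conditions, which is precisely what forces the $\delta_0$ slack and constitutes the technical heart of the proof.
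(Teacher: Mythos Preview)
Your overall architecture matches the paper's: decompose $v^2-6$, impose $\chi_R$-orthogonality, derive the ODE system \eqref{eq:diffAlpha100}, integrate forward or backward according to the sign of the rate, and extract $d_{2,l}$ by the splitting trick you wrote. That part is fine.

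There is, however, a real gap. You assert that Proposition~\ref{prop:exponential} supplies a bound with $\mu\ge\tfrac12$, and then immediately use $|\mathrm{NL}_{2,1,l}|\lesssim e^{-\tau}$. But Proposition~\ref{prop:exponential} only gives the rate $\tfrac{3}{10}$; the quadratic nonlinearity therefore decays at first only like $e^{-\frac{3}{5}\tau}$, which is \emph{not} enough to produce the $e^{-\tau}$ bounds in \eqref{eq:d3d} or the $\mathcal{O}(e^{-\tau})$ remainder in \eqref{eq:d21Order} in one step. The paper closes this by an explicit bootstrap: from $|\mathrm{NL}|\lesssim e^{-\frac35\tau}$ it first obtains the sharp rates $e^{-\frac13\tau}$ for $\alpha_{0,2,l}$, $e^{-\frac12\tau}$ for $\alpha_{3,0,1}$ and $\alpha_{2,1,l}$, and $e^{-\frac35\tau}$ for the rest (see \eqref{eq:02Sharp}--\eqref{eq:preNKL}); it also invokes Lemma~\ref{LM:3mPrelim} to show that the high-$k$ modes decay arbitrarily fast, which is what lets one take $N$ large. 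Only after feeding these intermediate rates back into $G_2+SN$ does one reach $|\mathrm{NL}_{2,1,l}|\lesssim e^{-\tau}$ and $\|\chi_R\eta\|_{\mathcal G}\le C_{\delta_0}e^{-(1-\delta_0)\tau}$. Your argument skips this iteration entirely.

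Two smaller points. First, the paper controls $\|\chi_R\eta\|_{\mathcal G}$ by the energy/Lyapunov functional $G_{n,m}$ and the differential inequality \eqref{eq:3gnm}, not by Lemma~\ref{LM:frequencyWise}; that lemma is stated for weighted $L^\infty$ norms and is used for the pointwise analysis in Section~\ref{sec:BTHMSec}, not for $\mathcal G$-norms here. Second, your route to the pointwise bound \eqref{eq:etaPoin34} via Sobolev embedding plus Lemma~\ref{LM:ColdMini} does not work as written: Lemma~\ref{LM:ColdMini} controls derivatives only on $|y|\le R_1(\tau)\sim\sqrt{\ln\tau}$, not on $|y|\le R(\tau)\sim\tau^{\frac12+\frac1{20}}$. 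In the paper this bound is inherited from the weighted $L^\infty$ machinery of Proposition~\ref{prop:mostgeneric} (see \eqref{eq:pointWiseR}), not from a local Sobolev argument.
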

The proposition will be proved in Section \ref{sec:PropStep2}.

Next we make a normal form transformation to remove the part $\sum_{l}d_{2,l} e^{-\frac{\tau}{2}} H_2 \omega_l$ and then study the new rescaled MCF.
The result is following: recall the definition of $\Psi_{Q_{N},v}$ defined from \eqref{eq:norPara},
\begin{proposition}\label{prop:afterNorForm}
There exist real constants $a_{k}$, $k=1,2,3,4,$ such that if we parametrize the rescaled MCF as $\Psi_{Q_2,v}$,
with $Q_2$ defined as
\begin{align}
Q_2(y,\tau)=e^{-\frac{1}{2}\tau} H_2(y) \Big(a_1,a_2,a_3,a_4\Big), \label{eq:Q2normal}
\end{align} 
then $v$ can be decomposed into the form
\begin{align}
v(y,\omega,\tau)=\sqrt{6+\sum_{n=0}^{3}\sum_{k=0}^{N} \sum_{l}\gamma_{n,k,l}(\tau)H_{n}(y)f_{k,l}(\omega)  +\xi(y,\ \omega,\tau)},
\end{align} where $N$ is a large integer, and $\chi_{R}\xi$ satisfies the following orthogonality conditions 
\begin{align}
\chi_{R}\xi\perp_{\mathcal{G}} H_n f_{k,l},\ n=0,1,2,3; \ k=0,\cdots,N,\label{eq:xiortho}
\end{align} the focus is on $\alpha_{3,0,1}$, for some constant $d_3\in \mathbb{R}$,
\begin{align}
     \Big|\gamma_{3,0,1}(\tau)- d_3 e^{-\frac{1}{2}\tau} \Big| \lesssim e^{-\tau}
\end{align}
and for any $k$ and $l$, for any $j\geq 1$, there exist constants $d_{2,k,l}$ such that
\begin{align}
    e^{\frac{1}{3}\tau}|\gamma_{0,k,l}(\tau)|+ e^{\frac{5}{6}\tau}|\gamma_{1,k,l}(\tau)|+e^{\frac{4}{3}\tau}\Big|\gamma_{2,k,l}(\tau)-d_{2,k,l}(\tau)e^{-\tau}\Big|   +(1+\tau)^{-1} e^{\tau}\Big|\gamma_{3,j,l}(\tau)\Big| \lesssim 1, \label{eq:tiAlpha012}
    \end{align}
and the remainder $\xi$ enjoys the same estimates to $\eta$ in \eqref{eq:RetaSharp} and \eqref{eq:etaPoin34}: for any $\delta_0>0,$
\begin{align}
 \sum_{k+|l|\leq 2}\Big\|\partial_{y}^k\nabla_{E}^l\chi_{R}\xi(\cdot,\tau) \Big\|_{\mathcal{G}}\leq & C_{\delta_0} e^{-(1-\delta_0)\tau},\label{eq:TetaSharp}\\
\Big|\partial_{y}^k\nabla_{E}^l\chi_{R}\xi(\cdot,\tau)\Big|\lesssim \tau^{-\frac{3}{10}} .\label{eq:TetaPoin34}
\end{align}
\end{proposition}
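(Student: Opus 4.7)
\textbf{Proof proposal for Proposition \ref{prop:afterNorForm}.} The plan is to pass from the parametrization $\Psi_{0,v}$ studied in Proposition \ref{prop:step2} to $\Psi_{Q_2,\tilde v}$ by a change of the base ``cylinder'' that absorbs the slowly-decaying modes $H_2(y)\omega_l$. Writing the geometric identity $\Psi_{0,v}(y,\omega,\tau)=\Psi_{Q_2,\tilde v}(y',\omega',\tau)$ componentwise gives
\begin{align*}
y=y'-\tilde v(y',\omega',\tau)\,\partial_{y'}Q_2(y',\tau)\cdot\omega',\qquad
v(y,\omega,\tau)\,\omega = Q_2(y',\tau)+\tilde v(y',\omega',\tau)\,\omega'.
\end{align*}
Since $|Q_2|,|\partial_{y'}Q_2|\lesssim e^{-\tau/2}\langle y\rangle^2$ is small on the region $|y|\le (1+\epsilon)R(\tau)$, the implicit function theorem gives a near-identity diffeomorphism $(y,\omega)\mapsto(y',\omega')$, and $\tilde v^2=|v\omega-Q_2|^2=v^2-2v\,\omega\cdot Q_2+|Q_2|^2$. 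With $Q_2=e^{-\tau/2}H_2(y)(a_1,a_2,a_3,a_4)^T$ and $v=\sqrt{6+\mathcal{O}(\tau^{-1}\langle y\rangle^2)}$, this expansion yields, to leading order on $v^2-6$,
\begin{align*}
\tilde v^2-6 \;=\; (v^2-6)\;-\;2\sqrt{6}\,e^{-\tau/2}H_2(y)\sum_{l=1}^4 a_l\,\omega_l \;+\;\text{lower order terms}.
\end{align*}
Choosing $a_l:=d_{2,l}/(2\sqrt{6})$ (with $d_{2,l}$ from \eqref{eq:d21Order}) cancels precisely the main part $d_{2,l}e^{-\tau/2}H_2\omega_l$, so the coefficient of $H_2\omega_l$ in the new decomposition becomes $\mathcal{O}(e^{-\tau})$. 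Of course, $|Q_2|^2$ contributes $e^{-\tau}H_2(y)^2\sum a_l^2$ on the $H_{2k}f_{0,1}$ modes, which is consistent with the stated bound $e^{-\tau}$ on $\gamma_{2,k,l}-d_{2,k,l}e^{-\tau}$.

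Once $a_l$ is fixed, I would derive the PDE for $\tilde v$ from the MCF equation in the new coordinates. Writing
$$\tilde v(y,\omega,\tau)=\sqrt{6+\sum_{n=0}^{3}\sum_{k,l}\gamma_{n,k,l}(\tau)H_n(y)f_{k,l}(\omega)+\xi(y,\omega,\tau)},$$
with $\chi_R\xi$ orthogonal (in the $\mathcal{G}$-inner product) to $H_n f_{k,l}$ for $n=0,1,2,3$ and $k\le N$, I project the $\tilde v$-equation onto each eigenspace to get ODEs of the form
$$\Big(\frac{d}{d\tau}+\tfrac{n-2}{2}+\tfrac{k(k+2)}{6}\Big)\gamma_{n,k,l}=\mathrm{NL}_{n,k,l}(\gamma,\xi).$$
The nonlinearities $\mathrm{NL}_{n,k,l}$ inherit the same structure as before; the only new ingredients are the polynomial driving terms produced by $\partial_yQ_2$ and $|\partial_yQ_2|^2$ in \eqref{eq:qmdnkl}, which live at size $e^{-\tau}$. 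Integrating these ODEs in $\tau$ exactly as in \eqref{eq:leadPart} yields \eqref{eq:tiAlpha012}: for $n\le 2$ the unstable/neutral modes are trapped by the forcing $\mathcal{O}(e^{-\tau})$, and for the $H_3$ mode
$$\gamma_{3,0,1}(\tau)=e^{-\tau/2}\!\Big(\gamma_{3,0,1}(0)+\!\int_0^\infty\!\! e^{\sigma/2}\mathcal{O}(e^{-\sigma})d\sigma\Big)+\mathcal{O}(e^{-\tau})=d_3 e^{-\tau/2}+\mathcal{O}(e^{-\tau}),$$
which defines the constant $d_3$.

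The bounds \eqref{eq:TetaSharp}--\eqref{eq:TetaPoin34} on $\xi$ are obtained by running the same weighted energy / propagator argument used in Proposition \ref{prop:step2}: since the change of variables is a near-identity diffeomorphism with Jacobian $1+\mathcal{O}(e^{-\tau/2})$, the $\mathcal{G}$-norm and $L^\infty$ norm of the remainder are preserved up to absorbable errors, and the pointwise $\tau^{-3/10}$ bound carries over from \eqref{eq:etaPoin34}.

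The main obstacle I anticipate is \emph{bookkeeping}: the substitution $v\omega-Q_2=\tilde v\omega'$ mixes the spectral modes (e.g.\ $(\omega\cdot Q_2)H_n$ couples $f_{1,l}$ with $f_{k,l}$ of all degrees through the product $\omega_l f_{k,l}$), so one must carefully expand to sufficiently high order and verify that all ``secondary'' contributions to the $H_n f_{k,l}$ coefficients for $n\le 3$ are either $\mathcal{O}(e^{-\tau})$ or can be absorbed into the polynomial ansatz. Handling this bookkeeping cleanly, while simultaneously enforcing the orthogonality conditions \eqref{eq:xiortho} on $\chi_R\xi$ via a finite-dimensional modulation, is the technically delicate step; everything else is essentially a repetition of the spectral analysis developed for Proposition \ref{prop:step2}.
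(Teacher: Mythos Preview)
Your proposal is essentially the same approach as the paper's: choose $Q_2$ to kill the leading $H_2\omega_l$ contribution, derive the ODEs $(\frac{d}{d\tau}+\frac{n-2}{2}+\frac{k(k+2)}{6})\gamma_{n,k,l}=NL_{n,k,l}$ in the new parametrization, integrate them as in \eqref{eq:leadPart}, and run the same weighted-energy argument for $\chi_R\xi$. The paper does not bother with the explicit leading-order formula $a_l=d_{2,l}/(2\sqrt6)$ you derive; it simply asserts existence ``by a graphic illustration'' and then \emph{characterizes} the correct $a_l$ a posteriori by the requirement that the constant $\tilde d_{2,l}:=\gamma_{2,1,l}(0)+\int_0^\infty e^{\sigma/2}NL_{2,1,l}(\sigma)\,d\sigma$ vanish (see \eqref{eq:21l}). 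Your explicit computation is a nice complement, and your identification of the mode-mixing bookkeeping as the delicate point is accurate.

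One correction: you claim the new driving terms from $Q_2$ ``live at size $e^{-\tau}$''. That is true for the quadratic pieces $|\partial_yQ_2|^2$, $(\omega\cdot\partial_yQ_2)^2$ (the $J_7,J_8$ of \eqref{def:J7J8}), but the parametrization also produces terms \emph{linear} in $Q_2$, namely $J_9=v(\omega\cdot\partial_y^2Q_2)$ and $J_{10}$ of \eqref{def:J9J10}, which are of size $e^{-\tau/2}$. The paper records this carefully in \eqref{eq:snl3}: $|NL_{0,k,l}|\lesssim e^{-\tau/2}$ rather than $e^{-\tau}$. This does not damage the final estimate $e^{\tau/3}|\gamma_{0,k,l}|\lesssim 1$ (the bound there is set by the linear rate $\frac{k(k+2)}{6}-1=\frac13$ at $k=2$, not by the forcing), but your sentence ``for $n\le2$ the unstable/neutral modes are trapped by the forcing $\mathcal O(e^{-\tau})$'' should be amended accordingly.
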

In what follows we discuss the proof.

To remove the part $\sum_{l=1}^4 d_{2,l} e^{-\frac{1}{2}\tau} H_2 \omega_l$ we parametrize the rescaled MCF as 
$$\Psi_{Q_2, v}=
\left[
\begin{array}{ccc}
y\\
Q_2(y,\tau)
\end{array}
\right]+v(y,\omega,\tau)\left[
\begin{array}{ccc}
-\partial_{y}Q_2(y,\tau)\cdot\omega\\
\omega
\end{array}
\right]$$ for some vector-valued function $Q_{2}$ of the form, for some real constants $a_k,\ k=1,2,3,4,$
\begin{align}
Q_2(y,\tau)=e^{-\frac{1}{2}\tau}H_{2}(y)\Big(a_{1},\ a_{2},\ a_{3},\ a_{4}\Big)^{T}.
\end{align}
The existence of such a vector-valued function $Q_2$ is standard, for example, by a graphic illustration. This idea was used in \cite{GaKn2014,ColdingMiniUniqueness} to find better coordinates.

To prove the desired estimates we need to derive a governing equation for $v$. Since in the rest of the paper we often need a governing equation for $v$ with parametrization $\Psi_{Q_N,v},$ here we consider the problem in a slightly more general setting. 

Suppose that the parametrization for the rescaled MCF is
\begin{align}\label{eq:psiqnv}
    \Psi_{Q_N, v}=
\left[
\begin{array}{ccc}
y\\
Q_N(y,\tau)
\end{array}
\right]+v(y,\omega,\tau)\left[
\begin{array}{ccc}
-\partial_{y}Q_N(y,\tau)\cdot\omega\\
\omega
\end{array}
\right],
\end{align} where $Q_{N}\in \mathbb{R}^4$ is a vector-valued function of the form, for some real constants $a_{n,k},$
\begin{align}
Q_N(y,\tau)=\sum_{n=2}^{N} e^{-\frac{n-1}{2}\tau}H_{n}(y)\Big(a_{n,1},\ a_{n,2},\ a_{n,3},\ a_{n,4}\Big)^{T}.
\end{align} 
The corresponding MCF, $\Phi_{\Pi_{N}}(u),$ takes the form
\begin{align}\label{eq:qnrescaled}
\sqrt{T-t}\Psi_{Q_{N},v}=\Phi_{\Pi_{N}, u}=\left[
\begin{array}{ccc}
z\\
\Pi_N(z,t)
\end{array}
\right]+u(z,\omega,t)\left[
\begin{array}{ccc}
-\partial_{z}\Pi_N(z,t)\cdot\omega\\
\omega
\end{array}
\right],
\end{align} $u,$ $z$, $\tau$ and $\Pi_{N}$ are defined in terms of $v,$ $y$, $\tau$ and $Q_N$ by the identities in \eqref{eq:ytau} and \eqref{eq:QPIdentity}.

We will derive in Appendix \ref{sec:derivation} below that the function $u$ satisfies the equation
\begin{align}
\partial_{t}u=\partial_{z}^2 u +u^{-2}\Delta_{\mathbb{S}^3}u+N(u)+V_{\Pi_N}(u)
\end{align}where $N(u)$ and $V_{\Pi_{N}}(u)$ are defined in (\ref{eq:effectivUeqn}).

From this we derive a governing equation for $v$ as
\begin{align}
\partial_{\tau}v=\partial_{y}^2v -\frac{1}{2}y\partial_{y}v+v^{-2}\Delta_{\mathbb{S}^3}v+\frac{1}{2}v+N(v)+W_{Q_N}(v),
\end{align} where $N(v)$ is defined in \eqref{eq:effeV}, and $W_{Q_N}(v)$ is defined as
\begin{align}
W_{Q_N}(v):=\sqrt{T-t} V_{\Pi_{N}}(u).
\end{align}
In the present paper what is more useful is a governing equation for $\tilde{v}:=v^2,$
\begin{align}
\partial_{\tau}\tilde{v}=v^{-2}\Delta_{\mathbb{S}^2}\tilde{v}+\partial_{y}^2 \tilde{v}-\frac{1}{2}y\partial_{y}\tilde{v} +\tilde{v}-6-\frac{1}{2}v^{-4} |\nabla_{\omega}^{\perp}v^2 |^2-\frac{1}{2}v^{-2}|\partial_{y}v^2 |^2+2v N(v)
+2vW_{Q_N}(v).\label{eq:tilv}
\end{align} Here $\nabla_{\omega}^{\perp}$ stands for $P_{\perp \omega}\nabla_{\omega}$, and the operator $P_{\perp\omega}:\mathbb{R}^4\rightarrow \mathbb{R}^4$ is the orthogonal projection defined as, for any vector $A\in \mathbb{R}^4$,
\begin{align}
    P_{\perp\omega}A=A-(\omega\cdot A)\omega.
\end{align}

Even though the nonlinearities $v^{-4} |\nabla_{\omega}^{\perp}v^2|^2+v^{-2}|\partial_{y}v^2|^2$, $vN(v)$ and $vW_{Q_N}(v)$ contain many terms, only a few will play important roles and will be the focus of our treatment. 
Up to a constant factor, the nonlinearities contain the following terms:
\begin{itemize}
\item from $vN(v)$, 
\begin{align}\label{def:Kterm}
\begin{split}
K_1(v):= v(\partial_{y}v)^2 \partial_{y}^2 v;\\
K_2(v):=v^{-2} |\nabla_{\omega}^{\perp}v|^2;
\end{split}
\end{align}
\item from $v^{-4} |\nabla_{\omega}^{\perp}v^2|^2+v^{-2}|\partial_{y}v^2|^2$, which also contains $K_2(v)$,
\begin{align}\label{def:Iterm}
I_1(v):=&v^{-1} |\partial_{y} v^2|^2,
\end{align}
\item from $vW_{Q_{N}}(v)$, see Remark \ref{rem:source} below,
\begin{align}
\begin{split}\label{def:J}
J_1(Q_N, v):=&(\partial_{y}v) (\partial_{y}Q_{N}\cdot \omega),\\
J_2(Q_{N}, v):=& v^{-1} (\partial_{y}Q_{N}\cdot \omega) (\nabla_{\omega}^{\perp} v\cdot \partial_{y}Q_{N}) ,\\
J_3(Q_{N},v):=&v^{-1} (\partial_{y}Q_{N})\cdot 
\nabla_{\omega}^{\perp}\Big((\partial_{y}Q_{N})\cdot \nabla_{\omega}^{\perp} v\Big),\\
J_4(Q_{N},v):=& v(\partial_{y}^2 v) |\partial_{y}Q_{N}\cdot \omega|^2,\\
J_5(Q_{N},v):=& v^2 \partial_{y}^2 v (\partial_{y}^2Q_{N}\cdot \omega), \\
J_6(Q_{N}, v):=&\partial_{y}Q_{N}\cdot \partial_{y}\nabla_{\omega}^{\perp}v, 
\end{split}
\end{align} the terms $J_7$ and $J_8$ are independent of $v$ and defined as
\begin{align}
\begin{split}\label{def:J7J8}
    J_7(Q_{N}):=&|P_{\perp\omega}\partial_{y} Q_{N}|^2,\\
J_8(Q_{N}):=&(\omega\cdot \partial_{y}Q_N)^2,
\end{split}
\end{align}
$J_9$ and $J_{10}$ are different since they are not nonlinear in terms of the derivatives of $v$ and $\partial_{y}Q_N.$
\begin{align}
          \begin{split}\label{def:J9J10}
    J_9(Q_{N},v):=&v (\omega\cdot \partial_{y}^2Q_{N});\\
    J_{10}(Q_{N},v):=& v \sum_{n=2}^N e^{-\frac{n-1}{2}\tau}\sum_{k=1}^{\floor*{\frac{n}{2}}}k h_{n-2k,n} y^{n-2k-2} \Big(\omega\cdot (a_{n,1},a_{n,2},a_{n,3},a_{n,4})\Big).
           \end{split}
\end{align}$J_{10}$ is from the term $\omega\cdot \partial_{t}\Pi_{N}$ in Remark \ref{rem:source} below, the constants $h_{n-2k,n}$ are the coefficients of 
the Hermite polynomial $H_{n}$, see (\ref{eq:hermit}).
\end{itemize}

These chosen terms are ``simple" in the sense that their number of factors is less than some other terms. But this means that it is harder to control them, since each factor is small, the less number of factors implies less room to maneuver. For example, by the techniques we will develop, it is easier to control $\tilde{K}_j-K_j$ and $\tilde{J}_k-J_k$ even though they have more factors, since $D$ and $(\partial_{y}v)^2+v^{-2}|\nabla_{\omega}^{\perp}v|^2$ are small.
Here
$\tilde{K}_j(v)$, $\tilde{J}_k(v)$  and $D$ are defined as:
\begin{align}
\begin{split}\label{def:TilKJ}
    \tilde{K}_{j}(v):=&\frac{K_j(v)}{1+(\partial_{y}v)^2+v^{-2}|\nabla_{\omega}^{\perp}v|^2},\ j=1,2;\\
\tilde{J}_{k}(v):=&\frac{J_k(v)}{1+D},\ k=1,2,\cdots,8,\\
    D:=&|P_{\perp \omega}\partial_{y}Q_{N}|^2-(\partial_{y}Q_{N}\cdot\omega) \partial_{y}v-(\partial_{y}^2 Q_{N}\cdot \omega)v+v^{-1} (\partial_{y}Q_{N}\cdot \omega) (\nabla_{\omega}^{\perp} v\cdot \partial_{y}Q_{N}).
    \end{split}
\end{align}


Now we continue to prove Proposition \ref{prop:afterNorForm}. 
\subsection{Proof of Proposition \ref{prop:afterNorForm}}\label{subsec:afterNorm3}
We start with some preliminary estimates: recall the estimates for $\alpha_{n,k,l}$ in Proposition \ref{prop:step2},
\begin{lemma}
\begin{align}
\Big|\alpha_{n,k,l}(\tau)-\gamma_{n,k,l}(\tau)\Big|+\sum_{k+|l|\leq 2}\Big\|\partial_{y}^k\nabla_{E}^{l}\chi_{R}\xi(\cdot,\tau)\Big\|_{\mathcal{G}} \lesssim e^{-\frac{1}{2}\tau}\label{eq:3prelim}
\end{align}
\end{lemma}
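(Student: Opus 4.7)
The plan is to compare the two parametrizations $\Psi_{0,v_{\text{old}}}$ and $\Psi_{Q_2,v_{\text{new}}}$ of the same hypersurface, where $v_{\text{old}}$ carries coefficients $\alpha_{n,k,l}$ and remainder $\eta$ from Proposition \ref{prop:step2}, and $v_{\text{new}}$ carries $\gamma_{n,k,l}$ and $\xi$. The key observation is that $Q_2$, together with $\partial_y Q_2$, has size of order $e^{-\tau/2}$ on the region $|y|\le (1+\epsilon)R(\tau)$ (using $|H_2(y)|,|\partial_y H_2(y)|\lesssim \tau^{1/2+1/20}$, which is absorbed into the exponential), so the change of variables between the two parametrizations is an $\mathcal{O}(e^{-\tau/2})$ perturbation of the identity.

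First I would write out the matching relation. Setting $\Psi_{0,v_{\text{old}}}(y_1,\omega_1,\tau)=\Psi_{Q_2,v_{\text{new}}}(y_2,\omega_2,\tau)$ produces
\begin{align*}
y_1 &= y_2 - v_{\text{new}}(y_2,\omega_2,\tau)\,\partial_y Q_2(y_2,\tau)\cdot\omega_2,\\
v_{\text{old}}(y_1,\omega_1,\tau)\,\omega_1 &= Q_2(y_2,\tau)+v_{\text{new}}(y_2,\omega_2,\tau)\,\omega_2.
\end{align*}
Taking the norm of the second equation gives $v_{\text{old}}^2-v_{\text{new}}^2 = |Q_2|^2+2v_{\text{new}}\,Q_2\cdot\omega_2$, and dividing the second equation by its norm gives $\omega_1=\omega_2+\mathcal{O}(|Q_2|/v_{\text{new}})$. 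Together with $|\partial_y Q_2|=\mathcal{O}(e^{-\tau/2}\tau^{1/2+1/20})$ and $v_{\text{new}}\sim\sqrt{6}$, this produces the bounds $|y_1-y_2|+|\omega_1-\omega_2|\lesssim e^{-\tau/2}$, and hence, after solving implicitly for $v_{\text{new}}$ and expanding $v_{\text{old}}$ around $(y_2,\omega_2)$ using the derivative estimate \eqref{eq:etaPoin34} for $v_{\text{old}}$,
$$|v_{\text{new}}(y,\omega,\tau)-v_{\text{old}}(y,\omega,\tau)|\lesssim e^{-\frac{1}{2}\tau}.$$
Differentiating this matching relation (implicit function theorem plus the derivative estimates already available for $v_{\text{old}}$) yields the analogous bound for $\partial_y^k\nabla_E^l(v_{\text{new}}-v_{\text{old}})$ up to order two on the same region.

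Next I would translate this into bounds on the spectral coefficients. Substituting the two decompositions into $v_{\text{new}}^2-v_{\text{old}}^2=\mathcal{O}(e^{-\tau/2})$ and testing against $H_n(y)f_{k,l}(\omega)\,e^{-|y|^2/4}$ on the truncated domain $|y|\le R(\tau)$ yields
$$|\alpha_{n,k,l}(\tau)-\gamma_{n,k,l}(\tau)|\lesssim e^{-\frac{1}{2}\tau},$$
since the orthonormal Hermite/spherical basis is stable under Gaussian weighting and the $O(e^{-\tau/2})$ bound on $v_{\text{new}}^2-v_{\text{old}}^2$ transfers directly to each projection. Writing $\chi_R\xi = \chi_R\eta + (\text{sum of differences of first $N{+}3$ basis pieces})+\mathcal{O}(e^{-\tau/2})$, and using that each basis coefficient drops by at most $e^{-\tau/2}$ while $\chi_R\eta$ already satisfies \eqref{eq:RetaSharp} at a rate $\le e^{-(1-\delta_0)\tau}\ll e^{-\tau/2}$, gives the $\mathcal{G}$-norm bound for $\chi_R\xi$. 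The derivative estimates are obtained analogously from the differentiated matching relation.

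The main obstacle is the derivative-two estimate in the $\mathcal{G}$-norm: while the pointwise difference is easily $\mathcal{O}(e^{-\tau/2})$, commuting $\partial_y^k\nabla_E^l$ past the change of variables $(y_1,\omega_1)\mapsto(y_2,\omega_2)$ generates factors involving second derivatives of the implicit map, whose $y$-growth must be controlled by the exponential decay of $Q_2$ and its derivatives. Because $H_2$ is only quadratic and $R(\tau)$ grows only polynomially, this polynomial loss is absorbed comfortably into $e^{-\tau/2}$, so the bound survives. Everything else is a routine orthogonal projection and a Taylor expansion.
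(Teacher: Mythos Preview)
Your proposal is correct and follows essentially the same approach as the paper: the paper's entire proof is the single sentence ``The proof is easy since we consider the reparametrization as a perturbation of order $e^{-\frac{1}{2}\tau}$,'' and your argument is precisely the detailed unpacking of this observation---matching the two parametrizations, noting that $Q_2$ and $\partial_y Q_2$ are $\mathcal{O}(e^{-\tau/2})$ (with polynomial factors from $R(\tau)$ absorbed), and propagating this through the spectral projections and derivative estimates.
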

The proof is easy
since we consider the reparametrization as a perturbation of order $e^{-\frac{1}{2}\tau}$.

These estimates and \eqref{eq:d21Order}-\eqref{eq:etaPoin34} imply the desired pointwise estimates for $\chi_{R}\xi$ in \eqref{eq:TetaPoin34}.

What is left is to improve the decay estimates for $\gamma_{n,k,l}$ and the $\mathcal{G}-$norm of $\chi_{R}\xi$ by studying their governing equations.

To simplify notations we define a function $\gamma$ as
\begin{align*}
\gamma(y,\omega,\tau):=&6+\sum_{n=0}^{3}\sum_{k=0}^N\sum_l \gamma_{n,k,l}(\tau) H_{k}(y)f_{k,l}(\omega).
\end{align*} 
From (\ref{eq:tilv}) we derive
\begin{align}\label{eq:EffTilXi}
\partial_{\tau}\chi_{R}\xi=-L\chi_{R}\xi+\chi_{R}\Big(F(Q_2,\gamma)+SN(Q_2,\gamma,\xi)\Big)+\mu_{R}(\xi),
\end{align}
where the linear operator $L$ is defined in \eqref{def:L}, and $\mu_{R}(\xi)$ is defined in the same way as \eqref{eq:effectChiREta},
the function $F$ is independent of $\xi$ and is defined as 
\begin{align*}
F(Q_2,\gamma):=&F_1+F_2,
\end{align*} and the functions $F_1$ and $F_2$ are defined as, 
\begin{align*}
F_1:=&-\sum_{n=0}^3\sum_{k=0}^{2}\sum_{l=1}^4 \Big(\frac{d}{d\tau}+\frac{n-2}{2}+\frac{k(k+2)}{6}\Big)\gamma_{n,k,l} H_{n}f_{k,l}(\omega),\\
F_2:=&\frac{6-\gamma}{6\gamma }\Delta_{\mathbb{S}^3}\gamma-\frac{1}{2\gamma^2} |\nabla_{E}\gamma|^2-\frac{1}{2\gamma}|\partial_{y}\gamma|^2+2\sqrt{\gamma} N(\sqrt{\gamma})+2\sqrt{\gamma}W_{Q_2}(\sqrt{\gamma}),
\end{align*}
$SN(Q_2,\gamma,\xi)$ contains terms nonlinear in terms of $\eta$, or linear in terms of $\eta$ but is ``small",
\begin{align*}
SN:=&\frac{1}{2\gamma^2}|\nabla_{E}\gamma|^2-\frac{1}{2v^4} |\nabla_{E}v^2|^2+\frac{1}{2q}|\partial_{y}q|^2-\frac{1}{2}v^{-2}|\partial_{y}v^2 |^2+2v N(v)-2\sqrt{\gamma} N(\sqrt{\gamma})\\
&+\frac{6-\gamma}{6\gamma}\Delta_{\mathbb{S}^3}\xi-\frac{\xi}{v^2 \gamma }\Delta_{\mathbb{S}^2}v^2+2v W_{Q_2}(v)-2\sqrt{\gamma}W_{Q_2}(\sqrt{\gamma}).
\end{align*}

Now we estimate various functions. The proof is similar to that of Proposition \ref{prop:step2} in subsection \ref{sec:PropStep2}. Thus we only sketch the proof.

To estimate $\chi_{R}\xi$ and its derivatives in the $\mathcal{G}-$norm, we define
\begin{align*}
    \phi_{j,m}(\tau):=\Big\langle (-\Delta_{\mathbb{S}^3}+1)^{m}\partial_{y}^j\chi_{R}\xi(\cdot,\tau),\partial_{y}^j\chi_{R}\xi(\cdot,\tau)\Big\rangle_{\mathcal{G}}
\end{align*} 
and then apply the same methods used in \eqref{eq:geqn} and \eqref{eq:gnm0}
to derive
\begin{align}
   \Big( \frac{1}{2}\frac{d}{d\tau}+1+\mathcal{O}(\tau^{-\frac{1}{4}})\Big)\sum_{j+\frac{1}{2}m\leq 2}\phi_{n,m}
   &\leq \sum_{j+\frac{1}{2}m\leq 2}\Big\langle (-\Delta_{\mathbb{S}^3}+1)^{m}\partial_{y}^j \chi_{R}\xi\  \partial_{y}^j \chi_{R}F_2\Big\rangle_{\mathcal{G}}+o(e^{-\frac{1}{5}R^2}).
\end{align}
The orthogonality condition enjoyed by $\chi_{R}\xi$ in (\ref{eq:xiortho}) cancels contribution from the slowly decaying terms of $F_2$. This and \eqref{eq:3prelim} imply that, for any constant $\delta_0>0$ there exists a $C_{\delta_0}$ s.t.
\begin{align}
    \sum_{j+\frac{1}{2}m\leq 2}\phi_{j,m}\leq C_{\delta_0} e^{-(1-\delta_0)\tau}.
\end{align}

To prepare for estimating $\gamma_{n,k,l}$, from the orthogonality conditions (\ref{eq:xiortho}) we derive
\begin{align}
\Big(\frac{d}{d\tau}+\frac{n-2}{2}+\frac{k(k+2)}{6}\Big)\gamma_{n,k,l}=\frac{1}{\|H_nf_{k,l}\|_{\mathcal{G}}^2}\Big\langle  F_2+SN+\mu_{R}(\xi),\ H_{n}f_{k,l}\Big\rangle_{\mathcal{G}}=:NL_{n,k,l}.
\end{align}



\eqref{eq:3prelim} implies that, since all the terms in $F_2+SN$ are nonlinear in terms of $\gamma_{n,k,l}$, $\xi$ and $\partial_{y}Q_2$ except $J_9$ and $J_{10}$ defined in (\ref{def:J9J10}), and since $J_9$ and $J_{10}$ take explicit forms, 
\begin{align}\label{eq:snl3}
\begin{split}
    |NL_{n,k,l}(\tau)|\lesssim & e^{-\tau}, \ n=1,2,3,\\
    |NL_{0,k,l}(\tau)|\lesssim & e^{-\frac{1}{2}\tau}.
\end{split}
\end{align} The terms $J_9$ and $J_{10}$ make the second estimate sharp.

Similar to proving \eqref{eq:02Sharp} and \eqref{eq:matFin}, since the linear decay rates of $\gamma_{0,2,l}$ and $\gamma_{1,2,l}$, which are $e^{-\frac{1}{3}\tau}$ and $e^{-\frac{5}{6}\tau}$, are the lowest among $\gamma_{0,k,l}$ and $\gamma_{1,k,l}$, $k\geq 2,$ it is easy to obtain sharp estimates,
\begin{align}\label{eq:slowest}
    |\gamma_{0,k,l}(\tau)|e^{\frac{\tau}{3}}+|\gamma_{1,k,l}(\tau)|e^{\frac{5}{6}\tau}\lesssim 1,\ \text{for any}\ k\geq 2.
\end{align} For the pairs $(n,k)=(0,k),\ (1,k)$ with $k=0,1,$ since $\frac{n-2}{2}+\frac{k(k+2)}{6}\leq 0$ and $\lim_{\tau\rightarrow \infty}\gamma_{n,k,l}(\tau)=0$,
$$\gamma_{n,k,l}(\tau)=-\int_{\tau}^{\infty} e^{-(\frac{n-2}{2}+\frac{k(k+2)}{6})(\tau-\sigma) } NL_{n,k,l}(\sigma)\ d\sigma.$$ From here we obtain the desired estimates.

What is left is to consider $\gamma_{n,k,l}$ with $n\geq 2.$

Compared to the estimates for $\alpha_{2,1,l}$ in \eqref{eq:d21Order}, those for $\gamma_{2,1,l},$ $l=1,2,3,4,$ are improved,
\begin{align}
\gamma_{2,1,l}(\tau)=& e^{-\frac{1}{2}\tau} \gamma_{2,1,l}(0)+\int_{0}^{\tau} e^{-\frac{1}{2}(\tau-\sigma)} NL_{2,l,l}(\sigma)\ d\sigma\nonumber\\
=&-e^{-\frac{1}{2}\tau}\int_{\tau}^{\infty} e^{\frac{1}{2}\sigma} NL_{2,l,1}(\sigma)\ d\sigma
=\mathcal{O}(e^{-\tau})\label{eq:21l}
\end{align}
where we use that  $
    \tilde{d}_{2,l}=\gamma_{2,1,l}(0)+\int_{0}^{\tau} e^{\frac{1}{2}\sigma} NL_{2,l,1}(\sigma)\ d\sigma=0,
$ since we made normal form transformation to make them decay faster than $e^{-\frac{1}{2}\tau}$.

\eqref{eq:snl3} implies the following sharp estimates: when $(n,k)=(2,0)$,
\begin{align}\label{eq:2021}
    \gamma_{2,0,1}(\tau)=&-\int_{\tau}^{\infty} NL_{2,0,1}(\sigma)\ d\sigma=\mathcal{O}(e^{-\tau});
\end{align} for the pairs $(n,k)=(2,k)$ with $k\geq 2$, we have $\frac{n-2}{2}+\frac{k(k+2)}{6}=\frac{k(k+2)}{6}\geq \frac{4}{3}$, and thus
\begin{align}\label{eq:2kg2}
    \gamma_{2,k,l}(\tau)=&e^{-\frac{k(k+2)}{6}\tau} \gamma_{2,k,l}(0)+\int_{0}^{\tau}e^{-\frac{k(k+2)}{6}(\tau-\sigma)} NL_{2,k,l}(\sigma)\ d\sigma=\mathcal{O}(e^{-\tau}).
\end{align}

Next we consider $(n,k)$ with $n=3.$

When $(n,k)=(3,0)$, 
\begin{align}
  \gamma_{3,0,1}(\tau)= & e^{-\frac{1}{2}\tau} \gamma_{3,0,1}(0)+\int_{0}^{\tau} e^{-\frac{1}{2}(\tau-\sigma)} NL_{3,0,1}(\sigma)\ d\sigma\nonumber\\
=&d_{3} e^{-\frac{1}{2}\tau} -e^{-\frac{1}{2}\tau}\int_{\tau}^{\infty} e^{\frac{1}{2}\sigma} NL_{3,0,1}(\sigma)\ d\sigma
=d_{3} e^{-\frac{1}{2}\tau} +\mathcal{O}(e^{-\tau}),
\end{align} where $d_3$ a constant defined as
\begin{align*}
    d_3:=\gamma_{3,0,1}(0)+\int_{0}^{\infty} e^{\frac{1}{2}\sigma} NL_{3,0,1}(\sigma)\ d\sigma.
\end{align*}

When $n=3$ and $k\geq 1$, we have $\frac{k(k+2)}{6}+\frac{1}{2}\geq 1.$
This, together with \eqref{eq:snl3}, implies that
\begin{align}
    \gamma_{3,k,l}(\tau)=&e^{-(\frac{k(k+2)}{6}+\frac{1}{2})\tau}\gamma_{3,k,l}(0)+\int_{0}^{\tau} e^{-(\frac{k(k+2)}{6}+\frac{1}{2})(\tau-\sigma)} NL_{3,k,l}(\sigma)\ d\sigma
    =\mathcal{O}\Big(e^{-\tau}(1+\tau)\Big).\label{eq:3kl}
\end{align} 
Here the estimates for $\gamma_{3,1,l}$ in \eqref{eq:3kl} are sharp since $F_2$ contains $\frac{3}{2} \frac{\alpha_{3,0,1}y^2}{\sqrt{q}}(\partial_{y}Q_2\cdot \omega)$ contributed by $J_1(\sqrt{q})=(\partial_{y}\sqrt{q}) (\partial_{y}Q_{2}\cdot \omega)$. $J_1(v)$ is defined in \eqref{def:J}.

Next we complete the proof by providing finer descriptions for some of the functions $\gamma_{n,k,l}$.

The estimates for $\gamma_{2,0,1}$ and $\gamma_{2,2,l}$, in \eqref{eq:2021} and \eqref{eq:2kg2}, can be refined. $J_{7}$ and $J_8$, defined in \eqref{def:J7J8}, are parts of $F_2(\sqrt{q})$ and make, for some constants $d_{2,0,1}$ and $d_{2,2,l}$,
\begin{align}
\begin{split}
    \gamma_{2,0,1}(\tau)=&d_{2,0,1}e^{-\tau}+\mathcal{O}(e^{-\frac{4}{3}\tau}),\\
    \gamma_{2,2,l}(\tau)=&d_{2,2,l}e^{-\tau}+\mathcal{O}(e^{-\frac{4}{3}\tau}).
\end{split}
\end{align}
The estimate for $\gamma_{2,1,l}$ in \eqref{eq:21l} can be improved by feeding the estimates above into $NL_{2,1,l},$
\begin{align}
    \gamma_{2,1,l}(\tau)=-e^{-\frac{1}{2}\tau}\int_{\tau}^{\infty} e^{\frac{1}{2}\sigma} NL_{2,l,1}(\sigma)\ d\sigma=\mathcal{O}(e^{-\frac{4}{3}\tau}).
\end{align}
Similarly, for any $k\geq 3$, study \eqref{eq:2kg2} to obtain that
\begin{align}
    |\gamma_{2,k,l}|\lesssim e^{-\frac{4}{3}\tau}.
\end{align}

Thus we proved all the desired estimates.


\section{Proof of the Part C in Theorem \ref{THM:sec} by assuming Part B}\label{sec:step1Third}
Recall that we parametrize the rescaled MCF as
\begin{align}\label{eq:S3}
\Psi_{Q_{m-1},v}&=\left[
\begin{array}{ccc}
y\\
Q_{m-1}(y,\tau)
\end{array}
\right]+v(y,\omega,\tau)\left[
\begin{array}{ccc}
-\partial_{y}Q_{m-1}(y,\tau)\cdot\omega\\
\omega
\end{array}
\right],
\end{align} 
and the vector-valued function $Q_{m-1}\in \mathbb{R}^4$ takes the form, for some constants $c_{n,k},$
\begin{align*}
Q_{m-1}(y,\tau)= \sum_{n=2}^{m-1}H_{n}(y) e^{-\frac{n-1}{2}\tau}\Big(a_{n,1},a_{n,2},a_{n,3},a_{n,4}
\Big)^{T}.
\end{align*}
The corresponding MCF, denoted as $\Phi_{\Pi_{m-1},u}$, takes the form
\begin{align}\label{eq:mcfMean}
    \Phi_{\Pi_{m-1},u}:=\left[
\begin{array}{ccc}
z\\
\Pi_{m-1}(z,t)
\end{array}
\right]+u(z,\omega,t)\left[
\begin{array}{ccc}
-\partial_{z}\Pi_{m-1}(z,t)\cdot\omega\\
\omega
\end{array}
\right]
\end{align} by the identity
\begin{align}
    \frac{1}{\sqrt{T-t}}\Phi_{\Pi_{m-1},u}=\Psi_{Q_{m-1},u}\label{eq:reMCFToRes}
\end{align} where $u$, $z$ and $\Pi_{m-1}$ are naturally defined in terms of $v$, $y$ and $Q_{m-1},$ and $\tau:=-\ln(T-t).$

In our paper \cite{GZ2018}, we considered the nondegenerate cases with parametrization
\begin{align}\label{eq:S1}
\Psi_{0,v}(y,\theta,\tau)&=\left[
\begin{array}{ccc}
y\\
0\\
0
\end{array}
\right]+v(y,\theta,\tau)\left[
\begin{array}{ccc}
0\\
\sin(\theta)\\
\cos(\theta)
\end{array}
\right],\ y\in \mathbb{R}^3,\ \theta\in [0,\ 2\pi).
\end{align}

What makes \eqref{eq:S3} different from \eqref{eq:S1} is the vector $Q_{m-1}$. However it is not an obstacle: in the considered region the curve $\left[
\begin{array}{ccc}
y\\
Q_{m-1}(y)
\end{array}
\right]$ varies very slowly, for any sufficiently large $\tau$,
\begin{align}
|\partial_{y}Q_{m_1}(y)|\ll e^{-(\frac{1}{2}-\frac{m-2}{2m})\tau} e^{20\sqrt{\tau}} \ll 1,\label{eq:curve}
\end{align} hence locally it is almost a straight line.

Consequently, the proof is similar to that in \cite{GZ2017}. However, here we will present a self-contained proof. 

Now we consider MCF in (\ref{eq:mcfMean}). For each fixed small $z_1\not=0$, we will study a small neighborhood of it in two temporal regions, defined as 
\begin{align}
\Big\{\ t(\tau)\ \Big| \ |y|=e^{\frac{1}{2}\tau}|z_1|\geq e^{\frac{m-2}{2m}\tau(t)} e^{5\sqrt{\tau(t)}} \Big\} \ \text{and}\ \Big\{\ t(\tau)\ \Big| \ |y|=e^{\frac{1}{2}\tau}|z_1| < e^{\frac{m-2}{2m}\tau(t)} e^{5\sqrt{\tau(t)}} \Big\}.
\end{align}Recall that in the rescaled MCF, $z$ and $t$ are rescaled into $y=\frac{z}{\sqrt{T-t}}=e^{\frac{1}{2}\tau}z$ and $\tau=-\ln (T-t)$. The treatment in the second region is easier since the estimates \eqref{eq:pointwisedm} apply. In the first region we need new techniques, since the ones we used for the rescaled MCF become less and less powerful as $|y|$ increases.


We start with studying the first region.


\subsection{Analysis in the temporal region \texorpdfstring{$\Big\{\ t(\tau)\ \Big| \ e^{\frac{1}{2}\tau(t)}|z_1|\geq e^{\frac{m-2}{2m}\tau(t)} e^{5\sqrt{\tau(t)}}\Big\}$}{} }
We start with defining the time $\tau_1\gg 1$ to be the unique time such that 
\begin{align}
    |y|=e^{\frac{1}{2}\tau_1}|z_1|= e^{-\frac{m-2}{2m}\tau_1} e^{5\sqrt{\tau_1}}.
\end{align}

We need to retrieve some useful information from the rescaled MCF.
We consider the region
\begin{align}
e^{\frac{m-2}{2m}\tau_1} e^{5\sqrt{\tau_1}}-e^{\tau_1^{\frac{3}{4}}}\leq |y|\leq  e^{\frac{m-2}{2m}\tau_1} e^{5\sqrt{\tau_1}}+e^{\tau_1^{\frac{3}{4}}}.\label{eq:chosen}
\end{align} This is inside the region we chose in (\ref{eq:region}), and thus the estimate (\ref{eq:pointwisedm}) applies.
It implies that
\begin{align}\label{eq:LargeRegion}
v(y,\omega,\tau_1)=\lambda_1\Big(1+o(1)\Big)
\end{align} where $\lambda_1=\lambda(\tau_1)$ is a large constant defined as
\begin{align}
    \lambda_1:=\sqrt{d_m} e^{\frac{5m\sqrt{\tau_1}}{2}}\gg 1,
\end{align} recall that the Hermite polynomials $H_{n}$ take the forms in \eqref{eq:hermit}.
In contrast to \eqref{eq:LargeRegion}, when $y=0,$
\begin{align}
v(0,\omega,\tau_1)=\sqrt{6}\Big(1+o(1)\Big).\label{eq:vYzero}
\end{align}

\eqref{eq:LargeRegion} and \eqref{eq:vYzero} are crucial for our purpose. To see this, define $t_1$ to be the unique time $t_1:=t(\tau_1)$,
through the identity \eqref{eq:reMCFToRes}, they imply that in the corresponding region of MCF,
\begin{align}\label{eq:regionTau1}
e^{-\frac{1}{2}\tau_1}\Big( e^{-\frac{m-2}{m}\tau_1} e^{5\sqrt{\tau_1}}-e^{\tau_1^{\frac{3}{4}}}\Big)  \leq   |z|\leq e^{-\frac{1}{2}\tau_1}\Big(e^{-\frac{m-2}{m}\tau} e^{5\sqrt{\tau_1}}+e^{\tau_1^{\frac{3}{4}}}\Big),
\end{align} the following estimates hold for $u$,
\begin{align}
    u(z,\omega,t_1)=\lambda_1 \sqrt{T-t_1}\Big(1+o(1)\Big);
\end{align} at $z=0$, where a singularity will form at $t=T,$ \eqref{eq:vYzero} implies
\begin{align}
    u(0,\lambda,t_1)=\sqrt{6}\sqrt{T-t_1}\Big(1+o(1)\Big).
\end{align}
These, together with the smoothness estimates provided through the identity
\begin{align}
v^{-j+1}\partial_{y}^j \nabla_{E}^{i} v(y,\omega,\tau)=u^{-j+1}\partial_{z}^{j}\nabla_{E}^{i} u(z,\omega,t)\label{eq:ThrouIdentity}
\end{align} and the estimates in \eqref{eq:pointwisedm},
indicate that when a singularity is formed at $z=0$, the MCF stays smooth in the region \eqref{eq:regionTau1}.

To make these ideas rigorous and to make the proof transparent, we define a new MCF by rescaling the old one $\Phi_{\Pi_{m-1},u}$ by a factor $\lambda_1\sqrt{T-t_1}$, in order to achieve (\ref{eq:p1}) below in the interested region.
Specifically the new MCF takes the form, for some function $p,$
\begin{align}\label{eq:flow2}
\begin{split}
\left[
\begin{array}{ccc}
x\\
\tilde{Q}(x,s)
\end{array}
\right]+p(x,\omega,s)\left[
\begin{array}{ccc}
-\partial_{x}\tilde{Q}(x,s)\cdot\omega\\
\omega
\end{array}
\right]=&\frac{\sqrt{1-\lambda_1^2 s}}{\lambda_1 }\Psi_{Q_{m-1},v}(y,\omega,\tau)\\
=&\frac{1}{\lambda_1 \sqrt{T-t_1}}\Phi_{\Pi_{m-1},u}(z,\omega,t)
\end{split}
\end{align}
where 
$p(x,\theta,s)$ is defined in terms of $v(y,\omega,\tau)$, and hence of $u(z,\omega,t)$ through the identity \eqref{eq:ytau},
\begin{align}
\begin{split}\label{eq:restart}
p(x,\omega,s):
=&\frac{1}{\lambda_1 \sqrt{T-t_1}} u(\lambda_1 \sqrt{T-t_1} x, \omega,  \lambda_1^2 (T-t_1) s+t_1)\\
=&\frac{\sqrt{1-\lambda_1^2 s}}{\lambda_1}\ v\big(\frac{\lambda_1 x}{\sqrt{1-\lambda_1^2 s}},\omega, -\ln(1-\lambda_1^2 s)+\tau_1\big),
\end{split}
\end{align} and $\tilde{Q}\in \mathbb{R}^4$ is a vector-valued function defined as 
\begin{align}
    \tilde{Q}(x,s):=\frac{1}{\lambda_1} Q(y,\tau)=\frac{1}{\lambda_1}Q(\frac{\lambda_1}{\sqrt{1-\lambda_1^2 s}}x, -\ln (1-\lambda_1^2 s)+\tau_1)
\end{align} and $x$ and $s$ are spatial and time variables defined in terms of $z$ and $t$,
\begin{align}
s:=\frac{t-t_1}{\lambda_1^2 (T-t_1)}=\frac{1-e^{-(\tau-\tau_1)}}{\lambda_1^2},\ \text{and}\ x:=\frac{1}{\lambda_1 \sqrt{T-t_1}}z=\frac{\sqrt{1-\lambda_1^2 s}}{\lambda_1 } y.
\end{align} 
The blowup time of the new MCF is $\lambda_1^{-2}\ll 1$ since, as $s\rightarrow \lambda_1^{-2}$, $\tau=-ln(1-\lambda_1^{2}s)+\tau_1\rightarrow \infty.$ 

Now we make preparations for studying $p(x,\omega,s)$ when $s\in [0,\lambda_1^{-2}]$ and $x$ is in the large region
\begin{align}
\lambda_1^{-1}\Big(e^{-\frac{m-2}{m}\tau_1} e^{5\sqrt{\tau_1}}-e^{\tau_1^{\frac{3}{4}}}\Big)\leq |x|\leq  \lambda_1^{-1}\Big(e^{-\frac{m-2}{m}\tau} e^{5\sqrt{\tau_1}}+e^{\tau_1^{\frac{3}{4}}}\Big).\label{eq:xregion}
\end{align} 

The main tools are the standard techniques of local smooth extension and interpolation between estimates of derivatives, see e.g. \cite{EckerBook, ColdingMiniUniqueness}. 

To prepare for the applications, we observe that, the scaling in \eqref{eq:invar} implies the identities 
\begin{align}\label{eq:ptov}
p^{-j+1}\partial_{x}^j \nabla_{E}^i p(x,\omega,s)=v^{-j+1}\partial_{y}^j \nabla_{E}^i v(y,\omega,\tau).
\end{align} Moreover, the estimates for $v$ in \eqref{eq:pointwisedm} imply that
\begin{align}\label{eq:invar}
  \sum_{j+|i|=1,2} v^{-j+1}\Big|\partial_{y}^j\nabla_{E}^{i} v(y,\omega,\tau)\Big|\ll  e^{-5\sqrt{\tau_1}} \ll 1,\ \text{when}\ |y|\leq e^{\frac{m-2}{2m}\tau+8\sqrt{\tau}}.
\end{align}These provide the corresponding ones for $p$
when $s\in [-1,0]$ and $x$ is in the region \eqref{eq:xregion}. Especially at the time $s=0$ and in the region \eqref{eq:xregion}
we have that
\begin{align}
p(x,\omega,0)=1+o(1)\label{eq:p1}.
\end{align} And when $s\in [-1,0]$ and $x$ is in the same region, there exists a constant $C>0$ such that
\begin{align}
\begin{split}
p(x,\omega,s)\in& [\frac{9}{10},\ \sqrt{10}],\\
\Big|\partial_{x}^k \nabla_{E}^l p(x,\omega,s)\Big|\leq & C e^{-\sqrt{\tau_1}}.
\end{split}
\end{align}

By \eqref{eq:curve}, in the interested region, the curve $\left[
\begin{array}{ccc}
x\\
\tilde{Q}(x,s)
\end{array}
\right]$ is almost a straight line.

These estimates make the tools of local smooth extension and interpolation between estimates of derivatives applicable. They imply that, in the small time interval $$s\in [0,\lambda_1^{-2}],$$ and in the region 
\begin{align}
\lambda_1^{-1}\Big(e^{-\frac{m-2}{m}\tau_1} e^{5\sqrt{\tau_1}}-\frac{9}{10} e^{\tau_1^{\frac{3}{4}}}\Big)\leq |x|\leq  \lambda_1^{-1}\Big(e^{-\frac{m-2}{m}\tau} e^{5\sqrt{\tau_1}}+\frac{9}{10} e^{\tau_1^{\frac{3}{4}}}\Big),
\end{align} 
the following estimates hold, for any integer $N$, there exists some $\delta_{N}(\tau_1)>0$ satisfying $\lim_{\tau_1\rightarrow \infty}\delta_{N}(\tau_1)=0,$ 
\begin{align}
\begin{split}
p(x,\omega,s)\in \Big[\frac{9}{10},\ \frac{11}{10}\Big],\\
\sum_{1\leq k+|l|\leq N}\Big|p^{-k+1}\partial_{x}^k \nabla_{E}^l p(x,\omega,s)\Big|\leq & \delta_{N}(\tau_1).
\end{split}
\end{align}
From here it is easy to prove that the considered part remains smooth, and is mean convex and the singularity is isolated. Recall that $s=\lambda_1\ll 1$ is the blowup time, and the singular point is $z=0.$

\subsection{Analysis in the temporal region \texorpdfstring{$\Big\{\ t(\tau)\ \Big| \ e^{\frac{1}{2}\tau(t)}|z_1| < e^{\frac{m-2}{2m}\tau(t)} e^{5\sqrt{\tau(t)}} \Big\}$}{}}
Compared to the previous subsection, the analysis here is significantly easier since, when $j+|i|\leq 2,$ the estimates in \eqref{eq:pointwisedm} provide bounds for $u^{j-1} \partial_{z}^{j}\nabla_{E}^{i}u$ through the following identity: for any nonnegative integer $j$ and $i\in (\mathbb{N}\cup\{0\})^3,$
\begin{align}
v^{-j+1}\partial_{y}^{j}\nabla_{E}^{i}v(y,\omega,\tau)=u^{-j+1} \partial_{z}^{j}\nabla_{E}^{i}u(z,\omega,t).\label{eq:nmIdentity}
\end{align}
When $j+|i|>2$, we control $\partial_{y}^{j}\nabla_{E}^{i}v$ by defining a new MCF similar to that in \eqref{eq:flow2}, applying the standard technique of smoothness estimates, and then using the identities \eqref{eq:flow2} and (\ref{eq:nmIdentity}). We choose to skip this part.


\section{Reformulation of the part B in Theorem \ref{THM:sec}}\label{sec:BTHMSec}
Here we are interested in the region
\begin{align}
    \Big\{y\ \Big| \ |y|\leq (1+\epsilon)Z_m(\tau)\Big\}, \ \text{and}\ \tau\geq T_1,
\end{align}
where $T_1\gg 1$ is a constant, and $Z_{m}$ is a scalar function defined as
\begin{align}\label{def:Zm}
    Z_m(\tau):=e^{\frac{m-2}{2m}(\tau-T_1)}e^{10\sqrt{\tau-T_1}}+5\tau^{\frac{1}{2}+\frac{1}{20}},
\end{align} the constant $\epsilon$ will appear in \eqref{def:chi} below.
The reason for choosing such a region is to make the results in \eqref{eq:qmminus1}-\eqref{eq:GLEtaM} applicable when $\tau=T_1$, which will be used to prove Lemma \ref{LM:startBoot} below.

To restrict our attention to such a region, we define a cutoff function $\chi_{Z}$ as
\begin{align}
    \chi_{Z}(y):=\chi(\frac{y}{Z_m}).
\end{align} $\chi$ is a cutoff function satisfying the following conditions: $\chi(x)=\chi(|x|)$ is non-increasing in $|x|$; and
\begin{align}\label{def:chi}
    \chi(x)=\left[
    \begin{array}{cc}
      1   &  \text{when}\ |x|\leq 1;\\
       0  & \text{when}\ |x|\geq 1+\epsilon;
    \end{array}
    \right.
\end{align}
it is a $C^{2m+20}_1$ function, and when $0<\epsilon-|x|\ll 1$, there exist constants $C_j$, $j=0,\cdots,m+2,$ s.t.
\begin{align}
\begin{split}\label{eq:chiSmooth}
    \frac{d^j}{dx^j}\chi (x)=& C_j(1-|x|)^{2m+20-j}\Big(1+o(1)\Big).
\end{split}
\end{align} This will be needed in \eqref{eq:TwoCut} below to control the functions $\frac{z\chi^{'}}{\chi}\frac{d^k}{dz^k}\chi,\ 1\leq k\leq m+1.$

For the strategy of the proof, we will prove the desired results by bootstrap. This is necessary. Our goal is to prove \eqref{eq:vdecomMsup}-\eqref{eq:smallM} below for any $\tau\geq T_1\gg 1.$ To achieve this we have technical difficulties: before proving these results in an interval $\tau\in [T_1,\ T_2]$, we need the existence of the rescaled MCF and some primitive estimates; on the other hand only after proving sufficiently good estimates, for example \eqref{eq:vdecomMsup}-\eqref{eq:smallM} in an interval $[T_1,T_2]$,  one can prove the existence and primitive estimates in a slightly larger interval $[T_1,T_2+\kappa]$ for some $\kappa>0.$ 

Thus in order to prove the desired result we need to bootstrap. 

To initiate the bootstrap we use the following results:
\begin{lemma}\label{LM:startBoot}
For any small $\epsilon_1>0$ there exists a time $T_0$ such that, when $\tau\geq T_0$ and $|y|\leq 7 \tau^{\frac{1}{2}+\frac{1}{20}}$, 
\begin{align}\label{eq:initiaBoot}
    \Big|v-\sqrt{6+d_m e^{-\frac{m-2}{2}\tau}H_m}\Big|+ \sum_{j+|l|=1}^{2m+4}\Big|\partial_{y}^{j}\nabla_{E}^{l}v\Big|\leq \epsilon_1.
\end{align}
\end{lemma}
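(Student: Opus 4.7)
The plan is to read the claim off from the inductive hypotheses \eqref{eq:qmminus1}--\eqref{eq:GLEtaM} by a size comparison on the slightly smaller region $|y|\leq 7\tau^{\frac{1}{2}+\frac{1}{20}}$, and then to invoke parabolic interior regularity to produce the high-order derivative bounds.

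First I would use the decomposition \eqref{eq:newdec} to write
\[
v^2-\Bigl(6+d_m e^{-\frac{m-2}{2}\tau}H_m(y)\Bigr)=\bigl(\gamma_{m,0,1}(\tau)-d_m e^{-\frac{m-2}{2}\tau}\bigr)H_m(y)+\sum_{(n,k,l)\neq(m,0,1)}\gamma_{n,k,l}(\tau)H_n(y)f_{k,l}(\omega)+\xi.
\]
On the region $|y|\leq 7\tau^{\frac{1}{2}+\frac{1}{20}}$ one has $|H_n(y)|\lesssim \tau^{n(\frac{1}{2}+\frac{1}{20})}$, and every coefficient produced by \eqref{eq:n01Rapid} and \eqref{eq:dm} is dominated by an exponential that defeats this polynomial factor: the first summand is $O(e^{-\frac{m-1}{2}\tau}\tau^{m(\frac{1}{2}+\frac{1}{20})})=o(1)$; for $2\leq n\leq m-1$ each $|\gamma_{n,k,l}H_n|$ is $O(e^{-\frac{n}{2}\tau}\tau^{n(\frac{1}{2}+\frac{1}{20})})=o(1)$; for $n=0,1$ the decay of $\gamma_{n,k,l}$ is strictly stronger; and for $j\geq 1$ at order $n=m$ the coefficient carries $(1+\tau)e^{-\frac{m}{2}\tau}$, again giving $o(1)$. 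Combined with $|\xi|\leq \tau^{-\frac{3}{10}}$ from \eqref{eq:GLEtaM} this yields $|v^2-(6+d_m e^{-\frac{m-2}{2}\tau}H_m(y))|=o(1)$ uniformly on the region. Because $d_m e^{-\frac{m-2}{2}\tau}H_m(y)$ is itself $o(1)$ there, both radicands stay near $6$ and the identity $\sqrt{a}-\sqrt{b}=(a-b)/(\sqrt{a}+\sqrt{b})$ upgrades this to $|v-\sqrt{6+d_m e^{-\frac{m-2}{2}\tau}H_m(y)}|\leq \epsilon_1$ for $\tau\geq T_0$ sufficiently large.

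For the derivatives of order $j+|l|=1,2$ the bound follows immediately from the pointwise estimate \eqref{eq:GLEtaM} on $\partial_y^k\nabla_E^l\xi$ together with differentiation of the polynomial terms, whose coefficients remain exponentially small. For $3\leq j+|l|\leq 2m+4$ the inductive hypotheses only give $C^2$ pointwise control on $\xi$, so the plan is to invoke standard parabolic interior regularity applied to equation \eqref{eq:tilv} for $v^2$. On a unit-size parabolic cylinder contained in $|y|\leq 8\tau^{\frac{1}{2}+\frac{1}{20}}$, the function $v^2$ is uniformly close to $6$ (by the previous step applied at times slightly before $\tau$), the equation is uniformly parabolic with smooth structure since $\partial_y Q_{m-1}$ is exponentially small on the region by \eqref{eq:qmminus1}, and a Schauder-type bootstrap (in the spirit of the smoothing estimates of Ecker's book, already cited) upgrades $C^2$ smallness to $C^{2m+4}$ smallness on the shrunk region $|y|\leq 7\tau^{\frac{1}{2}+\frac{1}{20}}$. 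The principal obstacle is precisely this high-order step: one must verify that the interior regularity is uniform in $\tau$ across a spatial region whose size grows polynomially in $\tau$, but this is feasible because the $C^2$ input data shrink to zero as $\tau\to\infty$ and the coefficients of \eqref{eq:tilv} converge to those of a constant-coefficient linear heat equation.
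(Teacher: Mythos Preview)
Your proposal is correct and follows essentially the same route the paper indicates: the paper's proof is skipped with the remark that it ``is implied by the technique of smoothness estimates and the smallness estimates in \eqref{eq:qmminus1}--\eqref{eq:GLEtaM},'' which is exactly your two-step plan of reading off $C^0$ (and $C^2$) smallness from the coefficient and remainder bounds and then bootstrapping to $C^{2m+4}$ via parabolic interior regularity. Your treatment of the size comparison and the explicit bookkeeping of exponential-versus-polynomial factors is more detailed than what the paper records, but the substance is the same.
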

This is implied by the technique of smoothness estimates and the smallness estimates in \eqref{eq:qmminus1}-\eqref{eq:GLEtaM}. The proof is tedious, but easy, and hence is skipped.

We are ready to state our results.  Recall that we are considering the following cases: (1) $d_m>0$ and $m\geq 4$ is even, (2) $d_m=0$ and $m\geq 3$ is even or odd.

In the first step of bootstrap, we prove the following results.
\begin{proposition}\label{prop:Mconvex}
There exists a small constant $\delta$ such that if in the space-and-time region
\begin{align}
\tau\in [T_1,\tau_1],\ y\in \Big\{y\ \Big| \ |y|\leq (1+\epsilon) Z_{m}(\tau)\Big\},\label{eq:induRegionSup}
\end{align} $v$ satisfies the estimates
\begin{align}\label{eq:conditionZm}
\Big|\frac{v-\sqrt{6+d_m e^{-\frac{m-2}{2}\tau}H_m}}{v}\Big|+ \sum_{m+|l|=1}^{2m+4}v^{m-1}\Big|\partial_{y}^{m}\nabla_{E}^{l}v\Big|\leq \delta,
\end{align}
then \eqref{eq:vdecomMsup}-\eqref{eq:smallM} below hold in the same space-and-time region.

For any sufficiently large integer $N$, there exist functions $\alpha_{j,k,l}$ such that 
\begin{align}
    v(y,\omega,\tau)=\sqrt{6+\sum_{j=0}^{m} \sum_{k=0}^{N}\sum_{l}\alpha_{j,k,l}(\tau) H_{j}(y)f_{k,l}(\omega)+\eta(y,\omega,\tau)},\label{eq:vdecomMsup}
\end{align} and $\chi_{Z}\eta$ satisfies the orthogonality conditions
\begin{align}\label{eq:ZEtaOrtho}
    \chi_{Z}\eta\perp_{\mathcal{G}},\ H_{n} f_{k,l}, \ 0\leq n\leq m, \ \text{and}\ k\leq N,
\end{align}
$\alpha_{j,k,l}$ and the $\mathcal{G}-$norm of $\chi_{Z}\eta$ enjoy the same estimates to the corresponding parts in \eqref{eq:dm}-\eqref{eq:mGnormeta}.

Certain weighted $L^{\infty}$-norm of $\chi_{Z}\eta$ satisfies the inequality,
\begin{align}\label{eq:etaMsup}
\mathcal{M}(\tau)\lesssim  \kappa \Big(1+\mathcal{M}(\tau)\Big)+\mathcal{M}^2(\tau),
\end{align} where $\mathcal{M}$ is to be defined in (\ref{eq:major2}) below, $\kappa$ is a small constant satisfying that $\kappa\rightarrow 0$ as $\delta+T_1^{-1}\rightarrow 0.$ This, together with the condition $\mathcal{M}(T_1)\lesssim \delta\ll 1$ implied by Lemma \ref{LM:startBoot} and a standard application of Granwall's inequality, implies that
\begin{align}
    \mathcal{M}(\tau)\ll 1,\ \text{for any}\ \tau\in [T_1, \tau_1].\label{eq:smallM}
\end{align}

\end{proposition}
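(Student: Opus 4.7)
\medskip

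\noindent\textbf{Proof proposal.} The plan is to carry out four steps: define the decomposition, derive the effective equation for $\chi_{Z}\eta$, close the $\mathcal{G}$-norm estimates by the energy method used in Section~\ref{subsec:afterNorm3}, and finally prove the weighted-$L^{\infty}$ inequality \eqref{eq:etaMsup}, which is the new contribution. First I define $\alpha_{j,k,l}(\tau)$, for $0\le j\le m$, $0\le k\le N$, so that the orthogonality \eqref{eq:ZEtaOrtho} holds: writing $v^{2}-6=\sum\alpha_{j,k,l}H_{j}f_{k,l}+\eta$ and imposing $\langle \chi_{Z}\eta,\ H_{j}f_{k,l}\rangle_{\mathcal{G}}=0$ gives a linear system for $\{\alpha_{j,k,l}\}$ whose Gram matrix differs from the diagonal one (the $L^{2}(\mathbb{R},e^{-y^{2}/4}dy)\otimes L^{2}(\mathbb{S}^{3})$ norms of $H_{j}f_{k,l}$) by an exponentially small perturbation since $Z_{m}(\tau)\gg \sqrt{\tau}$, so the system is uniquely solvable and the continuity in $\tau$ from Lemma~\ref{LM:startBoot} identifies the constants $d_{n,k,l}$ with those in \eqref{eq:dm}.

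Next I derive the effective equation for $\chi_{Z}\eta$ analogous to \eqref{eq:EffTilXi}. Starting from \eqref{eq:tilv} and subtracting the contribution of $\sum\alpha_{j,k,l}H_{j}f_{k,l}$, I obtain
\begin{align*}
\partial_{\tau}(\chi_{Z}\eta) = -L(\chi_{Z}\eta) + \chi_{Z}\big(F(Q_{m-1},\alpha) + SN(Q_{m-1},\alpha,\eta)\big) + \mu_{Z}(\eta),
\end{align*}
where $F$ collects the ODE sources $-(\frac{d}{d\tau}+\frac{j-2}{2}+\frac{k(k+2)}{6})\alpha_{j,k,l}H_{j}f_{k,l}$ together with the explicit terms $J_{7},J_{8},J_{9},J_{10}$ from \eqref{def:J7J8}--\eqref{def:J9J10}, $SN$ collects everything nonlinear in $\eta$ or with the factors $\tilde K_{j},\tilde J_{k}$ from \eqref{def:TilKJ}, and $\mu_{Z}(\eta)$ is the cutoff commutator, which by \eqref{eq:chiSmooth} is controlled by the smoothness of $\chi$ up to order $m+1$. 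For the $\mathcal{G}$-norm estimates I then form the quadratic functionals $\phi_{j,n}(\tau)=\langle(-\Delta_{\mathbb{S}^{3}}+1)^{n}\partial_{y}^{j}\chi_{Z}\eta,\partial_{y}^{j}\chi_{Z}\eta\rangle_{\mathcal{G}}$ for $j+n/2\le 2$, differentiate in $\tau$, and use \eqref{eq:ZEtaOrtho} to cancel the low-frequency pieces of $F$, and \eqref{eq:conditionZm} to control $SN$; the cutoff part contributes $o(e^{-Z_{m}^{2}/5})$, which is negligible. This reproduces the estimates in \eqref{eq:dm}--\eqref{eq:mGnormeta} inside the larger region.

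The main obstacle is \eqref{eq:etaMsup}. I would define $\mathcal{M}(\tau)$ as the sup of $\|\langle y\rangle^{-m}\partial_{y}^{j}\nabla_{E}^{l}\chi_{Z}\eta(\cdot,\tau)\|_{\infty}$ over $j+|l|\le 2$ (possibly weighted by $e^{\frac{m-1}{2}\tau}$), since the introduction emphasizes that these are the norms reachable by the propagator Lemma~\ref{LM:frequencyWise}. Writing Duhamel with the $L$-propagator,
\begin{align*}
\chi_{Z}\eta(\tau)=e^{-L(\tau-T_{1})}\chi_{Z}\eta(T_{1})+\int_{T_{1}}^{\tau}e^{-L(\tau-\sigma)}\big[\chi_{Z}(F+SN)+\mu_{Z}(\eta)\big](\sigma)\,d\sigma,
\end{align*}
I would measure each term in the weighted sup norm. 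Lemma~\ref{LM:frequencyWise} gives decay of $e^{-L\cdot}$ on the subspace $\langle y\rangle^{-m}L^{\infty}$ orthogonal (in $\mathcal{G}$) to $\{H_{j}f_{k,l}\}_{j\le m,k\le N}$; this is exactly what \eqref{eq:ZEtaOrtho} secures. The source $F$ is made of explicit polynomials whose size against $\langle y\rangle^{-m}$ is $\kappa$-small because of the bootstrap assumption \eqref{eq:conditionZm} and the smallness of $\partial_{y}Q_{m-1}$ in the region \eqref{eq:induRegionSup}. The terms in $SN$ are either products of two small factors (one $\eta$ and one small coefficient) giving $\kappa\mathcal{M}$, or quadratic in $\eta$ giving $\mathcal{M}^{2}$. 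The commutator $\mu_{Z}(\eta)$ is localized near $|y|\sim Z_{m}$ where \eqref{eq:chiSmooth} furnishes sufficient derivatives of $\chi$, giving a contribution bounded by $\kappa(1+\mathcal{M})$. Summing yields \eqref{eq:etaMsup}. Combined with $\mathcal{M}(T_{1})\lesssim \delta$ from Lemma~\ref{LM:startBoot} and a direct Granwall argument (the quadratic term is absorbed once $\mathcal{M}\ll 1$), we obtain \eqref{eq:smallM} throughout $[T_{1},\tau_{1}]$.

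The hard part is genuinely the $L^{\infty}$ estimate: in contrast to the $\mathcal{G}$-norm, which is tight near $y=0$ and decays exponentially in $|y|$, here the region $|y|\lesssim Z_{m}(\tau)$ is super-polynomial in $\tau$, so I must use the polynomial weight $\langle y\rangle^{-m}$ to exactly balance the polynomial factors $H_{j}(y)$, $\partial_{y}Q_{m-1}(y)$ that appear in $F$ and $SN$. This balance only closes because the orthogonality conditions \eqref{eq:ZEtaOrtho} remove precisely those modes for which the propagator does not decay, making the Duhamel integral convergent with the correct rate.
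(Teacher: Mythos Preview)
Your general outline---decomposition via fixed point, effective equation, $\mathcal{G}$-norm via energy, weighted $L^\infty$ via Duhamel---matches the paper's skeleton. However there are two genuine gaps in the $L^\infty$ step that would prevent the argument from closing.

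\textbf{The definition of $\mathcal{M}$.} You propose $\mathcal{M}$ as a sup over $\|\langle y\rangle^{-m}\partial_y^{j}\nabla_E^{l}\chi_Z\eta\|_\infty$ with $j+|l|\le 2$. This does not close. The paper's $\mathcal{M}$ (see \eqref{def:majorants}--\eqref{eq:major2}) is a sum over a \emph{staircase} of norms
\[
\mathcal{M}_{n,j}=\sup_{s\le\tau}\tilde Z_m^{\,n}(s)\,Y_{T_2}(s)\,\Big\|\langle y\rangle^{-n}\|(-\Delta_{\mathbb{S}^3}+1)^5\partial_y^{j}\chi_Z\eta\|_{L^2(\mathbb{S}^3)}\Big\|_\infty,\qquad (n,j)\in\Upsilon_m,
\]
where $\Upsilon_m=\{(m+1-\tfrac{j}{2},j):j=0,\dots,2m+2\}$. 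The reason is explained around \eqref{eq:samD1D2}: the nonlinearity $SN$ contains terms such as $e^{-\tau/2}\sqrt{q}\,(\partial_y^2\eta)(A_2\cdot\omega)$ and $e^{-\tau}y^2|A_2\cdot\omega|^2\partial_y^2\eta$ coming from $J_4,J_5$. To bound $\langle y\rangle^{-m-1}|\chi_Z\eta|$ you are forced to bound $\langle y\rangle^{-m}|\partial_y^2\chi_Z\eta|$, which in turn forces $\langle y\rangle^{-m+1}|\partial_y^4\chi_Z\eta|$, and so on up to $2m+2$ $y$-derivatives. With only two derivatives and a single weight $\langle y\rangle^{-m}$ the Duhamel integral does not gain enough decay and the estimate is circular. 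The $(-\Delta_{\mathbb{S}^3}+1)^5$ is also essential: it is what makes products controllable via the embedding Lemma~\ref{LM:embedding}.

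\textbf{The commutator $\mu_Z(\eta)$ and the three-case split.} You treat $\mu_Z(\eta)$ as $\kappa$-small. It is not: the term $(\tfrac12-Z_m^{-1}\tfrac{d}{d\tau}Z_m)(y\partial_y\chi_Z)\partial_y^j\eta$ has $\|y\partial_y\chi_Z\|_\infty$ of fixed size, and the map $\chi_Z w\mapsto (y\partial_y\chi_Z)w$ is even unbounded near the edge of the support. The paper (around \eqref{eq:diffCut}--\eqref{eq:NewPoten}) introduces a second cutoff $\tilde\chi_Z$ and absorbs the bounded part $(y\partial_y\chi_Z)\tilde\chi_Z/\chi_Z$ into the linear operator as a nonnegative potential, leaving a genuinely small remainder $\Gamma_j$. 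Furthermore, the paper does \emph{not} run a single Duhamel argument: it splits $\eta=\sum_{k\le N}\eta_{k,l}f_{k,l}+\tilde\eta$ and treats the three regimes (low $k$, low $j$) via the propagator Lemma~\ref{LM:frequencyWise}, (high $k$, low $j$) via the maximum principle exploiting the large spectral gap from $-\Delta_{\mathbb{S}^3}$, and (high $j\ge m+2$) via the maximum principle exploiting the commutator gain $j/2$. Without this split the propagator estimate alone does not apply, since $\chi_Z\eta_{k,l}$ for large $k$ carries no useful orthogonality in $y$.
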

The proposition will be proved in subsection \ref{subsec:propMconvex}.

Now we define the function $\mathcal{M}$ used in (\ref{eq:etaMsup}). Before that we define the following functions, 
\begin{align}\label{def:majorants}
\mathcal{M}_{n,j}(\tau):=\sup_{T_1\leq s\leq \tau}\tilde{Z}_m^{n}(s) Y_{T_2}(s) \Big\|\langle y\rangle^{-n}\|\big(-\Delta_{\mathbb{S}^3}+1\big)^5 \partial_{y}^{j}\chi_{Z} \eta(\cdot,s)\|_{\mathbb{S}^3}\Big\|_{\infty},
\end{align} where the pair $(n,j)$ belongs to the following set
\begin{align*}
    \Upsilon_m:=&\Big\{(m+1,0),\ (m+\frac{1}{2},1),\ (m, 2),\ (m-\frac{1}{2},3),\ \cdots,\ (0,2m+2)\Big\}\\
    =&\Big\{ \Big(m+1-\frac{j}{2},j\Big)\ \Big|\ j=0,1,\cdots, 2m+2 \Big\},
\end{align*} and $\tilde{Z}_m$ is a function defined as 
\begin{align}\label{def:TiZm}
\tilde{Z}_{m}(\tau):=e^{\frac{m-2}{2m}(\tau-T_1)}e^{10\sqrt{\tau-T_1}};
\end{align}
and $Y_{T_2}$ is a function defined as
\begin{align}\label{def:YT2}
Y_{T_2}(\tau):=e^{20m\sqrt{\max\big\{s-T_2,\ 0\big\}}},    
\end{align} $T_2>T_1$ is the unique time defined by the identity
$\tilde{Z}_{m}(T_2)=T_2^{\frac{1}{2}+\frac{1}{20}}$. We will need $T_2$ and $Y_{T_2}$ to overcome some minor difficulties in (\ref{eq:pureTech2}) and (\ref{eq:pureTech}) below. The important information is that, $$Y_{T_2}(\tau)=e^{20m\sqrt{\tau}}\Big(1+o(1)\Big),\ \text{as} \ \tau\rightarrow \infty.$$

We define the function $\mathcal{M}$ as
\begin{align}\label{eq:major2}
\mathcal{M}(\tau):=\sum_{(n,j)\in \Upsilon_{m}}\mathcal{M}_{n,j}(\tau).
\end{align}
The reasons for choosing such norms will be explained after Proposition \ref{prop:ZMajorRefor}, when we are ready.

The next result is the second and last step of bootstrap:
\begin{proposition}\label{prop:localSmo}
If Lemma \ref{LM:startBoot} holds and $v$ satisfies the estimates \eqref{eq:vdecomMsup}-\eqref{eq:smallM} in the region
$$\tau\in [T_1,\tau_1],\ y\in \Big\{y\ \Big| \ |y|\leq (1+\epsilon) Z_{m}(\tau)\Big\},$$
then, for any $\delta>0$, provided that $T_1$ is large enough, there exists a small $\zeta>0$ such that in the space-and-time region
\begin{align}
\tau\in [T_1,\tau_1+\zeta],\ \text{and}\  y\in \Big\{y\ \Big| \ |y|\leq (1+\epsilon) Z_{m}(\tau)\Big\},
\end{align}  the following estimates hold,
\begin{align}
\Big|\frac{v-\sqrt{6+d_m e^{-\frac{m-2}{2}\tau}H_m}}{v}\Big|+ \sum_{j+|i|=1}^{2m+4}v^{j-1}\Big|\partial_{y}^{j}\nabla_{E}^{i}v\Big|\leq \delta.
\end{align}
\end{proposition}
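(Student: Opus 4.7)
The plan is to show that the pointwise bounds claimed in Proposition \ref{prop:localSmo} at time $\tau_1$ follow from $\mathcal{M}(\tau_1)\ll 1$ together with the decomposition \eqref{eq:vdecomMsup}, and then to extend them a short time further by short-time existence and parabolic regularity. Since the hypothesis is exactly the conclusion of Proposition \ref{prop:Mconvex}, the bound $\delta$ is already achieved strictly (with plenty of slack), so only a uniform continuity statement is needed.

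First I would convert the $\mathcal{M}$-smallness into the pointwise statement at $\tau=\tau_1$. From the definitions in \eqref{def:majorants}--\eqref{eq:major2}, $\mathcal{M}(\tau_1)\ll 1$ gives, for every $j\in\{0,1,\ldots,2m+2\}$,
\begin{align*}
\Big\|\langle y\rangle^{-(m+1-j/2)}\|(-\Delta_{\mathbb{S}^3}+1)^{5}\partial_{y}^{j}\chi_{Z}\eta(\cdot,\tau_1)\|_{\mathbb{S}^3}\Big\|_{\infty}\ll \tilde{Z}_m^{-(m+1-j/2)}(\tau_1)\,Y_{T_2}^{-1}(\tau_1),
\end{align*}
which by Sobolev embedding on $\mathbb{S}^3$ absorbs up to four $\nabla_E$-derivatives and thus controls $\partial_y^j\nabla_E^i\eta$ for $j+|i|\leq 2m+2$. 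Combining this with \eqref{eq:dm}--\eqref{eq:n01Rapid} for the coefficients $\alpha_{j,k,l}$ and with the explicit polynomial structure of the main part $\sqrt{6+d_m e^{-(m-2)\tau/2}H_m(y)}$, one reads off
\begin{align*}
\Big|\tfrac{v-\sqrt{6+d_m e^{-(m-2)\tau_1/2}H_m}}{v}\Big|+\sum_{j+|i|=1}^{2m+2}v^{j-1}|\partial_y^j\nabla_E^i v|\leq \tfrac{\delta}{4}
\end{align*}
in the region $|y|\leq (1+\epsilon)Z_m(\tau_1)$, provided $T_1$ is large enough; the relative weight $\langle y\rangle^{-n}$ in $\mathcal M_{n,j}$ is precisely calibrated so that the contribution of $\eta$ is dwarfed by the main term $\sqrt{6+d_m e^{-(m-2)\tau/2}H_m}$ in this region.

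Next I would recover the two additional derivative orders $2m+3$ and $2m+4$ required by the conclusion. Since $v$ stays strictly positive and its first two derivatives are uniformly small, the quasilinear parabolic equation \eqref{eq:tilv} governing $\tilde v=v^2$ is uniformly parabolic with smooth coefficients depending on at most $\partial_y v, \nabla_\omega v$. Applying standard interior parabolic Schauder/smoothness estimates — in the form used by Ecker \cite{EckerBook} for MCF — on balls of radius $\sim 1$ sitting well inside the region $|y|\leq (1+\epsilon)Z_m(\tau)$, the bounds on $2m+2$ spatial derivatives propagate into bounds on $2m+3$ and $2m+4$ derivatives at the cost of shrinking the domain by an amount $\ll \epsilon\,Z_m(\tau)$; the slack in $(1+\epsilon)Z_m$ (with the inner region being $|y|\leq (1+\epsilon/2)Z_m$, say) absorbs this loss, yielding
$$\sum_{j+|i|\leq 2m+4}v^{j-1}|\partial_y^j\nabla_E^i v(\cdot,\tau_1)|\leq \tfrac{\delta}{2}.$$

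Finally I would close by a short-time continuation argument. The parametrized rescaled MCF is a smooth quasilinear parabolic system in $v$ on the slowly-varying curved base $\Psi_{Q_{m-1},0}$; at time $\tau_1$ the initial datum just obtained lies strictly inside the open set
$\{\mathcal N_\delta:\ |\tfrac{v-\sqrt{6+d_m e^{-(m-2)\tau/2}H_m}}{v}|+\sum_{j+|i|=1}^{2m+4}v^{j-1}|\partial_y^j\nabla_E^i v|\leq \delta\}$. Short-time existence and continuous dependence in $C^{2m+4}$ — which are standard for quasilinear parabolic equations on compact pieces of the spatial domain — produce a $\zeta>0$, depending on $\delta$ and the elliptic/smoothness constants of the equation but not on $\tau_1$, such that the solution remains in $\mathcal N_\delta$ on $[\tau_1,\tau_1+\zeta]$ throughout $|y|\leq (1+\epsilon)Z_m(\tau)$. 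The main subtlety is handling the moving outer edge $|y|=(1+\epsilon)Z_m(\tau)$: because $Z_m$ grows slowly, $\tfrac{d}{d\tau}Z_m\lesssim Z_m$, the region at $\tau_1+\zeta$ differs from that at $\tau_1$ by only a thin shell, and the local smoothness estimate from the previous step already covers this shell with room to spare, which is the only delicate point in the argument.
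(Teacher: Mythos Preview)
Your overall strategy --- convert $\mathcal{M}\ll 1$ into pointwise smallness of $\eta$ and its derivatives, then bootstrap higher derivatives and extend slightly in time via parabolic regularity --- is the same as the paper's. There is, however, a genuine gap: you argue only at the single time $\tau_1$, while the conclusion must hold on the entire interval $[T_1,\tau_1+\zeta]$. More seriously, even if you repeat your first step at every $\tau\in[T_1,\tau_1]$, it fails near the initial time. At $\tau$ close to $T_1$ one has $\tilde Z_m(\tau)\approx 1$ and $Y_{T_2}(\tau)=1$, while the spatial region extends out to $|y|\le (1+\epsilon)Z_m(\tau)\approx 5\tau^{1/2+1/20}\gg 1$; the bound $\|\langle y\rangle^{-(m+1)}\eta\|_\infty\ll \tilde Z_m^{-(m+1)}Y_{T_2}^{-1}$ then yields only $|\eta|\lesssim Z_m^{m+1}$, which is large, not small. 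In other words, the calibration of the $\mathcal M$-weights against the profile that you invoke is valid only once $\tilde Z_m$ has caught up with $Z_m$.

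The paper handles this by splitting into two temporal regions: when $\tilde Z_m(\tau)\le \tau^{1/2+1/20}$ (so $Z_m(\tau)\le 6\tau^{1/2+1/20}$), the spatial domain sits inside the region covered by Lemma~\ref{LM:startBoot}, and one simply quotes that lemma; when $\tilde Z_m(\tau)>\tau^{1/2+1/20}$, one has $Z_m\lesssim \tilde Z_m$ and $Y_{T_2}(\tau)=e^{20m\sqrt{\tau-T_2}}$, so now $\mathcal M\ll 1$ does yield $\sum_{j+|i|\le 2}|\partial_y^j\nabla_E^i\chi_Z\eta|\le e^{-10m\sqrt{\tau-T_1}}$ uniformly in $|y|\le Z_m$, exactly as you intended. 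From there the paper (like you) appeals to local smooth extension and interpolation \`a la Ecker to recover the higher derivatives and the short forward-in-time extension. So your proof becomes correct once you insert this temporal split and invoke Lemma~\ref{LM:startBoot} for the early regime.
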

\begin{proof}
We consider two temporal regions: $\{\tau \ | \ \tilde{Z}_{m}(\tau)\leq \tau^{\frac{1}{2}+\frac{1}{20}}\}$ and $\{\tau\ |\ \tilde{Z}_{m}(\tau)>\tau^{\frac{1}{2}+\frac{1}{20}}\}.$

In the first region $Z_{m}\leq 6\tau^{\frac{1}{2}+\frac{1}{20}},$ Lemma \ref{LM:startBoot} implies the desired results. 

In the second region we need \eqref{eq:vdecomMsup}-\eqref{eq:smallM}. Since $\mathcal{M}\leq 1$, the definition of $\mathcal{M}$ imply that
\begin{align}
    \sum_{j+|i|\leq 2}\Big|\partial_{y}^j\nabla_{E}^{i}\chi_{Z}\eta(\cdot,\tau)\Big|\leq e^{-10m\sqrt{\tau-T_1}}.
\end{align} This and the estimates on the other functions imply that, when $|y|\leq Z_{m}(\tau),$
\begin{align}
    \Big|v-\sqrt{6+d_m e^{-\frac{m-2}{2}\tau}H_m}\Big|+ \sum_{j+|i|=1,2}\Big|v^{j-1}\partial_{y}^{j}\nabla_{E}^{i}v\Big|\lesssim e^{-\sqrt{\tau-T_1}}.
\end{align}
From here we obtain the desired results by comparing MCF and the rescaled MCF, applying the standard techniques of local smooth extension and interpolation between estimates between derivatives. We choose to skip the detail since we considered the same problem in our previous paper \cite{GZ2018}, and these techniques were applied in \cite{ColdingMiniUniqueness}. Moreover in Section \ref{sec:step1Third} we used these techniques to prove a more difficult result.
\end{proof}

Assuming Proposition \ref{prop:Mconvex} we are ready to prove Part II of Item [B] in Theorem \ref{THM:sec}.


\subsection{Proof of Part B in Theorem \ref{THM:sec}}
\begin{proof}
As we discussed before, we need to bootstrap. To initiate it we need (\ref{eq:initiaBoot}). By the definition of $Z_m(\tau)$ in \eqref{def:Zm}, if we choose a sufficiently large $T_1,$ then (\ref{eq:initiaBoot}) holds in the space and time region
\begin{align}
\tau\in [T_1,T_1+20],\ \text{and}\ |y|\leq (1+\epsilon)Z_m(\tau).
\end{align} Consequently the condition \eqref{eq:conditionZm} is satisfied, hence the results \eqref{eq:vdecomMsup}-\eqref{eq:etaMsup} in Proposition \ref{prop:Mconvex} hold in the same space and time region.

For the next step of bootstrap, suppose that \eqref{eq:vdecomMsup}-\eqref{eq:etaMsup} in Proposition \ref{prop:Mconvex} hold in a space-and-time region, for some $T\geq T_1+20$,
\begin{align}
\tau\in [T_1,T],\ \text{and}\ |y|\leq (1+\epsilon)Z_m(\tau).
\end{align} Thus Proposition \ref{prop:localSmo} becomes applicable. This, in turn, implies that, for some $\zeta>0$, in the region,
\begin{align}
\tau\in [T_1,T+\zeta],\ \text{and}\ |y|\leq (1+\epsilon)Z_m(\tau),
\end{align} (\ref{eq:conditionZm}) holds. Consequently \eqref{eq:vdecomMsup}-\eqref{eq:etaMsup} actually hold in this larger region.

By continuity we prove that the desired results \eqref{eq:vdecomMsup}-\eqref{eq:etaMsup} hold in the region
\begin{align}
\tau\in [T_1,\infty),\ \text{and}\ |y|\leq (1+\epsilon)Z_m(\tau).
\end{align}

Those results imply the desired Part B in Theorem \ref{THM:sec}.
\end{proof}


\subsection{Proof of Proposition \ref{prop:Mconvex}}\label{subsec:propMconvex}
We start with some preliminary estimates for $\alpha_{n,k,l}$ and the $\mathcal{G}$-norm of $\eta$ by 
comparing them to $\gamma_{n,k,l}$ and $\xi$ in \eqref{eq:newdec}. Recall that $1_{\leq R}$ is the Heaviside function.
\begin{lemma}\label{LM:fixedPoint} The functions $\eta$ and $\alpha_{n,k,l}$ satisfy the following estimates,
\begin{align}
\Big\|\chi_{Z}\eta(\cdot,\tau)-1_{\leq R}\xi(\cdot,\tau)
\Big\|_{\mathcal{G}}+ &
    \Big|\gamma_{n,k,l}(\tau)-\alpha_{n,k,l}(\tau)\Big|\leq C_{\delta_0} e^{-\frac{m-1-\delta_0}{2}\tau}.\label{eq:DiffAlBe}
    \end{align}
\end{lemma}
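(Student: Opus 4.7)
The plan is to exploit that both decompositions \eqref{eq:newdec} and \eqref{eq:vdecomMsup} represent the same function $v^2-6$ on their common domain, and that the coefficients in each are pinned down by an orthogonality convention against $H_nf_{k,l}$.

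First I would subtract the two decompositions on the common region $|y|\leq R(\tau):=8\tau^{\frac{1}{2}+\frac{1}{20}}$, obtaining the pointwise identity
$$\eta(y,\omega,\tau)-\xi(y,\omega,\tau)=\sum_{n=0}^{m}\sum_{k=0}^{N}\sum_{l}\bigl(\gamma_{n,k,l}(\tau)-\alpha_{n,k,l}(\tau)\bigr)H_n(y)f_{k,l}(\omega).$$
This is a purely arithmetic consequence of equating the two expressions for $v^2-6$.

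Next I would use the two orthogonality conventions: $\chi_Z\eta\perp_{\mathcal{G}} H_nf_{k,l}$ from \eqref{eq:ZEtaOrtho}, and $\chi_R\xi\perp_{\mathcal{G}} H_nf_{k,l}$ inherited from the inductive setup (the same convention that fixed the coefficients in Propositions~\ref{prop:step2}--\ref{prop:afterNorForm}). Testing the displayed identity against $H_nf_{k,l}$ in the $\chi_R$-weighted $\mathcal{G}$-inner product gives
$$\sum_{n',k',l'}(\gamma_{n',k',l'}-\alpha_{n',k',l'})\bigl\langle\chi_RH_{n'}f_{k',l'},H_nf_{k,l}\bigr\rangle_{\mathcal{G}}=-\bigl\langle(\chi_Z-\chi_R)\eta,\,H_nf_{k,l}\bigr\rangle_{\mathcal{G}}.$$
The Gram matrix on the left is exponentially close to $\mathrm{diag}(\|H_nf_{k,l}\|_{\mathcal{G}}^2)$ since $1-\chi_R$ is supported where $e^{-|y|^2/4}\lesssim e^{-R^2/5}$, so for $\tau$ large it is uniformly invertible. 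On the right, $\chi_Z-\chi_R$ is supported in $\{|y|\geq R(\tau)\}$, where the Gaussian weight is bounded by $e^{-R^2/4}=e^{-16\tau^{1+1/10}}$. Combined with the polynomial growth of $H_nf_{k,l}$ and the pointwise bound $|\eta(y,\omega,\tau)|\lesssim\langle y\rangle^{m+1}$ on $\mathrm{supp}\,\chi_Z$ (inherited from the majorant $\mathcal M_{m+1,0}(\tau)\lesssim 1$ in \eqref{def:majorants}), the right-hand side is doubly-exponentially small in $\tau$, hence certainly bounded by $C_{\delta_0}e^{-\frac{m-1-\delta_0}{2}\tau}$. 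Inverting the Gram matrix yields the desired coefficient estimate.

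Finally, for the $\mathcal{G}$-norm of $\chi_Z\eta-1_{\leq R}\xi$ I would write
$$\chi_Z\eta-1_{\leq R}\xi=(\chi_Z-1_{\leq R})\eta+1_{\leq R}(\eta-\xi).$$
The first piece is supported in $\{|y|\geq R(\tau)\}$ and hence super-exponentially small in $\mathcal{G}$-norm by the same Gaussian-tail argument. The second piece equals $1_{\leq R}\sum(\gamma_{n,k,l}-\alpha_{n,k,l})H_nf_{k,l}$ by Step~1, whose $\mathcal{G}$-norm is bounded by $C\sum|\gamma_{n,k,l}-\alpha_{n,k,l}|\lesssim C_{\delta_0}e^{-\frac{m-1-\delta_0}{2}\tau}$ by the coefficient estimate just proved. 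The only mildly delicate point is the pointwise polynomial bound on $\eta$ in the tail region $R(\tau)\leq|y|\leq(1+\epsilon)Z_m(\tau)$, but this is precisely the content of $\mathcal M_{m+1,0}(\tau)\lesssim 1$ from the bootstrap assumption.
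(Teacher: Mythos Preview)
Your argument is correct and follows essentially the same route as the paper: compare the two decompositions of $v^2-6$, use the Gaussian tail to absorb the cutoff mismatch $(\chi_Z-\chi_R)$ or $(\chi_Z-1_{\leq R})$, and invert a near-diagonal linear system to bound $|\gamma_{n,k,l}-\alpha_{n,k,l}|$. The paper phrases the last step as an application of the Fixed Point Theorem to the map $\alpha\mapsto G(\alpha)=\bigl(\langle\chi_Z(v^2-6-\sum\alpha_{n,k,l}H_nf_{k,l}),H_if_{j,h}\rangle_{\mathcal G}\bigr)$, but this is equivalent to your Gram-matrix inversion.

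Two small points to clean up. First, you invoke $\chi_R\xi\perp_{\mathcal G}H_nf_{k,l}$; this is indeed how the coefficients $\gamma_{n,k,l}$ are constructed in the inductive machinery (cf.\ \eqref{eq:xiortho}, \eqref{eq:orthoG10}), but the inductive hypothesis as stated in Theorem~\ref{THM:sec} only records the norm bound \eqref{eq:GLEtaM2}. The paper accordingly uses only $\|1_{\leq R}\xi\|_{\mathcal G}\leq C_{\delta_0}e^{-\frac{m-1-\delta_0}{2}\tau}$ and bounds $\langle 1_{\leq R}\xi,H_if_{j,h}\rangle_{\mathcal G}$ by Cauchy--Schwarz; this is slightly more robust and avoids appealing to an unstated orthogonality. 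Second, for the pointwise bound on $\eta$ in the tail region you cite $\mathcal M_{m+1,0}(\tau)\lesssim 1$ as a ``bootstrap assumption,'' but in the logical order of Proposition~\ref{prop:Mconvex} the bound on $\mathcal M$ is the \emph{conclusion} \eqref{eq:smallM}, not the hypothesis. The hypothesis available at this stage is \eqref{eq:conditionZm}, from which the polynomial bound $|\eta|\lesssim\langle y\rangle^{m}$ on $\mathrm{supp}\,\chi_Z$ follows directly (since $v^2\lesssim 1+|d_m|e^{-\frac{m-2}{2}\tau}|y|^m$ there and the $\alpha_{n,k,l}$ are bounded). With that substitution your tail estimate goes through unchanged.
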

\begin{proof}

For \eqref{eq:DiffAlBe}, the existence of the functions $\alpha_{n,k,l}$ to make $\chi_{Z}\eta$ satisfy the orthogonality conditions (\ref{eq:ZEtaOrtho}) is equivalent to find $\alpha_{n,k,l}$ such that, for any $i\leq m$ and $j\leq N,$
\begin{align}
G_{i,j,h}(\alpha):=\Big\langle \chi_{Z} \Big(v^2-6-\sum_{n,k,l} \alpha_{n,k,l} H_n f_{k,l}\Big),\ H_{i}f_{j,h} \Big\rangle_{\mathcal{G}}=0.
\end{align}

The main tool is the Fixed Point Theorem. To make it applicable we form a $N\times 1$ vector $G(\alpha)$ from $G_{i,j,h}(\alpha),$ with $N$ being the total number of these functions.  Then it is easy to prove that a $N\times N$ matrix formed by $\partial_{\beta_{n,k,l}}G(\alpha)$ is uniformly invertible.

Moreover, the estimates in \eqref{eq:newdec}-\eqref{eq:GLEtaM2} imply that
\begin{align*}
\tilde{G}_{i,j,h}(\gamma):=\Big\langle 1_{\leq R} \Big(v^2-6-\sum_{n,k,l}\gamma_{n,k,l} H_n f_{k,l}\Big),\ H_{i}f_{j,h} \Big\rangle_{\mathcal{G}},
\end{align*} 
satisfy the estimates
\begin{align}
    |\tilde{G}_{i,j,h}(\gamma)|\leq C_{\delta_0} e^{-\frac{m-1-\delta_0}{2}\tau},
\end{align}
thus 
\begin{align}
|G_{i,j,h}(\alpha)|=\Big|\Big\langle (\chi_{Z}-1_{\leq R})\Big(v^2-6-\sum_{n,k,l} \alpha_{n,k,l} H_n f_{k,l}\Big), H_{i}f_{j,h}  \Big\rangle_{\mathcal{G}} \Big|\leq C_{\delta_0} e^{-\frac{m-1-\delta_0}{2}\tau}.
\end{align}Now we apply the fixed point theorem to obtain the desired result 
\begin{align}
 \sum_{n,k,l}  \Big|\beta_{n,k,l}(\tau)-\alpha_{n,k,l}(\tau)\Big|\leq C_{\delta_0} e^{-\frac{m-1-\delta_0}{2}\tau},
\end{align}
which is the second part of \eqref{eq:DiffAlBe}. Moreover this implies the first part. 

\end{proof}

The estimates provided by Lemma \ref{LM:fixedPoint} are not satisfactory, but only need a slight improvement. For that we need to study their governing equations.
Define a function $q$ as
\begin{align*}
    q:=&6+\sum_{n=0}^m\sum_{k=0}^{N}\sum_{l=1} \alpha_{n,k,l}(\tau) H_n(y)f_{k,l}(\omega).
\end{align*}
We take the main part of $q$, and denote it by $q_{M},$
\begin{align*}
q_M(y,\tau):=\left[
\begin{array}{cc}
  6+\alpha_{m,0,1}(\tau)H_m(y) & \ \text{when} \ d_m>0 \ \text{and}\ m\ \text{is even},\\
  6 &\ \text{when}\ d_m=0.
\end{array}
\right.
\end{align*}
Similar to deriving \eqref{eq:EffTilXi}, we derive a governing equation for $\chi_{Z}\eta$,  
\begin{align}\label{eq:mSupeta}
\partial_{\tau}\chi_{Z}\eta=-L\chi_{Z}\eta+\chi_{Z}\Big(G+SN\Big)+\mu_{Z}(\eta)
\end{align}where the linear operator $L$ is defined as
\begin{align*}
L:=-\partial_{y}^2+\frac{1}{2}y\partial_{y}-1-\frac{1}{q_M}\Delta_{\mathbb{S}^3},
\end{align*}
and the function $G=G_1+G_2$ is defined as
\begin{align}
\begin{split}
    G_1:=&-\sum_{n,k,l} (\frac{d}{d\tau}+\frac{n-2}{2}+\frac{k(k+2)}{6})\alpha_{n,k,l} H_n f_{k,l},\\
    G_2:=&\frac{6-q}{6q}\Delta_{\mathbb{S}^3}q-\frac{1}{2q^2} |\nabla_{\omega}^{\perp}q|^2-\frac{1}{2q}|\partial_{y}q|^2+2\sqrt{q} N_1(\sqrt{q})+2\sqrt{q}W_{Q_{m-1}}(\sqrt{q}),
    \end{split}
\end{align} 
and the term $SN$ collects the terms nonlinear and ``small" linear in terms of $\eta,$
\begin{align}
\begin{split}
    SN:=&\frac{1}{2q^2}|\nabla_{\omega}^{\perp}q|^2-\frac{1}{2}v^{-4} |\nabla_{\omega}^{\perp}v^2 |^2+\frac{1}{2q}|\partial_{y}q|^2-\frac{1}{2}v^{-2}|\partial_{y}v^2|^2+2v N_1(v)-2\sqrt{q} N_1(\sqrt{q})\\
&+2vW_{Q_{m-1}}(v)-2\sqrt{q}W_{Q_{m-1}}(\sqrt{q})+\frac{q_M-q}{q_{M}q}\Delta_{\mathbb{S}^3}\eta-\frac{\eta}{v^2 q}\Delta_{\mathbb{S}^2}v^2.
\end{split}
\end{align} 
and the term $\mu_{Z}(\eta)$ is defined as
\begin{align}\label{def:muMsup}
\mu_{Z}(\eta):=\frac{1}{2}(y\partial_{y}\chi_{Z})\eta+(\partial_{\tau}\chi_{Z})\eta-\big(\partial_{y}^2\chi_{Z}\big)\eta-2\partial_{y}\chi_{Z}\cdot  \partial_{y}\eta.
\end{align}
From the orthogonality conditions \eqref{eq:ZEtaOrtho} we derive governing equations for $\alpha_{n,k,l},$
\begin{align}\label{eq:BetaNKLeqn}
\begin{split}
\Big(\frac{d}{d\tau}+\frac{n-2}{2}+\frac{k(k+2)}{6}\Big)\alpha_{n,k,l}=&\frac{1}{\|H_n f_{k,l}\|_{\mathcal{G}}^2}\Big\langle \chi_{Z}(G_2+SN)+\mu_{Z}(\eta),\ H_n f_{k,l}\Big\rangle_{\mathcal{G}}\\
=&\frac{1}{\|H_n f_{k,l}\|_{\mathcal{G}}^2} NL_{n,k,l},
\end{split}
\end{align} where the functions $NL_{n,k,l}$ are naturally defined.

To improve the preliminary estimates provided by Lemma \ref{LM:fixedPoint}, we need some sharp estimates for $NL_{n,k,l}$. This will be achieved by feeding the preliminary ones into $NL_{n,k,l}.$ The result is following:
\begin{lemma}
The functions $NL_{n,k,l}$ satisfy the following estimates: for some constants $\tilde\alpha_{n,k,l}$ ,
\begin{align}
\begin{split}\label{eq:estBetaEff}
|NL_{m,k,l}|\lesssim &e^{-\frac{m-1}{2}\tau};\\
 NL_{n,k,l}= &\tilde\alpha_{n,k,l} e^{-\frac{n}{2}\tau}+\mathcal{O}(e^{-(\frac{n}{2}+\frac{1}{3})\tau}),\ \text{when}\ 2\leq n\leq m-1,\\
 e^{\frac{1}{2}\tau}|NL_{0,k,l}|&+e^{\tau}|NL_{1,k,l}|\lesssim  1.
 \end{split}
\end{align}
\end{lemma}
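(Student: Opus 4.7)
The plan is to substitute the decomposition $v^2 = q + \eta$ directly into $NL_{n,k,l}=\langle\chi_{Z}(G_2+SN)+\mu_{Z}(\eta),H_n f_{k,l}\rangle_{\mathcal{G}}$ and bound the three pieces separately, using the preliminary closeness $|\alpha_{n,k,l}-\gamma_{n,k,l}|\lesssim e^{-(m-1-\delta_0)\tau/2}$ from Lemma \ref{LM:fixedPoint} to transfer the inductive hypothesis \eqref{eq:n01Rapid}--\eqref{eq:dm} and the $\mathcal{G}$-norm bound \eqref{eq:GLEtaM2} directly onto $\alpha_{n,k,l}$ and $\chi_Z\eta$.

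First I would dispose of the cutoff commutator $\mu_Z(\eta)$ defined in \eqref{def:muMsup}: it is supported on $Z_m(\tau)\leq |y|\leq (1+\epsilon)Z_m(\tau)$ where $Z_m\gtrsim \tau^{1/2+1/20}$, so the Gaussian factor $e^{-|y|^2/4}$ in $\langle\cdot,\cdot\rangle_{\mathcal{G}}$ paired against the polynomial $H_n f_{k,l}$ gives an exponentially small contribution of order $o(e^{-c\tau^{1+1/10}})$, much stronger than any algebraic rate we need.

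Next I would expand $G_2$. Since $q=6+\sum \alpha_{n,k,l}H_n f_{k,l}$ is polynomial in $(y,\omega)$ and $Q_{m-1}=\sum_{k=2}^{m-1}e^{-(k-1)\tau/2}H_k(y)(a_{k,1},a_{k,2},a_{k,3},a_{k,4})^T$ is explicit, every summand of $G_2$---the quadratic pieces $q^{-1}|\partial_y q|^2$, $q^{-2}|\nabla^\perp_\omega q|^2$, $(6-q)q^{-1}\Delta_{\mathbb{S}^3}q$, the cubic-type $N_1(\sqrt q)$ contributions, and the ten source terms $J_1,\ldots,J_{10}$ of $W_{Q_{m-1}}(\sqrt q)$ in \eqref{def:J}--\eqref{def:J9J10}---unfolds into a finite sum of monomials $\prod_i \alpha_{n_i,k_i,l_i}\cdot e^{-\sum_j (k_j-1)\tau/2}$ times known functions of $(y,\omega)$. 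Using the decay rates $|\alpha_{n,k,l}|\lesssim e^{-n\tau/2}$ for $n\geq 2$, $|\alpha_{1,k,l}|\lesssim e^{-5\tau/6}$, and $|\alpha_{0,k,l}|\lesssim e^{-\tau/3}$, a monomial-by-monomial count of which combinations project onto $H_n f_{k,l}$ identifies: for $2\leq n\leq m-1$ a leading contribution $\propto e^{-n\tau/2}$ (from the resonant pairings with total index $n$ in the quadratic part of $q$ and in $J_7,J_8,J_{10}$), giving the constant $\tilde\alpha_{n,k,l}$, while any mixing with an $\alpha_{0,k,l}$-factor produces the remainder $\mathcal{O}(e^{-(n/2+1/3)\tau})$; for $n=0,1$ the $Q_{m-1}$-dependent sources $J_9,J_{10}$ together with the leading terms of $J_7,J_8$ produce the claimed rates $e^{-\tau/2}$ and $e^{-\tau}$; and for $n=m$, no combination of indices $\geq 2$ summing to $m$ avoids at least one factor smaller than $e^{-(m/2-1)\tau}\cdot e^{-\tau}$, which together with the improved bound $|\gamma_{m,k,l}|\lesssim (1+\tau)e^{-m\tau/2}$ from \eqref{eq:n01Rapid} yields $|NL_{m,k,l}|\lesssim e^{-(m-1)\tau/2}$.

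Finally, for the $SN$ contribution I would use that every term in $SN$ is schematically of the form $\eta\cdot(\text{small explicit factor})$---for instance, $v N_1(v)-\sqrt q N_1(\sqrt q)=\mathcal{O}(\eta\cdot\partial_y\sqrt q)$ and $(q_M-q)(q_M q)^{-1}\Delta_{\mathbb{S}^3}\eta$ carries the small prefactor $q-q_M=\mathcal{O}(e^{-\tau/3})$---so Cauchy--Schwarz against $H_n f_{k,l}\,e^{-|y|^2/4}$ combined with $\|\chi_Z\eta\|_{\mathcal{G}}\lesssim e^{-(m-1-\delta_0)\tau/2}$ makes the $SN$-contribution strictly smaller than each of the claimed rates. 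The main obstacle will be the bookkeeping in the $G_2$-step: identifying, for each fixed $(n,k,l)$, the precise quadratic/cubic combination that realises the leading order $e^{-n\tau/2}$ without accidental cancellation, and correctly isolating the $e^{-(n/2+1/3)\tau}$ remainder generated by the slowly-decaying $\alpha_{0,k,l}$-factor, while simultaneously verifying the selection rule for the spherical-harmonic indices in each of the ten sources $J_1,\ldots,J_{10}$.
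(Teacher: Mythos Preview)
Your overall strategy---split $NL_{n,k,l}$ into the cutoff commutator $\mu_Z(\eta)$, the source $G_2$, and the $SN$-piece, and estimate each in the Gaussian inner product---is exactly the paper's, and your treatment of $\mu_Z$ and $SN$ is correct.

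The gap is in the $G_2$ step, and it has two related aspects. First, the decay rate you invoke, ``$|\alpha_{n,k,l}|\lesssim e^{-n\tau/2}$ for $n\geq 2$,'' is false at the single most important coefficient: by \eqref{eq:dm} together with Lemma~\ref{LM:fixedPoint}, $\alpha_{m,0,1}(\tau)=d_m e^{-(m-2)\tau/2}+\mathcal{O}(e^{-(m-1)\tau/2})$, two full orders slower than $e^{-m\tau/2}$. This slow mode is precisely what makes the $n=m$ bound sharp: the paper shows that $J_1(\sqrt{q})=(\partial_y Q_{m-1}\cdot\omega)\,\partial_y\sqrt{q}$ produces a contribution of exact size $e^{-(m-1)\tau/2}$ through the pairing of $e^{-\tau/2}H_2'$ (from $\partial_y Q_{m-1}$) with $\alpha_{m,0,1}H_m'$. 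Your combinatorial argument for $|NL_{m,k,l}|$ misses this because it feeds in the wrong rate for $\alpha_{m,0,1}$, and the ``improved bound $|\gamma_{m,k,l}|\lesssim(1+\tau)e^{-m\tau/2}$'' you quote applies only for $k\geq 1$.

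Second, the claim that $G_2$ ``unfolds into a finite sum of monomials $\prod_i\alpha_{n_i,k_i,l_i}$'' is not literally correct: the factors $q^{-1}$, $q^{-2}$, $\sqrt{q}$ are not polynomial in the $\alpha$'s, and Taylor-expanding them around $6$ requires $q-6$ to be uniformly small---which fails on $|y|\leq Z_m$ exactly because $\alpha_{m,0,1}y^m$ is unbounded there. The paper's device to handle both issues at once is to split $q=6+\tilde q+y^m\tilde q_M$, where $\tilde q$ has degree $\leq m-1$ and \emph{is} uniformly small (so $F_2(\sqrt{6+\tilde q})$ admits a clean Taylor expansion yielding the constants $\tilde\alpha_{n,k,l}$ and the $\mathcal{O}(e^{-(n/2+1/3)\tau})$ remainder, see \eqref{eq:remainder}), and then to bound the difference $F_2(\sqrt{q})-F_2(\sqrt{6+\tilde q})$ directly: every term carries at least one factor of $\alpha_{m,k,l}$, and the Gaussian weight $e^{-y^2/4}$ in the $\mathcal{G}$-inner product absorbs the polynomial growth of $y^m$, giving the uniform bound $e^{-(m-1)\tau/2}$ as in \eqref{eq:alDif}. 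Without this splitting your bookkeeping is not just tedious but formally ill-posed.
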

\eqref{eq:estBetaEff} will be proved in Section \ref{sec:source} below.

From \eqref{eq:estBetaEff} and \eqref{eq:BetaNKLeqn} we derive the desired estimates for $\alpha_{n,k,l}.$ This is similar to the treatment in subsection \ref{subsec:afterNorm3}, we skip the detail here. The first estimate in \eqref{eq:DiffAlBe} and \eqref{eq:mGnormeta} imply the desired estimates for the $\mathcal{G}-$norm of $\chi_{Z}\eta$.

What is left is to prove \eqref{eq:etaMsup}. This is the most involved part and it will be reformulated into Proposition \ref{prop:estSourceNonL} below.

We start with presenting the difficulties and ideas in overcoming them. We will have to decompose $\chi_{Z}\eta$ into finitely many pieces and treat them by different techniques. 

To see this, decompose $\eta$ according to the spectrum of $-\Delta_{\mathbb{S}^3}$, recall that $f_{k,l},$ $k\geq 0$, are eigenvectors of $-\Delta_{\mathbb{S}^3}$ with eigenvalues $k(k+2)$,
\begin{align}
    \eta(y,\omega,\tau)=\sum_{k=0}^{\infty}\sum_{l}\eta_{k,l}(y,\tau)f_{k,l}(\omega),
\end{align} where the functions $\eta_{k,l}$ are defined as
\begin{align}
\eta_{k,l}(y,\tau):= \frac{1}{\langle f_{k,l}, f_{k,l}\rangle_{\mathbb{S}^3}}\Big\langle \eta(y,\cdot,\tau),\ f_{k,l}\Big\rangle_{\mathbb{S}^3}.
\end{align} Here and in the rest of the paper the inner product $\langle \cdot,\cdot\rangle_{\mathbb{S}^3}$ denotes the standard inner product for $L^2(\mathbb{S}^3)$ space.
Thus $\chi_{Z}\eta_{k,l}$ satisfies the equation
\begin{align}
\begin{split}\label{eq:ZEtaKL}
    \partial_{\tau}\chi_{Z}\eta_{k,l}=&-L_{k}\chi_{Z}\eta_{k,l}
    +\frac{1}{\langle f_{k,l},\ f_{k,l}\rangle_{\mathbb{S}^3}} \chi_{Z}\Big\langle G+SN(\eta),\ f_{k,l}\Big\rangle_{\mathbb{S}^3}+\mu_{Z}(\eta_{k,l}),
    \end{split}
\end{align}
where the linear operator $L_k$ is defined as
\begin{align}
    L_{k}:=-\partial_{y}^2+\frac{1}{2}y\partial_{y}-1+\frac{k(k+2)}{q_{M}}.
\end{align}

Now we present the difficulty. Since $|q_{M}-6|\ll 1$ in the region $\frac{|y|}{e^{\frac{(m-2)\tau}{m}}}\ll 1$, at least intuitively,
\begin{align}
    L_{k}\approx L_0-1+\frac{k(k+2)}{6}.
\end{align} By the spectral analysis after \eqref{def:linearL}, the eigenvalues of $L_0:=-\partial_{y}^2+\frac{1}{2}y\partial_{y}$ are $\frac{j}{2}, \ j=0,1,2,\cdots$. In order to prove $\chi_{Z}\eta_{k,l}$ decays faster than $e^{-\frac{m-2}{2}\tau}$, it is important to have some orthogonality conditions when $k$ is not large.
When $\frac{k(k+2)}{6}$ is sufficiently large, we will prove the function decays rapidly after some transformation and applying the maximum principle, see Section \ref{sec:MaxTiEta} below.

To make this rigorous, we decompose $\eta$ according to the spectrum of $-\Delta_{\mathbb{S}^3}$:
\begin{align}
\eta(y,\omega,\tau)=\sum_{k=0}^{N}\sum_{l}\eta_{k,l}(y,\tau) f_{k,l}(\omega)+\tilde\eta(y,\omega,\tau)
\end{align} where we require $N$ to be large so that 
\begin{align}
\text{when}\ k>N,\ \frac{k(k+2)}{6}-1\geq \frac{m}{2} .\label{eq:largeNzm}
\end{align}This makes $\tilde\eta\perp_{\mathbb{S}^3} f_{k,l}$ for any $k\leq N$, or
\begin{align}
    P_{\omega,N} \tilde\eta= \tilde\eta;
\end{align} and by the orthogonality conditions imposed on $\chi_{Z}\eta$ in \eqref{eq:ZEtaOrtho},
\begin{align}
    \chi_{Z}\eta_{k,l}\perp_{\mathcal{G}} H_k,\ k=0,\cdots,m.
\end{align}
Here $P_{\omega, N}$ is an orthogonal projection defined as,
\begin{align}
    P_{\omega, N}g=\sum_{k>N}\sum_l g_{k,l} f_{k,l},\ \text{for any function}\ g(\omega)=\sum_{k=0}^{\infty}\sum_l g_{k,l} f_{k,l}(\omega).\label{def:OmePro}
\end{align}

Now we are ready to reformulate \eqref{eq:etaMsup}, before that we define a constant $\kappa$ as
\begin{align}
    \kappa:=\delta+e^{- \sqrt{T_1}}.\label{def:kappa}
\end{align}
\begin{proposition}\label{prop:ZMajorRefor} For any pair $(n,j)\in \Upsilon_m$ with $j\leq m+1$, we have that
\begin{align}
\begin{split}\label{eq:EtaKLlowN}
    Y_{T_2}\tilde{Z}_m^n\Big(\Big\|\langle y\rangle^{-n}\partial_{y}^j\chi_{Z}\eta_{k,l} \Big\|_{\infty}+
    \Big\| \langle y\rangle^{-n} \|\partial_{y}^j(-\Delta_{\mathbb{S}^3}+1)^5\chi_{Z}\tilde\eta\|_{L^2(\mathbb{S}^3)} \Big\|_{\infty}\Big)
    \lesssim \kappa \Big(1+\mathcal{M}\Big)+\mathcal{M}^2,
\end{split}
\end{align}
for any pair $(n,j)\in \Upsilon_m$ with $j\geq m+2$,
\begin{align}\label{eq:EtaKLhighN}
       Y_{T_2}\tilde{Z}_m^n \Big\| \langle y\rangle^{-n} \|\partial_{y}^j (-\Delta_{\mathbb{S}^3}+1)^5\chi_{Z}\eta\|_{L^2(\mathbb{S}^3)} \Big\|_{\infty}\lesssim &\kappa \Big(1+\mathcal{M}\Big)+\mathcal{M}^2.
\end{align}
\end{proposition}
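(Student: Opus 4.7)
The plan is to convert the evolution equations \eqref{eq:ZEtaKL} (for low spherical frequencies $\eta_{k,l}$, $k\le N$) and the analogous equation for $\tilde\eta = P_{\omega,N}\eta$ (high frequencies $k>N$) into Duhamel integral equations, then estimate each piece in the weighted $L^\infty$ norms that define $\mathcal{M}_{n,j}$. For the low-frequency pieces the linear operator is $L_k \approx L_0 - 1 + \tfrac{k(k+2)}{6}$, whose propagator will be handled via the frequency-wise propagator estimates from Lemma \ref{LM:frequencyWise}; the orthogonality conditions $\chi_Z \eta_{k,l}\perp_{\mathcal{G}} H_n$ for $0\le n\le m$ remove the slow/non-decaying eigenmodes, giving an effective decay rate at least as fast as $e^{-\frac{m-1}{2}\tau}$, which comfortably beats $\tilde Z_m^{-n} Y_{T_2}^{-1}$ in the relevant range $(n,j)\in\Upsilon_m$. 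For the high-frequency piece $\tilde\eta$, when $j\ge m+2$ there are no orthogonality conditions available and the spectral gap \eqref{eq:largeNzm} (i.e.\ $\tfrac{k(k+2)}{6}-1\ge\tfrac{m}{2}$ for $k>N$) is used instead, via a maximum principle argument after multiplying by an appropriate weight — this is the content of the forthcoming Section \ref{sec:MaxTiEta} referenced in the text.

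The next step is to split the forcing in \eqref{eq:ZEtaKL} into three categories and bound each. First, the explicit source term $\chi_Z G_2$, which depends only on $q$ (hence on the $\alpha_{n,k,l}$'s and on $Q_{m-1}$), is of size $\kappa$ in the weighted norm: the main ingredients of $G_2$ are polynomial in $y$ with controlled prefactors $e^{-\frac{n}{2}\tau}$ coming from \eqref{eq:n01Rapid}--\eqref{eq:dm}, and the orthogonality removal of $H_n f_{k,l}$ for $n\le m$ kills all contributions whose weighted $L^\infty$ size would otherwise be too large. Second, $\chi_Z \cdot SN(\eta)$ collects quadratic expressions in $\eta$ and its derivatives, together with ``small linear'' terms of the form $\frac{q_M-q}{q_M q}\Delta_{\mathbb{S}^3}\eta$ and $\frac{\eta}{v^2 q}\Delta_{\mathbb{S}^3}v^2$; both types are controlled by $\mathcal{M}^2 + \kappa\mathcal{M}$ after noting that $|q_M - q|\lesssim\kappa$ and using the definition \eqref{def:majorants} to express each $\partial_y^j \nabla_E^i\eta$ factor by a factor of $\tilde Z_m^{-n'}Y_{T_2}^{-1}\langle y\rangle^{n'}\mathcal{M}$ for the appropriate $(n',j')\in\Upsilon_m$ (this is where the choice of the index set $\Upsilon_m$ pays off: pairs differ by half-integer jumps in $n$ matched to integer jumps in $j$, so products of two factors distribute the $\langle y\rangle$-weight correctly). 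Third, $\mu_Z(\eta)$ is supported in the cutoff annulus $Z_m\le|y|\le(1+\epsilon)Z_m$; there, one converts $\partial_y\chi_Z$, $\partial_y^2\chi_Z$, and $\partial_\tau\chi_Z$ to powers of $Z_m^{-1}$ times bounded factors (using \eqref{eq:chiSmooth}) and uses the bootstrap hypothesis \eqref{eq:conditionZm} together with Lemma \ref{LM:startBoot} to bound $\eta$ and $\partial_y\eta$ pointwise on the annulus by $\kappa$.

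Assembling these bounds via Duhamel, one writes
\begin{equation*}
\chi_Z\eta_{k,l}(\tau) = e^{-L_k(\tau-T_1)}\chi_Z\eta_{k,l}(T_1) + \int_{T_1}^\tau e^{-L_k(\tau-s)}\bigl[\text{source} + SN + \mu_Z\bigr](s)\,ds,
\end{equation*}
applies Lemma \ref{LM:frequencyWise} in the weighted $L^\infty$ norm $\|\langle y\rangle^{-n}\cdot\|_\infty$, and uses the initial-data bound from \eqref{eq:initiaBoot} (which gives $\mathcal{M}(T_1)\lesssim\kappa$). For the pieces \eqref{eq:EtaKLlowN} with $j\le m+1$, the propagator is applied to the scalar-in-$y$ function $\partial_y^j\chi_Z\eta_{k,l}$ after commuting $\partial_y^j$ past $L_k$ (the commutator $[\partial_y^j, L_k]$ contributes only lower-order terms in $j$, hence is absorbed into $\mathcal{M}$). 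For \eqref{eq:EtaKLhighN} with $j\ge m+2$, one uses the high-frequency equation and, as noted, a maximum principle on $\partial_y^j(-\Delta_{\mathbb{S}^3}+1)^5\chi_Z\tilde\eta$ multiplied by the weight $\tilde Z_m^n Y_{T_2}\langle y\rangle^{-n}$, where the positive spectral gap from $\tfrac{k(k+2)}{q_M}\ge 1+\tfrac{m}{2}$ produces the required decay without any orthogonality.

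\textbf{Main obstacle.} The hard part will be the bookkeeping of derivative loss versus polynomial weight in the weighted $L^\infty$ estimates: when one differentiates $\chi_Z\eta_{k,l}$ $j$ times and encounters nonlinearities in $SN(\eta)$, each factor of a derivative must be assigned a compatible $(n',j')\in\Upsilon_m$ so that the $y$-weights multiply to at most $\langle y\rangle^n$. This is delicate precisely because the pairs in $\Upsilon_m$ lie on a line $n+\tfrac{j}{2}=m+1$, so the accounting is saturated and one must verify at each step that enough $Y_{T_2}$ and $\tilde Z_m$ factors remain after distributing. A closely related technical point is the control of $\chi_Z$-commutator terms $\tfrac{z\chi'}{\chi}\tfrac{d^k}{dz^k}\chi$ coming from differentiating the cutoff, for which the higher-order smoothness requirement \eqref{eq:chiSmooth} is essential.
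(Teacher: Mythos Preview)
Your overall architecture is correct and matches the paper: Duhamel plus the propagator Lemma~\ref{LM:frequencyWise} for the finitely many low-frequency pieces $\eta_{k,l}$, and a weighted maximum principle for the rest. The three-way source split into $G_2$, $SN$, and $\mu_Z$ is also right, and your remark about the saturated weight-bookkeeping along the line $n+\tfrac{j}{2}=m+1$ is exactly the point.

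There is, however, a genuine gap in your treatment of $\mu_Z(\eta)$. You claim that $\partial_y\chi_Z$, $\partial_y^2\chi_Z$, $\partial_\tau\chi_Z$ all carry a factor of $Z_m^{-1}$ and that $\eta$, $\partial_y\eta$ are bounded by $\kappa$ on the annulus. Both are false. The combination $\tfrac12(y\partial_y\chi_Z)+(\partial_\tau\chi_Z)=\bigl(\tfrac12-Z_m^{-1}\tfrac{d}{d\tau}Z_m\bigr)\,y\partial_y\chi_Z$ has fixed $L^\infty$ norm independent of $Z_m$, and on the annulus $|\eta|$ is only bounded by $\delta q$, which grows like $e^{10m\sqrt{\tau}}$. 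More seriously, you cannot treat this term as a source in the Duhamel integral at all: the unknown in the propagator formulation is $\chi_Z\eta_{k,l}$, and the map $\chi_Z w\mapsto (y\partial_y\chi_Z)w$ is unbounded because $\bigl|(y\partial_y\chi_Z)/\chi_Z\bigr|\to\infty$ at the outer edge of the support. Putting this term on the right-hand side would give a contribution of order $\mathcal{M}$ with coefficient $O(1)$, not $\kappa$, and the estimate would not close. The paper's resolution (see \eqref{eq:diffCut}--\eqref{eq:TwoCut}) is to observe that this term is favorably \emph{non-positive}, introduce an auxiliary cutoff $\tilde\chi_Z$ supported slightly inside $\chi_Z$, and absorb the bounded part $\bigl(\tfrac12-Z_m^{-1}\tfrac{d}{d\tau}Z_m\bigr)\tfrac{(y\partial_y\chi_Z)\tilde\chi_Z}{\chi_Z}$ into the linear operator as an additional non-negative potential. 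The propagator estimate still applies because Lemma~\ref{LM:frequencyWise} only requires $V\ge0$ and slow variation. Only the small residual $\Gamma_j$ is treated as a source. Your mention of \eqref{eq:chiSmooth} is relevant only for this residual, not for the main obstruction.

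A smaller point: your case split is slightly off. The paper actually runs three separate arguments, not two. For $j\le m+1$: the low-frequency pieces $\eta_{k,l}$ use Duhamel plus propagator (Section~\ref{sec:lowDeta}), while the high-frequency piece $\tilde\eta$ uses the maximum principle with the $\omega$-spectral gap \eqref{eq:largeNzm} (Section~\ref{sec:MaxTiEta}). For $j\ge m+2$: the maximum principle is applied to the \emph{full} $\eta$ (Section~\ref{sec:jGreat}), and the decay comes not from the spherical spectral gap but from the commutator $[\partial_y^j,\tfrac12 y\partial_y]=\tfrac{j}{2}\partial_y^j$, which for $j\ge m+2$ already gives $\Lambda_{n,j}\ge m+1-\tfrac{1}{20}$ without any orthogonality or frequency restriction.
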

The proposition will be proved in Sections \ref{sec:lowDeta}, \ref{sec:MaxTiEta} and \ref{sec:jGreat} below. These and the definition of $\mathcal{M}$ obviously imply the desired (\ref{eq:etaMsup}).

Next we explain the reasons for choosing the norms in (\ref{def:majorants}).

Starting from the easiest, for the reason of choosing $(-\Delta_{\mathbb{S}^3}+1)^5$ in the norms, we will need it, together with the embedding results Lemma \ref{LM:embedding}
below, to control some nonlinearity, see \eqref{eq:prelD11}.
\begin{lemma}\label{LM:embedding}
There exists a constant $C_{em}$ such that for any function $h:\ \mathbb{S}^3\rightarrow \mathbb{R}$
\begin{align}
\|h\|_{\infty}\leq C_{em} \|(-\Delta_{S^3}+1)^{3}h\|_{L^2(\mathbb{S}^3)}.
\end{align} 
\end{lemma}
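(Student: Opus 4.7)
The plan is to prove Lemma \ref{LM:embedding} as a direct instance of the Sobolev embedding $H^{6}(\mathbb{S}^3)\hookrightarrow L^{\infty}(\mathbb{S}^3)$, which holds with large margin since $6>\tfrac{3}{2}=\tfrac{\dim\mathbb{S}^3}{2}$. Rather than quote a general embedding theorem, I would give a self-contained argument by expanding in the spherical harmonic basis $\{f_{k,l}\}$ already introduced in the paper, now normalized so that $\|f_{k,l}\|_{L^2(\mathbb{S}^3)}=1$. Recall from \eqref{eq:specOmega} that $-\Delta_{\mathbb{S}^3}f_{k,l}=k(k+2)f_{k,l}$ with multiplicity $\Omega_k=(k+1)^2$, so that $(-\Delta_{\mathbb{S}^3}+1)^3$ has eigenvalue $(k+1)^{6}$ on the $k$-th eigenspace.

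Writing $h=\sum_{k,l}h_{k,l}f_{k,l}$, one has $\|(-\Delta_{\mathbb{S}^3}+1)^3 h\|_{L^2(\mathbb{S}^3)}^2=\sum_{k,l}(k+1)^{12}|h_{k,l}|^2$. For each fixed $\omega\in\mathbb{S}^3$, Cauchy--Schwarz applied to $h(\omega)=\sum_{k,l}h_{k,l}f_{k,l}(\omega)$ with the weights $(k+1)^{\pm 6}$ yields
\begin{align*}
|h(\omega)|^2 \;\leq\; \|(-\Delta_{\mathbb{S}^3}+1)^3 h\|_{L^2(\mathbb{S}^3)}^2 \cdot \sum_{k,l}\frac{|f_{k,l}(\omega)|^2}{(k+1)^{12}}.
\end{align*}

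The only real step is to bound the second factor uniformly in $\omega$. Here I would invoke the addition formula for spherical harmonics on $\mathbb{S}^3$, which gives the pointwise identity $\sum_{l=1}^{\Omega_k}|f_{k,l}(\omega)|^2=\Omega_k/|\mathbb{S}^3|=(k+1)^2/|\mathbb{S}^3|$ independent of $\omega$. Substituting, the weighted sum becomes $|\mathbb{S}^3|^{-1}\sum_{k\geq 0}(k+1)^{-10}$, which converges. Taking $C_{em}^{\,2}$ equal to this finite constant times $|\mathbb{S}^3|^{-1}$ yields the stated inequality.

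I do not anticipate any real obstacle: the addition formula is classical and the remaining series converges with many powers of $(k+1)$ to spare. If one prefers to avoid quoting the addition formula, the same bound $\sum_l|f_{k,l}(\omega)|^2\lesssim (k+1)^4$ can be extracted from the crude $L^{\infty}$ estimate $\|f_{k,l}\|_{\infty}\lesssim (k+1)$ for $L^2$-normalized spherical harmonics on $\mathbb{S}^3$ combined with $\Omega_k\lesssim (k+1)^2$, which still produces a convergent series $\sum(k+1)^{-8}$. Either route delivers the lemma essentially for free.
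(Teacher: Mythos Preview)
Your proof is correct; the paper itself omits the argument entirely, calling it ``standard'' and skipping the details. Your spherical-harmonic expansion with Cauchy--Schwarz and the addition formula is precisely the standard argument the paper is implicitly invoking, so there is nothing to compare.
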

The proof of this lemma is standard, hence we choose to skip the details here.

Next we present the reasons for choosing the norms in \eqref{def:majorants}.

Recall that our goal is to prove \eqref{eq:pointwisedm} in the main Theorem \ref{THM:sec}. We need some relatively sharp estimate for $\chi_{Z}\eta$. Our key tool is propagator estimates, see Lemma \ref{LM:frequencyWise} below. This forces us to prove that $\|\langle y\rangle^{-m-1} \chi_{Z}\eta (\cdot,\tau)\|_{\infty}$ decays slightly faster than $e^{-\frac{(m-2)(m+1)}{2m}\tau} $, as shown in \eqref{def:majorants}.

An adverse fact is that, some difficult terms are in the governing equation of $\chi_{Z}\eta$. Even though we want to ``close the estimates" as fast as possible, we have to take many steps if $m$ is large.
Among the many terms in the governing equations for $\chi_{Z}\eta$, here we only discuss two terms
\begin{align}
\begin{split}\label{eq:samD1D2}
    D_1:=&e^{-\frac{1}{2}\tau}\sqrt{q }\partial_{y}^2\eta (A_2\cdot \omega),\\
    D_2:=&e^{-\tau}y^2 |A_2\cdot \omega|^2\partial_{y}^2\eta.
\end{split}
\end{align}
They are parts of 
$
J_4:= \sqrt{q}|\partial_yQ_{m-1}\cdot \omega|^2\partial_{y}^2 \eta$ and $
J_5:=e^{-\frac{1}{2}\tau} q\partial_{y}^2 \eta (\partial_{y}^2 Q_{m-1}\cdot \omega)
$ in \eqref{def:J}:
we take
$$e^{-\frac{\tau}{2}}y^2 \Big(a_{2,1},\ a_{2,2},\ a_{2,3},\ a_{2,4}\Big)=\frac{1}{2} e^{-\frac{\tau}{2}}y^2 A_2$$ 
from $Q_{m-1},$ where the vector $A_2\in \mathbb{R}^4$ is naturally defined.
It is easier to treat the other parts since, in the considered region, there exists some $\epsilon_0>0$ such that $e^{-\frac{\tau}{2}}|y|\leq e^{-\epsilon_0\tau}\ll 1$ when $\tau\geq T_1\gg 1.$

In some sense $D_1$ and $D_2$ force us to choose the norms listed in \eqref{def:majorants}. 
We need to prove that $\|\langle y\rangle^{-m-1}\chi_{Z}D_{k}\|_{\infty},$ $k=1,2,$  decay, at least, as fast as $\|\langle y\rangle^{-m-1} \chi_{Z}\eta (\cdot,\tau)\|_{\infty}.$ 
For $D_2$, observe that in the considered region the factor $e^{-\tau}y^2$ decays very slowly, but $\langle y\rangle^{-1} e^{-\tau} y^2$ decays faster than $e^{-\frac{1}{2}\tau}$; for $D_1$ the factor $e^{-\frac{1}{2}\tau}\sqrt{q }$ decays slightly slower than $e^{-\frac{\tau}{2}},$ and the decay rate of $\langle y\rangle^{-1}e^{\frac{1}{2}\tau}\sqrt{q}\leq e^{-\frac{1}{2}}$ only improves slightly. Another constraint is that we need propagator estimate in Lemma \ref{LM:frequencyWise}.
Based on these reasons, we need to prove that
$\|\langle y\rangle^{-m}\partial_{y}^2\chi_{Z}\eta(\cdot,\tau)\|_{\infty}$ decays slightly faster than $e^{-\frac{m-2}{2}\tau}$, as shown in (\ref{def:majorants}).

By the same reason, to prove the desired decay rate for $\langle y\rangle^{-m}|\partial_{y}^2\chi_{Z}\eta|$, we need to prove $\langle y\rangle^{-m+1}|\partial_{y}^4\chi_{Z}\eta|$ decays at a rate about $\tilde{Z}_{m}^{-m+1} Y_{T_2}^{-1}$.

Fortunately we do not need to estimate $\partial_{y}^k\chi_{Z}\eta$ for all $k\in \mathbb{N}$. The reason is that, the wanted decay rates for $\partial_{y}^k \chi_{Z}\eta$ become lower as $k$ increases, and when $j=2m+3$ and $j=2m+4$ we only need $|\partial_{y}^{2m+3} \chi_{Z}\eta|+|\partial_{y}^{2m+4} \chi_{Z}\eta|\leq 1$, and they are provided by \eqref{eq:conditionZm}.

Next we discuss a crucial technical tool in the next subsection.


\subsection{The propagator estimates}
A crucially important tool is propagator estimate, since the propagator generates decay estimates. We are interested propagator generated by a linear operator $\mathcal{L}_V$, defined as
\begin{align}
    \mathcal{L}_{V}:=e^{-\frac{1}{8}y^2}\Big(-\partial_{y}^2+\frac{1}{2}y\partial_{y}-1+V\Big)e^{\frac{1}{8}y^2}=&-\partial_{y}^2 +\frac{1}{16}y^2-\frac{1}{4}-1+V\nonumber\\
    =& \mathcal{L}_0-1+V
\end{align} where $\mathcal{L}_0$ is naturally defined, and $V\geq 0$ is a bounded multiplier satisfying, for some small $\epsilon_0>0,$
\begin{align}
   \sup_{y}\Big\{ \Big|\partial_{y}V(y,\tau)\Big|+ \langle y\rangle^{-5}\Big|V(y,\tau)-V(0,\tau)\Big|\Big\}\leq \epsilon_0 (1+\tau)^{-\frac{2}{5}}. \label{eq:smallV}
\end{align}

Before stating the results, we define $P_n$ to be the orthogonal projection onto the subspace orthogonal to the eigenvectors $e^{-\frac{1}{8}y^2}H_k$, $k=0,1,\cdots,n$; and define $U_n(\tau,\sigma)$ to be the propagator generated by $-P_{n}\mathcal{L}_{V}P_{n}$, from times $\sigma$ to $\tau$, and $U(\tau,\sigma)$ be the one generated by $-\mathcal{L}_{V}$
\begin{lemma}\label{LM:frequencyWise}
For any fixed positive constants $\delta_0$, $k$ and nonnegative integer $n$, there exists a constant $C_{n,k,\delta_0}$ such that for any function $g,$
\begin{align}
    \Big\| \langle y\rangle^{-n-1-k} e^{\frac{1}{8}y^2}U_n(\tau,\sigma)P_n g\Big\|_{\infty}\leq &C_{n,k,\delta_0}e^{-\frac{n-1+\delta_0}{2}(\tau-\sigma)} \Big\| \langle y\rangle^{-n-1-k} e^{\frac{1}{8}y^2} g\Big\|_{\infty},\label{eq:propagatorMPro}
 \end{align}and for any $k\geq 0$, there exists some constant $C_k$ such that,
    \begin{align}
        \Big\| \langle y\rangle^{-k} e^{\frac{1}{8}y^2}U(\tau,\sigma)g\Big\|_{\infty}\leq & C_{k} e^{\tau} \Big\| \langle y\rangle^{-k} e^{\frac{1}{8}y^2} g\Big\|_{\infty}.\label{eq:propagatorNoPro}
\end{align}
\end{lemma}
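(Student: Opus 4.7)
The proof rests on three ingredients: a conjugation reducing $\mathcal L_V$ to a Fokker--Planck operator $L_V$, a spectral-gap estimate in the Gaussian-weighted $L^2$ space $\mathcal G$, and an upgrade to the polynomial-weighted $L^\infty$ norm via a supersolution / maximum-principle argument.

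\textbf{Conjugation.} Since $\mathcal L_V = e^{-\frac{1}{8} y^2}\, L_V\, e^{\frac{1}{8} y^2}$ with $L_V := -\partial_y^2 + \tfrac{1}{2} y\partial_y - 1 + V$, setting $F(\tau) := e^{\frac{1}{8}y^2}\, U_n(\tau,\sigma) P_n g$ gives $\partial_\tau F = -\widetilde P_n L_V \widetilde P_n F$, where $\widetilde P_n$ is the $\mathcal G$-orthogonal projection off $\operatorname{span}\{H_0,\ldots,H_n\}$. The target bound \eqref{eq:propagatorMPro} becomes
\[
\|\langle y\rangle^{-n-1-k} F(\tau)\|_\infty \,\lesssim\, e^{-\frac{n-1+\delta_0}{2}(\tau-\sigma)}\,\|\langle y\rangle^{-n-1-k} F(\sigma)\|_\infty,
\]
a polynomial-weighted $L^\infty$ estimate for a parabolic equation.

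\textbf{Spectral decay in $\mathcal G$.} The unperturbed operator $L_0-1$ has Hermite polynomials $H_k$ as $\mathcal G$-orthogonal eigenfunctions with eigenvalues $\tfrac{k-2}{2}$. Thus $\widetilde P_n(L_0-1)\widetilde P_n \ge \tfrac{n-1}{2}$ on the range of $\widetilde P_n$, so the free propagator obeys $\|\widetilde U_n^0(\tau,\sigma)\|_{\mathcal G\to\mathcal G} \le e^{-\frac{n-1}{2}(\tau-\sigma)}$. A Duhamel expansion
\[
\widetilde U_n(\tau,\sigma) = \widetilde U_n^0(\tau,\sigma) - \int_\sigma^\tau \widetilde U_n^0(\tau,s)\,\widetilde P_n V \widetilde P_n\, \widetilde U_n(s,\sigma)\,ds,
\]
combined with Gronwall and the smallness of $V$ guaranteed by \eqref{eq:smallV}, yields the perturbed bound $\|\widetilde U_n(\tau,\sigma)\|_{\mathcal G\to\mathcal G} \lesssim e^{-\frac{n-1+\delta_0}{2}(\tau-\sigma)}$ for any $\delta_0>0$ once $\sigma$ is sufficiently large.

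\textbf{Upgrade to weighted $L^\infty$.} Set $\alpha := n+1+k$ and $\widetilde F := \langle y\rangle^{-\alpha} F$. A direct computation shows that $\widetilde L_V := \langle y\rangle^{-\alpha} L_V \langle y\rangle^{\alpha}$ is second order with bounded drift at infinity and effective zeroth-order term tending to $\tfrac{\alpha-2}{2}+V(y)$ as $|y|\to\infty$. Since $\tfrac{\alpha-2}{2}=\tfrac{n+k-1}{2}\ge \tfrac{n-1}{2}$, the function $w_\alpha(y):=\langle y\rangle^{\alpha}$ serves outside a large bounded set as a supersolution whose decay rate under $L_V$ is at least $\tfrac{n-1}{2}$ (modulo the small $V$), so a maximum-principle comparison pins $\widetilde F$ to the required exponential decay there. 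Inside the bounded set, the $\mathcal G$-decay from Step 2, together with standard local parabolic smoothing (Schauder or De Giorgi--Nash--Moser), converts the Gaussian $L^2$ control into pointwise bounds. Patching the two regions with a cutoff and iterating produces \eqref{eq:propagatorMPro}; the unprojected estimate \eqref{eq:propagatorNoPro} follows by the same argument, except that the smallest eigenvalue of $L_V$ is now $-1$ (corresponding to $H_0=1$), which accounts for the $e^{\tau}$ factor.

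\textbf{Main obstacle.} The projection $\widetilde P_n$ is natural in the $\mathcal G$-inner product but not in the polynomial-weighted $L^\infty$ space, and the weight conjugation by $\langle y\rangle^{-\alpha}$ produces commutator terms at moderate $|y|$. The key technical point is bootstrapping between the $\mathcal G$- and $L^\infty$-bounds without bleeding more than the stated $\delta_0$ into the exponent; this should be tractable because the projected-out Hermite polynomials have degree at most $n$, so $\langle y\rangle^{-\alpha}H_k$ is integrable whenever $\alpha \ge n+1$, rendering $\widetilde P_n$ continuous from the polynomial-weighted $L^\infty$ space into $\mathcal G$ up to harmless finite-dimensional losses.
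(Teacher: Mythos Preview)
Your approach is genuinely different from the paper's, and it has a real gap. The paper works entirely with the explicit integral kernel: it represents $U(\tau,\sigma)$ via Feynman--Kac with Mehler's formula \eqref{eq:meh}, so that $V\ge 0$ gives the pointwise domination $|U(\tau,\sigma)g|\le e^{-(\tau-\sigma)(\mathcal L_0-1)}|g|$ and \eqref{eq:propagatorNoPro} follows immediately. For \eqref{eq:propagatorMPro} the paper rewrites the projected flow as $\partial_\tau g=-(\mathcal L_0+V)g+(1-P_n)\tilde V g$ with $\tilde V=V-V(0,\tau)$, applies Duhamel with the \emph{unprojected} propagator, and then extracts the rate $e^{-\frac{n-1}{2}(\tau-\sigma)}$ by integrating the Mehler kernel by parts $n+1$ times in $z$: each $\partial_z K_{\tau-\sigma}$ produces a factor $e^{-\frac{\tau-\sigma}{2}}$, and the orthogonality $g\perp e^{-z^2/8}H_j$ for $j\le n$ guarantees that the iterated antiderivatives $g^{-l}$ decay. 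This is the key mechanism, and it is never invoked in your argument.

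The gap in your route is that the projected evolution $\partial_\tau F=-\widetilde P_n L_V\widetilde P_n F$ is not a local parabolic equation, so neither the maximum principle nor local Schauder/De Giorgi--Nash--Moser regularity applies to it. Your ``Main obstacle'' paragraph notes that $1-\widetilde P_n$ is bounded on the polynomial-weighted space, which is true but does not restore locality. The only way to recover a local PDE is precisely the paper's move: use $\widetilde P_n F=F$ and $[\widetilde P_n,L_0]=0$ to write $\partial_\tau F=-L_V F+(1-\widetilde P_n)VF$, treating the finite-rank piece as a source. Once you do this, the supersolution argument works outside a compact set, but to close the estimate on the source term $(1-\widetilde P_n)VF$ in the weighted $L^\infty$ norm you still need the quantitative smallness of $\tilde V$ from \eqref{eq:smallV} (note that $V$ itself is not small---only $V-V(0,\tau)$ and $\partial_y V$ are), and you need a mechanism that converts the orthogonality conditions into the gain of $\frac{n+1}{2}$ in the rate; this is exactly what the integration-by-parts trick supplies and what your spectral-gap-in-$\mathcal G$ argument, by itself, does not.
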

The lemma will proved in Appendix \ref{sec:propagator}.

Now we present some ideas. The spectrum of $\mathcal{L}_0$ is $\Big\{\frac{k}{2}\ \Big|\ k=0,1,2,3,\cdots,\Big\}$ with corresponding eigenvectors $e^{-\frac{1}{8}y^2}H_k$. When $V$ is nonnegative and slowly varying, one expects the propagator generated by $-\mathcal{L}_V$, in certain subspace, decays rapidly. This is the intuition behind the results.

The present problem is very similar to what was considered in \cite{BrKu, DGSW} for the blowup problem of one-dimensional nonlinear heat equation; \cite{GS2008, GaKnSi, GaKn2014} for MCF; and \cite{MultiDHeat} for multidimensional nonlinear heat equations, where the propagator is generated by $-\tilde{L}_W$ of the form
\begin{align}
\tilde{L}_{W}:=-\partial_{y}^2 +\frac{1}{16}y^2-\frac{1}{4}-1+W.
\end{align} Here $W$ is bounded and positive, and satisfies a slightly different estimate, 
\begin{align}
  \sup_{y}\Big\{ \Big|\partial_{y}W(y,\tau)\Big|+ \langle y\rangle^{-5}\Big|W(y,\tau)\Big|\Big\}\leq \epsilon_0 R(\tau) .\label{eq:smallW}
\end{align} for some $\epsilon_0\ll 1$ and $R(\tau)\rightarrow 0$ as $\tau\rightarrow \infty.$
The differences between \eqref{eq:smallW} and \eqref{eq:smallV} do not make our proof any harder, especially after observing the identity in \eqref{eq:linearEvo} below. 






\section{Proof of the estimates for \texorpdfstring{$\chi_{Z}\eta_{k,l}$}{} in (\ref{eq:EtaKLlowN})}\label{sec:lowDeta}
From the governing equation for $ \chi_{Z}\eta_{k,l}$ in \eqref{eq:ZEtaKL} we derive
\begin{align}
\begin{split}\label{eq:nZEtaKL2}
    \partial_{\tau}\partial_{y}^{j}\chi_{Z}\eta_{k,l}&= -\Big(L_{k}+\frac{j}{2}\Big)\partial_{y}^{j}\chi_{Z}\eta_{k,l}+ \partial_y^j \chi_{Z}F_{k,l}+\widetilde{SN}_{k,l,j}+\partial_{y}^j \mu_{Z}(\eta_{k,l}),
\end{split}
\end{align} where $\frac{j}{2}$ in the linear operator is from a commutation relation: for any $h\in \mathbb{N}$ and function $g$,
\begin{align}
    \partial^h_{y}(\frac{1}{2}y\partial_{y}g)=(\frac{1}{2}y\partial_{y}+\frac{h}{2}) \partial_{y}^h g,\label{eq:commRel}
\end{align} and the functions $F_{k,l}$ and $\widetilde{SN}_{k,l}$ are defined as,
\begin{align*}
    F_{k,l}:=&\frac{1}{\langle f_{k,l},\ f_{k,l}\rangle_{\mathbb{S}^3}}\Big\langle F,\ f_{k,l}\Big\rangle_{\mathbb{S}^3}\\
    \widetilde{SN}_{k,l,j}:=&\frac{1}{\langle f_{k,l},\ f_{k,l}\rangle_{\mathbb{S}^3}}\partial_{y}^j \chi_{Z}\Big\langle SN,\ f_{k,l}\Big\rangle_{\mathbb{S}^3}+\frac{1}{q_M}\partial_{y}^{j}(\chi_{Z}\eta_{k,l})-\partial_{y}^j\Big(\frac{1}{q_M}\chi_{Z}\eta_{k,l}\Big).
\end{align*}

We need to transform the equation since $\partial_{y}^j\mu_{Z}(\eta_{k,l})$ contains difficult terms. It has two parts:
\begin{align}
\partial_y^j\mu_{Z}(\eta_{k,l})=&\partial_{y}^j \Big[(\partial_{\tau}\chi_{Z})\eta_{k,l}+\frac{1}{2}(y\partial_{y}\chi_{Z})\eta_{k,l}-(\partial_{y}^2\chi_{Z})\eta_{k,l}-2(\partial_{y}\chi_{Z})(  \partial_{y}\eta_{k,l})\Big]\nonumber\\
=&\partial_y^j\Big[\big|\frac{1}{2}-Z_{m}^{-1}\frac{d}{d\tau}Z_{m}\big|\ ( y\partial_{y}\chi_{Z})\eta_{k,l}-(\partial_{y}^2\chi_{Z})\eta_{k,l}-2(\partial_{y}\chi_{Z}) ( \partial_{y}\eta_{k,l}) \Big]\nonumber\\
=&-\Big|\frac{1}{2}-Z_{m}^{-1}\frac{d}{d\tau}Z_{m}\Big|\ \Big| y\partial_{y}\chi_{Z}\Big|\partial_{y}^j \eta_{k,l}+\Gamma_{j,1}(\eta_{k,l})\label{eq:diffCut}
\end{align} where the term $\Gamma_{j,1}(\eta_{k,l})$ is considered small and is defined as
\begin{align*}
&\Gamma_{j,1}(\eta_{k,l})\\
:=&\Big|\frac{1}{2}-Z_{m}^{-1}\frac{d}{d\tau}Z_{m}\Big| \Bigg\{\Big[\partial_{y}^j\big( ( y\partial_{y}\chi_{Z})\eta_{k,l}\big)- ( y\partial_{y}\chi_{Z}) \partial_{y}^j\eta_{k,l}\Big]-\partial_y^j\Big[(\partial_{y}^2\chi_{Z})\eta_{k,l}+2(\partial_{y}\chi_{Z}) ( \partial_{y}\eta_{k,l}) \Big]\Bigg\},
\end{align*} and in the second step we used that $\frac{1}{2}-Z_{m}^{-1}\frac{d}{d\tau}Z_{m}$ is positive by the definition of $Z_{m}$, and $y\partial_{y}\chi_{Z}\leq 0$ by requiring that $\chi(z)=\chi(|z|)$ is a decreasing function (see \eqref{eq:defChi3}).

$\Gamma_{j,1}(\eta_{k,l})$ is indeed small:
the definition $\chi_{Z}(|y|)=\chi(\frac{|y|}{Z_{m}})$ implies that for any $h\in \mathbb{N},$ $$\partial_{y}^{h}\chi_{Z}=Z_{m}^{-h} \big(\frac{d^h}{d x^n }\chi(x)\big)\Big|_{x=\frac{y}{Z_m}}.$$
This, together with
$q^{-1}\sum_{k=0}^2|\partial_{y}^k\eta|\ll 1$ implied by \eqref{eq:conditionZm} and that $\chi'(\frac{y}{Z_{m}})$ and $\chi^{''}(\frac{y}{Z_{m}})$ are supported by the set $|y|\geq Z_{m}$, implies that, for any $i\geq 0$ there exists a constant $C_{\chi,i}$, such that 
\begin{align}
    \langle y\rangle^{-i}|\Gamma_{j,1}(\eta_{k,l})|\leq C_{\chi, i} Z_{m}^{-i-1} q \ll Z_{m}^{-i-\frac{1}{2}} .\label{eq:smallGamma}
\end{align} 

It is more involved to control the first term in \eqref{eq:diffCut}. We observe that the $L^{\infty}$-norm of $y\partial_{y}\chi_{Z}$ is fixed, thus we can not treat it as a small term, since the definition $\chi_{Z}(y)=\chi(\frac{|y|}{Z_{m}})$ implies
\begin{align}
\sup_{y}\Big|y\partial_{y}\chi_{Z}(y)\Big|=\sup_{x}\Big|\chi^{'}(x)\Big|;
\end{align} and moreover, for example when $j=0,$ the map $\chi_{Z}w\rightarrow (y\cdot \partial_{y}\chi_{Z})w=\frac{y\cdot \partial_{y}\chi_{Z}}{\chi_{Z}}\chi_{Z}w$ is unbounded since $|\frac{y\cdot \partial_{y}\chi_{Z}}{\chi_{Z}}|\rightarrow \infty$ as $|y|\rightarrow (1+\epsilon)Z_{m}$.

To overcome this difficulty, we observe that it is non-positive. This is favorable, since at least intuitively,  a non-positive multiplier on the right hand side of \eqref{eq:nZEtaKL2} should help $\chi_{Z}\eta_{k,l}$ decay faster.
Our strategy is to absorb ``most" of it into the linear operator. For that purpose we define a new non-negative smooth cutoff function $\tilde\chi_{Z}(y)$ such that
\begin{align}
\tilde\chi_{Z}(y)=\left[
\begin{array}{ll}
1 ,\ \text{if}\ |y|\leq Z_{m}(1+\epsilon- Z_{m}^{-\frac{1}{4}}),\\
0,\ \text{if}\ |y|\geq Z_{m}(1+\epsilon-2 Z_{m}^{-\frac{1}{4}})
\end{array}
\right.
\end{align} and require it satisfies the estimate
\begin{align}
|\partial_{y}^{k}\tilde{\chi}_{Y}(y)|\lesssim Z_{m}^{-\frac{3}{4}|k|},\ |k|=1.
\end{align} Such a function is easy to construct, hence we skip the details. 
Decompose the function $(\frac{1}{2}-Z_{m}^{-1}\frac{d}{d\tau} Z_{m}) (y\partial_{y}  \chi_{Z}) \partial_{y}^n\eta_{k,l} $ into two parts and then compute directly to obtain
\begin{align}
\begin{split}\label{eq:unboundtwo}
&(\frac{1}{2}-Z_{m}^{-1}\frac{d}{d\tau} Z_{m})(y\partial_{y} \chi_{Z}) \partial_{y}^j \eta_{k,l}\\
=& (\frac{1}{2}-Z_{m}^{-1}\frac{d}{d\tau} Z_{m})\frac{(y\partial_{y} \chi_{Z}) \tilde\chi_{Z}}{\chi_{Z}}\chi_{Z} \partial_{y}^j\eta_{k,l}+(\frac{1}{2}-Z_{m}^{-1}\frac{d}{d\tau} Z_{m}) (y\partial_{y} \chi_{Z}) (1-\tilde\chi_{Z})\partial_{y}^j\eta_{k,l}\\
=&(\frac{1}{2}-Z_{m}^{-1}\frac{d}{d\tau} Z_{m})\frac{(y\partial_{y} \chi_{Z}) \tilde\chi_{Z}}{\chi_{Z}} \partial_{y}^j\chi_{Z}\eta_{k,l}+\Gamma_{j,2}(\eta_{k,l})
\end{split}
\end{align} where $\Gamma_{j,2}(\eta_{k,l})$ is defined as 
\begin{align*}
    \Gamma_{j,2}:=&(\frac{1}{2}-Z_{m}^{-1}\frac{d}{d\tau} Z_{m})\frac{(y\partial_{y} \chi_{Z}) \tilde\chi_{Z}}{\chi_{Z}} \Big[\chi_{Z}\partial_{y}^j\eta_{k,l}-\partial_{y}^j\chi_{Z}\eta_{k,l}\Big]\\
    &+(\frac{1}{2}-Z_{m}^{-1}\frac{d}{d\tau}Z_{m}) (y\partial_{y} \chi_{Z}) (1-\tilde\chi_{Z})\partial_{y}^j\eta_{k,l}.
\end{align*}

The following two observations will be used in later development: 
\begin{itemize}
\item[(A)]
The first part in \eqref{eq:unboundtwo} is a bounded multiplication operator since, for some $c(\epsilon)>0,$
\begin{align}
\Big|\frac{y\partial_{y} \chi_{Z}\  \tilde\chi_{Z}}{\chi_{Z}}\Big|\leq c(\epsilon) Z_m^{\frac{1}{4}}.\label{eq:NewPoten}
\end{align}If $Z_m$ is sufficiently large, then the properties of $\chi$ in \eqref{eq:chiSmooth} imply that
\begin{align}
\Big|\partial_{y}^{k}\Big[\frac{y\partial_{y} \chi_{Z}\  \tilde\chi_{Z}}{\chi_{Z}}\Big]\Big|\leq  Z_m^{-\frac{1}{4}},\ |k|=1,2.\label{eq:deriveSmooth}
\end{align}

\item[(B)]
To control $\Gamma_{j,2}$, we use \eqref{eq:chiSmooth} and techniques used in proving \eqref{eq:smallGamma} to find that, for any $h\geq 0,$
\begin{align}
\langle y\rangle^{-h}|\Gamma_{j,2}|\ll  Z_m^{-h-\frac{1}{2}}.\label{eq:TwoCut}
\end{align}
\end{itemize}

Thus \eqref{eq:nZEtaKL2} takes a new form
\begin{align}\label{eq:jetakl}
    \partial_{\tau}e^{-\frac{1}{8}y^2}\partial_{y}^j\chi_{Z}\eta_{k,l}=-(H_k+\frac{j}{2}) e^{-\frac{1}{8}y^2}\chi_{Z}\eta_{k,l} +e^{-\frac{1}{8}y^2}\Big[\partial_{y}^j\chi_{Z}F_{k,l}+ \widetilde{SN}_{k,l,j}\Big]
    +e^{-\frac{1}{8}y^2}\Gamma_{j}(\eta_{k,l}),
\end{align} where $H_k$ is a linear operator defined as
\begin{align*}
    H_k:=e^{-\frac{1}{8}y^2} L_k e^{\frac{1}{8}y^2}+\Big|\frac{1}{2}-Z_{m}^{-1}\frac{d}{d\tau}Z_{m}\Big|\Big|\frac{\tilde\chi_{Z}\ y\cdot \partial_{y} \chi_{Z}  }{\chi_{Z}}\Big|,
\end{align*} and $\Gamma_j(\eta_{k,l})$ is defined as
\begin{align*}
    \Gamma_{j}(\eta_{k,l}):=\Gamma_{j,1}(\eta_{k,l})+\Gamma_{j,2}(\eta_{k,l}).
\end{align*}

From \eqref{eq:ZEtaOrtho} we derive
\begin{align}
e^{-\frac{1}{8}y^2}\partial_{y}^j\chi_{Z}\eta_{k,l}\perp e^{-\frac{1}{8}y^2} H_i, \ i=0,\cdots, m-j, 
\end{align} equivalently, 
\begin{align}
P_{m-j} e^{-\frac{1}{8}y^2}\chi_{Z}\eta_{k,l}=e^{-\frac{1}{8}y^2}\chi_{Z}\eta_{k,l}.
\end{align}Recall that $1-P_n$ is the orthogonal projection onto the subspace spanned by $e^{-\frac{1}{8}y^2}H_i,\ i=0,\cdots,n.$

Rewrite \eqref{eq:jetakl} by applying the operator $P_{m-j}$, and then Duhamel's principle to obtain,
\begin{align}
\begin{split}\label{eq:durh}
e^{-\frac{1}{8}y^2}\partial_{y}^j&\chi_{Z}\eta_{k,l}(\cdot,\tau)=U_j(\tau,T_1) e^{-\frac{j}{2}(\tau-T_1)} e^{-\frac{1}{8}y^2}\chi_{Z}\eta_{k,l}(\cdot,T_1)\\
&+\int_{T_1}^{\tau} U_{j}(\tau,\sigma)e^{-\frac{j}{2}(\tau-\sigma)} P_{m-j} e^{-\frac{1}{8}y^2}
\Big[\partial_{y}^j\chi_{Z}F_{k,l}+ \widetilde{SN}_{k,l,j}\Big]+\Gamma_{j}(\eta_{k,l})\Big](\sigma)\ d\sigma,
\end{split}
\end{align} where $U_j(\tau,\sigma)$ is the propagator generated by $-P_{m-j} H_k P_{m-j}$. 

It is crucial that the propagator generates decay estimate. Lemma \ref{LM:frequencyWise} implies that, if $(n,j)\in \Upsilon_m$, then for any $\delta_0>0$ there exists a constant $C_{\delta_0}>0$ such that for any function $f$ and $\tau\geq \sigma,$
\begin{align}
e^{-\frac{j}{2}(\tau-\sigma)}\Big\|\langle y\rangle^{-n} e^{\frac{1}{8}y^2} U_j(\tau,\sigma)P_{m-j} f\Big\|_{\infty}\leq C_{\delta_0} e^{-(\frac{m-1}{2}-\delta_0)(\tau-\sigma)} \Big\|\langle y\rangle^{-n} e^{\frac{1}{8}y^2}f\Big\|_{\infty}.
\end{align}

For the terms on the right hand side we provide the following estimates: recall the definitions of $\tilde{Z}_m$ and $\kappa$ from (\ref{def:TiZm}) and (\ref{def:kappa}),
\begin{proposition}\label{prop:estSourceNonL} For any pair $(n,j)\in \Upsilon_m$, we have that
\begin{align}
\Big\|\langle y\rangle^{-n}P_{m-j} e^{-\frac{1}{8}y^2} \partial_{y}^j  \chi_{Z} F_{k,l}(\cdot,\tau) \Big\|_{\infty} \lesssim & e^{-40m\sqrt{\tau}} \tilde{Z}_{m}^{-n}(\tau) ,\label{eq:fkl}\\
\Big\|\langle y\rangle^{-n}\widetilde{SN}_{k,l,j}(\cdot,\tau)\Big\|_{\infty}\lesssim &  \tilde{Z}_{m}^{-n} Y_{T_2}^{-1}\  \Big(\kappa (1+\mathcal{M})+\mathcal{M}^2 \Big),\label{eq:snkl}\\
\Big\|\langle y\rangle^{-n}\Gamma_j(\eta_{k,l})(\cdot,\tau)\Big\|_{\infty}\lesssim & \kappa Z_{m}^{-n-\frac{1}{2}} .\label{eq:GamKL}
\end{align}
\end{proposition}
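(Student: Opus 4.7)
The plan is to prove the three bounds separately, progressing from the simplest to the most delicate.

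Estimate \eqref{eq:GamKL} is a bookkeeping exercise. Both $\Gamma_{j,1}$ and $\Gamma_{j,2}$ are supported on the annulus where a derivative of $\chi_Z$ or $\tilde\chi_Z$ is nonzero, i.e.\ $|y|\sim Z_m$. There the bootstrap hypothesis \eqref{eq:conditionZm} gives $\sum_{k\leq 2m+4}|\partial_y^k\eta|\lesssim\delta\leq\kappa$, each cutoff derivative contributes a factor $Z_m^{-1}$ (or $Z_m^{-3/4}$ for $\tilde\chi_Z$), and $\langle y\rangle^{-n}\sim Z_m^{-n}$ on this set. Combined with the extra $Z_m^{-1/2}$ already recorded in \eqref{eq:smallGamma} and \eqref{eq:TwoCut}, this yields \eqref{eq:GamKL}.

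For \eqref{eq:fkl}, decompose $F=G_1+G_2$. The piece $G_1=-\sum_{n\le m,k\le N,l}(\tfrac{d}{d\tau}+\tfrac{n-2}{2}+\tfrac{k(k+2)}{6})\alpha_{n,k,l}H_nf_{k,l}$ is a polynomial of degree $\le m$ in $y$ whose coefficients are fixed by the orthogonality conditions via \eqref{eq:BetaNKLeqn}. Applying $\partial_y^j$ yields a polynomial of degree $\le m-j$; away from the cutoff annulus, $e^{-y^2/8}$ times such a polynomial is a linear combination of $e^{-y^2/8}H_i$ with $i\le m-j$, hence annihilated by $P_{m-j}$, and only commutator errors with $\chi_Z$ survive, supported in the annulus and estimated as in the previous paragraph. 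For $G_2$, the sharp bounds \eqref{eq:estBetaEff} make every surviving summand have coefficient bounded by $e^{-(m-1)\tau/2}$ times a polynomial in $y$ of degree $\le m$; since $|y|\le 2Z_m$ on $\mathrm{supp}\,\chi_Z$ and $\tilde{Z}_m\sim e^{(m-2)\tau/(2m)}$, a direct calculation gives $e^{-(m-1)\tau/2}\tilde{Z}_m^{n}\lesssim e^{-40m\sqrt{\tau}}$ for $n\le m+1$ and $\tau\ge T_1$ sufficiently large, which is \eqref{eq:fkl}.

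The substantive work is \eqref{eq:snkl}, and it is where the particular choice of norms in $\Upsilon_m$ and the projection $P_{\omega,N}$ pay off. Schematically, $\widetilde{SN}_{k,l,j}$ is a sum of terms of the form $\partial_y^j(\chi_Z C_\alpha(y,\omega,\tau)\Pi_\alpha(\eta))_{k,l}$, plus commutators involving $q_M^{-1}$; each $C_\alpha$ is smooth in $(y,\omega)$ with explicitly bounded coefficients built from $q$, $\partial_yQ_{m-1}$, $\partial_y^2Q_{m-1}$, and $\Pi_\alpha(\eta)$ is either quadratic in derivatives of $\eta$ of orders $\le j$, or linear in $\eta$ with a ``small'' coefficient. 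Expand via Leibniz and bound each piece in $\|\langle y\rangle^{-n}\cdot\|_\infty$. For the quadratic-in-$\eta$ summands, one $\eta$-factor is converted from the $L^2_\omega$ bound carried by $\mathcal{M}_{n',j'}$ to the $L^\infty_\omega$ control we need via the Sobolev embedding Lemma \ref{LM:embedding} (this is the purpose of the $(-\Delta_{\mathbb{S}^3}+1)^5$ in the definition of $\mathcal{M}_{n,j}$); the weight $\langle y\rangle^{-n}$ is then distributed between the two $\eta$-factors according to $\Upsilon_m$ to produce an overall $\mathcal{M}^2\cdot \tilde Z_m^{-n}Y_{T_2}^{-1}$ contribution.

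The small-linear-in-$\eta$ summands contain the delicate $D_1, D_2$-type terms of \eqref{eq:samD1D2} that motivated the ladder $\Upsilon_m$: on $\mathrm{supp}\,\chi_Z$ one has $\langle y\rangle^{-1}e^{-\tau/2}\sqrt{q}+\langle y\rangle^{-2}e^{-\tau}y^2\lesssim e^{-\epsilon_0\tau}\le\kappa$ for some $\epsilon_0>0$, which absorbs the slow coefficients, and the remaining $\partial_y^{j+2}\eta$-factor is controlled by $\mathcal{M}_{n-1,j+2}$ or $\mathcal{M}_{n-2,j+2}$ when $j+2\le 2m+2$, and directly by $1$ via the bootstrap assumption \eqref{eq:conditionZm} when $j+2\in\{2m+3,2m+4\}$. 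The unit bound in this last case costs a factor $Y_{T_2}^{+1}$, which is precisely what the $Y_{T_2}^{-1}$ on the right-hand side of \eqref{eq:snkl} absorbs, thanks to the mild growth rate \eqref{def:YT2}. The $q_M^{-1}$ commutators and the $P_{\omega,N}$-tail contributions are handled similarly, the latter using the large-$k$ coercivity $k(k+2)/q_M\ge m/2$ from \eqref{eq:largeNzm}. The anticipated main obstacle is the bookkeeping required to keep every label $(n',j')$ inside $\Upsilon_m$ while distributing weights between factors of each nonlinear term; essentially the internal structure of $\Upsilon_m$ and the auxiliary function $Y_{T_2}$ were engineered precisely so that this bookkeeping closes, and the proof consists in verifying that it does for each of the finitely many terms in $\widetilde{SN}$.
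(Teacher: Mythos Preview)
Your treatment of \eqref{eq:GamKL} is correct and matches the paper. Your outline for \eqref{eq:snkl} is in the right spirit and close to the paper's Section~10, though two points are imprecise: the $P_{\omega,N}$ coercivity from \eqref{eq:largeNzm} is not used in this estimate (it enters only in the $\tilde\eta$ analysis of Section~7), and the role of $Y_{T_2}$ is not quite what you describe --- the paper uses it to reconcile the mismatch between $Z_m$ and $\tilde Z_m$ when $\tau\in[T_1,T_2]$ via the spatial splitting $1_{\le(1+\epsilon)Z_m}=1_{\le\tilde Z_m}+1_{\tilde Z_m<|y|\le(1+\epsilon)Z_m}$ (see \eqref{eq:pureTech2}--\eqref{eq:pureTech}), not to absorb the top-derivative bootstrap bound.

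There is, however, a genuine gap in your argument for \eqref{eq:fkl}. You assert that \eqref{eq:estBetaEff} makes every surviving summand in $G_2$ ``bounded by $e^{-(m-1)\tau/2}$ times a polynomial in $y$ of degree $\le m$''. This is false as stated: $G_2=G_2(\sqrt{q})$ is built from $1/q$, $\sqrt{q}$, etc., and is not a polynomial; moreover $q$ contains the term $\alpha_{m,0,1}(\tau)y^m$, which is not uniformly small on $\mathrm{supp}\,\chi_Z$ --- in fact $|\alpha_{m,0,1}y^m|\gg 1$ when $|y|\gg e^{(m-2)\tau/(2m)}$ (the paper flags this explicitly at \eqref{eq:worstTerm}). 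The bounds \eqref{eq:estBetaEff} are on Gaussian-weighted inner products $NL_{n,k,l}$, not pointwise bounds on $G_2$, so they do not give you what you claim.

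The paper's actual mechanism (Section~9.2) is a decomposition $q=6+\tilde q+y^m\tilde q_M$ with $\tilde q$ of degree $\le m-1$ and uniformly small on the region (see \eqref{eq:qqqM}--\eqref{eq:cnkl2}). Then $F_2(\sqrt{6+\tilde q})$ is Taylor-expanded in the small $\tilde q$; the resulting expression has slowly decaying polynomial pieces, and it is the projection $P_{m-j}$ that removes them --- this is the essential use of the orthogonality, not merely a commutator-with-$\chi_Z$ correction. For the difference $F_2(\sqrt q)-F_2(\sqrt{6+\tilde q})$, each term carries at least one factor from $y^m\tilde q_M$; the key observations are that $\alpha_{m,0,1}$ already decays at rate $e^{-(m-2)\tau/2}$, that a $y$-derivative on $\alpha_{m,0,1}y^m$ gains smallness, and that $e^{-\delta_0\tau}|\alpha_{m,0,1}y^m|\ll 1$ for any $\delta_0>0$. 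Working through representative terms (the paper does $\tilde J_1$ and $\tilde K_2$) one gets $\langle y\rangle^{-m-1}|F_2(\sqrt q)-F_2(\sqrt{6+\tilde q})|\lesssim e^{-(m-1)\tau/2}$, and only then does your numerical comparison $e^{-(m-1)\tau/2}\tilde Z_m^{m+1}\lesssim e^{-40m\sqrt\tau}$ finish the job. Without this decomposition you have no control over the large-$|y|$ part of $G_2$.
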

These first two estimates will be proved in Sections \ref{sec:source} and \ref{sec:smallLinear} respectively. The third one is implied by the definition of $\Gamma_{j}(\eta_{k,l})$ in \eqref{eq:jetakl}, and the estimates \eqref{eq:smallGamma} and \eqref{eq:TwoCut}.

Thus, \eqref{eq:durh} becomes
\begin{align*}
\Big\|\langle y\rangle^{-n}\partial_{y}^{l}\chi_{Z}&\eta_{k,l}(\cdot,\tau)
\Big\|_{\infty}\leq  C_{\delta_0}\Big\{ e^{-\frac{m-1-\delta_0}{2}(\tau-T_1)} \delta\\
&+\delta \int_{T_1}^{\tau}e^{-\frac{m-1-\delta_0}{2}(\tau-\sigma)} \Big[\tilde{Z}_{m}^{-n}(\sigma) Y_{T_2}^{-1}(\sigma) \Big]\ d\sigma\  \Big(\kappa+\kappa\mathcal{M}(\tau)+\mathcal{M}^2(\tau)\Big)\Big\}
\end{align*} Here we choose $\delta_0>0$ such that $\frac{m-1-\delta_0}{2}> \frac{(m+1)(m-2)}{2m}$, which is equivalent to
$$0<\delta_0<\frac{2}{m},$$
so that the function $e^{-\frac{m-1-\delta_0}{2}\tau}$ decays faster than 
$\tilde{Z}_{m}^{-m-1} Y_{T_2}^{-1}$.

By this and that $n\leq m+1$, we obtain the desired result, for some $C>0,$
\begin{align}
\Big\|\langle y\rangle^{-n}\partial_{y}^{l}\chi_{Z}\eta_{k,l}(\cdot,\tau)
\Big\|_{\infty}\leq C\delta  \tilde{Z}_{m}^{-n}(\tau) Y_{T_2}^{-1}  \Big(\kappa +\kappa \mathcal{M}(\tau)+\mathcal{M}^2(\tau)\Big).
\end{align}


\section{Proof of the part for \texorpdfstring{$\chi_{Z}\tilde\eta$}{} in (\ref{eq:EtaKLlowN})}\label{sec:MaxTiEta}
The main tool is the maximum principle.
To prepare for its application we need a governing equation for the function
\begin{align}
g_j(y,\tau):=(M+y^2)^{-n}\Big\langle\xi_j(y,\cdot,\tau),\ \xi_j(y,\cdot,\tau)\Big\rangle_{\mathbb{S}^3},
\end{align} 
where $K$ is a positive constant, and $\xi_{j}$ is a function, defined as
\begin{align*}
K:=&20m^2,\\
\xi_{j}(y,\omega,\tau):= &(-\Delta_{\mathbb{S}^3}+1)^5 \partial_{y}^{j}\chi_{Z}\tilde\eta(y,\omega,\tau).
\end{align*}
Derive from \eqref{eq:EffTilXi}
\begin{align}
\begin{split}\label{eq:govGl}
    \partial_{\tau} g_j
    =&2(K+y^2)^{-n}\Big\langle \xi_j,\ -(L+\frac{j}{2}) \xi_j\Big\rangle_{\mathbb{S}^3}+\sum_{k=1}^3 D_{1,k}\\
   =&-\Big(L_{n}+\Lambda_{n,j}\Big) g_j-\text{Positive}(\xi_{j})+\sum_{k=1}^3 D_{1,k},
   \end{split}
\end{align} where we use the commutation relation (\ref{eq:commRel}), $L_{n}$ is a differential operator, and $\Lambda_{n,j}$ is a multiplier,
\begin{align*}
L_{n}:=&-\partial_{y}^2 +\frac{1}{2}y\partial_{y}-\frac{2ny}{K+y^2}\partial_y,\\
\Lambda_{n,j}:=&-\frac{2n}{K+y^2}+\frac{2n(n-1)y^2}{(K+y^2)^2}+\frac{2ny^2}{K+y^2}+j-1,
\end{align*}
and
$\text{Positive}(\xi_{j})$ is a positive function defined as
\begin{align*}
\text{Positive}(\xi_j):=2(K+y^2)^{-n}\Big[\Big\langle \partial_{y}\xi_j, \ \partial_{y}\xi_j\Big\rangle_{\mathbb{S}^3}-\frac{1}{q_M}\Big\langle  \xi_j,\  \Delta_{\mathbb{S}^3}\xi_j\Big\rangle_{\mathbb{S}^3}\Big]
\end{align*}
and the terms $D_{1,k},\ k=1,2,3,$ are defined as,
\begin{align}
\begin{split}
    D_{1,1}:=&2(K+y^2)^{-n}\Big\langle\xi_j,\ (-\Delta_{\mathbb{S}^3}+1)^5 \partial_{y}^j \chi_{Z} F\Big\rangle_{\mathbb{S}^3},\\
    D_{1,2}:=&2(K+y^2)^{-n}\Big\langle\xi_j,\ (-\Delta_{\mathbb{S}^3}+1)^5 \partial_{y}^j \chi_{Z}SN
    \Big\rangle_{\mathbb{S}^3},\\
    D_{1,3}:=&2(K+y^2)^{-n}\Big\langle\xi_j,\ (-\Delta_{\mathbb{S}^3}+1)^5 \partial_{y}^j \mu_{Z}(\tilde\eta)\Big\rangle_{\mathbb{S}^3}\\
    =&2(K+y^2)^{-n}\Big\langle\xi_j,\  \partial_{y}^j \mu_{Z}\Big((-\Delta_{\mathbb{S}^3}+1)^5 P_{\mathbb{S}^3,N_m}\tilde\eta\Big)\Big\rangle_{\mathbb{S}^3}
\end{split}
\end{align}

For the terms on the right hand, recall the definitions of $\tilde{Z}_m$ and $\kappa$ from (\ref{def:TiZm}) and (\ref{def:kappa}),
\begin{proposition}\label{prop:maximum1}
\begin{align}
D_{1,1}\leq &\kappa  \tilde{Z}_{m}^{-n} e^{-20m\sqrt{\tau }}\ g_{l}^{\frac{1}{2}}   ,\label{eq:estD11}\\
D_{1,2}\leq &   \tilde{Z}_{m}^{-n} Y_{T_2}^{-1}\ g_{l}^{\frac{1}{2}}\Big(\kappa (1+\mathcal{M})+\mathcal{M}^2 \Big) +\kappa \text{Positive}(\xi_l) ,\label{eq:estD12}\\
D_{1,3}\leq &\delta^2  Z_{m}^{-2n-\frac{1}{2}} .\label{eq:estD13}
\end{align}
\end{proposition}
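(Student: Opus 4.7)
The plan is to treat the three bracket terms $D_{1,1}, D_{1,2}, D_{1,3}$ separately, using a common first move: apply the Cauchy--Schwarz inequality on $\mathbb{S}^3$ to split off $\|\xi_j\|_{L^2(\mathbb{S}^3)}$, and then absorb the weight via $(K+y^2)^{-n} \|\xi_j\|_{L^2(\mathbb{S}^3)} = (K+y^2)^{-n/2} g_j^{1/2}(y)$. Since $K+y^2 \asymp \langle y\rangle^2$, the remaining problem reduces in each case to a pointwise-in-$y$ estimate of $\langle y\rangle^{-n}$ times the $L^2(\mathbb{S}^3)$-norm of the ``source'' factor.

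For $D_{1,1}$, the source is $(-\Delta_{\mathbb{S}^3}+1)^5 \partial_y^j \chi_Z F$. Decomposing $F = G_1 + G_2$ in the spherical-harmonic basis, each mode $f_{k,l}$ is an eigenfunction of $(-\Delta_{\mathbb{S}^3}+1)^5$ with eigenvalue $(k(k+2)+1)^5$, so the operator acts as a bounded multiplier on each mode and contributes only finitely many modes (the projection of $F$ onto modes $k \le N$ plus a rapidly decaying tail). Applying (\ref{eq:fkl}) from Proposition \ref{prop:estSourceNonL} mode-by-mode, together with the embedding Lemma \ref{LM:embedding}, yields the desired bound $\kappa \tilde Z_m^{-n} e^{-20m\sqrt{\tau}}$.

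For $D_{1,2}$, the source is $(-\Delta_{\mathbb{S}^3}+1)^5 \partial_y^j \chi_Z SN$. The subtlety is that $SN$ contains pieces that are quadratic in $\nabla_\omega \tilde\eta$ (from differences like $\frac{|\nabla_\omega^\perp q|^2}{2q^2}-\frac{|\nabla_\omega^\perp v^2|^2}{2v^4}$) and pieces linear in $\nabla_\omega^k \tilde\eta$ with small coefficients (from $\frac{q_M - q}{q_M q}\Delta_{\mathbb{S}^3}\eta$ etc.). The non-gradient nonlinear pieces are controlled directly by extending (\ref{eq:snkl}) to the $(-\Delta_{\mathbb{S}^3}+1)^5$-weighted $L^2(\mathbb{S}^3)$-norm using Lemma \ref{LM:embedding}, giving the contribution $\kappa(1+\mathcal M) + \mathcal M^2$. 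The dangerous pieces that, after Cauchy--Schwarz, produce factors of $\|\partial_y \xi_j\|_{L^2(\mathbb{S}^3)}$ or $\|\nabla_{\mathbb{S}^3}\xi_j\|_{L^2(\mathbb{S}^3)}$ are handled by Young's inequality $ab \le \epsilon a^2 + b^2/(4\epsilon)$ with $\epsilon \sim \kappa$: the $\epsilon a^2$ term is absorbed into $\kappa\,\text{Positive}(\xi_l)$ (which controls exactly $(K+y^2)^{-n}[\|\partial_y \xi_j\|^2_{L^2(\mathbb{S}^3)} + q_M^{-1}\|\nabla_{\mathbb{S}^3} \xi_j\|^2_{L^2(\mathbb{S}^3)}]$), while the $b^2/\kappa$ remainder is absorbed into the nonlinear contribution.

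For $D_{1,3}$, by the same reasoning leading to (\ref{eq:smallGamma}), (\ref{eq:TwoCut}) and (\ref{eq:GamKL}), the commutator term $\mu_Z(\tilde\eta)$ is supported on $|y| \in [Z_m, (1+\epsilon) Z_m]$ and satisfies $\langle y\rangle^{-n}\|(-\Delta_{\mathbb{S}^3}+1)^5 \partial_y^j \mu_Z(\tilde\eta)\|_{L^2(\mathbb{S}^3)} \lesssim \delta Z_m^{-n-1/2}$. Combining this with the bootstrap bound $g_j^{1/2}(y) \lesssim \delta\, \tilde Z_m^{-n} Y_{T_2}^{-1} \lesssim \delta Z_m^{-n}$ (from $\mathcal M \le 1$ and the definition of the majorants) yields $|D_{1,3}| \lesssim \delta^2 Z_m^{-2n-1/2}$. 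The main obstacle is the bookkeeping for $D_{1,2}$: one must identify precisely which gradient-type pieces of $SN$ must be absorbed into $\text{Positive}(\xi_j)$ (and with what $\epsilon$) and verify that the $\langle y\rangle^{-n}$ weight is compatible with the structure of these pieces, since mis-weighting would either prevent absorption into $\text{Positive}(\xi_j)$ or give a bound stronger than the norms in $\mathcal M$ permit.
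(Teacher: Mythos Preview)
Your treatment of $D_{1,1}$ and $D_{1,2}$ is broadly along the lines of the paper, though for $D_{1,1}$ your citation is off: since $\xi_j=(-\Delta_{\mathbb{S}^3}+1)^5\partial_y^j\chi_Z\tilde\eta$ with $\tilde\eta=P_{\omega,N}\eta$, the pairing $\langle\xi_j,\cdot\rangle_{\mathbb{S}^3}$ sees only the modes $k>N$ of $F$, so the low-mode estimate \eqref{eq:fkl} is irrelevant. What is actually needed (and what the paper uses) is that $P_{\omega,N}F_1=0$ and that $P_{\omega,N}F_2$ decays at the required rate, cf.~\eqref{eq:sizeNm}.

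The genuine gap is in $D_{1,3}$. You claim that $\langle y\rangle^{-n}\|(-\Delta_{\mathbb{S}^3}+1)^5\partial_y^j\mu_Z(\tilde\eta)\|_{L^2(\mathbb{S}^3)}\lesssim\delta Z_m^{-n-1/2}$ by invoking \eqref{eq:smallGamma}, \eqref{eq:TwoCut}, \eqref{eq:GamKL}. But those estimates hold only for the \emph{residual} pieces $\Gamma_{j,1},\Gamma_{j,2}$; they do not cover the leading term $-\big|\tfrac12-Z_m^{-1}\tfrac{d}{d\tau}Z_m\big|\,|y\partial_y\chi_Z|\,\partial_y^j\tilde\eta$ isolated in \eqref{eq:diffCut}. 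Since $\sup_y|y\partial_y\chi_Z|=\sup_x|x\chi'(x)|$ is a fixed constant independent of $Z_m$, this piece obeys only $\langle y\rangle^{-n}|\cdot|\lesssim\delta Z_m^{-n}$, not $\delta Z_m^{-n-1/2}$. With that weaker bound your product estimate gives at best $D_{1,3}\lesssim\delta^2 Z_m^{-2n}$, which lacks the extra $Z_m^{-1/2}$ needed in \eqref{eq:interme} to beat the factor $Y_{T_2}^{-2}$, so the maximum-principle argument would not close.

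The paper handles this piece by sign, not by size: its contribution to $D_{1,3}$ is, up to lower-order commutators with $\chi_Z$,
\[
2(K+y^2)^{-n}\Big(\tfrac12-Z_m^{-1}\tfrac{d}{d\tau}Z_m\Big)\,(y\partial_y\chi_Z)\,\chi_Z\,\big\|(-\Delta_{\mathbb{S}^3}+1)^5\partial_y^j\tilde\eta\big\|_{L^2(\mathbb{S}^3)}^2\le 0,
\]
because $\tfrac12-Z_m^{-1}\tfrac{d}{d\tau}Z_m>0$, $\chi_Z\ge0$, and $y\partial_y\chi_Z\le0$. Thus it can simply be dropped, and the remaining commutator terms \emph{do} satisfy the $\delta Z_m^{-n-1/2}$ bound via \eqref{eq:smallGamma}. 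You need to insert this sign observation to make the argument for $D_{1,3}$ work.
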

The first two estimates will be proved in Sections \ref{sec:source},\ \ref{sec:smallLinear} respectively. The third one is defined in terms of derivatives of the cutoff function $\chi_{Z}$. Different from what was discussed around \eqref{eq:diffCut}, the difficult term $(\frac{1}{2}-Z_{m}^{-1}\frac{d}{d\tau}Z_{m}) y \partial_{y}\chi_{Z}$ becomes easy to be controlled, since it is favorably nonpositive,
\begin{align}
\tilde{D}_{1,3}:=(\frac{1}{2}-Z_{m}^{-1}\frac{d}{d\tau}Z_m)\Big\langle (-\Delta_{\mathbb{S}^3}+1)^5\chi_{Z} \partial_{y}^{j}\tilde\eta,\ (-\Delta_{\mathbb{S}^3}+1)^5 (y\partial_{y}\chi_{Z}) \partial_{y}^{j}\tilde\eta\Big\rangle_{\mathcal{G}}\leq 0.
\end{align}
For the other parts we apply the same techniques used in \eqref{eq:smallGamma} to obtain the desired results.

Returning to \eqref{eq:govGl}, we collect the estimates above to find
\begin{align}\label{eq:interme}
    \partial_{\tau}g_l\leq -\Big(L_{n}+\Lambda_{n,j}-\frac{1}{2}\delta_0\Big) g_j-(1-\kappa)\text{Positive}(\xi_{j})+C_{\delta_0}\kappa^2 Z_{m}^{-2n} Y_{T_2}^{-2} \Big(1+\mathcal{M}\Big)^4. 
\end{align}

Now we need some contribution from $\text{Positive}(\xi_{j})$. Recall that $f_{n,m}$, $n=0,1,2,\cdots,$ are eigenvectors of $-\Delta_{\mathbb{S}^3}$ with eigenvalues $n(n+2).$ The condition $\xi_l(y,\cdot,\tau)\perp_{\mathbb{S}^3} f_{k,l},\ k\leq N, $ implies $$q_{M}^{-1}\Big\langle \xi_l, -\Delta_{\mathbb{S}^3}\xi_l\Big\rangle_{\mathbb{S}^3}\geq q_{M}^{-1} (N+1)(N+3) \Big\langle \xi_l, \xi_l\Big\rangle_{\mathbb{S}^3}.$$ This implies an important estimate: for any $\delta_0>0$, provided that $T_1$ and $N_m$ are large enough, 
\begin{align}
    \Lambda_{n,j}+q^{-1}_{M} (N+1)(N+3)\geq m-1-\frac{1}{2}\delta_0.
\end{align} To see this, we consider two regions: when $|y|\leq \tau^2$, then $q_{M}^{-1}\approx 6$, and $\frac{1}{6}(N+1)(N+3)\geq m-1;$ when $|y|\geq \tau^2$, $\Lambda_{n,j}\geq m-1-\frac{1}{2}\delta_0$. Recall that $\tau\geq T_1\gg 1.$

\eqref{eq:interme} becomes, for any $\delta_0>0$, suppose that $T_1$ is large enough,
\begin{align}
\partial_{\tau}g_j\leq -(m-1-\delta_0) g_j+C_{\delta_0} Z_{m}^{-2n} Y_{T_2}^{-2}  \Big(\kappa (1+\mathcal{M})+\mathcal{M}^2 \Big)^2.
\end{align}

The cutoff function $\chi_{Z}$ in the definition of $g_j$ makes $g_j(y,\tau)\equiv 0$ if $|y|\geq (1+\epsilon )Z_m.$

Apply the maximum principle to obtain
\begin{align}
\sup_{y}g_j(y,\tau)\leq & C_{\delta_0}\Big[e^{-(m-1-\delta_0)(\tau-T_1)}\sup_{y}g_{l}(y,T_1)\nonumber\\
& + \Big(\kappa (1+\mathcal{M})+\mathcal{M}^2 \Big)^2 \int_{T_1}^{\tau} e^{-(m-1-\delta_0)(\tau-\sigma)} \tilde{Z}_{m}^{-2n}Y_{T_2}^{-2}(\sigma) \ d\sigma\Big]\nonumber\\
\lesssim  & C_{\delta_0} \Big[ \delta^2  e^{-(m-1-\delta_0)(\tau-T_1)}+ \Big(\kappa (1+\mathcal{M})+\mathcal{M}^2 \Big)^2  \tilde{Z}_{m}^{-2n} Y_{T_2}^{-2}(\tau)
 \Big].
\end{align}
What is left is to take a square root to obtain the desired result.

\section{Proof of (\ref{eq:EtaKLhighN})}\label{sec:jGreat}
We will follow the steps in Section \ref{sec:MaxTiEta}.
To make the maximum principle applicable we derive a governing equation for $g_j$, defined as
\begin{align}
g_j(y,\tau):=(K+y^2)^{-n}\Big\langle\zeta_j(y,\cdot,\tau),\ \zeta_j(y,\cdot,\tau)\Big\rangle_{\mathbb{S}^3},
\end{align} 
where $K=20m^2$ is a positive constant, and $\zeta_j$ is a function, defined as
\begin{align*}
\zeta_j(y,\omega,\tau):=& (-\Delta_{\mathbb{S}^3}+1)^5 \partial_{y}^{l}\chi_{Z}\eta(y,\omega,\tau).
\end{align*}
Derive from \eqref{eq:EffTilXi},
\begin{align}
\begin{split}\label{eq:govGl2}
    \partial_{\tau} g_j
   =&-\Big(L_{n}+\Lambda_{n,j}\Big)g_j-\text{Positive}(\zeta_j)+\sum_{k=1}^3 D_{2,k},
   \end{split}
\end{align}where the differential operator $L_{n}$, the multiplier $\Lambda_{n,j}$ and the function $\text{Positive}$ are defined in \eqref{eq:govGl},
the terms $D_{2,k}$ are defined as
\begin{align}
\begin{split}
D_{2,1}:=&2(K+y^2)^{-n}\Big\langle\zeta_j,\ (-\Delta_{\mathbb{S}^3}+1)^4 \partial_{y}^j \chi_{Z}F\Big\rangle_{\mathbb{S}^3},\\
D_{2,2}:=&2(K+y^2)^{-n}\Big\langle\zeta_j,\ (-\Delta_{\mathbb{S}^3}+1)^4 \partial_{y}^j \chi_{Z}SN
   \Big\rangle_{\mathbb{S}^3},\\
D_{2,3}:=&2(K+y^2)^{-n}\Big\langle\zeta_j,\ (-\Delta_{\mathbb{S}^3}+1)^4 \partial_{y}^j \mu_{Z}(\eta)\Big\rangle_{\mathbb{S}^3}.
\end{split}
\end{align}

Now we estimate these terms, recall the definitions of $\tilde{Z}_m$ and $\kappa$ from (\ref{def:TiZm}) and (\ref{def:kappa}),
\begin{proposition}\label{prop:D3k}
The terms $D_{2,k}$ satisfy the following estimates
\begin{align}
D_{2,1}\leq &\kappa  \tilde{Z}_{m}^{-n} e^{-20m\sqrt{\tau}}\ g_{j}^{\frac{1}{2}}   ,\label{eq:estD21}\\
D_{2,2}\leq &   \tilde{Z}_{m}^{-n} Y_{T_2}^{-1}\ g_{j}^{\frac{1}{2}} \Big(\kappa (1+\mathcal{M})+\mathcal{M}^2 \Big)^2+\kappa \text{Positive}(\zeta_j) ,\label{eq:estD22}\\
D_{2,3}\leq &\delta^2  Z_{m}^{-2n-\frac{1}{2}} .\label{eq:estD23}
\end{align}
\end{proposition}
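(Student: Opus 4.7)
The plan is to mirror the proof of Proposition \ref{prop:maximum1} from Section \ref{sec:MaxTiEta}, since $g_j$ here satisfies an evolution equation of the same form as the one driving the analysis there. The essential structural difference is that we no longer have the $\omega$-orthogonality $P_{\omega,N}\tilde\eta = \tilde\eta$ available, and hence cannot extract a gain from $q_M^{-1}\langle \zeta_j, -\Delta_{\mathbb{S}^3}\zeta_j\rangle_{\mathbb{S}^3}$. This loss is compensated by the fact that $j \geq m+2$: inspecting $\Lambda_{n,j}$ from \eqref{eq:govGl}, one has $\Lambda_{n,j} = j-1 + O(n/K)$ with $K = 20m^2$, hence $\Lambda_{n,j} \geq m+1 - O(1/m)$ uniformly in $y$. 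Thus the raw number of $y$-derivatives already provides enough decay after applying the maximum principle, and the $\text{Positive}(\zeta_j)$ reservoir is needed only to absorb a small handful of linear-in-$\eta$ contributions from $SN$.

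For \eqref{eq:estD21}, I would apply Cauchy--Schwarz in $\omega$ to write
\begin{equation*}
D_{2,1} \leq 2(K+y^2)^{-n/2}\,g_j^{1/2}\,\bigl\|(K+y^2)^{-n/2}(-\Delta_{\mathbb{S}^3}+1)^4\partial_y^j \chi_Z F(\cdot,y,\tau)\bigr\|_{L^2(\mathbb{S}^3)},
\end{equation*}
and bound the second factor by the same mechanism that yields \eqref{eq:fkl}; the point is that $F$ consists of smooth polynomial expressions in $\alpha_{n,k,l}$ whose $\omega$-derivatives of any order behave no worse than $F$ itself, and the factor $e^{-40m\sqrt{\tau}}$ is furnished by the governing equations \eqref{eq:BetaNKLeqn} combined with the estimates \eqref{eq:estBetaEff}. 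For \eqref{eq:estD23}, the argument is identical in spirit to the bound on $D_{1,3}$: the only potentially dangerous piece of $\mu_Z(\eta)$ after $\partial_y^j$ is the contribution proportional to $(\tfrac12 - Z_m^{-1}\tfrac{d}{d\tau}Z_m)(y\partial_y\chi_Z)\eta$, which enters the pairing with $\zeta_j$ with a manifestly non-positive sign and can be dropped, while every remaining term carries at least one additional derivative of $\chi_Z$, each producing a factor $Z_m^{-1}$ through $\partial_y^k \chi_Z = Z_m^{-k}\chi^{(k)}(y/Z_m)$; the smoothness assumption \eqref{eq:chiSmooth} with $\chi \in C^{2m+20}$ supplies all the needed derivatives for $j \leq 2m+2$.

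The main obstacle is \eqref{eq:estD22}. I would decompose $SN$ exactly as in Section \ref{sec:smallLinear}, into terms that are truly quadratic (or higher) in $\eta$ and $\partial_y\eta$, and a short list of linear-in-$\eta$ coefficient-type pieces coming from the $J_k$ in \eqref{def:J}--\eqref{def:J9J10}. The quadratic terms, after distributing $\partial_y^j(-\Delta_{\mathbb{S}^3}+1)^4$ by Leibniz, produce products of factors each controlled by $\mathcal{M}$; exactly one factor of $\langle y\rangle^{-n}\|\zeta_j\|_{L^2(\mathbb{S}^3)} = g_j^{1/2}$ is pulled out by Cauchy--Schwarz, and the remaining factors yield the square $(\kappa(1+\mathcal M)+\mathcal M^2)^2$ (rather than the single power obtained in Proposition \ref{prop:maximum1}) because the extra derivative count forces an extra ``copy'' of the majorant bound to be used. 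Terms where the highest admissible derivatives $\partial_y^{2m+3}$ or $\partial_y^{2m+4}$ land on $\eta$ fall outside the index set $\Upsilon_m$ and must instead be handled with the crude bound \eqref{eq:conditionZm}; these are precisely the contributions that contribute the ``$\kappa$'' (rather than $\mathcal{M}$) factors on the right-hand side. Finally, for the unavoidable linear pieces from $J_4$ and $J_5$ (of schematic form $e^{-\tau/2}\sqrt{q}\,\partial_y^2\eta$ and $e^{-\tau}y^2\,\partial_y^2\eta$), I would integrate by parts in $y$ inside the $\omega$-inner product to transfer one derivative onto $\zeta_j$, pair the resulting $\partial_y\zeta_j$ via Cauchy--Schwarz with a piece of $\text{Positive}(\zeta_j)$, and use the smallness of the prefactors $e^{-\tau/2}\sqrt{q}$, $\langle y\rangle^{-1}e^{-\tau}y^2 \lesssim e^{-\tau/2}\tilde Z_m$ in the region $|y|\lesssim Z_m$ to ensure the absorbed portion carries at most a factor $\kappa$, producing precisely the $\kappa\,\text{Positive}(\zeta_j)$ term on the right of \eqref{eq:estD22}.
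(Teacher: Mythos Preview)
Your treatment of \eqref{eq:estD21} and \eqref{eq:estD23} matches the paper: the former is deferred to the same source-term analysis as \eqref{eq:fkl} (the paper notes it is easier because many $y$-derivatives have already been taken and the target rate is slower), and the latter is handled exactly as \eqref{eq:estD13}, with the contribution from $(\tfrac12 - Z_m^{-1}\tfrac{d}{d\tau}Z_m)(y\partial_y\chi_Z)\partial_y^j\eta$ dropped by sign.

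For \eqref{eq:estD22} there is a genuine gap in your mechanism for the $\kappa\,\text{Positive}(\zeta_j)$ term. You propose to integrate by parts in $y$ inside the $\omega$-inner product to move a derivative onto $\zeta_j$; but $D_{2,2}$ is a \emph{pointwise-in-$y$} quantity (it enters the maximum-principle inequality \eqref{eq:govGl2} for $g_j(y,\tau)$ with no $y$-integration), so there is no $y$-integral to integrate by parts in. The $\langle\partial_y\zeta_j,\partial_y\zeta_j\rangle_{\mathbb{S}^3}$ piece of $\text{Positive}$ cannot be accessed this way. In the paper, the $J_4,J_5$-type contributions (your ``difficult linear pieces'') are not absorbed into $\text{Positive}$ at all: they are controlled directly by the same weighted-norm device used for the term called $D_2$ in Section~\ref{sec:smallLinear}, namely trading one power of $\langle y\rangle^{-1}$ for decay of the coefficient ($\langle y\rangle^{-1}|\partial_y q|^2 \lesssim e^{-\frac{m-1}{2m}\tau}$, and similarly for $e^{-\tau}y^2$), which shifts the norm index from $(n,j)$ to $(n-1,j+2)$ inside $\Upsilon_m$ and closes against $\mathcal{M}$.

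The $\kappa\,\text{Positive}(\zeta_j)$ reservoir is instead needed for the pieces of $SN$ carrying an extra $\Delta_{\mathbb{S}^3}$ or $\nabla_\omega^\perp$ on $\eta$, such as $\frac{q_M-q}{q_M q}\Delta_{\mathbb{S}^3}\eta$ and the $K_2$-difference $v^{-2}|\nabla_\omega^\perp v|^2 - q^{-1}|\nabla_\omega^\perp\sqrt{q}|^2$. After applying $(-\Delta_{\mathbb{S}^3}+1)^5\partial_y^j$ and pairing with $\zeta_j$, one integrates by parts \emph{on $\mathbb{S}^3$} (not in $y$) to produce $\langle\zeta_j,-\Delta_{\mathbb{S}^3}\zeta_j\rangle_{\mathbb{S}^3}$ times a coefficient of size $O(|q_M-q|/q) = O(\kappa)$, which is precisely the second summand of $\text{Positive}(\zeta_j)$. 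This is why the paper remarks that $K_2$ ``is easier since, in the $L^2(\mathbb{S}^3)$-inner product, one can integrate by parts.'' Once you reroute the Positive absorption to these $\omega$-derivative terms and handle the $J_4,J_5$ pieces by the direct weighted estimate, your argument aligns with the paper's.
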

The first two estimates will be proved in Sections \ref{sec:source},\ \ref{sec:smallLinear} respectively. And \eqref{eq:estD23} can be proved by the same methods used in proving \eqref{eq:estD13}, hence we skip the details.

Returning to \eqref{eq:govGl2}, we compute directly to find that, for any pairs $(n,j)\in \Upsilon_{m}$ with $j\geq m+2,$
\begin{align}
  \Lambda_{n,j}= -\frac{2n}{K+y^2}+\frac{2n(n-1)y^2}{(K+y^2)^2}+\frac{ny^2}{K+y^2}+j-1\geq m+1-\frac{1}{20}.
\end{align}

Collect the estimates to obtain, for some $C>0,$
\begin{align}
    \partial_{\tau}g_j\leq -(m+1-\frac{1}{20})g_j+C  \tilde{Z}_{m}^{-2n} Y_{T_2}^{-2}  \Big(\kappa (1+\mathcal{M})+\mathcal{M}^2 \Big)^2.
\end{align}

Before applying the maximum principle we observe that the cutoff function $\chi_{Z}$ in the definition of $g_j$ makes
\begin{align}
g_j(y,\tau)=0\ \text{when}\ |y|\geq (1+\epsilon)Z_m(\tau).
\end{align}
Apply the maximum principle to obtain
\begin{align}
\begin{split}
\max_{y}g_j(y,\tau)\lesssim & e^{-(m+1-\frac{1}{20})(\tau-T_1)} \max_{y}g_{l}(y,T_1) \\
&+ \int_{T_1}^{\tau} e^{-(m+1-\frac{1}{20})(\tau-\sigma)} \ \tilde{Z}_{m}^{-2n}(\sigma) Y_{T_2}^{-2}(\sigma)\ d\sigma  \Big(\kappa (1+\mathcal{M})+\mathcal{M}^2 \Big)^2\\
\lesssim & \kappa^2 \Big(1+\mathcal{M}\Big)^4 \tilde{Z}_{m}^{-2n} Y_{T_2}^{-2}.
\end{split}
\end{align}
What is left is to take a square root to obtain the desired result.


\section{Proof of (\ref{eq:estBetaEff}), (\ref{eq:fkl}), (\ref{eq:estD11}) and (\ref{eq:estD21})}\label{sec:source}
To prepare for the proof we study $\chi_{Z}F=\chi_{Z}(F_1+F_2)$. It is easy to control $F_1$ by its explicit expression.

We observe some good properties for $F_2=F_2(\sqrt{q})$.
The function $q$ takes the form
\begin{align}
    q(y,\omega,\tau)=6+\tilde{q}(y,\omega,\tau)+y^m\tilde{q}_{M}(\omega,\tau)\label{eq:qqqM}
\end{align} where 
$\tilde{q}$ is a polynomial of $y$ of degree $m-1$, and $\tilde{q}_{M}$ is independent of $y$, 
\begin{align*}
    \tilde{q}(y,\omega,\tau):=&\sum_{k=0}^N \sum_l\sum_{n=0}^{m-1}c_{n,k,l}(\tau) e^{-\frac{n}{2}} y^n f_{k,l}(\omega),\\
    \tilde{q}_{M}(\omega,\tau):=& \sum_{k=0}^N\sum_{l}\alpha_{m,k,l}(\tau) f_{k,l}(\omega).
\end{align*} \eqref{eq:DiffAlBe}, \eqref{eq:n01Rapid} and \eqref{eq:dm} imply that: for some constants $\tilde{c}_{n,k,l}$, for any $k$ and $l$, and any $n\geq 2,$
\begin{align}\label{eq:cnkl1}
    e^{\frac{1}{3}\tau} |c_{0,k,l}(\tau)|+e^{\frac{5}{6}\tau}|c_{1,k,l}(\tau)|+e^{(\frac{n}{2}+\frac{1}{3})\tau}\Big| c_{n,k,l}(\tau)-\tilde{c}_{n,k,l}e^{-\frac{n}{2}\tau}\Big|&\lesssim  1;
\end{align} $\alpha_{m,k,l}$ satisfy the following estimates: for any $j\geq 1$ and any $l$, and for any constant $\delta_0>0,$ 
\begin{align}\label{eq:cnkl2}
    \Big|\alpha_{m,0,1}(\tau)-d_{m}e^{-\frac{m-2}{2}\tau}\Big|+
    |\alpha_{m,j,l}(\tau)|\leq & C_{\delta_0} e^{-\frac{m-1-\delta_0}{2}\tau}.
\end{align}

These imply that $|\tilde{q}|$ is favorably small in the considered region, more precisely, when $$|y|\leq (1+\epsilon)Z_m(\tau)=(1+\epsilon) \Big(e^{\frac{m-2}{2m}(\tau-T_1)}e^{10\sqrt{\tau-T_1}}+5\tau^{\frac{1}{2}+\frac{1}{20}}\Big),$$ there exists a constant $\epsilon_0=\epsilon_0(m)>0$ such that
\begin{align}
   |\tilde{q}(y,\omega,\tau)|\leq e^{-\epsilon_0 \tau}, \text{when}\ \tau\geq T_1\gg 1.
\end{align}

To make more preparation we observe that $F(\sqrt{6+\tilde{q}})$ is nonlinear in terms of $\tilde{q}$ and $\partial_{y}Q_{m-1}$, except $J_9(Q_{m-1},\sqrt{6+\tilde{q}})$ and $J_{10}(Q_{m-1},\sqrt{6+\tilde{q}})$ defined in \eqref{def:J9J10};  and by definition 
\begin{align*}
\partial_{y}Q_{m-1}(y,\tau)= \sum_{n=2}^{m-1}\partial_{y}H_{n}(y) e^{-\frac{n-1}{2}\tau}\Big(a_{n,1},a_{n,2},a_{n,3},a_{n,4}
\Big)^{T}.
\end{align*} 

These observations and the explicit forms of $J_9$ and $J_{10}$ make it easy to control $F(\sqrt{6+\tilde{q}})$.
By Taylor expanding in $y$, we observe that, for some large integers $M_1\geq m+1$ and $M_2$
\begin{align}
F_2(\sqrt{6+\tilde{q}})=\sum_{n=0}^{M_1} \sum_{k=0}^{M_2}\sum_l g_{n,k,l}(\tau) e^{-\frac{n}{2}}y^n f_{k,l}(\omega) +\text{Remainder}\label{eq:remainder}
\end{align} where, for some constants $\tilde{g}_{n,k,l}$,
\begin{align}
    |g_{n,k,l}(\tau)-\tilde{g}_{n,k,l}|\lesssim & e^{-\frac{1}{3}\tau},\ \text{with}\ n\geq 2,\\
    e^{\frac{\tau}{2}} \Big(|g_{0,k,l}(\tau)|+|g_{1,k,l}(\tau)|\Big)\lesssim & 1,
\end{align}
the term $\text{Remainder}$ is of order $e^{-\frac{m}{2}\tau}$ in the sense that, for any $n_1\in \mathbb{N}\cup{\{0\}}$ and $j_1\in (\mathbb{N}\cup{\{0\}})^3$, there exists a constant $a_{n_1,j_1}$ such that, in the considered region,
\begin{align}
\Big|\partial_{y}^{n_1}\nabla_{E}^{j_1} \text{Remainder}(y,\omega,\tau)\Big| \leq a_{n_1, j_1} e^{-m\tau}.
\end{align}

Now we consider $F(\sqrt{q})-F(\sqrt{6+\tilde{q}})$,  Among the terms in $q-\tilde{q}$, the main obstacle is that 
$$y^m\alpha_{m,0,1}=y^{m}\Big(e^{-\frac{m-2}{2}\tau} d_m+\mathcal{O}_{\delta_0} (e^{-\frac{m-1-\delta_0}{2}\tau})\Big)$$ 
is not uniformly bounded in the considered region. This forces us to control them by different techniques.
The proof of (\ref{eq:estBetaEff}) is easy since the rapid decay of the weight $e^{-\frac{1}{4}y^2}$ in the $\mathcal{G}-$inner product will overwhelm the modest growth of $y^{m}$. But in the proof of (\ref{eq:fkl}), (\ref{eq:estD11}) and (\ref{eq:estD21}), where it is considered in ``modestly" weighted $L^{\infty}-$norms, we have to deal with this obstacle.

Now we are ready to prove (\ref{eq:estBetaEff}).
\subsection{Proof of (\ref{eq:estBetaEff})}
Recall that $NL_{n,k,l}$ is defined as
\begin{align}
\begin{split}
NL_{n,kl}
=& \Big(\Big \langle \chi_{Z}F_2(\sqrt{q}), H_{n}f_{k,l}\Big\rangle_{\mathcal{G}}+\Big\langle \chi_{Z}SN(\eta), H_{n}f_{k,l}\Big\rangle_{\mathcal{G}}\Big)+o(e^{-\frac{1}{5}Z_{m}^2})\\
=& \Big\{\Big \langle \chi_{Z}F_2(\sqrt{6+\tilde{q}}), H_{n}f_{k,l}\Big\rangle_{\mathcal{G}}+\Big \langle \chi_{Z}\Big(F_2(\sqrt{q})-F_2(\sqrt{6+\tilde{q}})\Big), H_{n}f_{k,l}\Big\rangle_{\mathcal{G}}\\
&+\Big\langle \chi_{Z}SN(\eta), H_{n}f_{k,l}\Big\rangle_{\mathcal{G}}\Big\}+o(e^{-\frac{1}{5}Z_{m}^2})
\end{split}
\end{align} It is easy to prove that, since each term in $SN(\eta)$ is either nonlinear in terms of $\eta$, or is linear but with small coefficients, the preliminary estimate provided by \eqref{eq:DiffAlBe} and \eqref{eq:GLEtaM2} implies,
\begin{align}
|\Big\langle SN(\eta), H_n f_{k,l}\Big\rangle_{\mathcal{G}}|\leq & C_{\delta_0} e^{-\frac{m-\delta_0}{2}\tau}.
\end{align} \eqref{eq:remainder} implies that, for any $n\geq 2,$
\begin{align}
|\Big\langle \chi_{Z}F_2(\sqrt{6+\tilde{q}}), H_{n}f_{k,l}\Big\rangle_{\mathcal{G}}|\leq & A_{n,k,l} e^{-\frac{n}{2}\tau}+\mathcal{O}(e^{-(\frac{n}{2}+\frac{1}{3})\tau}),\label{eq:ankl}
\end{align} and for $n=0,1$, 
\begin{align}\label{eq:mf01}
    e^{\frac{\tau}{2}}|\Big\langle \chi_{Z}F_2(\sqrt{6+\tilde{q}}), H_{0}f_{k,l}\Big\rangle_{\mathcal{G}}|+e^{\tau}|\Big\langle \chi_{Z}F_2(\sqrt{6+\tilde{q}}), H_{1}f_{k,l}\Big\rangle_{\mathcal{G}}|\lesssim 1.
\end{align} 

Now we consider $F(\sqrt{q})-F(\sqrt{6+\tilde{q}})$. Recall that $q-6-\tilde{q}=y^m\sum_{k,l}\alpha_{m,k,l} f_{k,l}$. Among $\alpha_{m,k,l}$ the decay rate of $\alpha_{m,0,1}$ is lowest, being $e^{-\frac{m-2}{2}\tau}$. Since every term in $F(\sqrt{q})-F(\sqrt{6+\tilde{q}})$ is nonlinear in terms of $q-6-\tilde{q}$, $q$ and $\partial_{y}Q_{m-1}$, compute directly to find
\begin{align}
|\Big\langle \chi_{Z}\Big(F_2(\sqrt{q})-F_2(\sqrt{6+\tilde{q}})\Big), H_{n}f_{k,l}\Big\rangle_{\mathcal{G}}|\lesssim e^{-\frac{m-1}{2}\tau}.\label{eq:alDif}
\end{align} This estimate is sharp, since, by $J_1(v)$ defined in (\ref{def:J}), $F_2(\sqrt{q})$ contains $ (\partial_{y}Q_{m-1}\cdot \omega)\ \partial_{y}\sqrt{q}$.

Collect the estimates above to obtain the desired (\ref{eq:estBetaEff}).

For the case $d_m=0$, the decay rate of $\alpha_{m,0,1}$ improves significantly. The following result will be used to prove a part of \eqref{eq:betamm1} in Section \ref{sec:ProofTHMsecC} below. 
\begin{lemma}\label{LM:improvement} 
If $d_m=0$, then for any $\delta_0>0$ there exists a constant $C_{\delta_0}$ such that  
\begin{align}
|\alpha_{m,0,1}(\tau)|\leq & C_{\delta_0} e^{-\frac{m-\delta_0}{2}\tau}.
\end{align}
\end{lemma}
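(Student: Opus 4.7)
The equation (\ref{eq:BetaNKLeqn}) specialized to $(n,k,l)=(m,0,1)$ reads $\bigl(\tfrac{d}{d\tau}+\tfrac{m-2}{2}\bigr)\alpha_{m,0,1}=NL_{m,0,1}/\|H_m\|_{\mathcal{G}}^{2}$. Under the hypothesis $d_m=0$, combining the induction assumption (\ref{eq:dm}) with (\ref{eq:DiffAlBe}) yields the preliminary bound $|\alpha_{m,0,1}(\tau)|\leq C_{\delta_0'}e^{-(m-1-\delta_0')\tau/2}$ for every $\delta_0'>0$; in particular $e^{(m-2)\tau/2}\alpha_{m,0,1}(\tau)\to 0$ as $\tau\to\infty$. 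Duhamel's principle applied with terminal data at infinity then gives
\[
\alpha_{m,0,1}(\tau)=-\int_{\tau}^{\infty}e^{-\frac{m-2}{2}(\tau-\sigma)}\,\frac{NL_{m,0,1}(\sigma)}{\|H_m\|_{\mathcal{G}}^{2}}\,d\sigma,
\]
so the lemma reduces to the strengthened source estimate $|NL_{m,0,1}(\sigma)|\leq C_{\delta_0}\,e^{-(m-\delta_0)\sigma/2}$, since that rate reproduces itself under the Duhamel integration against $e^{\frac{m-2}{2}(\sigma-\tau)}$.

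Decompose $NL_{m,0,1}(\sigma)=\langle\chi_Z F_2(\sqrt{6+\tilde q}),H_m\rangle_{\mathcal{G}}+\langle\chi_Z(F_2(\sqrt q)-F_2(\sqrt{6+\tilde q})),H_m\rangle_{\mathcal{G}}+\langle\chi_Z SN(\eta),H_m\rangle_{\mathcal{G}}+\mathcal{O}(e^{-Z_m^2/5})$. The first term is already $\lesssim A_{m,0,1}e^{-m\sigma/2}+\mathcal{O}(e^{-(m/2+1/3)\sigma})$ by (\ref{eq:ankl}); the $SN(\eta)$ piece is either at least quadratic in small quantities or linear with small coefficient, and is controlled using the $\mathcal{G}$-bound (\ref{eq:GLEtaM2}) together with the preliminary bound on $\alpha_{m,0,1}$; the cutoff error is super-exponentially small. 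The delicate piece is the difference $F_2(\sqrt q)-F_2(\sqrt{6+\tilde q})$, whose generic bound (\ref{eq:alDif}) saturates at $e^{-(m-1)\sigma/2}$ on account of the term $J_1(v)=(\partial_y v)(\partial_y Q_{m-1}\cdot\omega)$. The key observation closing the gap is an angular parity cancellation: $J_1$, as well as $J_5$, $J_6$, $J_9$, $J_{10}$, each carry exactly one factor $\partial_y Q_{m-1}\cdot\omega$ or $\omega\cdot\partial_y^2 Q_{m-1}$ and therefore belong to the $k=1$ spherical-harmonic sector, so their projection against $H_m f_{0,1}\equiv H_m$ vanishes identically.

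The remaining even-parity contributors ($J_2$, $J_3$, $J_4$ and the $\gamma$-derivative pieces of $F_2$) are handled by Taylor-expanding the difference in $h:=y^m\tilde q_M$. The crucial structural fact is that every term of $F_2$ vanishes at the reference configuration $(\gamma,Q)=(6,0)$ together with its functional $\gamma$-derivative, because every term involves either a derivative of $\gamma$ (such as $\partial_y\gamma$, $\nabla_\omega^\perp\gamma$, $\Delta_{\mathbb{S}^3}\gamma$), a factor of $\partial Q_{m-1}$, or a prefactor $\frac{6-\gamma}{6\gamma}$ vanishing at $\gamma=6$. Consequently the linear-in-$h$ part of the difference, evaluated at the true state $(6+\tilde q,Q_{m-1})$, inherits an extra multiplicative factor of $\tilde q$, $\partial\tilde q$, $\partial_y Q_{m-1}$, or $\partial_y^2 Q_{m-1}$. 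Parity selection on the $y$-integration against $H_m$ then eliminates the odd-in-$y$ couplings (in particular, the $n=0,1$ pieces of $\partial_y\tilde q$), leaving only contributions weighted by $n\geq 2$ Taylor coefficients of $\tilde q$ (each of size $e^{-n\sigma/2}$) or by paired $\partial_y Q_{m-1}$ factors (of order $e^{-\sigma}$); combined with the preliminary bound on $\alpha_{m,k,l}$ from (\ref{eq:n01Rapid})--(\ref{eq:dm}) and (\ref{eq:DiffAlBe}) these contribute $\lesssim C_{\delta_0}e^{-(m-\delta_0)\sigma/2}$. The residual slow couplings of order $e^{-\sigma/3}$ arising from the $(k\geq 1)$-modes of $\tilde q$ against $(k\geq 1)$-modes of $\tilde q_M$ can be absorbed by one additional bootstrap: once the improved bound on $\alpha_{m,0,1}$ has been obtained at rate $e^{-(m-1/3-\delta_0')\sigma/2}$, one reinserts it, and iterating a fixed finite number of times saturates the decay to $e^{-(m-\delta_0)\sigma/2}$ for any prescribed $\delta_0>0$.

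Putting everything together yields $|NL_{m,0,1}(\sigma)|\leq C_{\delta_0}\,e^{-(m-\delta_0)\sigma/2}$, and the Duhamel formula delivers the claim. The main technical obstacle is the systematic parity bookkeeping: verifying that every term surviving the simultaneous $y$- and $\omega$-projections inherits the extra decay stemming from both the $k=1$ cancellation of $J_1$-type contributions and the vanishing of $F_2$ together with its functional derivative at the reference configuration $(\gamma,Q)=(6,0)$.
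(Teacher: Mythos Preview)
Your Duhamel setup and the reduction to the source estimate $|NL_{m,0,1}(\sigma)|\leq C_{\delta_0}e^{-(m-\delta_0)\sigma/2}$ are correct and match the paper exactly. The problem lies in how you obtain that source estimate.

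The parity claim is incorrect as stated. You assert that $J_1,J_5,J_6,J_9,J_{10}$ ``belong to the $k=1$ spherical-harmonic sector'' because they carry one factor $\partial_yQ_{m-1}\cdot\omega$ or $\omega\cdot\partial_y^2Q_{m-1}$. But the \emph{other} factor in each of these ($\partial_y v$, $v\partial_y^2v$, $v^2\partial_y^2v$, $\partial_y\nabla_\omega^\perp v$, $v$) depends on $\omega$ through $q$, so the product is not purely $k=1$. In particular, $\langle J_1(\sqrt q)-J_1(\sqrt{6+\tilde q}),H_m\rangle_{\mathcal G}$ picks up the $k=1$ components of $\partial_y\sqrt q-\partial_y\sqrt{6+\tilde q}$, which are fed by the $\alpha_{m,1,l}$ as well as cross terms $\alpha_{m,0,1}\cdot\alpha_{n,1,l}$ coming from expanding $\sqrt{\,\cdot\,}$. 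So the projection does not vanish identically, and the subsequent bootstrap machinery you sketch is built on a fragile foundation.

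More importantly, the whole parity/bootstrap detour is unnecessary. You already noted, from (\ref{eq:dm}) and (\ref{eq:DiffAlBe}) with $d_m=0$, that $|\alpha_{m,0,1}(\tau)|\leq C_{\delta_0'}e^{-(m-1-\delta_0')\tau/2}$. By (\ref{eq:cnkl2}) the same rate already holds for $\alpha_{m,j,l}$, $j\geq 1$, regardless of $d_m$. Hence \emph{all} coefficients of $\tilde q_M=\sum_{k,l}\alpha_{m,k,l}f_{k,l}$ now decay at $e^{-(m-1-\delta_0')\tau/2}$ instead of $e^{-(m-2)\tau/2}$. Feed this directly into the computation that produced (\ref{eq:alDif}): since $F_2(\sqrt q)-F_2(\sqrt{6+\tilde q})$ is at least linear in $y^m\tilde q_M$ with remaining factors bounded by $e^{-\tau/2}$ (from $\partial_yQ_{m-1}$) or $e^{-\epsilon_0\tau}$ (from $\tilde q$), the bound (\ref{eq:alDif}) improves from $e^{-(m-1)\tau/2}$ to $e^{-(m-\delta_0)\tau/2}$ in one step. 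This is exactly the paper's argument; no angular selection rules and no iteration are needed.
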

\begin{proof}
By \eqref{eq:dm}, \eqref{eq:DiffAlBe}, \eqref{eq:ankl} and (\ref{eq:alDif}), the decay rate of $NL_{m,0,1}$ becomes $e^{-\frac{m-\delta_0}{2}\tau}$, thus
\begin{align}
|\alpha_{m,0,1}(\tau)|=& \Big|e^{-\frac{m-2}{2}\tau}\Big[\alpha_{m,0,1}(0)+\int_{0}^{\infty} NL_{m,0,1}(\sigma)\ d\sigma\Big]-e^{-\frac{m-2}{2}\tau}\int_{\tau}^{\infty}e^{\frac{m-2}{2}\sigma} NL_{m,0,1}(\sigma)\ d\sigma\Big|\nonumber\\
=& \Big|e^{-\frac{m-2}{2}\tau}\int_{\tau}^{\infty}e^{\frac{m-2}{2}\sigma} NL_{m,0,1}(\sigma)\ d\sigma\Big|\leq C_{\delta_0} e^{-\frac{m-\delta_0}{2}\tau},
\end{align} where, we use that $d_m=0$ implies that $\alpha_{m,0,1}(0)+\int_{0}^{\infty} NL_{m,0,1}(\sigma)\ d\sigma=0$.
\end{proof}

\subsection{Proof of (\ref{eq:fkl}), (\ref{eq:estD11}) and (\ref{eq:estD21})}
We start with proving (\ref{eq:fkl}), but only the case $j=0$, i.e. no $y-$derivative is taken on $\chi_{Z}F$. This is the most difficult case, since, as $j$ increases, the wanted decay rate becomes slower, and hence is easy to obtain; on the other hand it is easy to see that $y-$derivatives improve decay rate.

For the case $j=0$ of (\ref{eq:fkl}), the following estimate is better than the desired (\ref{eq:fkl}),
\begin{align}\label{eq:betterThanAsk}
   \Big\|\langle y\rangle^{-m-1} e^{\frac{1}{8}y^2} P_m e^{-\frac{1}{8}y^2} \chi_{Z}\langle F, \ f_{k,l}\rangle_{\mathbb{S}^3}\Big\|_{\infty}\leq e^{-\frac{m-2}{2}\tau}e^{-\frac{m-2}{2m}\tau}e^{-40(m+1)\sqrt{\tau}}.
\end{align} 

In what follows we prove \eqref{eq:betterThanAsk}.

Recall that 
$F_1(\sqrt{q})=-\sum_{n=0}^m \sum_{h,i} \Big(\frac{d}{d\tau}+\frac{n-2}{2}+\frac{h(h+2)}{6}\Big)\alpha_{n,h,i} H_n f_{h,i}.$
It easy to see that the contribution from $F_1$ satisfies the desired estimates \eqref{eq:betterThanAsk} since
$$ P_m e^{-\frac{1}{8}y^2} \Big\langle F_1, \ f_{k,l}\Big\rangle_{\mathbb{S}^3}\equiv 0.$$Similar $ P_m e^{-\frac{1}{8}y^2} \chi_{Z}\Big\langle F_2(\sqrt{6+\tilde{q}}), \ f_{k,l}\Big\rangle_{\mathbb{S}^3}$ decays sufficiently rapidly since $P_{m}$ removes the slow part. Hence it satisfies (\ref{eq:betterThanAsk}).

What is left is to consider $F(\sqrt{q})-F(\sqrt{6+\tilde{q}})$. The ``worst" term in $q-6-\tilde{q}=y^m\sum_{k,l}\alpha_{m,k,l} f_{k,l}$ is $\alpha_{m,0,1} y^m$ since
\begin{align}
    |\alpha_{m,0,1}(\tau) y^m|\gg 1,\ \text{when}\ (1+\epsilon)Z_{m}(\tau)\geq  |y|\gg e^{-\frac{m-2}{2m}\tau}.\label{eq:worstTerm}
\end{align} It is considerably easier to treat the other terms since, in the considered region, they are small.

However it is not difficult to overcome this obstacle for the following two reasons:
\begin{itemize}
\item[(1)]
The decay rate of $\alpha_{m,0,1},$ which is $e^{-\frac{m-2}{2}\tau},$ is close to the wanted one in \eqref{eq:betterThanAsk},

\item[(2)]
The difficulty presented in (\ref{eq:worstTerm}) is not so bad either. Even though $\alpha_{m,0,1}y^m$ can be very large, we observe that, in the considered region, provided that $T_1$ is sufficiently large,
\begin{align}
e^{-\delta_0 \tau} |\alpha_{m,0,1}(\tau)y^m|\ll 1\ \text{for any }\ \delta_0>0.
\end{align}

Moreover a $y-$derivative of $\alpha_{m,0,1} y^m$ produces a favorably small $m \alpha_{m,0,1} y^{m-1}$; and a covariant derivative on $\mathbb{S}^3$ works even better since $\nabla_{\omega}^{\perp}\alpha_{m,0,1} y^m\equiv 0$.
\end{itemize}

Now we study two examples to illustrate these ideas, and the other terms can be treated similarly.
We consider the terms $J_1(\sqrt{q})=\partial_{y}\sqrt{q}\ \Big(\partial_{y}Q_{m-1}\cdot\omega\Big)$ and $K_2(\sqrt{q})=q^{-2} | \nabla_{\omega}^{\perp} q |^2$ from $J_1$ and $K_2$ defined in \eqref{def:J} and \eqref{def:Kterm}. Their contributions to $F_2(\sqrt{q})-F_{2}(\sqrt{6+\tilde{q}})$ are,
\begin{align*}
    \tilde{J}_1(\tilde{q},\tilde{q}_{M}):=&J_1(\sqrt{q})-J_1(\sqrt{6+\tilde{q}})=
     \frac{\partial_{y}\tilde{q}_{M} \ (\partial_{y}Q_{m-1}\cdot\omega)}{2\sqrt{q}}
    -\frac{\tilde{q}_M \partial_{y}\tilde{q}\ (\partial_{y}Q_{m-1}\cdot\omega)}{2 q^{\frac{1}{2}} (6+\tilde{q})^{\frac{1}{2}}[q^{\frac{1}{2}}+(6+\tilde{q})^{\frac{1}{2}}]},\\
    \tilde{K}_2(\tilde{q},\tilde{q}_{M}):=&K_2(\sqrt{q})-K_2(\sqrt{6+\tilde{q}})=\frac{2 \nabla_{\omega}^{\perp} \tilde{q}\cdot \nabla_{\omega}^{\perp}\tilde{q}_{M} +|\nabla_{\omega}^{\perp}\tilde{q}_{M}|^2}{q}-\frac{\tilde{q}_{M}|\nabla^{\perp}_{\omega}\tilde{q}|^2}{q(6+\tilde{q})}.
\end{align*} 

Compute directly to find a decay rate better than that in \eqref{eq:betterThanAsk},
$$\langle y\rangle^{-m-1}\Big(|\tilde{J}_1|+|\tilde{K}_2|\Big)\lesssim e^{-\frac{m-1}{2}\tau}.$$

Thus we complete the proof of \eqref{eq:betterThanAsk}, which implies the desired (\ref{eq:fkl}).

Now we prove (\ref{eq:estD11}). Recall the constant $N$ and the operator $P_{\omega,N}$ from \eqref{eq:newdec} and \eqref{def:OmePro}. For any $n\leq m$, we claim that
\begin{align}
   \Big\|P_{\omega, N}\Big\langle \chi_Z F, e^{-\frac{1}{4}y^2}H_n\Big\rangle_{L^2(\mathbb{R})}\Big\|_{L^2(\mathbb{S}^3)}=\Big\|P_{\omega, N}\Big\langle \chi_Z F_2, e^{-\frac{1}{4}y^2}H_n\Big\rangle_{L^2(\mathbb{R})}\Big\|_{L^2(\mathbb{S}^3)}\lesssim  e^{-\frac{m-1}{2}\tau}.\label{eq:sizeNm}
\end{align} This is indeed true: in the first step we observe that $P_{\omega,N}$ removes the $F_1$-part; for the second step, since $ \chi_Z F_2$ is in the governing equation for $\chi_{Z}\eta$, $\big\|P_{\omega, N}\Big\langle \chi_Z F_2, e^{-\frac{1}{4}y^2}H_n\Big\rangle_{L^2(\mathbb{R})}\big\|_{L^2(\mathbb{S}^3)}$ must decay fast enough to make it possible that $\|\chi_{Z}\eta(\cdot,\tau)\|_{\mathcal{G}}$ decays at the high rate implied by \eqref{eq:DiffAlBe} and \eqref{eq:GLEtaM2}. Recall the definition of the operator $P_{\omega,N}$ from \eqref{def:OmePro}.

\eqref{eq:sizeNm} and the properties enjoyed by $F_2(\sqrt{q})$ make the proof of (\ref{eq:estD11}) very similar to that of (\ref{eq:fkl}), hence we skip the details here.

It is easy to prove (\ref{eq:estD21}) since, after many $y-$derivatives are taken on $\chi_{Z}F$, the decay estimates improve, and then, favorably, the wanted decay rates are significantly lower. We skip this part.


\section{Proof of (\ref{eq:snkl}), (\ref{eq:estD12}), (\ref{eq:estD22})}\label{sec:smallLinear}
Among the terms in $SN$, we only consider $\tilde{K}_1(v)-\tilde{K}_1(\sqrt{q})$, with $\tilde{K}_1(v)$ defined in \eqref{def:TilKJ} as
\begin{align*}
    \tilde{K}_1(v)= \frac{v(\partial_{y}v)^2 \partial_{y}^2 v}{1+(\partial_{y}v)^2+v^{-2}|\nabla_{\omega}^{\perp}v|^2}=\frac{K_1(v)}
    {1+\tilde{N}_1(v)}
\end{align*} where $\tilde{N}_1(v):=(\partial_{y}v)^2+v^{-2}|\nabla_{\omega}^{\perp}v|^2,$ and $K_1(v):=v(\partial_{y}v)^2 \partial_{y}^2 v$.

The chosen term is among the most difficult to handle. For example, $K_2$, defined in \eqref{def:Kterm}, depends on $\partial_{\omega}^{\perp} v$ and hence is easier since, in the $L^2(\mathbb{S}^3)$-inner product, one can integrate by parts. Also it is easy to control the terms listed in \eqref{def:J} since every one has a factor $\partial_{y}Q_{m-1}$, which decays uniformly in the considered region.

Recall that, around \eqref{eq:samD1D2}, we discussed some difficulties in controlling $J_5$ and $J_6$, defined in \eqref{def:J}. They can be controlled by the same techniques to be used on $D_{12}$ below.

To prove the desired (\ref{eq:snkl}), (\ref{eq:estD12}) and (\ref{eq:estD22}), as part of $SN(\eta)$, we need $\tilde{K}_1(v)-\tilde{K}_1(\sqrt{q})$ to satisfy the following estimates. Recall the definition of $Y_{T_2}$ from \eqref{def:YT2}.
\begin{lemma} For any $(n,j)\in \Upsilon_m,$
\begin{align}\label{eq:k1v}
    \Big\| \langle y\rangle^{-n}\|\partial_{y}^j (-\Delta_{\mathbb{S}^3}+1)^{5}\chi_{Z} \Big(\tilde{K}_1(v)-\tilde{K}_1(\sqrt{q})\Big)\|_{L^2(\mathbb{S}^3)} \Big\|_{\infty}\lesssim \kappa \tilde{Z}_{m}^{-n} Y_{T_2}^{-2} \Big(1+\mathcal{M}\Big)^2.
\end{align}
\end{lemma}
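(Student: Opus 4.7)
The plan is to expand $\tilde{K}_1(v)-\tilde{K}_1(\sqrt{q})$ as a finite sum of terms, each of which factors into a coefficient built from derivatives of $v$ and $\sqrt{q}$ bounded by \eqref{eq:conditionZm}, and a derivative of the error $\eta=v^2-q$ controlled by $\mathcal{M}$. Setting $w:=v-\sqrt{q}=\eta/(v+\sqrt{q})$, each derivative of $w$ is by the product rule a rational combination of derivatives of $\eta$ with smooth coefficients in $v$ and $\sqrt{q}$. Telescoping gives
\begin{align*}
K_1(v)-K_1(\sqrt{q})=&\,w(\partial_y v)^2\partial_y^2 v+\sqrt{q}\bigl(\partial_y v+\partial_y\sqrt{q}\bigr)(\partial_y w)\partial_y^2 v\\
&+\sqrt{q}(\partial_y\sqrt{q})^2\partial_y^2 w,
\end{align*}
and an analogous expansion applies to the denominator difference coming from $(\partial_y v)^2+v^{-2}|\nabla_\omega^\perp v|^2$ versus its counterpart at $\sqrt{q}$; combining these via the identity $\frac{A}{1+B}-\frac{C}{1+D}=\frac{(A-C)(1+D)-C(B-D)}{(1+B)(1+D)}$ produces a finite sum in which every term contains at least one factor of the form $\partial_y^a\nabla_E^b w$ and at least one other factor carrying a derivative of $v$ or $\sqrt{q}$ that is bounded by $\delta$ through \eqref{eq:conditionZm}.

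Next I would apply $\partial_y^j(-\Delta_{\mathbb{S}^3}+1)^5\chi_Z$ and distribute derivatives by the Leibniz rule. In each resulting summand I place the $\eta$-factor in $L^2(\mathbb{S}^3)$ and estimate the remaining coefficient factors in $L^\infty(\mathbb{S}^3)$, with Lemma \ref{LM:embedding} using up three of the five spare powers of $(-\Delta_{\mathbb{S}^3}+1)$ for each pointwise factor. Putting the weight $\langle y\rangle^{-n}$ entirely on the $\eta$-factor then yields an expression of the form $\tilde{Z}_m^{-n'}Y_{T_2}^{-1}\mathcal{M}_{n',j'}$ for some admissible $(n',j')\in\Upsilon_m$. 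Since $\tilde{K}_1$ already contains two $y$-derivatives of $v$, the worst term gets at most $j+2$ $y$-derivatives and at most ten covariant $\mathbb{S}^3$-derivatives on the $\eta$-factor, which still sits within the admissible range $j'\leq 2m+2$ of $\Upsilon_m$. The bilinear cross-terms $C(B-D)$ and the second-order Taylor remainders are quadratic in derivatives of $\eta$, producing the $\mathcal{M}^2$ piece; combined with the linear-in-$\mathcal{M}$ piece this assembles into $(1+\mathcal{M})^2$. The extra factor $Y_{T_2}^{-1}$ beyond the single $Y_{T_2}^{-1}$ already encoded in $\mathcal{M}$ is supplied by the auxiliary smallness $e^{-20m\sqrt{\tau}}\lesssim Y_{T_2}^{-1}$ carried by the linear coefficient on the linear-in-$\mathcal{M}$ term.

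The main obstacle will be the weight-and-derivative bookkeeping: derivatives of $\sqrt{q}$ have polynomial growth in $|y|$ stemming from the $d_m e^{-(m-2)\tau/2}H_m(y)$ component of $q$, so it is not immediate that the weight $\langle y\rangle^{-n}$ applied to a product such as $\sqrt{q}(\partial_y\sqrt{q})^2\partial_y^2 w$ matches precisely the weight built into some $\mathcal{M}_{n',j'}$. This is resolved by the normalization $v^{k-1}|\partial_y^k\nabla_E^l v|\lesssim \delta$ provided by \eqref{eq:conditionZm}, together with the relation $v\approx\sqrt{q}$ in the considered region (which follows from $\eta$ being small in $L^\infty$ from the bound $\mathcal{M}\ll 1$ assumed in \eqref{eq:smallM}); these together show that the polynomial growth of $\sqrt{q}$ is systematically matched by compensating smallness when the weight is transferred. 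The remainder of the argument is a finite number of routine pointwise verifications, entirely analogous to those carried out for the nondegenerate case in \cite{GZ2018}, and this is why the exposition in the body of the paper can reasonably suppress the details.
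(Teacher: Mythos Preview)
Your approach is essentially the same as the paper's---telescope the difference, apply Leibniz, and distribute the weight---but two pieces of bookkeeping you skip are genuinely needed and are handled explicitly in the paper.

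First, your claim that the worst $\eta$-factor carries at most $j+2$ $y$-derivatives ``which still sits within the admissible range $j'\le 2m+2$'' fails when $(n,j)\in\Upsilon_m$ with $j\ge 2m+1$: then $\partial_y^{2m+3}\eta$ or $\partial_y^{2m+4}\eta$ appears, and these are \emph{not} covered by any $\mathcal{M}_{n',j'}$. The paper handles this by invoking \eqref{eq:conditionZm} directly for those highest-order factors (they are $\lesssim\delta$), treating them as coefficient factors rather than as the weighted $\eta$-factor. More generally, for a term like $D_2=v^{-2}\frac{(\partial_y q)^2}{1+\tilde N_1(v)}\partial_y^2\eta$, after expanding $\partial_y^j D_2=\sum_{k+l=j}C_{k,l}\bigl(\partial_y^k(\ldots)\bigr)\partial_y^{l+2}\eta$, the paper distinguishes three sub-cases ($l+2\in\{2m+3,2m+4\}$; $j<l+2\le 2m+2$; $l+2\le j$) and uses a different device in each.

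Second, your weight placement ``entirely on the $\eta$-factor'' is not how the paper closes the nonlinear-in-$\eta$ terms. For the prototype $D_1=v^{-2}(\partial_y\eta)^2\partial_y^2\eta/(1+\tilde N_1(v))$ at $j=0$, the paper splits $\langle y\rangle^{-m-1}=\langle y\rangle^{-m-\frac12}\cdot\langle y\rangle^{-\frac12}$ across \emph{two} $\partial_y\eta$ factors (one weighted by $\langle y\rangle^{-m-\frac12}$, the other by $q_M^{-1}\langle y\rangle^{-\frac12}$), and for the second factor further splits the spatial region into $\{|y|\le\tilde Z_m\}$ and $\{\tilde Z_m<|y|\le(1+\epsilon)Z_m\}$. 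This spatial/temporal split (equivalently, the dichotomy $\tau\ge T_2$ versus $T_1\le\tau<T_2$) is essential because $\tilde Z_m(\tau)/Z_m(\tau)$ can be arbitrarily small near $\tau=T_1$, so one cannot directly compare $Z_m^{-n}$ with $\tilde Z_m^{-n}$; you do not mention this step. Your explanation of the extra $Y_{T_2}^{-1}$ via ``$e^{-20m\sqrt{\tau}}\lesssim Y_{T_2}^{-1}$'' is also not what the paper does; in fact the paper's displayed estimates in this section yield $Y_{T_2}^{-1}$ rather than $Y_{T_2}^{-2}$, so the exponent in the lemma statement appears to be a typo.
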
 

In the rest of this section we prove this lemma.

Compute directly to obtain
\begin{align*}
    \tilde{K}_1(v)-\tilde{K}_1(\sqrt{q})=&\frac{K_1(v)-K_1(\sqrt{q})}{1+\tilde{N}_{1}(v)}
    -\frac{\tilde{N}_1(v)-\tilde{N}_1(\sqrt{q})}{(1+\tilde{N}_{1}(v))(1+\tilde{N}_{1}(\sqrt{q}))}K_1(\sqrt{q}).
\end{align*} 
Compared to the terms $D_1$ and $D_2$ below, it is easier to control the second term since $|K_1(\sqrt{q})|$ is uniformly small in the considered region.
Among terms in the first part, we consider two of them and it is easier to treat the other ones. The two terms are
\begin{align}
\begin{split}
D_{1}:=&v^{-2} \frac{(\partial_{y}\eta)^2 \ \partial_{y}^2\eta}{1+\tilde{N}_1(v)},\\
D_{2}:=&v^{-2} \frac{(\partial_{y}q)^2  }{1+\tilde{N}_1(v)}\partial_{y}^2 \eta.
\end{split}
\end{align}

For the norms in (\ref{eq:k1v}), we consider the cases $j=0$ and $j\geq 1$ separately.

For the case $j=0$, we compute directly to obtain
\begin{align}\label{eq:prelD11}
\Big\|\langle y\rangle^{-m-1} \big\| (-\Delta_{\mathbb{S}^3}+1)^{5} \chi_{Z}D_{1}\big\|_{L^2(\mathbb{S}^3)}\Big\|_{\infty}
\lesssim & \delta H_1 H_2
\end{align} where $H_{1}$ and $H_2$ are defined as
\begin{align*}
H_1:=&\Big\|\langle y\rangle^{-m-\frac{1}{2}} \|(-\Delta_{\mathbb{S}^3}+1)^{5} \chi_{Z}\partial_{y} \eta\|_{L^2(\mathbb{S}^3)}\Big\|_{\infty},\\
H_2:=&\Big\| \langle y\rangle^{-\frac{1}{2}} \|q_{M}^{-1}1_{\leq (1+\epsilon)Z_m}(-\Delta_{\mathbb{S}^3}+1)^{5} \partial_{y} \eta\|_{L^2(\mathbb{S}^3)} \Big\|_{\infty},
\end{align*}
and we use the embedding result in Lemma \ref{LM:embedding}, and that $v^{-1}|\partial_{y}^2\eta|\lesssim \delta$ implied by \eqref{eq:conditionZm}.
To control $H_1$, we change the orders of $\partial_{y} $ and $\chi_{Z}$ and argue as in \eqref{eq:smallGamma} to find that
\begin{align}
H_1\leq & \Big\|\langle y\rangle^{-m-\frac{1}{2}} \|(-\Delta_{\mathbb{S}^3}+1)^{5} \partial_{y}\chi_{Z} \eta\|_{L^2(\mathbb{S}^3)}\Big\|_{\infty}+Z_{m}^{-m-\frac{3}{2}} \sup_{y,\omega}\Big|1_{\leq (1+\epsilon) Z_m}\eta(y,\omega,\tau)\Big|\nonumber\\
\leq &\tilde{Z}_{m}^{-m-\frac{1}{2}} Y_{T_2}^{-1}  \mathcal{M}(\tau)+Z_{m}^{-m-1}.
\end{align} 
Fir $H_2$, by the identities $1_{\leq (1+\epsilon)Z_m}=1_{\leq \tilde{Z}_m}+1_{\tilde{Z}_m<y\leq (1+\epsilon)Z_m}$ and $\chi_{Z}1_{\leq \tilde{Z}_m}=1_{\leq \tilde{Z}_m}$, and \eqref{eq:conditionZm},
\begin{align}
    H_2\lesssim \delta \tilde{Z}_{m}^{-\frac{1}{2}}+\Big\| \langle y\rangle^{-\frac{1}{2}} \|q_{M}^{-1}1_{\leq \tilde{Z}_m}(-\Delta_{\mathbb{S}^3}+1)^{5} \partial_{y} \eta\|_{L^2(\mathbb{S}^3)} \Big\|_{\infty}
    \lesssim \delta \tilde{Z}_{m}^{-\frac{1}{2}}+\tilde{Z}_{m}^m H_1.
\end{align}

Returning to \eqref{eq:prelD11}, we collect the estimates above to prove the desired 
\begin{align}
\Big\|\langle y\rangle^{-m-1} \big\| (-\Delta_{\mathbb{S}^3}+1)^{5} \chi_{Z}D_{1}\big\|_{L^2(\mathbb{S}^3)}\Big\|_{\infty} \lesssim  \delta Z_{m}^{-m-1}Y_{T_2}^{-1}(1+ \mathcal{M}(\tau)).
\end{align}

It is easier to control $D_{2}$ since for some $\epsilon_0>0$
$$\langle y\rangle^{-1} |\partial_{y}q|^2\ll e^{-\frac{m-1}{2m}\tau}\leq e^{-\epsilon_0\tau}Z_{m}(\tau).$$ Compute directly to obtain
\begin{align}\label{eq:mD12}
\Big\|\langle y\rangle^{-m-1} \big\| (-\Delta_{\mathbb{S}^3}+1)^{5} \chi_{Z}D_{12}\big\|_{L^2(\mathbb{S}^3)}\Big\|_{\infty}\lesssim  & e^{-\frac{m-1}{2m}\tau} \Big\|\langle y\rangle^{-m} \big\| (-\Delta_{\mathbb{S}^3}+1)^{5} \chi_{Z}\partial_{y}^2 \eta\big\|_{L^2(\mathbb{S}^3)}\Big\|_{\infty}\nonumber\\
\lesssim &  e^{-\frac{m-1}{2m}\tau} \Big[Z_{m}^{-m} Y_{T_2}^{-1}  \mathcal{M}+Z_{m}^{-m-\frac{1}{2}}\Big].
\end{align}

For the cases $j\geq 1$, the problem becomes easier since, for any term in $\partial_{y}^{j}D_{1}$, one of the factors must be $\partial_{y}^{h}\eta$ with $1\leq h\leq j$. Based on this, we find that if $(n,j)\in \Upsilon_{m}$, then
\begin{align}
A:=&\Big\|\langle y\rangle^{-n} \big\|\partial_{y}^{j} (-\Delta_{\mathbb{S}^3}+1)^{5} \chi_{Z}D_{1}\big\|_{L^2(\mathbb{S}^3)}\Big\|_{\infty}\nonumber\\
\lesssim & \delta\Big[\sum_{h=1}^{j}\Big\|\langle y\rangle^{-n} \big\|\partial_{y}^{h} (-\Delta_{\mathbb{S}^3}+1)^{5} \chi_{Z}\eta\big\|_{L^2(\mathbb{S}^3)}\Big\|_{\infty}+Z_{m}^{-n-\frac{1}{2}}\Big].
\end{align}
Recall the definitions of $T_2$ and $Y_{T_2}$ in \eqref{def:YT2}. Here we discuss two possibilities, 
\begin{itemize}
\item 
if $\tau\geq T_2$, then $2\geq Z_m/\tilde{Z}_{m}\geq 1$, and hence 
\begin{align}
A\lesssim & \delta\Big( \sum_{h=1}^{j} Z_{m}^{n_{h}-n} \Big\|\langle y\rangle^{-n_{h}} \big\|\partial_{y}^{h} (-\Delta_{\mathbb{S}^3}+1)^{5} \chi_{Z}\eta\big\|_{L^2(\mathbb{S}^3)}\Big\|_{\infty}+Z_{m}^{-n-\frac{1}{2}}\Big]\nonumber\\
\lesssim & \delta \tilde{Z}_{m}^{-n} Y_{T_2}^{-1} \Big(1+\mathcal{M}(\tau)\Big),\label{eq:pureTech2}
\end{align} where in the third step $n_h\geq n$ is the unique constant s.t. for the fixed $h\geq 1,$ $(n_{h},h)\in \Upsilon_{m};$ 
\item 
when $T_1\leq \tau<T_2$, then it might happen $\tilde{Z}_{m}(\tau)/Z_{m}(\tau)\ll 1$ since, for example, when $\tau=T_1$, $\tilde{Z}_m(T_1)=1$ while $Z_{m}(T_1)\gg 1$. This forces us to divide the region $|y|\leq (1+\epsilon)Z_m$ into two parts and consider them separately:
\begin{align}
&\Big\|\langle y\rangle^{-n} \big\|\partial_{y}^{h} (-\Delta_{\mathbb{S}^3}+1)^{5} \chi_{Z}\eta\big\|_{L^2(\mathbb{S}^3)}\Big\|_{\infty}\nonumber\\
\leq &\Big\|\langle y\rangle^{-n} 1_{\geq \tilde{Z}_m}\big\|\partial_{y}^{h} (-\Delta_{\mathbb{S}^3}+1)^{5} \chi_{Z}\eta\big\|_{L^2(\mathbb{S}^3)}\Big\|_{\infty} +\Big\|\langle y\rangle^{-n} 1_{< \tilde{Z}_m}\big\|\partial_{y}^{h} (-\Delta_{\mathbb{S}^3}+1)^{5} \chi_{Z}\eta\big\|_{L^2(\mathbb{S}^3)}\Big\|_{\infty}\nonumber\\
\lesssim & \delta \tilde{Z}_{m}^{-n}+ \tilde{Z}_{m}^{n_{h}-n} \Big\|\langle y\rangle^{-n_{h}} \big\|\partial_{y}^{h} (-\Delta_{\mathbb{S}^3}+1)^{5} \chi_{Z}\eta\big\|_{L^2(\mathbb{S}^3)}\Big\|_{\infty}\label{eq:pureTech}\\
\lesssim & \tilde{Z}_{m}^{-n} Y_{T_2}^{-1} \Big(1+\mathcal{M}(\tau)\Big), \nonumber
\end{align} where we use that here $Y_{T_2}(\tau)=1$ when $\tau\in [T_1, \ T_2]$.
\end{itemize}

Collect the estimates above to find that, for all the possibilities, $A$ satisfies the desired estimate
\begin{align}
    A\lesssim  \delta \tilde{Z}_{m}^{-n} Y_{T_2}^{-1} \Big(1+\mathcal{M}(\tau)\Big).
\end{align}

To control $\Big\|\langle y\rangle^{-n} \big\|\partial_{y}^{j} (-\Delta_{\mathbb{S}^3}+1)^{5} \chi_{Z}D_{2}\big\|_{L^2(\mathbb{S}^3)}\Big\|_{\infty}$, we observe that for some constants $C_{k,l}$, $$\partial_{y}^j D_{2}=\sum_{k+l=j}C_{k,l}\Big(\partial_{y}^{k}(\chi_Z v^{-2} \frac{(\partial_{y}q)^2  }{1+\tilde{N}_1(v)})\Big)\ (\partial_{y}^{l+2}\eta).$$ We prove these terms satisfy the desired estimates, by different techniques:
\begin{itemize}
\item $l+2=2m+3$ or $2m+4$, then we use that $|\partial_{y}^{l+2}\eta|\lesssim \delta$ by (\ref{eq:conditionZm});
\item when $2m+2\geq l+2>j$, then $l+2-j= 1$ or $2$ and we apply the techniques used in (\ref{eq:mD12}); 
\item when $l+2\leq j$ we apply the same techniques used in \eqref{eq:pureTech}.
\end{itemize}

\section{Proof of the part A of Theorem \ref{THM:sec}}\label{sec:ATHMsec}
The Part A states that it is impossible to have the following two possibilities: (1) $d_m\not=0$ when $m$ is odd and (2) $d_m<0$ when $m$ is even. In Lemma \ref{LM:contra} below we will rule out the possibility that $d_m>0$ and $m$ is odd, the others can be treated similarly, hence we choose to skip their proof.

We start with formulating the problem for the case $m$ is odd and $d_m>0$. Recall that we discussed the ideas behind the proof for the case $m=3$ in (\ref{eq:d3Region})-(\ref{eq:rescFunForm}).  

The proof is easier than that of B-part of Theorem \ref{THM:sec}. We will take some ideas from there.

For any fixed $\epsilon_0>0$ we parametrize the rescaled MCF as $\Psi_{Q_{m-1},v}$, and study $v$ in the region
\begin{align}\label{def:YM3}
\Bigg\{y\in \mathbb{R}\ \Bigg|\ -(1+\epsilon)
\Big((1+\epsilon_0^2) \frac{6-\epsilon_0}{d_m}\Big)^{\frac{1}{m}}\leq \frac{y}{Y_{m}(\tau)}\leq (1+\epsilon)\epsilon_0^{\frac{1}{m}}\Bigg\},
\end{align} 
where $Y_{m}$ is a function defined as
\begin{align}
    Y_{m}(\tau):=e^{\frac{m-2}{2m}(\tau-T_1)}+4\tau^{\frac{1}{2}+\frac{1}{20}},
\end{align}
and $\epsilon>0$ is the small constant to appear in the definition of cutoff function, see \eqref{def:chi21} below, and $T_1$ is a large time.  

To avoid comparing the sizes of small constants $\epsilon$ and $\epsilon_0$, we require that 
\begin{align}
  0<  \frac{\epsilon}{\epsilon_0^2}\leq  1.
\end{align}

The cutoff function $\chi_{Y}$ to be used in \eqref{eq:orthom3} is defined as
\begin{align}\label{eq:chim3}
\chi_{Y}(y):=\chi_{2}(ye^{-\frac{m-2}{2m}\tau})
\end{align} and $\chi_{2}$ is a $C^{m+20}_1$ cutoff function satisfying
\begin{align}\label{def:chi21}
    \chi_{2}(x)=\left[
    \begin{array}{ll}
       1  & \text{when}\ x\in \Big[-(\frac{6-\epsilon_0}{d_m})^{-\frac{1}{m}},\ \epsilon_0^{-\frac{1}{m}}\Big]; \\
       0  &  \text{when}\ x\leq -(1+\epsilon)(\frac{6-\epsilon_0}{d_m})^{-\frac{1}{m}},\ \text{or}\ x\geq (1+\epsilon)\epsilon_0^{-\frac{1}{m}}.
    \end{array}
    \right.
\end{align}
Here we construct $\chi_2$ from $\chi$ defined in \eqref{def:chi} and \eqref{eq:chiSmooth}:
\begin{align}\label{def:chi2}
\chi_{2}(z)=\left[
\begin{array}{cc}
\chi\Big((\frac{6-\epsilon_0}{d_m})^{-\frac{1}{m}}z\Big) &\ \text{when}\ z\leq 0,\\
\chi\Big((\epsilon_0)^{-\frac{1}{m}}z\Big) &\ \text{when}\ z\geq 0.
\end{array}
\right.
\end{align}

The result is following:
\begin{proposition}\label{prop:step1Sec}
For the rescaled MCF $\Psi_{Q_{m-1},v}$ in the region defined in \eqref{def:YM3},
the function $v$ can be decomposed into the form
\begin{align}\label{eq:decomvm3}
v(y,\omega,\tau)=\sqrt{6+\sum_{n=0}^{m} \sum_{k=0}^{N}\sum_l\beta_{n,k,l}(\tau)H_{n}(y)f_{k,l}(\omega) +\xi(y,\omega,\tau)}
\end{align} where $N$ is a sufficiently large integer, and $\chi_{Y}\xi$ satisfies the orthogonality conditions
\begin{align}
\chi_{Y}\xi\perp_{\mathcal{G}} H_n f_{k,l},\ n=0,\cdots,m;\ k=0,\cdots,N,\label{eq:orthom3}
\end{align} and $\chi_{Y}$ is a cutoff function defined in \eqref{eq:chim3},
and the most important term is $\beta_{m,0,1}$, 
\begin{align}
\beta_{m,0,1}(\tau)=d_m e^{-\frac{m-2}{2}\tau} +\mathcal{O}(e^{-\frac{m-1}{2}\tau}),
\end{align} $d_m$ is the same to that in \eqref{eq:dm}, and $\beta_{n,k,l}$ enjoy the same estimates to $\gamma_{n,k,l}$ in \eqref{eq:dm}-\eqref{eq:n01Rapid}.   

The remainder $\eta$ satisfies the estimates
\begin{align}
\sum_{j+|i|\leq 2}\Big\|\partial_{y}^j \nabla_{E}^{i}\chi_{Y}\eta(\cdot,\tau)\Big\|_{\mathcal{G}}\leq & C_{\delta_0} e^{-\frac{m-\delta_0}{2}\tau} ,\label{eq:estYeta}\\
\sum_{j+|i|\leq 2}\Big|\partial_{y}^j\nabla_{E}^i\chi_{Y}\eta(\cdot,\tau)\Big|&\leq e^{-\sqrt{\tau}}.\label{eq:Ypoint}
\end{align}
\end{proposition}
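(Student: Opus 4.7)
The plan is to mirror the proof strategy of Proposition \ref{prop:Mconvex} in Section \ref{sec:BTHMSec}, adapted to the larger, asymmetric region \eqref{def:YM3}. I will bootstrap in $\tau \in [T_1, \tau_1]$, taking as input the induction hypothesis of Theorem \ref{THM:sec} (which gives control on the smaller region $|y| \le 8\tau^{1/2+1/20}$) and a smoothness bootstrap hypothesis analogous to \eqref{eq:conditionZm}, but now relative to the leading profile $\sqrt{6 + d_m e^{-(m-2)\tau/2} H_m(y)}$ on the support of $\chi_Y$. The key observation, built into the definition of $\chi_2$ in \eqref{def:chi2} and into the range of $y/Y_m(\tau)$ in \eqref{def:YM3}, is that the expression $6 + d_m e^{-(m-2)\tau/2} H_m(y)$ stays $\ge \tfrac{1}{2}\epsilon_0$ on $\mathrm{supp}\,\chi_Y$, so that the linearization coefficient $q_M^{-1}$ appearing below is bounded; all constants will depend on $\epsilon_0$, but $\epsilon_0$ is fixed throughout.

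First, apply the fixed-point argument of Lemma \ref{LM:fixedPoint} with the new cutoff $\chi_Y$ to construct the coefficients $\beta_{n,k,l}(\tau)$ making $\chi_Y \xi$ satisfy the orthogonality conditions \eqref{eq:orthom3}; this yields preliminary estimates comparing $\beta_{n,k,l}$ to the $\gamma_{n,k,l}$ of \eqref{eq:newdec} and a crude control on $\|\chi_Y \xi\|_{\mathcal{G}}$. Define
\begin{align*}
q_M(y,\tau) := 6 + \beta_{m,0,1}(\tau) H_m(y), \qquad q := 6 + \textstyle\sum_{n,k,l} \beta_{n,k,l}(\tau) H_n(y) f_{k,l}(\omega),
\end{align*}
and derive from \eqref{eq:tilv} a governing equation for $\chi_Y \xi$ of the form \eqref{eq:mSupeta}, with linear operator $L := -\partial_y^2 + \tfrac12 y \partial_y - 1 - q_M^{-1}\Delta_{\mathbb{S}^3}$ and source/remainder splitting $G_1 + G_2 + SN + \mu_Y(\xi)$ identical in structure to Section \ref{subsec:propMconvex}. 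The orthogonality conditions then yield ODEs for $\beta_{n,k,l}$ analogous to \eqref{eq:BetaNKLeqn}, whose nonlinearities admit sharp estimates of type \eqref{eq:estBetaEff} by the arguments of Section \ref{sec:source}; in particular the integration as in \eqref{eq:leadPart} produces $\beta_{m,0,1}(\tau) = d_m e^{-(m-2)\tau/2} + \mathcal{O}(e^{-(m-1)\tau/2})$, with the constant $d_m$ matching \eqref{eq:dm} because the leading Hermite coefficient is uniquely determined on the overlap of the two parametrizations.

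For the remainder estimates \eqref{eq:estYeta}-\eqref{eq:Ypoint}, decompose $\xi$ along $-\Delta_{\mathbb{S}^3}$ as in Sections \ref{sec:lowDeta}-\ref{sec:MaxTiEta}. For the low spherical-harmonic modes $k \le N$, apply the cutoff reformulation around \eqref{eq:diffCut}-\eqref{eq:jetakl} and Duhamel/propagator estimates from Lemma \ref{LM:frequencyWise}, exploiting the orthogonality to $H_0,\dots,H_{m-j}$ to gain the decay factor $e^{-(m-1-\delta_0)(\tau-\sigma)/2}$. For the remaining high modes, use the maximum-principle scheme of Section \ref{sec:MaxTiEta} with majorants built from $\tilde Y_m^{-n}$ in place of $\tilde Z_m^{-n}$. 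The source estimates of type \eqref{eq:fkl}, \eqref{eq:snkl}, \eqref{eq:estD11}-\eqref{eq:estD13} transport to this setting essentially unchanged because each nonlinear term in $SN$ either is quadratic in $\xi$ or carries a factor of $\partial_y Q_{m-1}$ that is uniformly small on \eqref{def:YM3}; closing by Gronwall gives both the $\mathcal{G}$-norm bound $C_{\delta_0} e^{-(m-\delta_0)\tau/2}$ and the pointwise bound $e^{-\sqrt{\tau}}$.

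The main obstacle will be the smoothness bootstrap that justifies extending the analysis from the Part B region to the full asymmetric region. Unlike the setting of Proposition \ref{prop:Mconvex}, here $v$ may degenerate to $\sqrt{\epsilon_0}$ near the left endpoint $y/Y_m \approx -((6-\epsilon_0)/d_m)^{1/m}$, so factors of $v^{-1}$ and $v^{-2}$ in $SN$ are controlled only by $\epsilon_0^{-O(1)}$ rather than being small. Careful tracking of the $\epsilon_0$-dependence, using that $\tilde q := q - q_M$ remains small and that $q_M$ stays bounded away from zero on $\mathrm{supp}\,\chi_Y$, is needed to ensure that the nonlinear smallness parameter $\kappa$ of \eqref{def:kappa} can still be made small by choosing $T_1$ large (after $\epsilon_0$ is fixed). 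Once this is in place, the bootstrap closes exactly as in Section \ref{sec:BTHMSec}, and combined with the smoothness propagation of Proposition \ref{prop:localSmo}-type estimates, the proposition follows.
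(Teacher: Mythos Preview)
Your overall plan---mirror the bootstrap of Section \ref{sec:BTHMSec}, obtain the $\beta_{n,k,l}$ via the fixed-point comparison of Lemma \ref{LM:fixedPoint}, derive the governing equation for $\chi_Y\xi$, and close via the propagator/maximum-principle scheme of Sections \ref{sec:lowDeta}--\ref{sec:MaxTiEta}---is exactly what the paper does. The paper in fact says ``what is left is almost identical to the proof in Section \ref{sec:BTHMSec}.''

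However, you misidentify the new difficulty. What you call the ``main obstacle'' (the $\epsilon_0^{-O(1)}$ blow-up of $v^{-1},v^{-2}$ in $SN$) is harmless: $\epsilon_0$ is fixed before $T_1$ is chosen large, so these are just constants and the paper never discusses them. The genuine new issue, which the paper highlights explicitly, is this: in the present setting ($m$ odd, $d_m>0$) the profile $q_M(y,\tau)=6+\beta_{m,0,1}(\tau)H_m(y)$ vanishes at some $y_0 \approx -(6/d_m)^{1/m} e^{\frac{m-2}{2m}\tau}$ lying just outside $\mathrm{supp}\,\chi_Y$, so $q_M^{-1}$ is \emph{not} a globally bounded potential. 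Your observation that $q_M^{-1}$ is bounded \emph{on} $\mathrm{supp}\,\chi_Y$ is correct but insufficient, because Lemma \ref{LM:frequencyWise} requires the potential $V$ to be globally nonnegative, bounded, and to satisfy the slowly-varying condition \eqref{eq:smallV} on all of $\mathbb{R}$; with the raw $q_M^{-1}$ the propagator $U_n(\tau,\sigma)$ is not even well-defined, let alone contractive.

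The paper's fix is to replace $q_M^{-1}$ by a smooth $\tilde V(y,\tau)$ which equals $q_M^{-1}$ on the region \eqref{def:YM3} and is frozen to its boundary values outside (see \eqref{eq:tildeV1}), so that $\tilde V \chi_Y\xi = q_M^{-1}\chi_Y\xi$ while $\tilde V$ satisfies \eqref{eq:smallV} globally. With this substitution the linear operator becomes $H_Y=-\partial_y^2+\tfrac12 y\partial_y-1+\tilde V$, Lemma \ref{LM:frequencyWise} applies, and the rest of your argument goes through verbatim. You should insert this step before invoking Duhamel.
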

We will discuss the proof shortly.

Assume the proposition holds, then we prove that the second fundamental form is unbounded. This contradicts the assumption \eqref{eq:secFun}, which implies that, $|\tilde{A}|\leq C$ for some fixed constant $C$. Recall that $\epsilon_0$ is an arbitrary, but fixed, positive constant.
\begin{lemma}\label{LM:contra}
Let $\tilde{A}(y,\omega,\tau)$ be the second fundamental form for the considered part of rescaled MCF, then we have that, for some constant $M(\epsilon_0)$ satisfying $\lim_{\epsilon_0\rightarrow 0+}M(\epsilon_0)= \infty,$
\begin{align}
    \sup_{y,\omega,\tau}|\tilde{A}(y,\omega,\tau)|\geq M(\epsilon_0).
\end{align} 
\end{lemma}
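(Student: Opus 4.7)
The plan is to exhibit a specific point $(y_\ast,\omega,\tau)$ inside the region \eqref{def:YM3} at which $v$ is forced to be as small as $\sqrt{\epsilon_0}$, and then to read off from the geometry of the parametrization $\Psi_{Q_{m-1},v}$ that the principal curvature of the $\mathbb{S}^3$-factor at this point is of order $1/\sqrt{\epsilon_0}$. Since $\epsilon_0$ is a free positive parameter, this produces arbitrarily large values of $|\tilde{A}|$.

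Step 1 (locating the degenerate point). For each sufficiently large $\tau$, define $y_\ast=y_\ast(\tau)$ by
\begin{align*}
y_\ast\, e^{-\frac{m-2}{2m}\tau}=-\Big(\tfrac{6-\epsilon_0}{d_m}\Big)^{\frac{1}{m}}.
\end{align*}
This lies strictly inside \eqref{def:YM3}. Using $H_m(y)=y^m+O(y^{m-2})$ and $|y_\ast|\gtrsim e^{\frac{m-2}{2m}\tau}$, together with $m$ odd and $d_m>0$, a direct computation gives
\begin{align*}
\beta_{m,0,1}(\tau)\,H_m(y_\ast)=d_m e^{-\frac{m-2}{2}\tau}y_\ast^m\bigl(1+O(y_\ast^{-2})\bigr)+O(e^{-\frac{m-1}{2}\tau}y_\ast^m)=-(6-\epsilon_0)\bigl(1+o(1)\bigr).
\end{align*}
The remaining modes $\beta_{n,k,l}H_nf_{k,l}$ with $n<m$ carry strictly faster decay rates by \eqref{eq:n01Rapid}, so their contribution at $y_\ast$ is bounded by $Ce^{-\epsilon_1\tau}$ for some $\epsilon_1>0$; the remainder $\xi$ is $O(e^{-\sqrt{\tau}})$ by \eqref{eq:Ypoint}. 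Consequently
\begin{align*}
v^2(y_\ast,\omega,\tau)=\epsilon_0+o(1),\qquad v(y_\ast,\omega,\tau)=\sqrt{\epsilon_0}\bigl(1+o(1)\bigr).
\end{align*}

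Step 2 (curvature computation). In the considered region, the vector-valued polynomial $Q_{m-1}$ from \eqref{eq:qmminus1} satisfies $|\partial_y Q_{m-1}|+|\partial_y^2Q_{m-1}|\ll 1$ uniformly, because every monomial $e^{-(k-1)\tau/2}H_k(y)$ with $k\le m-1$ is uniformly small once $|y|\lesssim e^{\frac{m-2}{2m}\tau}$. Hence the parametrization $\Psi_{Q_{m-1},v}$ differs from the straight cylinder parametrization $\Psi_{0,v}$ by a perturbation whose derivatives tend to zero as $\tau\to\infty$. For $\Psi_{0,v}$ the three principal curvatures tangential to $\mathbb{S}^3$ equal $\frac{1}{v}\bigl(1+O(v^{-2}|\nabla_E v|^2+|\partial_y v|^2)\bigr)$, while the remaining principal curvature involves $\partial_y^2 v$ and is controlled by \eqref{eq:Ypoint}. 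The derivative bounds in \eqref{eq:Ypoint} guarantee $v^{-1}|\nabla_E v|+|\partial_y v|+|v^{-1}\partial_y^2 v|\le e^{-\sqrt{\tau}}\ll 1$ at $y_\ast$, so evaluating at $(y_\ast,\omega,\tau)$ yields
\begin{align*}
|\tilde{A}(y_\ast,\omega,\tau)|\ge \frac{1}{v(y_\ast,\omega,\tau)}\bigl(1+o(1)\bigr)=\frac{1}{\sqrt{\epsilon_0}}\bigl(1+o(1)\bigr).
\end{align*}
Taking $M(\epsilon_0):=\tfrac{1}{2}\epsilon_0^{-1/2}$ finishes the proof, and we have $M(\epsilon_0)\to\infty$ as $\epsilon_0\to 0^+$.

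The main obstacle is the bookkeeping in Step 1: one must verify that the secondary contributions (the $H_n$ with $n<m$, the correction $O(y_\ast^{-2})$ in $H_m(y)=y^m+\ldots$, the $O(e^{-(m-1)\tau/2})$ correction to $\beta_{m,0,1}$, and the remainder $\xi$) are each $o(1)$ at the chosen point $y_\ast$, uniformly in $\omega$. This is essentially the same estimate that is proved throughout Section~\ref{sec:BTHMSec} for the nearby non-degenerate range, applied at a slightly different distinguished scale; the key point is that $|y_\ast|\asymp e^{\frac{m-2}{2m}\tau}$ is precisely the borderline scale at which $y_\ast^m\beta_{m,0,1}$ is order one but $y_\ast^n\beta_{n,k,l}$ with $n<m$ is $o(1)$. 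The curvature step is routine once Step 1 is in place, since $Q_{m-1}$ and derivatives of $v$ are all negligible perturbations.
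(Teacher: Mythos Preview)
Your proof is correct and follows essentially the same approach as the paper: locate a point $y_\ast$ in the region \eqref{def:YM3} where the dominant term $d_m e^{-\frac{m-2}{2}\tau}y_\ast^m$ equals $-(6-\epsilon_0)$ so that $v\approx\sqrt{\epsilon_0}$, observe that $|\partial_y Q_{m-1}|\to 0$ makes the curve nearly straight, and read off a principal curvature of order $1/\sqrt{\epsilon_0}$. Your Step~2 is in fact more explicit than the paper's, which simply invokes the smallness of $|\partial_y Q_{m-1}|$ and the vanishing of $\sum_{k+|l|=1,2}|\partial_y^k\nabla_E^l v|$ as $\tau\to\infty$ before jumping to the conclusion.
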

\begin{proof}
Recall that we parametrize the rescaled MCF as
\begin{align}
\Psi_{Q_{m-1}}(v)=\left[
\begin{array}{ccc}
y\\
Q_{m-1}(y,\tau)
\end{array}
\right]+v(y,\omega,\tau)\left[
\begin{array}{ccc}
-\partial_{y}Q_{m-1}(y,\tau)\cdot\omega\\
\omega
\end{array}
\right],
\end{align} and we study the region
$$-1\leq \frac{y}{Y_{m}(\tau)\Big((1+\epsilon_0^2) \frac{6-\epsilon_0}{d_m}\Big)^{\frac{1}{m}}}\leq -1+\epsilon_0.$$

The observation $|\partial_{y}Q_{m-1}(y,\tau)|\rightarrow 0$ uniformly as $\tau\rightarrow\infty $ implies that the curve $\left[
\begin{array}{ccc}
y\\
Q_{m-1}(y,\tau)
\end{array}
\right]$ varies very slowly, and locally is almost a straight line. The estimates in Proposition \ref{prop:step1Sec} imply 
\begin{align}
v(y,\omega,\tau)=\sqrt{6+ d_m e^{-\frac{m-2}{2}\tau}y^m}\Big(1+o(1)\Big)=\sqrt{\epsilon_0}\Big(1+o(1)\Big),
\end{align} when 
$ y= -Y_{m}(\tau)\Big((1+\frac{1}{2}\epsilon_0^2) \frac{6-\epsilon_0}{d_m}\Big)^{\frac{1}{m}}\Big(1+o(1)\Big).$ 
For the smoothness, the estimates in Proposition \ref{prop:step1Sec} imply that $$\lim_{\tau\rightarrow \infty}\sum_{k+|l|=1,2}|\partial_{y}^k\nabla_{E}^{l}v(\cdot,\tau)|= 0.$$ Hence the second fundamental form $\tilde{A}$ satisfies
\begin{align}
\sup_{y,\omega,\tau}|\tilde{A}(y,\omega,\tau)|\geq M(\epsilon_0)\rightarrow \infty \ \text{as}\ \epsilon_0\rightarrow 0.
\end{align}  

\end{proof}


Next we discuss the proof of Proposition \ref{prop:step1Sec}. For a fully detailed proof, we need to go through the whole procedure similar to that in Section \ref{sec:BTHMSec}. We will skip most of the steps, and only focus on the parts requiring different techniques.

We obtain the desired estimates for $\beta_{n,k,l}$ and the estimates for $\chi_{Y}\eta$\eqref{eq:estYeta} by comparing them to $\gamma_{n,k,l}$ and $\chi_{R}\xi$ in \eqref{eq:dm}, by the techniques used in the proof of \eqref{eq:DiffAlBe}.

Now we discuss the proof of \eqref{eq:Ypoint}. Here we need the governing equation for $\chi_{Y}\eta$. To simply the notations we define new functions $q$ and $q_M$ by
\begin{align}
\begin{split}
q(y,\omega,\tau):=&6+\sum_{n=0}^{m} \sum_{k=0}^{N}\sum_{l}\gamma_{n,k,l}(\tau)H_{n}(y)f_{k,l}(\omega),\\
q_M(y,\tau):=&6+\gamma_{n,0,1}(\tau)H_n(y).
\end{split}
\end{align} 
Similar to deriving \eqref{eq:EffTilXi},
\begin{align}
\partial_{\tau}\chi_{Y}\eta=-L\chi_{Y}\eta+\chi_{Y}\Big(G+SN(\eta)\Big)+\mu_{Y}(\eta)\label{eq:etaY}
\end{align} where the linear operator $L$ is defined as, 
\begin{align*}
L:=-\partial_{y}^2+\frac{1}{2}y\partial_y-\frac{1}{q_M} \Delta_{\mathbb{S}^3}-1
\end{align*}
the function $G=G_1+G_2$ is defined as 
\begin{align*}
G_1(y,\omega,\tau):=&-\sum_m H_{n}(y)f_{k,l}(\omega) \Big[\frac{d}{d\tau}+\frac{n}{2}-1+\frac{1}{6} k(k+2)\Big]\alpha_{n,k,l}(\tau),\\
G_2(y,\omega,\tau):=&\frac{6-q}{6q}\Delta_{\mathbb{S}^3}q-\frac{1}{2q^2} |\nabla_{E}q|^2-\frac{1}{2q}|\partial_{y}q|^2+2\sqrt{q} N(\sqrt{q})+2\sqrt{q}W_{Q_{m-1}}(\sqrt{q}),
\end{align*}
and
\begin{align*}
SN(\eta):=&\frac{1}{2q^2}|\nabla_{E}q|^2-\frac{1}{2}v^{-4} |\nabla_{E}v^2|^2+\frac{1}{2q}|\partial_{y}q|^2-\frac{1}{2}v^{-2}|\partial_{y}v^2|^2+2v N(v)-2\sqrt{q} N(\sqrt{q})\\
&+2vW_{Q_{m-1}}(v)-2\sqrt{q}W_{Q_{m-1}}(\sqrt{q})+\frac{q_M-q}{q_{M}q}\Delta_{\mathbb{S}^3}\eta-\frac{\eta}{v^2 q}\Delta_{\mathbb{S}^2}v^2,
\end{align*}
and
\begin{align}
\mu_{Y}(\eta):=\frac{1}{2}\big(y\partial_{y}\chi_{Y}\big)\eta+\big(\partial_{\tau}\chi_{Y}\big)\eta-\big(\partial_{y}^2\chi_{Y}\big)\eta-2(\partial_{y}\chi_{Y})(  \partial_{y}\eta).\label{eq:Tchi3}
\end{align}

We are ready to present a difficulty. 

Observe that $\frac{1}{q_M}$ becomes $\infty$ at some  $y_0=-e^{\frac{m-2}{2m}\tau}(\frac{6}{|d_m|}e^{\frac{m-2}{2}\tau})^{\frac{1}{m}}\Big(1+o(1)\Big)$. This is adverse because, for example, we can not apply the crucial tool Lemma \ref{LM:frequencyWise}.

It is not hard to overcome this difficulty, since the difficulty takes place outside the interested region \eqref{def:YM3}. By restricting to the region \eqref{def:YM3} we can choose a ``better" multiplier $\tilde{V}(y,\tau)$ s.t.
\begin{align}
\tilde{V}\chi_{Y}\eta=q_M^{-1}\chi_{Y}\eta.
\end{align} We need $\tilde{V}$ to be a smooth function and satisfy
\begin{align}\label{eq:tildeV1}
\tilde{V}(y,\tau):=
\left[
\begin{array}{ccc}
 q_M^{-1}(y,\tau)\ &\text{when} &\ - y_{\text{min}}\leq y/Y_m\leq y_{\text{max}} ,\\
q_M^{-1}(Y_m y_{\text{max}},\tau) &\text{when} &\ y/Y_{m}\geq y_{\text{max}}+\epsilon_0^2,\\
q_M^{-1}(-Y_m y_{\text{min}},\tau)  &\text{when} &\ y/Y_m\leq -y_{\text{min}}-\epsilon_0^2, 
\end{array}
\right.
\end{align} 
and when $y +Y_m y_{\text{min}}\in Y_m [-\epsilon_0^2,0]$ and $y-Y_m y_{\text{max}}\in Y_m[0,\epsilon_0^2] $, we require that
\begin{align*}
\Big|\partial_{y}\tilde{V}(y,\tau)\Big|\leq e^{-\frac{2}{5}\tau}.
\end{align*} To construct such a function is easy, and is skipped.
Here $y_{\text{max}}$ and $y_{\text{min}}$ are constants defined as
\begin{align*}
    y_{\text{max}}:=&(1+\epsilon)\Big(\epsilon_0 e^{\frac{m-2}{2}\tau}\Big)^{\frac{1}{m}},\\
    y_{\text{min}}:=&(1+\epsilon)\Big(\frac{6-\epsilon_0}{d_m}e^{\frac{m-2}{2}\tau}\Big)^{\frac{1}{m}},
\end{align*}

Thus the equation \eqref{eq:etaY} becomes
\begin{align}
\partial_{\tau}\chi_{Y}\eta=-H_Y\chi_{Y}\eta+\chi_{Y}\Big(G+SN(\eta)\Big)+\mu_{Y}(\eta)\label{eq:m3Eta0}
\end{align} where the linear operator $H_Y$ is 
\begin{align*}
H_Y:=-\partial_{y}^2+\frac{1}{2}y\partial_{y}-1 +\tilde{V}.
\end{align*}

What is left is almost identical to the proof in Section \ref{sec:BTHMSec}. Thus we choose to skip the details.

\section{Proof of the part D of Theorem \ref{THM:sec}}\label{sec:ProofTHMsecC}
We will achieve the goal in two steps: we first study the old $\Psi_{Q_{m-1},v}$, but expand the function $v$ further; then we will make a normal form transformation and study the new rescaled MCF.

Here we need Proposition \ref{prop:Mconvex}, whose estimates have been proved to hold in the time interval $[T_1,\infty)$ for some $T_1\gg 1$. To simply the notations, here we shift the time to make $T_1=0.$

We are ready to start the first step, recall the definitions of $R$ and $\chi_{R}$ in (\ref{def:R}) and \eqref{eq:chiRy},
\begin{proposition}\label{prop:m1BefNorm}
For the $\Psi_{Q_{m-1},v}$, with $Q_{m-1}$ defined in \eqref{eq:qmminus1}, in the region
\begin{align}
    |y|\leq (1+\epsilon)R(\tau),
\end{align}
there exists an integer $N_{m+1}$ such that for any $N\geq N_{m+1}$, $v$ can be decomposed into
\begin{align}\label{eq:decomVm1}
    v(y,\omega,\tau)=\sqrt{6+\sum_{n=0}^{m+1}\sum_{k=0}^{N}\sum_{l} \beta_{n,k,l}(\tau)H_{n}(y)f_{k,l}(\omega)+\zeta(y,\ \omega,\tau)},
\end{align} such that $\chi_{R}\zeta$ satisfies the orthogonality conditions
\begin{align}
\chi_{R}\zeta\perp_{\mathcal{G}}  H_{n}f_{k,l},\ n=0,1,\cdots,m+1;\ \text{and}\ k=0,\cdots,N,\ \label{eq:orthoG}
\end{align} and when $n\leq m-1$, $\beta_{n,k,l}$ is closed to $\gamma_{n,k,l}$ in \eqref{eq:newdec}, for any $\delta_0>0,$ there exists a $C_{\delta_0}$ s.t.
\begin{align}
   \Big|\gamma_{n,k,l}(\tau)-\beta_{n,k,l}(\tau)\Big|\leq  & C_{\delta_0} e^{-\frac{m-1-\delta_0}{2}\tau},\label{eq:betagamma}
\end{align} here we assume that $\gamma_{n,k,l}=0$ if it is not in the decomposition \eqref{eq:newdec}.

The focus is on $\beta_{m,1,l},$ $l=1,2,3,4:$ for some constants $d_{m,l},$
\begin{align}
    \beta_{m,1,l}(\tau)=&d_{m,l} e^{-\frac{m-1}{2}\tau}+\mathcal{O}(e^{-\frac{m}{2}}),\label{eq:betaMl}
\end{align} and for the ones not included in \eqref{eq:betagamma} and \eqref{eq:betaM1}: for any $l$, for any $j\not=1$ and any $k\not=0$,
\begin{align} 
  e^{\frac{m-1}{2}\tau} |\beta_{m+1,0,1}|+(1+\tau)^{-1} e^{\frac{m}{2}\tau}|\beta_{m+1,k,l}(\tau)|+e^{\frac{m}{2}\tau}|\beta_{m,j,l}(\tau)| \lesssim  1.
\end{align} 
the remainder $\zeta$ satisfies the following estimates, for any $\delta_0>0,$ there exists a $C_{\delta_0}$ s.t.
\begin{align}
    \sum_{n+|j|\leq 2}\Big\|\partial_{y}^n\nabla_{E}^j\chi_{R} \zeta(\cdot,\tau)\Big\|_{\mathcal{G}}\leq & C_{\delta} e^{-\frac{m-\delta_0}{2}\tau},\label{eq:etaGnorm}\\
    \sum_{n+|j|\leq 2}\Big| \partial_{y}\nabla_{E}^{l}\chi_{R}\zeta(\cdot,\tau)\Big|\lesssim & e^{-\sqrt{\tau}}.\label{eq:mplus1eta}
\end{align}
\end{proposition}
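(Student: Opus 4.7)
The plan is to follow the blueprint already used in Sections 3--8, adapted to push the decomposition one order higher. First I would establish preliminary estimates by the Fixed Point Theorem. Exactly as in Lemma \ref{LM:fixedPoint}, the existence of $\beta_{n,k,l}$, $n=0,\ldots,m+1$, $k=0,\ldots,N$, producing the orthogonality conditions \eqref{eq:orthoG} on $\chi_{R}\zeta$, is equivalent to solving a nonlinear system $G_{n,k,l}(\beta)=0$ whose Jacobian is uniformly invertible. Comparing with the earlier decomposition \eqref{eq:newdec}, where only $n\leq m$ appeared, \eqref{eq:GLEtaM2} gives $|G_{n,k,l}(\gamma)|\lesssim C_{\delta_0}e^{-(m-1-\delta_0)\tau/2}$, and the Fixed Point Theorem yields \eqref{eq:betagamma} together with a preliminary version of \eqref{eq:etaGnorm}.

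Next I would derive the governing equation for $\chi_{R}\zeta$ in the same form as \eqref{eq:EffTilXi}--\eqref{eq:BetaNKLeqn}, with a new reference profile $q:=6+\sum_{n\leq m+1}\sum_{k,l}\beta_{n,k,l}H_{n}f_{k,l}$, and obtain from the orthogonality conditions
\begin{align*}
\Big(\frac{d}{d\tau}+\frac{n-2}{2}+\frac{k(k+2)}{6}\Big)\beta_{n,k,l}
=\frac{1}{\|H_{n}f_{k,l}\|_{\mathcal{G}}^{2}}NL_{n,k,l}.
\end{align*}
The key input is that $d_m=0$. By Lemma \ref{LM:improvement}, $|\gamma_{m,0,1}(\tau)|\leq C_{\delta_0}e^{-(m-\delta_0)\tau/2}$, which by \eqref{eq:betagamma} transfers to $\beta_{m,0,1}$. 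This crucial improvement suppresses the ``worst'' contribution $\gamma_{m,0,1}y^{m}$ identified in \eqref{eq:worstTerm} and allows sharp estimates for $NL_{n,k,l}$ of the form used in \eqref{eq:estBetaEff}. In particular, inspecting $J_{1}(\sqrt{q})=(\partial_{y}\sqrt{q})(\partial_{y}Q_{m-1}\cdot\omega)$ from \eqref{def:J}, whose cross term between $\partial_{y}\sqrt{q}\sim\text{const}$ and the highest component $e^{-(m-2)\tau/2}H_{m-1}(y)(a_{m-1,\cdot})\cdot\omega$ of $\partial_{y}Q_{m-1}$ produces a contribution proportional to $e^{-(m-1)\tau/2}H_{m}(y)\omega_{l}$, I would identify this as the source that dominates the evolution of $\beta_{m,1,l}$.

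Given sharp estimates on $NL_{n,k,l}$, the estimates for $\beta_{n,k,l}$ follow by the same integration arguments as in subsection \ref{subsec:afterNorm3}. Specifically, for $\beta_{m,1,l}$ the coefficient in front of $\beta$ in the linear part is $\frac{m-2}{2}+\frac{1\cdot 3}{6}=\frac{m-1}{2}$, so
\begin{align*}
\beta_{m,1,l}(\tau)=e^{-\frac{m-1}{2}\tau}\Big[\beta_{m,1,l}(0)+\int_{0}^{\infty}e^{\frac{m-1}{2}\sigma}NL_{m,1,l}(\sigma)\,d\sigma\Big]-e^{-\frac{m-1}{2}\tau}\int_{\tau}^{\infty}e^{\frac{m-1}{2}\sigma}NL_{m,1,l}(\sigma)\,d\sigma
\end{align*}
and the first bracket defines the constant $d_{m,l}$, while the remainder is $\mathcal{O}(e^{-m\tau/2})$ since the non-leading part of $NL_{m,1,l}$ decays like $e^{-(m/2+1/3)\tau}$. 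For $\beta_{m+1,0,1}$ the linear rate is $\frac{m-1}{2}$, matching the size of the source, which yields the stated bound. For the remaining $\beta_{n,j,l}$ with $n=m$ ($j\neq 1$) or $n=m+1$ ($k\neq 0$), the linear rate is strictly larger, so direct integration gives the improved decay, with the logarithmic factor appearing only when the linear rate is resonant with the source rate. The estimate for $\|\chi_{R}\zeta\|_{\mathcal{G}}$ then follows from the energy method in subsection \ref{subsec:afterNorm3}: orthogonality \eqref{eq:orthoG} now also kills the $H_{m+1}$ directions, so the slowly decaying part of the source is removed and one gets \eqref{eq:etaGnorm}.

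The main obstacle will be \eqref{eq:mplus1eta}, the pointwise estimate for $\chi_{R}\zeta$. I expect this to require a full reprise of the machinery of Sections \ref{sec:lowDeta}--\ref{sec:jGreat}, namely the majorant $\mathcal{M}$ built from weighted $L^{\infty}$-norms, propagator estimates (Lemma \ref{LM:frequencyWise}) on the low-spherical-frequency modes, and the maximum principle on the high-spherical-frequency complement. Since we now quotient out one additional direction $H_{m+1}$, the orthogonality bootstrap gives an additional half-power of decay through Lemma \ref{LM:frequencyWise}, which is exactly what is needed to match the $\mathcal{G}$-norm rate $e^{-(m-\delta_{0})\tau/2}$ to a pointwise rate better than $e^{-\sqrt{\tau}}$ in the smaller region $|y|\leq (1+\epsilon)R(\tau)$; the region here is much smaller than $Z_{m}$ in Section \ref{sec:BTHMSec}, so the weight factors $\tilde{Z}_{m}^{-n}$ are essentially harmless, and the bootstrap closes by the same Gronwall argument as in Proposition \ref{prop:Mconvex}. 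The only genuinely new ingredient is verifying that the source estimates from Section \ref{sec:source} go through with the improved bound on $\gamma_{m,0,1}$ from Lemma \ref{LM:improvement}, which replaces the decay rate $e^{-(m-2)\tau/2}$ in \eqref{eq:worstTerm} by $e^{-(m-\delta_{0})\tau/2}$ and thus is no longer the obstruction it was in Section \ref{sec:BTHMSec}.
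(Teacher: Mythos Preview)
Your outline for the coefficient estimates and the $\mathcal{G}$-norm is essentially the paper's: compare to the previous decomposition via a fixed-point argument (as in Lemma \ref{LM:fixedPoint}), derive the governing ODEs \eqref{eq:BetaNKLeqn}, use $d_m=0$ via Lemma \ref{LM:improvement} to sharpen the source terms, and integrate. That part is fine.

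However, you have the difficulty exactly backwards. You call the pointwise bound \eqref{eq:mplus1eta} ``the main obstacle'' and propose a full reprise of Sections \ref{sec:lowDeta}--\ref{sec:jGreat}. In the paper's proof this estimate is essentially free: Part B of Theorem \ref{THM:sec} has already been established, so Proposition \ref{prop:Mconvex} holds for all $\tau\geq T_1$ in the larger region $|y|\leq (1+\epsilon)Z_m(\tau)$, and in particular $\mathcal{M}(\tau)\ll 1$ there. The new remainder $\zeta$ differs from the old $\eta$ only by finitely many spectral components $\beta_{n,k,l}H_nf_{k,l}$ whose pointwise size on $|y|\leq (1+\epsilon)R(\tau)$ is controlled by the preliminary comparison (Lemma \ref{LM:prem11} in the paper). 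Hence \eqref{eq:mplus1eta} follows immediately from the already-proved pointwise control on $\chi_Z\eta$, with no new bootstrap. What actually requires work is the \emph{improvement} of the $\mathcal{G}$-norm from $e^{-(m-1-\delta_0)\tau/2}$ to $e^{-(m-\delta_0)\tau/2}$ and the sharp estimates for $\beta_{m,k,l},\beta_{m+1,k,l}$; this is where the paper spends its effort (Lemma \ref{LM:m11}).

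A second gap: you never address why an $N_{m+1}$ exists. The paper needs a separate (short) lemma showing that $\|P_{\omega,M_1}\chi_Z\eta\|_{\mathcal{G}}\lesssim e^{-(m+1)\tau/2}$ for large enough $M_1$; only then can one choose $N_{m+1}$ so large that the high-$\omega$-frequency part of $\chi_R\zeta$ already decays at the target rate and does not obstruct \eqref{eq:etaGnorm}. Without this, quotienting out finitely many $f_{k,l}$ does not suffice to push the $\mathcal{G}$-norm down. Finally, your identification of the specific $J_1$ cross term sourcing $\beta_{m,1,l}$ is imprecise (taking $\partial_y\sqrt{q}\sim\text{const}$ gives zero), though the conclusion that $NL_{m,1,l}$ has a piece of size $e^{-(m-1)\tau/2}$ is correct.
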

The proposition will be proved in Subsection \ref{sub:m1BefNorm} below.

In the next result we remove the part $d_{m,l}e^{-\frac{m-1}{2}\tau} H_{m}\omega_l$ by reparametrizing the rescaled MCF, and then study the new MCF. The following result implies the desired D-part of Theorem \ref{THM:sec}.
\begin{proposition}
We parametrize the rescaled MCF as $\Psi_{Q_m,v}$ with $Q_m$ of the form 
\begin{align}
Q_{m}(y,\tau)=\sum_{n\geq 2}^{m} H_{n}(y) e^{-\frac{n-1}{2}\tau}\Big(a_{n,1}, \ a_{n,2},\ a_{n,3},\ a_{n,4}\Big)^{T},
\end{align} and decompose the function $v$ into the form
\begin{align}
    v(y,\omega,\tau)=\sqrt{6+\sum_{n=0}^{m+1}\sum_{k=1}^{N}\sum_{l} \gamma_{n,k,l}(\tau)H_{n}(y)f_{k,l}(\omega)+\xi(y,\ \omega,\tau)},\label{eq:decomThM}
\end{align}
where $N$ is a sufficiently large integer, the remainder $\chi_{R}\xi$ satisfies the orthogonality conditions
\begin{align}
\chi_{R}\xi\perp_{\mathcal{G}}  H_{n}f_{k,l},\ n=0,1,\cdots,m+1;\ \text{and}\ k=0,\cdots,N,\ \label{eq:orthoG10}
\end{align}
these functions satisfy the same estimates to the corresponding parts in the D-part of Theorem \ref{THM:sec}. 

\end{proposition}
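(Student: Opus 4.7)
The plan is to mimic the argument used for Proposition \ref{prop:afterNorForm} (the $m=2$ normal form transformation), adapting each step to the current inductive level $m$. First I would produce the constants $a_{m,l}$ and the corresponding polynomial $Q_m(y,\tau)$ by a graphical/implicit function argument as was done in subsection \ref{subsec:afterNorm3}: in the parametrization $\Psi_{Q_m,v}$, the substitution amounts to a rotation/translation of the moving frame, and one chooses $a_{m,l}$ precisely so that the $H_m(y)\omega_l$-components $\gamma_{m,1,l}$ have vanishing ``constant'' along the linear flow, that is, so that $\gamma_{m,1,l}(0)+\int_0^\infty e^{\frac{m-1}{2}\sigma}NL_{m,1,l}(\sigma)\,d\sigma=0$ for $l=1,2,3,4$. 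This kills the main $e^{-\frac{m-1}{2}\tau}$ piece of $\beta_{m,1,l}$ identified in \eqref{eq:betaMl}. Existence of such $a_{m,l}$ reduces to the invertibility of a near-identity $4\times 4$ matrix, which is checked exactly as before.

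Next I would derive the governing equation for $v^2$ in the new parametrization, from \eqref{eq:tilv} with $Q_N$ replaced by $Q_m$, and then the equation for $\chi_R \xi$ with the orthogonality conditions \eqref{eq:orthoG10} enforced. Existence of the coefficient functions $\gamma_{n,k,l}$ making $\chi_R\xi$ orthogonal is obtained by a fixed-point argument identical to that in Lemma \ref{LM:fixedPoint}: the map $\alpha\mapsto G(\alpha)$ has a uniformly invertible linearization, and the preliminary estimate $|\gamma_{n,k,l}-\beta_{n,k,l}|\lesssim e^{-\frac{m-1}{2}\tau}$ follows by comparing against the decomposition \eqref{eq:decomVm1}, since the reparametrization is a perturbation of order $e^{-\frac{m-1}{2}\tau}$ coming from $Q_m-Q_{m-1}$.

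Then I would upgrade these preliminary bounds. From \eqref{eq:orthoG10} derive the scalar ODEs
\begin{align*}
\Big(\tfrac{d}{d\tau}+\tfrac{n-2}{2}+\tfrac{k(k+2)}{6}\Big)\gamma_{n,k,l}=\tfrac{1}{\|H_n f_{k,l}\|_{\mathcal{G}}^2}\,NL_{n,k,l},
\end{align*}
and estimate each $NL_{n,k,l}$ by feeding in the preliminary bounds, exactly as in \eqref{eq:snl3}, \eqref{eq:slowest}, \eqref{eq:21l}, \eqref{eq:2021}, \eqref{eq:2kg2}, \eqref{eq:3kl}. The key improvement, mirroring \eqref{eq:21l}, is that by the normal-form choice of $a_{m,l}$ the integral representation
\begin{align*}
\gamma_{m,1,l}(\tau)=-e^{-\frac{m-1}{2}\tau}\int_\tau^\infty e^{\frac{m-1}{2}\sigma}NL_{m,1,l}(\sigma)\,d\sigma
\end{align*}
now gives $\gamma_{m,1,l}(\tau)=\mathcal{O}(e^{-\frac{m}{2}\tau})$, absorbing $\gamma_{m,1,l}$ into the fast directions. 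For the pivotal direction $\gamma_{m+1,0,1}$ the same integration yields $\gamma_{m+1,0,1}(\tau)=d_{m+1}e^{-\frac{m-1}{2}\tau}+\mathcal{O}(e^{-\frac{m}{2}\tau})$, with $d_{m+1}:=\gamma_{m+1,0,1}(0)+\int_0^\infty e^{\frac{m-1}{2}\sigma}NL_{m+1,0,1}(\sigma)\,d\sigma$; the convergence of the improper integral requires the sharpened estimate $|NL_{m+1,0,1}(\sigma)|\lesssim e^{-\frac{m}{2}\sigma}$, which in turn uses Lemma \ref{LM:improvement} to replace the naive $e^{-\frac{m-2}{2}\sigma}$ bound on $\alpha_{m,0,1}$ (valid because $d_m=0$) by $C_{\delta_0}e^{-\frac{m-\delta_0}{2}\sigma}$.

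Finally, the $\mathcal{G}$-norm and pointwise bounds on $\chi_R\xi$ reduce to the machinery already developed in Section \ref{sec:BTHMSec}: the propagator estimate Lemma \ref{LM:frequencyWise} together with the source estimates of Propositions \ref{prop:estSourceNonL}, \ref{prop:maximum1}, \ref{prop:D3k} yield the desired $\mathcal{G}$-bound $\|\partial_y^k\nabla_E^l\chi_R\xi\|_{\mathcal{G}}\leq C_{\delta_0}e^{-\frac{m-\delta_0}{2}\tau}$ via the orthogonality \eqref{eq:orthoG10} (which cancels the slowly decaying part of the source $F_2$), and the pointwise estimate $|\partial_y^k\nabla_E^l\chi_R\xi|\leq e^{-\sqrt\tau}$ follows from the majorant-function bootstrap \eqref{eq:smallM} carried out in the weighted $L^\infty$ norms of Proposition \ref{prop:ZMajorRefor}. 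The main obstacle will be the bookkeeping of the new term $J_1(\sqrt{q})=(\partial_y\sqrt q)(\partial_y Q_m\cdot\omega)$ appearing in $F_2$ at order $e^{-\frac{m-1}{2}\tau}$: it is this term that, as in \eqref{eq:3kl}, makes the $\gamma_{m+1,1,l}$ estimate saturate at $(1+\tau)e^{-\frac{m}{2}\tau}$ rather than $e^{-\frac{m}{2}\tau}$, and one must track it carefully through the chain $NL_{n,k,l}\to\gamma_{n,k,l}\to SN\to\mathcal{G}$-estimate to ensure no slower direction is produced.
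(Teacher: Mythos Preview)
Your proposal is correct and follows essentially the same route the paper indicates: the paper itself says only that the proof ``is very similar to those of Propositions \ref{prop:afterNorForm} and \ref{prop:m1BefNorm}'' and skips the details, and your outline faithfully unfolds that plan---normal-form choice of $a_{m,l}$, fixed-point/comparison preliminary bounds, ODE analysis of $\gamma_{n,k,l}$ with the key cancellation for $\gamma_{m,1,l}$, and extraction of $d_{m+1}$ via Lemma \ref{LM:improvement}. One small over-complication: the pointwise bound on $\chi_R\xi$ does not require the full $\mathcal{M}$-bootstrap of Section \ref{sec:BTHMSec}; as in \eqref{eq:TetaPoin34} and \eqref{eq:mplus1eta} it follows directly from the preliminary comparison (the reparametrization being an $\mathcal{O}(e^{-\frac{m-1}{2}\tau})$ perturbation) together with the pointwise bounds already established for the previous decomposition.
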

The proof is very similar to those of Propositions \ref{prop:afterNorForm} and \ref{prop:m1BefNorm}.
Thus we choose to skip it.

This completes the proof of part D of Theorem \ref{THM:sec}.

In the next subsection we prove Proposition \ref{prop:m1BefNorm}.
\subsection{Proof of Proposition \ref{prop:m1BefNorm}}\label{sub:m1BefNorm}
Similar to the proof of Proposition \ref{prop:step2} in subsection \ref{sec:PropStep2}, we have to prove the existence of the integer $N_{m+1}$. This is necessary because we have to remove, from $\chi_{R}\zeta,$ any slowly decay directions $H_{n}f_{k,l}$ when $n\leq m+1,$ otherwise $\|\chi_{R}\zeta(\cdot,\tau)\|_{\mathcal{G}}$ does not decay with the desired high rate.

In the proof we will need a finer description for the function $\eta$ in Proposition \ref{prop:Mconvex}, recall the definition of the projection operator $P_{\omega, N}$ from (\ref{def:OmePro}),
\begin{lemma}
There exists an integer $M$, s.t. for any $M_1\geq M$ there exists a constant $C_{M_1}$ and,
\begin{align}
   \sum_{j+|i|\leq 2} \Big\|\partial_{y}^j \nabla_{E}^i P_{\omega,M_1}\chi_{Z}\eta(\cdot,\tau)\Big\|_{\mathcal{G}}\leq C_{M_1} e^{-\frac{m+1}{2}\tau}.
\end{align}
\end{lemma}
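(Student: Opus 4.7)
The plan is to exploit two facts simultaneously. First, because $q_M$ is independent of $\omega$, the linear operator $L=-\partial_y^2+\frac{1}{2}y\partial_y-1-\frac{1}{q_M}\Delta_{\mathbb{S}^3}$ appearing in \eqref{eq:mSupeta} commutes with $P_{\omega,M_1}$. Second, restricted to the range of $P_{\omega,M_1}$, the operator $-\frac{1}{q_M}\Delta_{\mathbb{S}^3}$ has lowest eigenvalue $\frac{(M_1+1)(M_1+3)}{\sup q_M}$, which can be made arbitrarily large by choosing $M_1$ large. Since $|q_M-6|\lesssim 1$ throughout the region $|y|\leq(1+\epsilon)Z_m$, the choice of $M_1$ with $\frac{(M_1+1)(M_1+3)}{\sup q_M}-1\geq \frac{m+3}{2}$ leaves an $e^{-\tau}$ cushion beyond the target rate $e^{-\frac{m+1}{2}\tau}$. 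I would record this as the coercivity bound $\langle Lw,w\rangle_{\mathcal{G}}\geq \lambda_{M_1}\|w\|_{\mathcal{G}}^2+\|\partial_y w\|_{\mathcal{G}}^2$ for $w=P_{\omega,M_1}w$, with $\lambda_{M_1}\geq \frac{m+3}{2}$.

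Next I would project \eqref{eq:mSupeta} by $P_{\omega,M_1}$ and bound the resulting source
\begin{align*}
S:=P_{\omega,M_1}\Big(\chi_Z(G_1+G_2)+\chi_Z SN(\eta)+\mu_Z(\eta)\Big)
\end{align*}
in $\mathcal{G}$-norm. The piece $G_1$ is a finite linear combination of $H_n f_{k,l}$ with $k\leq N$, so $P_{\omega,M_1}G_1=0$ as soon as $M_1\geq N$. For $G_2$, expanding $q^{-1},\ q^{-1/2},\ q^{-2}$ into Neumann series around the constant $6$ and using $|q-6|\leq e^{-\epsilon_0\tau}$ in the region $|y|\leq(1+\epsilon)Z_m$, the only spherical-harmonic modes that survive $P_{\omega,M_1}$ come from terms of order at least $\lfloor M_1/N\rfloor$ in these series, each bounded by $(e^{-\epsilon_0\tau})^{\lfloor M_1/N\rfloor}$, which is far smaller than $e^{-\frac{m+1}{2}\tau}$ once $M_1$ is large enough. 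The nonlinear piece $SN(\eta)$ is handled identically, combining the same expansion with the bound $\|\chi_Z\eta\|_{\mathcal{G}}\lesssim e^{-\frac{m-1-\delta_0}{2}\tau}$ from Proposition \ref{prop:Mconvex} and the smallness of the coefficients already exploited in Section \ref{sec:smallLinear}. The cutoff piece $\mu_Z(\eta)$ is superexponentially small by the estimates of type \eqref{eq:smallGamma}. Altogether $\|S(\cdot,\tau)\|_{\mathcal{G}}\lesssim e^{-\frac{m+1}{2}\tau}$.

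With coercivity and source in hand, Duhamel's principle applied to $\partial_\tau w=-Lw+S$ with $w:=P_{\omega,M_1}\chi_Z\eta$ yields
\begin{align*}
\|w(\cdot,\tau)\|_{\mathcal{G}}\leq e^{-\lambda_{M_1}\tau}\|w(\cdot,0)\|_{\mathcal{G}}+\int_0^\tau e^{-\lambda_{M_1}(\tau-\sigma)}\|S(\cdot,\sigma)\|_{\mathcal{G}}\,d\sigma\leq C_{M_1}e^{-\frac{m+1}{2}\tau},
\end{align*}
which is the $j=|i|=0$ case. For $j+|i|\leq 2$, the same energy argument applied to $(-\Delta_{\mathbb{S}^3}+1)^{|i|/2}\partial_y^j w$ produces the claimed bound, because both $\partial_y^j$ and $(-\Delta_{\mathbb{S}^3}+1)^{|i|/2}$ commute with $P_{\omega,M_1}$, while the commutator $[\frac{1}{2}y\partial_y,\partial_y^j]=-\frac{j}{2}\partial_y^j$ only improves the effective decay rate by $j/2$. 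The main obstacle I anticipate is not conceptual but bookkeeping: the source $G_2$ and $SN(\eta)$ contain many terms drawn from \eqref{def:Kterm}, \eqref{def:J} and \eqref{def:J7J8}, and for each one the Neumann-expansion argument and the verification that only finitely-many-low-frequency modes appear must be carried out, mirroring the computations of Sections \ref{sec:source} and \ref{sec:smallLinear} but with the extra projection $P_{\omega,M_1}$ killing every contribution of bounded spherical-harmonic degree.
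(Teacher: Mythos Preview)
Your overall strategy matches the paper's: the paper gives no detailed proof, only remarking that the argument is ``similar to the proof of \eqref{eq:arbiFast}'', and that argument is exactly the energy method you describe---project by $P_{\omega,M_1}$, use the large eigenvalues of $-\Delta_{\mathbb{S}^3}$ on high spherical-harmonic modes for coercivity, and bound the projected source.

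There is, however, a concrete error in your coercivity step. You claim $|q_M-6|\lesssim 1$ throughout $|y|\leq(1+\epsilon)Z_m$, and then take $\frac{(M_1+1)(M_1+3)}{\sup q_M}\geq\frac{m+3}{2}+1$. When $d_m>0$ this fails: at $|y|\sim Z_m$ one has $q_M-6\approx d_m e^{-\frac{m-2}{2}\tau}Z_m^m\approx d_m e^{10m\sqrt{\tau}}\to\infty$, so $\sup q_M$ is unbounded in $\tau$ and no fixed $M_1$ makes the displayed inequality hold. The fix is to use the $y$-part of $L$ as well: after conjugating to the harmonic-oscillator form $\mathcal{L}_0=-\partial_y^2+\tfrac{1}{16}y^2-\tfrac14$, the potential $\tfrac{1}{16}y^2$ dominates on the region $|y|\geq R_0$ for any fixed $R_0$, while on $|y|<R_0$ one has $q_M\to 6$ as $\tau\to\infty$ and the $\omega$-coercivity you wrote applies. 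This two-region splitting is exactly what the paper does in Section~\ref{sec:MaxTiEta} (see the argument following \eqref{eq:interme}). With this correction in place the Duhamel estimate you wrote goes through. A minor additional point: for the $SN(\eta)$ source, the linear-in-$\eta$ terms are not handled by the Neumann-series argument alone; you must split $\eta=(1-P_{\omega,M_1})\eta+P_{\omega,M_1}\eta$, absorb the second piece into the linear operator (its coefficient is small relative to $1/q_M$), and use that the first piece, multiplied by a high-mode coefficient, decays at the needed rate.
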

The proof is easy since, at least intuitively, the part at the higher frequencies of the $\omega-$space should decay faster. Since it is similar to the proof of \eqref{eq:arbiFast}, we skip the details.

Now we define $N_{m+1}$ to be the smallest integer satisfying the following conditions
\begin{align}
    N_{m+1}\geq M, \ \text{and}\ \frac{N_{m+1}(N_{m+1}+2)}{6}\geq \frac{m+1}{2}+1.
\end{align}

Next we study $\Psi_{Q_{m-1},v}$ in the region $|y|\leq (1+\epsilon)R(\tau)$, and decompose $v$ as,
\begin{align}
    v(y,\omega,\tau)=\sqrt{6+\sum_{n=0}^{m+1}\sum_{k=0}^{N }\sum_l \beta_{n,k,l}(\tau) H_{n}(y)f_{k,l}(\omega)+\zeta(y,\omega,\tau)},\label{eq:m11Decom}
\end{align} for any $N\geq N_{m+1}$, such that $\chi_{R}\zeta$ satisfies the orthogonality conditions
\begin{align*}
    \chi_{R}\zeta \perp_{\mathcal{G}} H_n f_{k,l}, \ n\leq m+1;\ k=0,\cdots, N.
\end{align*}

We provide preliminary estimates by comparing them to those in Proposition \ref{prop:Mconvex}, whose estimates have been proved to hold for $\tau\in [T_1,\infty).$ Recall that we shift the time to make $T_1=0.$
\begin{lemma}\label{LM:prem11}
For any $N\geq N_{m+1}$ and $\delta_0>0$, there exist constants $C_{\delta_0,N}$ and $C_{N}$ such that
\begin{align}
    \sum_{j+|i|\leq 2}\Big\|\partial_{y}^j \partial_{E}^i\chi_{R}\zeta(\cdot,\tau)\Big\|_{\mathcal{G}}+|\beta_{m+1,k,l,}(\tau)|\leq & C_{\delta_0,N} e^{-\frac{m-1-\delta_0}{2}\tau},\label{eq:betaM1}\\
    \sum_{j+|i|\leq 2}\Big\|\partial_{y}^j \partial_{E}^iP_{\omega, N }\chi_{R}\zeta(\cdot,\tau)\Big\|_{\mathcal{G}}\leq & C_N e^{-\frac{m+1}{2}\tau},\label{eq:largeOEta}\\
    \sum_{n\leq m}\Big|\alpha_{n,k,l}(\tau)-\beta_{n,k,l}(\tau)\Big|\ll & e^{-\frac{1}{5}R^2}.\label{eq:closedness}
\end{align} 
\end{lemma}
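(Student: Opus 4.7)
The plan is to first establish existence of the $\beta_{n,k,l}$ by a fixed-point argument, and then obtain all three estimates by comparing the new decomposition \eqref{eq:m11Decom} with the one in Proposition \ref{prop:Mconvex}, exploiting the rapid decay of the Gaussian weight outside the cutoff region.

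For existence, I would mimic the argument in Lemma \ref{LM:fixedPoint}. Write $\beta=(\beta_{n,k,l})_{0\leq n\leq m+1,\,0\leq k\leq N}$ and consider the vector-valued map
\begin{align*}
G_{i,j,h}(\beta):=\Big\langle \chi_{R}\Big(v^2-6-\sum_{n,k,l}\beta_{n,k,l}H_n f_{k,l}\Big),\,H_i f_{j,h}\Big\rangle_{\mathcal{G}},\quad 0\leq i\leq m+1,\; 0\leq j\leq N,
\end{align*} and solve $G(\beta)=0$. The Jacobian $\partial_\beta G$ is uniformly invertible (the $H_n f_{k,l}$ are linearly independent in $\mathcal{G}$), and Proposition \ref{prop:Mconvex} already places $v^2-6$ within a quantitatively small neighborhood of the span of $\{H_n f_{k,l}\}_{n\leq m}$. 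Hence the fixed-point theorem furnishes the required $\beta_{n,k,l}$, and $\chi_{R}\zeta$ automatically satisfies \eqref{eq:orthoG}.

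For \eqref{eq:closedness}, observe that for $n\leq m$ the orthogonality $\chi_{Z}\eta\perp_{\mathcal{G}}H_n f_{k,l}$ differs from $\chi_{R}\zeta\perp_{\mathcal{G}}H_n f_{k,l}$ by (i)~the contribution of $\beta_{m+1,k,l}H_{m+1}f_{k,l}$, which is orthogonal in the $\omega$-variable to $f_{j,h}$ for $j\neq k$, and in the $y$-variable its non-orthogonal part is captured by the Jacobian computation, and (ii)~the cutoff mismatch $(\chi_{Z}-\chi_{R})\eta$. The latter is supported in $|y|\geq R$, where the Gaussian weight $e^{-|y|^2/4}$ in the $\mathcal{G}$-inner product contributes at most $e^{-R^2/4}\ll e^{-R^2/5}$. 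Inverting the Jacobian then yields $|\alpha_{n,k,l}-\beta_{n,k,l}|\ll e^{-\tfrac{1}{5}R^2}$ after the $\beta_{m+1,k,l}$ pieces have been accounted for via the $(m+1)$-th orthogonality conditions that were not present before.

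For \eqref{eq:betaM1}, I would apply \eqref{eq:closedness} together with \eqref{eq:mGnormeta}: since $\|\chi_{Z}\eta(\cdot,\tau)\|_{\mathcal{G}}\lesssim C_{\delta_0}e^{-(m-1-\delta_0)\tau/2}$ and $\chi_{R}\zeta$ equals $\chi_{R}\eta$ up to a projection onto $\{H_n f_{k,l}:\,n\leq m+1\}$ plus an $e^{-R^2/5}$ error, both $\|\chi_{R}\zeta\|_{\mathcal{G}}$ and the coefficients $\beta_{m+1,k,l}$ (extracted by $\mathcal{G}$-pairing with $H_{m+1}f_{k,l}$, which is an operation of operator norm $\lesssim 1$) inherit the bound $C_{\delta_0,N}e^{-(m-1-\delta_0)\tau/2}$. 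Derivative estimates follow identically using \eqref{eq:mGnormeta} in full. Finally, for \eqref{eq:largeOEta}, I would invoke the auxiliary lemma stated just above Lemma \ref{LM:prem11}: for $N\geq N_{m+1}\geq M$ one has $\|\partial_{y}^j\nabla_E^i P_{\omega,N}\chi_{Z}\eta\|_{\mathcal{G}}\leq C_{N}e^{-(m+1)\tau/2}$, and since $P_{\omega,N}$ annihilates all the polynomial corrections $\beta_{n,k,l}H_n f_{k,l}$ for $k\leq N$, the projection $P_{\omega,N}\chi_{R}\zeta$ equals $P_{\omega,N}\chi_{R}\eta$ modulo an $e^{-R^2/5}$ cutoff error, and the desired bound follows.

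The main obstacle is the bookkeeping in step two: one must track that adding the $H_{m+1}f_{k,l}$ directions into the orthogonality system does not deteriorate the invertibility of the Jacobian or spoil the closeness of $\beta_{n,k,l}$ to $\alpha_{n,k,l}$ at lower levels. This is handled by noting that the extended Jacobian is a near-diagonal perturbation of the old one (because the $H_n f_{k,l}$ are $\mathcal{G}$-orthogonal up to exponentially small cutoff errors), so inverting it block-wise is straightforward.
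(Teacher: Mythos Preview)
Your proposal is correct and follows essentially the same approach the paper indicates: the paper states that the proof is ``very similar to that of \eqref{eq:DiffAlBe}, and hence is skipped,'' i.e., the fixed-point comparison argument of Lemma~\ref{LM:fixedPoint}. Your outline---existence via invertibility of the Jacobian, the cutoff mismatch $(\chi_Z-\chi_R)\eta$ supported in $|y|\geq R$ producing an $e^{-R^2/5}$ error in the $\mathcal{G}$-inner product, and the use of the auxiliary high-frequency lemma for \eqref{eq:largeOEta}---matches that template exactly.
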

The proof is very similar to that of \eqref{eq:DiffAlBe}, and hence is skipped. 

This lemma and the estimates for $\alpha_{n,k,l}$ and $\chi_{Z}\eta$ in Proposition \ref{prop:Mconvex} and Corollary \ref{LM:improvement} imply the desired estimates for $\beta_{n,k,l},$ $n\leq m-1,$ and \eqref{eq:mplus1eta}.

What is left is to improve estimates for $\beta_{m,k,l}$, $\beta_{m+1,k,l}$ and the $\mathcal{G}-$norm of $\chi_{R}\zeta$ by studying their governing equations. This is similar to our treatment in subsection \ref{subsec:afterNorm3}.

To simplify the notations we define a function $\beta $ as
\begin{align*}
\beta(y,\omega,\tau):=6+\sum_{n=0}^{m+1}\sum_{k=0}^{N }\sum_l \beta_{n,k,l}(\tau) H_{n}(y)f_{k,l}(\omega).
\end{align*} 
By this we derive
\begin{align}\label{eq:EffTilXiM1}
\partial_{\tau}\chi_{R}\zeta=-L\chi_{R}\zeta+\chi_{R}\Big(F+SN\Big)+\mu_{R}(\zeta),
\end{align} where the linear operator $L$ is defined in \eqref{def:L}, and $\mu_{R}(\zeta)$ is defined in the same way as \eqref{eq:effectChiREta},
the function $F$ is independent of $\eta$ and is defined as 
\begin{align}
\begin{split}
F:=&F_1+F_2,
\end{split}
\end{align} and the functions $F_1$ and $F_2$ are defined as, 
\begin{align*}
F_1:=&-\sum_{n=0}^{m+1}\sum_{k=0}^N\sum_{l}(\frac{d}{d\tau}+\frac{n-2}{2}+\frac{k(k+2)}{6})\beta_{n,k,l} H_{n}f_{k,l},\\
F_2:=&\frac{6-\beta}{6\beta }\Delta_{\mathbb{S}^3}\beta-\frac{1}{2\beta^2} |\nabla_{E}\beta |^2-\frac{1}{2\beta }|\partial_{y}\beta |^2+2\sqrt{\beta } N(\sqrt{\beta })+2\sqrt{\beta }W_{Q_{m-1}} (\sqrt{\beta }),
\end{align*}
the term $SN(\zeta)$ contains terms nonlinear in terms of $\zeta$, or ``small" linear terms,
\begin{align*}
SN(\zeta ):=&\frac{1}{2\beta ^2}|\nabla_{E}\beta |^2-\frac{1}{2}v^{-4} |\nabla_{E}v^2|^2+\frac{1}{2\beta }|\partial_{y}\beta |^2-\frac{1}{2}v^{-2}|\partial_{y}v^2|^2
+\frac{6-\beta }{6\beta }\Delta_{\mathbb{S}^3}\xi-\frac{\zeta }{v^2 \beta }\Delta_{\mathbb{S}^3}v^2\\
&+2v N(v)-2\sqrt{\beta } N(\sqrt{\beta })+2v W_{Q_{m-1} }(v)-2\sqrt{\beta}W_{Q_{m-1} }(\sqrt{\beta }).
\end{align*}
By the orthogonality conditions imposed on $\chi_{R}\zeta$ in \eqref{eq:m11Decom},
\begin{align}
\Big(\frac{d}{d\tau}+\frac{n-2}{2}+\frac{k(k+2)}{6}\Big)\beta_{n,k,l}=NL_{n,k,l},
\end{align} with
\begin{align*}
NL_{n,k,l}:=\frac{1}{\|H_nf_{k,l}\|_{\mathcal{G}}^2}\Big\langle  F_2+SN+\mu_{R}(\eta),\ H_{n}f_{k,l}\Big\rangle_{\mathcal{G}}
\end{align*}

Now we feed the preliminary estimates in Lemma \ref{LM:prem11}
into the governing equations above to improve estimates.
\begin{lemma}\label{LM:m11}
For any $\delta_0>0$ there exists a constant $C_{\delta_0}$ such that the following estimates hold,
\begin{align}
\begin{split}\label{eq:sourceM1}
\Big|\Big\langle \chi_{R}F_2, H_{n}f_{k,l}\Big\rangle_{\mathcal{G}}-c_{n,k,l} e^{-\frac{n}{2}\tau}\Big|\leq & e^{-(\frac{n}{2}+\frac{1}{3})\tau},\ \text{if}\ n\leq m,\ \text{and}\ k\leq N_{m+1},\\
|\Big\langle \chi_{R}F_2, H_{m+1}f_{k,l}\Big\rangle_{\mathcal{G}}|\leq & C_{\delta_0}e^{-\frac{m-\delta_0}{2}\tau}, \ \text{if}\ k\leq N_{m+1},\\
\Big\|P_{m+1} e^{-\frac{1}{8}y^2}F_2\Big\|_{2} \leq & C_{\delta_0}e^{-\frac{m-\delta_0}{2}\tau},
\end{split}
\end{align} where $c_{n,k,l}$ are constants, and
\begin{align}\label{eq:LarOmPro}
\Big\|\text{P}_{\omega, N_{m+1}} \chi_{R}F_2\Big\|_{\mathcal{G}}\lesssim e^{-\frac{m+1}{2}\tau},
\end{align} the remainder $\zeta$ satisfies the estimate
\begin{align}
\sum_{j+|i|\leq 2}\Big\|\partial_{y}^{j}\nabla_{E}^i\chi_{R}\zeta(\cdot,\tau)\Big\|_{\mathcal{G}}\leq C_{\delta_0}e^{-\frac{m-\delta_0}{2}\tau}.\label{eq:gzeta}
\end{align}
\end{lemma}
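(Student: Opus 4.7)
The plan is to treat \eqref{eq:sourceM1}, \eqref{eq:LarOmPro}, and \eqref{eq:gzeta} in that order, since the source estimates on $F_2$ feed directly into the differential inequality that controls $\chi_R\zeta$.

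First I would analyze the structure of $F_2=F_2(\sqrt{\beta})$. Since $\beta$ is the polynomial $6+\sum_{n\le m+1}\sum_{k\le N}\sum_l \beta_{n,k,l}(\tau) H_n(y)f_{k,l}(\omega)$, every nonlinear piece of $F_2$ can be expanded in the $y$ variable as in \eqref{eq:remainder}, splitting into a finite polynomial part of degree $\le M_1$ plus a Remainder controlled pointwise by $e^{-m\tau}$ in the considered region $|y|\le (1+\epsilon)R(\tau)$. The key input is the preliminary estimate \eqref{eq:closedness}, which says $\beta_{n,k,l}$ and $\alpha_{n,k,l}$ agree modulo $e^{-R^2/5}$ for $n\le m$, so the decay estimates \eqref{eq:n01Rapid}, \eqref{eq:dm}, and the improved Lemma \ref{LM:improvement} for $d_m=0$ carry over. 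Then the contribution of each polynomial term to $\langle \chi_R F_2, H_n f_{k,l}\rangle_{\mathcal{G}}$ is a finite linear combination of products of $\beta_{i,j,h}$ (and of the known coefficients $a_{k,l}$ appearing in $Q_{m-1}$). Collecting the terms that are linear in the ``main parts'' $d_{n,k,l}e^{-n\tau/2}$ yields the leading constants $c_{n,k,l}$ in the first line of \eqref{eq:sourceM1}; every other product of two or more factors decays at least $e^{-1/3}$ times faster, producing the remainder $\mathcal{O}(e^{-(n/2+1/3)\tau})$. For the second line of \eqref{eq:sourceM1} I would argue exactly as in \eqref{eq:ankl} and \eqref{eq:alDif}: the projection onto $H_{m+1} f_{k,l}$ requires the polynomial part of degree $\ge m+1$, which is produced only by nonlinear combinations of the $\beta_{n,k,l}$ or by the mildly growing difference $F_2(\sqrt{\beta})-F_2(\sqrt{6+\tilde\beta})$ handled exactly as in \eqref{eq:alDif}. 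The $P_{m+1}e^{-y^2/8}F_2$ estimate is the same calculation translated from the $H_n$-expansion to the full $L^2$-projection.

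For \eqref{eq:LarOmPro}, the observation is that $\beta$ involves spherical harmonics $f_{k,l}$ only up to degree $N\ge N_{m+1}$, and the nonlinearity of $F_2$ (products and divisions involving $\beta$, $\nabla_E\beta$, $\Delta_{\mathbb{S}^3}\beta$, together with $\partial_y Q_{m-1}\cdot\omega$) mixes these harmonics with $\omega_l$. Since the main parts of $\beta_{n,k,l}$ for $k\ge 1$ already decay at rate at least $e^{-(n/2+1/3)\tau}$ by \eqref{eq:n01Rapid}, any product that survives projecting onto harmonics of order $>N_{m+1}$ must be genuinely nonlinear in the non-trivial $\omega$-modes, or else come from one factor $\beta$ multiplied by a constant-in-$\omega$ factor; in either case the decay rate is at least $e^{-(m+1)\tau/2}$ by a Cauchy–Schwarz or direct degree-counting argument combined with the large-frequency estimate \eqref{eq:largeOEta} on $P_{\omega,N_{m+1}}\chi_R\zeta$ (which does not appear in $F_2$ but the same logic applies: picking up a high-frequency harmonic costs a product of factors). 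This is the step I expect to be the main obstacle, because one must carefully organize the multilinear expansion to confirm that every surviving monomial at high $\omega$-frequency carries at least the total decay $e^{-(m+1)\tau/2}$; the choice $N_{m+1}(N_{m+1}+2)/6\ge (m+1)/2+1$ is precisely what makes this bookkeeping close.

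For \eqref{eq:gzeta}, I would follow the energy/differential-inequality scheme used around \eqref{eq:geqn} and \eqref{eq:EffTilXi}. Define
\[
\phi_{j,h}(\tau):=\Big\langle (-\Delta_{\mathbb{S}^3}+1)^{h}\partial_{y}^j\chi_{R}\zeta,\ \partial_{y}^j\chi_{R}\zeta\Big\rangle_{\mathcal{G}},
\]
and derive, from \eqref{eq:EffTilXiM1}, an inequality of the form
\[
\Big(\tfrac{1}{2}\tfrac{d}{d\tau}+\tfrac{m}{2}+\mathcal{O}(\tau^{-1/4})\Big)\sum_{j+h/2\le 2}\phi_{j,h}\le \sum_{j+h/2\le 2}\big\langle (-\Delta_{\mathbb{S}^3}+1)^{h}\partial_{y}^j\chi_{R}\zeta,\ \partial_{y}^j\chi_{R}F_2\big\rangle_{\mathcal{G}}+o(e^{-R^2/5}).
\]
The coercive constant $m/2$ comes from the orthogonality \eqref{eq:orthoG}: on the $\mathcal{G}$-orthogonal complement of the $H_n f_{k,l}$ with $n\le m+1$ and $k\le N$, the smallest eigenvalue of $L$ is at least $m/2-\delta_0$, either by pushing $n$ up to $m+2$ (gaining $(m+1)/2-1=(m-1)/2+\tfrac12$) or by pushing $k$ up to $N+1$, which by our choice of $N_{m+1}$ gives at least $m/2+1$. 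The right-hand side inner product is then bounded by Cauchy–Schwarz together with the source estimates \eqref{eq:sourceM1}–\eqref{eq:LarOmPro}: after the orthogonality kills all slowly decaying Hermite–spherical-harmonic components of $F_2$ of degree $\le m+1$ (respectively $\omega$-frequency $\le N_{m+1}$), the remaining piece is bounded in $\mathcal{G}$-norm by $C_{\delta_0}e^{-(m-\delta_0)\tau/2}$. Absorbing the $\phi$ factor on the right into the coercive left-hand side and invoking Gronwall, together with the preliminary estimate \eqref{eq:betaM1} as initial control, yields the desired bound. The nonlinear terms $SN$ and the cutoff commutator $\mu_R(\zeta)$ are treated by the same small-factor arguments used in subsection \ref{subsec:afterNorm3}: each monomial either carries a power of the (already small) $\partial_y Q_{m-1}$, or carries a $\zeta$-factor that gives a term sufficiently smaller than $\phi$.
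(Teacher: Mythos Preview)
Your treatment of \eqref{eq:sourceM1} and \eqref{eq:gzeta} is essentially the paper's approach: the paper splits $\beta=6+\beta_L+\beta_H$ with $\beta_L$ collecting the $n\le m-1$ modes and $\beta_H$ the $n=m,m+1$ modes, writes $F_2(\sqrt{\beta})=F_2(\sqrt{6+\beta_L})+[F_2(\sqrt{\beta})-F_2(\sqrt{6+\beta_L})]$, handles the first piece exactly as in \eqref{eq:qqqM}--\eqref{eq:remainder}, and controls the second via the decay of $\beta_H$ as in \eqref{eq:alDif}. Your energy argument for \eqref{eq:gzeta} is a fleshed-out version of the paper's one-line remark that the third line of \eqref{eq:sourceM1} plus the governing equation give the bound.

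The real divergence is in \eqref{eq:LarOmPro}, and here the paper's route is much shorter than yours. You propose to obtain the high-$\omega$-frequency decay by a direct multilinear bookkeeping of how products of spherical harmonics in $F_2$ populate degrees $>N_{m+1}$, and you rightly flag this as the main obstacle. The paper bypasses it entirely with a consistency argument: $P_{\omega,N_{m+1}}\chi_R F_2$ sits as the source term in the evolution equation for $P_{\omega,N_{m+1}}\chi_R\zeta$, and by the preliminary Lemma~\ref{LM:prem11} (specifically \eqref{eq:largeOEta}) we already know $\|P_{\omega,N_{m+1}}\chi_R\zeta\|_{\mathcal G}\lesssim e^{-(m+1)\tau/2}$. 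Reading the equation backwards---$P_{\omega,N_{m+1}}\chi_R F_2=(\partial_\tau+L)P_{\omega,N_{m+1}}\chi_R\zeta-P_{\omega,N_{m+1}}\chi_R SN-P_{\omega,N_{m+1}}\mu_R(\zeta)$---each term on the right inherits the rate $e^{-(m+1)\tau/2}$, so the source must as well. This is the same trick already used at \eqref{eq:sizeNm}. Your direct expansion could presumably be pushed through, but it is unnecessary work; the indirect argument turns what you identify as the hardest step into a one-liner.
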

\begin{proof}

\eqref{eq:LarOmPro} must hold since $\text{P}_{\omega, N_{m+1}} \chi_{R}F_2$ is in the governing equation for $\text{P}_{\omega, N_{m+1}}\chi_{R}\eta$, whose decay rate is $e^{-\frac{m+1}{2}\tau}$ by \eqref{eq:largeOEta}.

The proof of \eqref{eq:sourceM1} is similar to that of \eqref{eq:estBetaEff}. We start with decomposing $\beta$ into three parts
\begin{align}
   \beta=6+\beta_L+\beta_{H}
\end{align} with $\beta_L$ and $\beta_{H}$ defined as
\begin{align*}
\beta_{L}(y,\omega,\tau):=&\sum_{n=0}^{m-1}\sum_k\sum_l \beta_{n,k,l}(\tau) y^n f_{k,l}(\omega),\\
\beta_{H}(y,\omega,\tau):=&H_m(y) \sum_{k,l}\beta_{m,k,l}(\tau) f_{k,l}(\omega)+H_{m+1}(y)\sum_{k,l}\beta_{m+1,k,l}(\tau) f_{k,l}(\omega).
\end{align*}

Decompose $F_2$ into two parts 
\begin{align}
    F_2(\sqrt{\beta})=F_{2}(\sqrt{6+\beta_{L}})+\Big( F_{2}(\sqrt{\beta})-F_{2}(\sqrt{6+\beta_{L}})\Big).
\end{align}
By reasoning as in \eqref{eq:qqqM}-\eqref{eq:remainder} and applying \eqref{eq:closedness}, we prove the contribution from $F_{2}(\sqrt{6+\beta_{L}})$ satisfies the estimates in \eqref{eq:sourceM1}. For the part $F_{2}(\sqrt{\beta})-F_{2}(\sqrt{6+\beta_{L}})$, \eqref{eq:betaM1}, \eqref{eq:closedness} and Corollary \ref{LM:improvement} provide estimates for $\beta-6-\beta_{L}=\beta_{H}$, specifically, for any $\delta_0>0$, there exists a $C_{\delta_0}$ such that
\begin{align}
|\beta_{m,k,l}(\tau)|+|\beta_{m+1,k,l}(\tau)|\leq C_{\delta_0}e^{-\frac{m-1-\delta_0}{2}\tau}.\label{eq:betamm1}
\end{align}
Reasoning as in \eqref{eq:alDif} to prove that the contribution from this part satisfies the estimates in \eqref{eq:sourceM1}.

The third estimate in \eqref{eq:sourceM1} and the governing equation for $\chi_{R}\zeta$ imply \eqref{eq:gzeta}.
\end{proof}

What is left is to use Lemma \ref{LM:m11} to improve estimates for $\beta_{n,k,l},\ n=m,\ m+1,$ and $\chi_{R}\zeta$ by studying the governing equations, and then feed the obtained ones into the governing equations to obtain the desired estimates. This is similar to the treatment in subsection \ref{subsec:afterNorm3}, hence is skipped.


\appendix

\section{Derivation of Equation \texorpdfstring{\eqref{eq:UPparametrization}}{}}\label{sec:derivation}
Recall that the MCF takes the form
\begin{align} 
\Phi_{\Pi}(u)= \left[
\begin{array}{ccc}
z\\
\Pi(z,t)
\end{array}
\right]+u(z,\omega,t)\left[
\begin{array}{ccc}
-(\partial_{z} \Pi(z, t))\cdot \omega\\
\omega
\end{array}
\right],
\end{align}
with $\omega\in \mathbb{S}^3$ defined as
\begin{align*}
    \omega:=\frac{(x_2,\cdots,x_5)^{T}-\Pi(z,t)}{|(x_2,\cdots,x_5)^{T}-\Pi(z,t)|}.
\end{align*}

The result is that
\begin{align}
\partial_{t}u=u^{-2}\Delta_{\mathbb{S}^3}u+\partial_{z}^2u-\frac{3}{u}+N(u)+V_{\Pi}(u)\label{eq:effectivUeqn}
\end{align} where $N$ is nonlinear in terms of $u$, and is defined as 
\begin{align}
N(u):=&-\frac{u^{-4}\sum_{k=1}^3( \nabla_{\omega}^{\perp} u\cdot e_{k})\Big( \nabla_{\omega}^{\perp} u \cdot \nabla_{\omega}^{\perp}( \nabla_{\omega}^{\perp} u\cdot e_{k})\Big)}
{1+u^{-2}|  \nabla_{\omega}^{\perp} u|^2+(\partial_{z}u)^2}\nonumber\\
&-\frac{(\partial_{z}u)^2\partial_{z}^2 u+2u^{-2}\partial_{z}u ( \nabla_{\omega}^{\perp} u\cdot \nabla_{\omega}^{\perp} \partial_{x}u)+u^{-3}|
\nabla_{\omega}^{\perp} u|^2}{1+u^{-2}|
\nabla_{\omega}^{\perp} u|^2+(\partial_{z}u)^2}\nonumber\\
=&\tilde{N}(z,\omega,t,|\bf{\tilde{x}}|)\Big|_{|{\bf{\tilde{x}}}|=u},\label{eq:tildeN}
\end{align} 
and $e_{j}\in \mathbb{R}^4, \ j=1,2,3,4,$ are the standard unit vectors
\begin{align}
    e_{j}:=\Big(\delta_{1,j},\delta_{2,j},\delta_{3,j},\delta_{4,j}
    \Big);
\end{align}
the function $V_{\Pi}(z,\omega,t)$ depends on $\Pi$ in the sense that $$V_{\Pi}\Big|_{\Pi\equiv 0}\equiv 0,$$ 
and is defined by the identity
\begin{align}
V_{\Pi}(z,\omega,t):=\widetilde{V_{\Pi}}(z,\omega,t,|{\bf{\tilde{x}}}|)\Big|_{|{\bf{\tilde{x}}}|=u}.
\end{align} $\tilde{N}$ is defined as, 
\begin{align}\label{def:tildeN}
    \tilde{N}:=\sum_{k=1}^5\Omega_{k,k}+\frac{\sum_{l=1}^5\sum_{j=1}^{5}\Omega_l \Omega_j
    \Omega_{l,j}}{\sum_{n=1}^{5}\Omega_n^2},
\end{align} and $\widetilde{V_{\Pi}}$ is defined as,
\begin{align}\label{def:widVPi}
\begin{split}
    \widetilde{V_{\Pi}}:=&\frac{Q_2}{1+Q_1}+\frac{Q_1}{1+Q_1}\Big[\sum_{k=1}^{5} (\Omega_{k,k}+\Sigma_{k,k})-\frac{\sum_{l=1}^5\sum_{j=1}^{5}(\Omega_l+\Sigma_l)(\Omega_j+\Sigma_l)
    (\Omega_{l,j}+\Sigma_{l,j})}{\sum_{n=1}^{5}(\Omega_n+\Sigma_n)^2}\Big]\\
    &-\sum_{k}\Sigma_{k,k}+\frac{\sum_{l=1}^5\sum_{j=1}^{5}(\Omega_l+\Sigma_l)(\Omega_j+\Sigma_l)
    (\Omega_{l,j}+\Sigma_{l,j})}{\sum_{n=1}^{5}(\Omega_n+\Sigma_n)^2}-\frac{\sum_{l=1}^5\sum_{j=1}^{5}\Omega_l \Omega_j
    \Omega_{l,j}}{\sum_{n=1}^{5}\Omega_n^2}.
\end{split}
\end{align}

Next we define the functions in \eqref{def:tildeN} and \eqref{def:widVPi}.

In what follows we use the following notations: $\nabla_{\omega}^{\perp}g, $ for any function $g$, is defined as
\begin{align}\label{eq:nabalPerp}
    \nabla_{\omega}^{\perp}g(\omega)= P_{\perp \omega}
    \nabla_{\omega} g(\omega);
\end{align} and the operator $P_{\perp\omega}:\mathbb{R}^4\rightarrow \mathbb{R}^4$ is an orthogonal projection such that for any vector $A\in \mathbb{R}^4,$
\begin{align}
P_{\perp \omega}A=A-(\omega \cdot A)\omega.\label{def:proOmega}
\end{align}

For the functions $\Omega_k$ and $\Sigma_k$, $k=1,\cdots,5,$
\begin{align}
\begin{split}\label{def:Om1Si1}
    \Omega_1:=&-\partial_{z}u,\\
    \Sigma_1:=&\frac{L}{1+D },
    \end{split}
\end{align} and when $k= 2,3,4,5$,
\begin{align}
\begin{split}\label{def:OmkSik}
    \Omega_k:=&\omega\cdot e_{k-1}-|{\bf{\tilde{x}}}|^{-1} \nabla_{\omega}^{\perp}u\cdot  e_{k-1},\\
    \Sigma_k:=&\frac{E    \cdot e_{k-1}}{|{\bf{\tilde{x}}}|(1+D )}.
\end{split}
\end{align} Here the functions $L$ and $D$ are defined as
\begin{align}
\begin{split}\label{def:LD}
L:=&\Big[-\partial_{z}u\Big(\partial_z u(\partial_{z}\Pi\cdot \omega)+(\partial_{z}^2\Pi\cdot \omega)u\Big)-\omega\cdot \partial_{z}\Pi\Big]\\
&+|{\bf{\tilde{x}}}|^{-1} \Big[\partial_{z}u \Big(u \|\text{P}_{\perp \omega} \partial_{z}\Pi\|^2+\partial_{z}\Pi\cdot \omega(
\nabla_{\omega}^{\perp}u \cdot \partial_{z}\Pi)\Big)+  \nabla_{\omega}^{\perp}u\cdot \partial_{z}\Pi\Big],\\
D:=&\frac{u}{|\bf{\tilde{x}}|}\|\text{P}_{\perp \omega} \partial_{z}\Pi\|^2-\partial_z u(\partial_{z}\Pi\cdot \omega)-(\partial_{z}^2\Pi\cdot \omega)u+\frac{\partial_{z}\Pi\cdot \omega}{|\bf{\tilde{x}}|}(\nabla_{\omega}^{\perp}u \cdot \partial_{z}\Pi);
\end{split}
\end{align}
$E=P_{\perp \omega}E\in \mathbb{R}^4$ is a vector orthogonal to $\omega$, and is defined as
\begin{align}\label{def:E}
\begin{split}
E:=&\Big[-\Big(\partial_{z}\Pi\cdot \omega+\partial_{z}u\Big)\Big(u \text{P}_{\perp \omega} \partial_{z}\Pi +(\partial_{z}\Pi \cdot \omega)  \nabla_{\omega}^{\perp}u\Big)\Big]\\
&+|{\bf{\tilde{x}}}|^{-1} \Big[(\nabla_{\omega}^{\perp}u\cdot \partial_{z}\Pi)\Big(u \text{P}_{\perp \omega} \partial_{z}\Pi +(\partial_{z}\Pi \cdot \omega) \nabla_{\omega}^{\perp}u\Big)\Big].
\end{split}
\end{align}

Next we define $\Omega_{l,j}$ and $\Sigma_{l,j}$, $l,j=1,2,3,4,5.$ Thy are symmetric of the indices $$\Omega_{l,j}=\Omega_{j,l}\ \text{and}\ \Sigma_{l,j}=\Sigma_{j,l},$$ 
and are defined as
\begin{align}
\begin{split}\label{def:OmSi11}
    \Omega_{11}:=&-\partial_{z}^2 u,\\
\Sigma_{11}:=&\frac{1}{1+D}\Big(\partial_{z}^2u\ D+\partial_{z}\frac{L}{1+D}\Big)-\frac{1}{(1+D)|{\bf{\tilde{x}}}|}
 \partial_{z}\Pi\cdot \Big(-\nabla_{\omega}^{\perp}\partial_{z}u+\nabla_{\omega}^{\perp}\frac{L}{1+D}\Big)\\
&-\frac{\omega\cdot\partial_{z}\Pi}{1+D}\partial_{|{\bf{\tilde{x}}}|}\frac{L}{1+D},
\end{split}
\end{align}
and when $k=2,3,4,5,$
\begin{align}
\begin{split}\label{def:OmSi1k}
    \Omega_{1k}:=&-|{\bf{\tilde{x}}}|^{-1} e_{k-1}\cdot\nabla_{\omega}^{\perp}\partial_{z}u,\\
\Sigma_{1,k}:=&\frac{(e_{k-1}\cdot\nabla_{\omega}^{\perp}\partial_{z}u)\ D}{(1+D)|{\bf{\tilde{x}}}|}
+\frac{1}{|{\bf{\tilde{x}}}|(1+D)}\partial_{z} \frac{E\cdot e_{k-1}}{|{\bf{\tilde{x}}}|(1+D)} \\
&-\frac{1}{(1+D)|{\bf{\tilde{x}}}|} \partial_{z}\Pi\cdot \Big[\text{P}_{\perp \omega} e_{k-1}- |{\bf{\tilde{x}}}|^{-1}\nabla_{\omega}^{\perp}\big( e_{k-1}\cdot \nabla_{\omega}^{\perp}u\big)+\nabla_{\omega}^{\perp}\frac{E\cdot e_{k-1}}{|{\bf{\tilde{x}}}|(1+D)}\Big]\\
&-\frac{\omega\cdot\partial_{z}\Pi}{1+D}\Big(|{\bf{\tilde{x}}}|^{-2}  e_{k-1}\cdot \nabla_{\omega}^{\perp}u+\partial_{|{\bf{\tilde{x}}}|} \frac{E\cdot e_{k-1}}{|{\bf{\tilde{x}}}|(1+D)}\Big);
\end{split}
\end{align}
and when $k,l=2,3,4,5,$
\begin{align}
\begin{split}\label{def:OmSikl}
    \Omega_{lk}:=&|{\bf{\tilde{x}}}|^{-1}\text{P}_{\perp \omega}e_{l-1}\cdot e_{k-1}-|{\bf{\tilde{x}}}|^{-2}
    e_{l-1} \cdot \nabla_{\omega}^{\perp}(\nabla_{\omega}^{\perp}u\cdot  e_{k-1})+|{\bf{\tilde{x}}}|^{-2}(\omega\cdot e_{l-1}) (\nabla_{\omega}^{\perp}u\cdot e_{k-1}),\\
\Sigma_{l,k}
:=&\frac{G\cdot e_{l-1}}{|{\bf{\tilde{x}}}|(1+D)}\Big[\frac{1}{|{\bf{\tilde{x}}}|}\partial_{z}\frac{E\cdot e_{k-1}}{1+D}-
\frac{1}{|{\bf{\tilde{x}}}|^2}\partial_{z}\Pi \cdot 
\nabla_{\omega}^{\perp} \frac{E\cdot e_{k-1}}{1+D}\Big]\\
&-\frac{G\cdot e_{l-1}}{|{\bf{\tilde{x}}}|(1+D)}\Big[\frac{1}{|{\bf{\tilde{x}}}|}\Big(\text{P}_{\perp \omega}\partial_{z}\Pi +\nabla_{\omega}^{\perp}\partial_{z}u\Big)+(\omega\cdot \partial_{z}\Pi)\Big(\frac{\nabla_{\omega}^{\perp}u}{|{\bf{\tilde{x}}}|^2}-\partial_{|{\bf{\tilde{x}}}|}\frac{E}{|{\bf{\tilde{x}}}|(1+D)}\Big)\Big]\cdot e_{k-1}\\
&+\frac{e_{l-1}}{|{\bf{\tilde{x}}}|} \cdot \nabla_{\omega}^{\perp}\frac{E\cdot e_{k-1}}{|{\bf{\tilde{x}}}|(1+D)}+\omega\cdot e_{l-1}\partial_{|{\bf{\tilde{x}}}|}\frac{E\cdot e_{k-1}}{|{\bf{\tilde{x}}}|(1+D)},
\end{split}
\end{align} and
$G=P_{\perp\omega}G$ is a four-dimensional vector-valued function defined as
\begin{align}\label{def:Gvec}
G:=
&u \text{P}_{\perp \omega} \partial_{z}\Pi +(\partial_{z}\Pi \cdot \omega)  \nabla_{\omega}^{\perp}u .
\end{align}
And lastly $Q_1$ and $Q_2$ are functions defined as
\begin{align}
\begin{split}\label{def:Q1Q2}
Q_1:=&\frac{(\partial_{z}\Pi\cdot \omega)^2+\partial_{z}u (\partial_{z}\Pi\cdot \omega)-|{\bf{\tilde{x}}}|^{-1}( \partial_{z}\Pi\cdot \nabla_{\omega}^{\perp}u) \partial_{z}\Pi\cdot \omega}{1+D},\\
Q_2:=&-\omega\cdot \partial_{t}\Pi+\frac{1}{|\bf{\tilde{x}}|}\partial_{t}\Pi\cdot \nabla_{\omega}^{\perp}u\\
&+\frac{-\omega\cdot \partial_{z}\Pi-\partial_{z}u+|{\bf{\tilde{x}}}|^{-1}  \partial_{z}\Pi\cdot\nabla_{\omega}^{\perp}u}{1+D}\times\\
&\Big(-|{\bf{\tilde{x}}}|^{-1}
\big((\partial_{z}\Pi\cdot\omega)\nabla_{\omega}^{\perp}u+u\partial_{z}\Pi\big)\cdot \text{P}_{\perp \omega}\partial_{t}\Pi+u \partial_{z}\partial_{t}\Pi\cdot \omega\Big).
\end{split}
\end{align}

\begin{remark}\label{rem:source}
In the main part of paper, among the many terms in $V_{\Pi}$, we only study a few of them. In what follows we list these terms and their sources, corresponding to those in \eqref{def:J}, \eqref{def:J7J8} and \eqref{def:J9J10}.
Recall that $V_{\Pi}=\widetilde{V_{\Pi}}\Big|_{|{\bf{\tilde{x}}}|=u}$.

\begin{itemize}
    \item[(1)] 
$v^{-1}(\partial_{z}\Pi\cdot \omega)^2$ from $Q_1\sum_{k=2}^5 \Omega_{k,k};$ 
\item[(2)] $\omega\cdot \partial_{t}\Pi$ from $Q_2;$
\item[(3)]
some terms from $\Sigma_{1,1}$: $u(\partial_{z}^2 \Pi\cdot \omega)\partial_{z}^2 u $ from 
$
\frac{(\partial_{z}^2 u) D}{1+D};
$
$(\partial_{z}^2u) |\partial_{z}\Pi\cdot \omega|^2$ and $\omega\cdot \partial_{z}^2\Pi$ from 
$
\frac{\partial_{z}L}{(1+D)^2};
$
and $u^{-2}(\omega\cdot \partial_{z}\Pi) (\nabla_{\omega}^{\perp}u\cdot \partial_{z}\Pi)$ from
$
-\frac{1}{(1+D)^2}(\omega\cdot \partial_{z}\Pi) \partial_{|{\bf{\tilde{x}}}|}L.
$
\item[(4)]
some terms from $\sum_{k=2}^5\Sigma_{k,k}$: 
$ u^{-1}\partial_{z}\Pi\cdot \partial_{z}\nabla_{\omega}^{\perp}u$ from $\sum_{k=2}^{5} \frac{G\cdot e_{l-1}}{|{\bf{\tilde{x}}}|^2}  \nabla_{\omega}^{\perp}\partial_{z}u\cdot e_{l-1}$;
 $u^{-2} \partial_{z}\Pi\cdot \nabla_{\omega}^{\perp} (\partial_{z}\Pi \cdot \nabla_{\omega}^{\perp}u) $ and $u^{-1} |\text{P}_{\perp \omega}\partial_{z}\Pi|^2
$ from $\sum_{l}\frac{e_{l-1}}{|{\bf{\tilde{x}}}|} \cdot \nabla_{\omega}^{\perp} \frac{E\cdot e_{l-1}}{|{\bf{\tilde{x}}}|}$;
and $u^{-2}(\partial_{z}\Pi\cdot \omega) (\partial_{z}\Pi\cdot \nabla_{\omega}^{\perp} u)$ from
$
    \sum_{k=2}^5|{\bf{\tilde{x}}}|^{-2} G\cdot e_{k-1} P_{\perp\omega}\partial_{z}\Pi \cdot e_{k-1};
$
and $u^{-1}\partial_{z}u (\partial_{z}\Pi\cdot \omega)$ from
$|{\bf{\tilde{x}}}|^{-2}\sum_{k }e_{k-1} \cdot \nabla_{\omega}^{\perp} (E\cdot e_{k-1}).$

\end{itemize}
\end{remark}

In the rest of this section we will prove the second identity in \eqref{eq:tildeN} in subsection \ref{sub:tildeN} below, and then derive the equation \eqref{eq:effectivUeqn} in subsection \ref{sub:effectivUeqn} below.

\subsection{Proof of the second identity in (\ref{eq:tildeN})}\label{sub:tildeN}
The following identity will be needed: $$\sum_{l=2}^5 (A_1\cdot e_{l-1})(A_2\cdot e_{l-1})=A_1\cdot A_2 $$ for any vectors $A_1$ and $A_2$. Thus $$\sum_{l=2}^5 (P_{\perp\omega}A_1\cdot e_{l-1})(e_{l-1}\cdot \omega)=0.$$

Compute directly to find
\begin{align}
\sum_{l=2}^5 \Omega_{l}\Omega_{l,k}
=&|{\bf{\tilde{x}}}|^{-3}\nabla_{\omega}^{\perp}u\cdot 
 \Big(\nabla_{\omega}^{\perp}(\nabla_{\omega}^{\perp}u\cdot  e_{k-1})\Big)
\end{align}
and 
\begin{align}
\sum_{k=2}^5 \Omega_k \sum_{l=2}^5 \Omega_l \Omega_{l,k}=|{\bf{\tilde{x}}}|^{-3}\sum_{k=2}^5  (\omega\cdot e_{k-1}-|{\bf{\tilde{x}}}|^{-1} \nabla_{\omega}^{\perp}u\cdot  e_{k-1})\Big(\nabla_{\omega}^{\perp}u\cdot \nabla_{\omega}^{\perp}(\nabla_{\omega}^{\perp}u\cdot 
 e_{k-1})\Big),
\end{align}
where we use the following identity 
\begin{align}
&\sum_{k=2}^5  (\omega\cdot e_{k-1})\Big(\nabla_{\omega}^{\perp}u\cdot 
 \nabla_{\omega}^{\perp}(\nabla_{\omega}^{\perp}u\cdot 
  e_{k-1})\Big)\nonumber\\
=&\nabla_{\omega}^{\perp}u\cdot 
  \nabla_{\omega}^{\perp}\sum_{k=2}^5 \Big((\omega\cdot e_{k-1})(\nabla_{\omega}^{\perp}u\cdot e_{k-1}) \Big)-\sum_{k=2}^5 (\nabla_{\omega}^{\perp}u\cdot  e_{k-1})\Big(\nabla_{\omega}^{\perp}u\cdot \nabla_{\omega}^{\perp}(\omega\cdot e_{k-1})\Big)\nonumber\\
=&-|\nabla_{\omega}^{\perp}u|^2,
\end{align} where in the first step we observe the first term is identically zero.

Similarly

\begin{align*}
\Omega_{11}+\sum_{k=2}^5 \Omega_{k,k}=&-\partial_{z}^2 u+3|{\bf{\tilde{x}}}|^{-1}-|{\bf{\tilde{x}}}|^{-2}\Delta_{\mathbb{S}^3}u,\\
\Omega_1\sum_{k= 2}^5 \Omega_k  \Omega_{k,1}=&-\partial_{z}u \Big\{-\sum_{k=2}^5 \Big(\omega\cdot e_{k-1}-|{\bf{\tilde{x}}}|^{-1} \nabla_{\omega}^{\perp}u\cdot  e_{k-1}\Big)|{\bf{\tilde{x}}}|^{-1} e_{k-1}\cdot\nabla_{\omega}^{\perp}\partial_{z}u\Big\}\\
=&-\partial_{z} u\ 
 \Big(|{\bf{\tilde{x}}}|^{-2} \nabla_{\omega}^{\perp} u\cdot \nabla_{\omega}^{\perp} \partial_{z}u\Big),\\
\Omega_1^2+\sum_{k=2}^5  \Omega_k^2=&(\partial_z u)^2+\sum_{k=2}^5 \Big(\omega\cdot e_{k-1}-|{\bf{\tilde{x}}}|^{-1} \nabla_{\omega}^{\perp}u\cdot e_{k-1}\Big)^2\\
=&1+(\partial_z u)^2+|{\bf{\tilde{x}}}|^{-2} |\nabla_{\omega}^{\perp}u|^2.
\end{align*}

What is left is to feed these into the definition of $\tilde{N}$ in \eqref{def:tildeN}, and then let $u=|{\bf{\tilde{x}}}|$ to obtain the desired result.

In the rest of this section we derive (\ref{eq:effectivUeqn}).

\subsection{Derivation of (\ref{eq:effectivUeqn})}\label{sub:effectivUeqn}
Here we have a risk of confusions: recall that $z$, $\omega$ and $u(z,\omega,t)$ depends $t$, it is easy to confuse $\frac{\partial}{\partial_t} u\big(z(t),\omega(t),t\big)$ with $\frac{\partial}{\partial_t} u\big(z(s),\omega(s),t\big)\Big|_{s=t}$. To avoid confusion
we use the following conventions:
\begin{align}
\begin{split}\label{eq:convention}
  \frac{\partial}{\partial t}u:=&\partial_{z}u \partial_{t}z+\nabla_{\omega}^{\perp} u\cdot \partial_{t}\omega+u_t,\\
  u_t:=&\frac{\partial}{\partial_t} u(z(s),\omega(s),t)\Big|_{s=t}.
  \end{split}
\end{align}
The desired \eqref{eq:effectivUeqn} is equivalent to the following identity:
\begin{align}\label{eq:omegasigma}
    -(1+Q_1)u_t+Q_2=\sum_{k=1}^{5} (\Omega_{k,k}+\Sigma_{k,k})-\frac{\sum_{l=1}^5\sum_{j=1}^{5}(\Omega_l+\Sigma_l)(\Omega_j+\Sigma_j)
    (\Omega_{l,j}+\Sigma_{l,j})}{\sum_{n=1}^{5}(\Omega_n+\Sigma_n)^2}.
\end{align}
This will be implied by \eqref{eq:level}, \eqref{eq:tf}-\eqref{eq:klf} below.

In order to derive a governing equation for the function $u$ we define a new variable $q$ by 
\begin{align}\label{eq:defQ}
q:=z-u(z,\omega,t) \partial_{z} \Pi(z, t) \cdot \omega,
\end{align}
then the parametrization becomes, in the independent variables $q$ and $x_k,\ k=2,3,4,5$, 
\begin{align}\label{eq:NewParaQ}
\Phi_{\Pi,u}= \left[
\begin{array}{ccc}
q\\
\Pi(z(q,\omega,t),t)
\end{array}
\right]+\left[
\begin{array}{ccc}
0\\
u(z(q,\omega,t), \omega,t)\omega
\end{array}
\right],
\end{align}
and $\omega\in \mathbb{S}^3$ becomes
\begin{align}
\omega:=\frac{{\bf{\tilde{x}}}}{|{\bf{\tilde{x}}}|}:=&\frac{(\tilde{x}_2,\cdots,\tilde{x}_5)^{T}}{|(\tilde{x}_2,\cdots,\tilde{x}_5)|}
:=\frac{(x_2,\cdots,x_5)^{T}-\Pi(z,t)}{|(x_2,\cdots,x_5)^{T}-\Pi(z,t)|}\label{def:omega}
\end{align} and $\tilde{x}_k,\ k=2,3,4,5,$ are defined as
\begin{align}
\tilde{x}_k:=x_k-\Pi_k(z(q,\omega,t),t).\label{eq:tildxk}
\end{align} and $\Pi_k$ is the corresponding entry of the vector $\Pi$.

We emphasize that $\omega$ and $z$ are functions of the variables $q, \ x_{k}$ and $t$. 

To derive a governing equation for $u$ we consider the level set
\begin{align}
f(q,x_2,x_3,x_4,x_5,t):=|{\bf{\tilde{x}}}|-u(z(q,\omega,t), \omega,t)=0.
\end{align}
It was shown in \cite{MR1770903} the function $f$ satisfies the equation, with convention $q=x_1$,
\begin{align}
\partial_{t}f=\sum_{i,j}\Big(\delta_{ij}-\frac{f_{x_i}f_{x_j}}{|Df|^2}\Big)f_{x_ix_j}.\label{eq:level}
\end{align}

To prove the desired (\ref{eq:omegasigma}), we only need to prove the following identities:
\begin{lemma}
\begin{align}
\partial_t f=&-(1+Q_1)u_t+Q_2,\label{eq:tf}\\
f_{x_k}=&\Omega_k+\Sigma_k,\label{eq:kf}\\
f_{x_k, x_l}=&\Omega_{k,l}+\Sigma_{k,l}.\label{eq:klf}
\end{align}
\end{lemma}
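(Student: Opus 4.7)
The plan is to verify the three identities \eqref{eq:tf}--\eqref{eq:klf} by direct chain-rule computation. The central observation is that $z$ is determined implicitly from $(q,\omega)$ by \eqref{eq:defQ}, and that $\omega=\tilde{x}/|\tilde{x}|$ with $\tilde{x}_k=x_k-\Pi_k(z,t)$ itself depends on $z$ through $\Pi$. Consequently each of $z_q,z_{x_k},z_t$ (and hence $\omega_q,\omega_{x_k},\omega_t$) must be obtained by implicit differentiation. Using $\partial\omega_j/\partial\tilde{x}_i=|\tilde{x}|^{-1}(\delta_{ij}-\omega_i\omega_j)$ to express $\omega_{\ast}$ in terms of $z_{\ast}$, differentiating \eqref{eq:defQ} yields a scalar equation for $z_{\ast}$ whose coefficient turns out to be exactly $1+D$, with $D$ as in \eqref{def:LD}. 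This is the structural source of the denominators $1+D$ appearing in every $\Sigma$-term.

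First, I would carry out this implicit differentiation to obtain closed-form expressions for $z_q,z_{x_k},z_t$, and thence for $\omega_q,\omega_{x_k},\omega_t$. The vector $E$ in \eqref{def:E} should emerge as the numerator of the $\Pi$-correction to $\omega_{x_k}$, and the scalar $L$ in \eqref{def:LD} as the combination needed to rewrite $-\partial_z u\cdot z_q-\nabla_\omega u\cdot \omega_q$ in the form $-\partial_z u+L/(1+D)$. Second, I would substitute into $f=|\tilde{x}|-u(z,\omega,t)$ via the chain rule
\[
f_{x_k}=|\tilde{x}|^{-1}\,\tilde{x}\cdot \tilde{x}_{x_k}-\partial_z u\, z_{x_k}-(\nabla_\omega u)\cdot \omega_{x_k},
\]
with $\tilde{x}_{k,x_l}=\delta_{k-1,l-1}-\partial_z\Pi_k\, z_{x_l}$, and the analogous formulas for $f_q$ and $f_t$. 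Separating the part that survives at $\Pi\equiv 0$ from the $\Pi$-dependent remainder directly produces $\Omega_k+\Sigma_k$, establishing \eqref{eq:kf}. For \eqref{eq:tf}, the factor $1+Q_1$ multiplying $u_t$ arises because $z_t$ inherits a contribution proportional to $u_t$ (from differentiating the term $u(z,\omega,t)\partial_z\Pi\cdot\omega$ in \eqref{eq:defQ} in $t$ under the convention \eqref{eq:convention}); collecting the $u_t$-proportional contributions and factoring gives $(1+Q_1)$, while the remaining $\partial_t\Pi$-contributions assemble into $Q_2$.

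Third, for \eqref{eq:klf} I would differentiate the identity $f_{x_k}=\Omega_k+\Sigma_k$ once more in $x_l$, substituting the Step~1 formulas for every $\partial_{x_l}z$ and $\partial_{x_l}\omega$ that appears. This is the main technical obstacle: the explicit formulas \eqref{def:OmSi11}--\eqref{def:OmSikl} contain many cross-derivative terms produced by the quotient rule applied to $L/(1+D)$ and $(E\cdot e_{k-1})/(|\tilde{x}|(1+D))$. The organising principle that keeps the bookkeeping tractable is that $\partial_{x_l}$ acting on any such quotient decomposes, via Step~1, into three channels---a $\partial_z$-channel carrying $z_{x_l}$, a $|\tilde{x}|^{-1}\nabla_\omega^{\perp}$-channel carrying $\omega_{x_l}$, and a $\partial_{|\tilde{x}|}$-channel carrying $|\tilde{x}|_{x_l}$---each of which matches exactly the corresponding type of derivative factor visible in the definitions of $\Sigma_{k,l}$. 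The $\Omega_{k,l}$ pieces reproduce themselves as the spherical-coordinate Hessian of $|\tilde{x}|-u$ at $\Pi\equiv 0$; matching the $\Pi$-corrections against $\Sigma_{k,l}$ is lengthy but mechanical once Step~1 is in place.
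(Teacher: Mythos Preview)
Your proposal is correct and follows essentially the same route as the paper: first solve the implicit relations to obtain $z_q,z_{x_k},z_t$ (and hence $\omega_q,\omega_{x_k},\omega_t$, $|\tilde{x}|_q,|\tilde{x}|_{x_k},|\tilde{x}|_t$) with the coefficient $1+D$ appearing exactly as you describe, then apply the chain rule to $f=|\tilde{x}|-u$, and for the second derivatives use the three-channel decomposition $\partial_{x_l}=\partial_{x_l}z\,\partial_z+\partial_{x_l}\omega\cdot\nabla_\omega^\perp+\partial_{x_l}|\tilde{x}|\,\partial_{|\tilde{x}|}$, which is precisely what the paper does. One small inaccuracy: the vector that emerges as the numerator of $\partial_{x_k}z$ and of the $\Pi$-correction to $\omega_{x_k}$ is the paper's $G$ (see \eqref{def:Gvec}), not $E$; the vector $E$ only appears after you recombine the contributions from $\partial_{x_k}|\tilde{x}|$ and $\partial_{x_k}u$ into $\Sigma_k$.
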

These will be proved in subsequent subsections.

To facilitate later discussions, we derive some identities.
Take a $\partial_q$ on \eqref{eq:defQ} and \eqref{eq:tildxk} to find
\begin{align}
1=&\partial_q z\Big(1-\partial_z u(\partial_{z}\Pi\cdot \omega)-u(\partial_{z}^2\Pi\cdot \omega)\Big)-\sum_{l=2}^5\Big(\big((\partial_{z}\Pi\cdot \omega)\nabla_{\omega}^{\perp}u+u\partial_{z}\Pi\big)\cdot \partial_{\tilde{x}_l} \omega\Big)\partial_q \tilde{x}_l,\\
{\partial_q \tilde{x}_l}=&-\partial_{z}\Pi_l\ {\partial_q}z.\label{eq:qxl}
\end{align} Recall the definitions of $\nabla_{\omega}^{\perp}$ and $\text{P}_{\perp \omega}$ from \eqref{eq:nabalPerp} and \eqref{def:proOmega}.
From these two equations we solve for $\partial_q z$. This implies the following identities
\begin{align}
\partial_{\tilde{x}_l} \omega=|{\bf{\tilde{x}}}|^{-1}\text{P}_{\perp \omega}e_{l-1}\ \text{and}\ \sum_{l=2}^5 e_{l-1} \partial_{z}\Pi_l=\partial_{z}\Pi
\end{align}
and hence
\begin{align}
\begin{split}
    \sum_{l=2}^5\big(\partial_{z}\Pi\cdot \partial_{\tilde{x_l}} \omega\big)\partial_{z}\Pi_l=&\frac{1}{|\bf{\tilde{x}}|}\|\text{P}_{\perp \omega} \partial_{z}\Pi\|^2,\\
    \sum_{l=2}^5 (\nabla_{\omega}^{\perp}u \cdot \partial_{\tilde{x}_l} \omega )\partial_{z}\Pi_l=&\frac{1}{|\bf{\tilde{x}}|}(\nabla_{\omega}^{\perp}u \cdot \text{P}_{\perp \omega}\partial_{z}\Pi).
\end{split}
\end{align}
Together they imply the following results, recall the definition of $D$ from \eqref{def:LD},
 \begin{lemma}
 \begin{align}\label{eq:D1D2}
\begin{split}
\partial_q z=&\frac{1}{1+D}\\
\partial_q \tilde{x}_l =& -\frac{\partial_{z}\Pi_l}{1+D}\\
\partial_{q}|{\bf{\tilde{x}}}|=&\sum_{l=2}^5 \omega\cdot \frac{\partial {\bf{\tilde{x}}}}{\partial \tilde{x}_l} \frac{\partial \tilde{x}_l}{\partial q} 
=\sum_{l=2}^5 (\omega\cdot e_{l-1}) \frac{\partial \tilde{x}_l}{\partial q}=-\frac{ \omega \cdot \partial_{z}\Pi}{1+D},\\
\partial_{q}\omega=&-\frac{\partial_{q}z}{|{\bf{\tilde{x}}}|} \text{P}_{\perp \omega}\partial_{z}\Pi=-\frac{1}{(1+D)|{\bf{\tilde{x}}}|}\text{P}_{\perp \omega}\partial_{z}\Pi.
\end{split}
\end{align}
\end{lemma}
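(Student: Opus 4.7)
The plan is to treat $(q, x_2, x_3, x_4, x_5, t)$ as the independent variables and read off $\partial_q z$, $\partial_q \tilde{x}_l$, $\partial_q\omega$, $\partial_q|\tilde{\mathbf{x}}|$ by implicitly differentiating the two defining relations
\begin{align*}
q &= z - u\big(z,\omega,t\big)\,\big(\partial_z\Pi(z,t)\cdot\omega\big),\\
\tilde{x}_l &= x_l - \Pi_l\big(z(q,\omega,t),t\big), \qquad l=2,3,4,5,
\end{align*}
together with $\omega = \tilde{\mathbf{x}}/|\tilde{\mathbf{x}}|$. Since $x_l$ and $t$ are held fixed, only $z$ and (through $z$) $\omega$ vary with $q$, so everything reduces to a single linear equation for $\partial_q z$.

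First I would compute $\partial_q\tilde{x}_l = -\partial_z\Pi_l\cdot\partial_q z$ directly. Next, from $\omega_j = \tilde{x}_j/|\tilde{\mathbf{x}}|$ the chain rule yields $\partial_q\omega = |\tilde{\mathbf{x}}|^{-1}P_{\perp\omega}(\partial_q\tilde{\mathbf{x}}) = -\tfrac{\partial_q z}{|\tilde{\mathbf{x}}|}\,P_{\perp\omega}\partial_z\Pi$, and similarly $\partial_q|\tilde{\mathbf{x}}| = \omega\cdot\partial_q\tilde{\mathbf{x}} = -\tfrac{\omega\cdot\partial_z\Pi}{1+D}\cdot(1+D)\partial_q z$. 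In this way every partial is expressed in terms of $\partial_q z$ alone, modulo explicit quantities involving $\Pi$, $u$ and $\omega$.

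The key step is then to differentiate $q = z - u(\partial_z\Pi\cdot\omega)$ in $q$. Using $\partial_q u = (\partial_z u)\,\partial_q z + \nabla_\omega^\perp u\cdot \partial_q\omega$ (since $u$ is a function on $\mathbb{R}\times\mathbb{S}^3$), I obtain
\begin{align*}
1 = \partial_q z\Big\{1 -(\partial_z u)(\partial_z\Pi\cdot\omega)-u(\partial_z^2\Pi\cdot\omega)\Big\} - \Big[(\partial_z\Pi\cdot\omega)\nabla_\omega^\perp u + u\,\partial_z\Pi\Big]\cdot \partial_q\omega.
\end{align*}
Substituting the formula for $\partial_q\omega$ derived above and using the two cancellation identities $\partial_z\Pi\cdot P_{\perp\omega}\partial_z\Pi = \|P_{\perp\omega}\partial_z\Pi\|^2$ and $\nabla_\omega^\perp u\cdot P_{\perp\omega}\partial_z\Pi = \nabla_\omega^\perp u\cdot \partial_z\Pi$ (the latter because $\nabla_\omega^\perp u$ is already tangential), the braces collapse precisely to $1+D$ with $D$ as defined in \eqref{def:LD}. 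Hence $\partial_q z = (1+D)^{-1}$, and the remaining three identities follow immediately from the expressions derived in the previous paragraph.

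There is no real obstacle; the only thing to watch is keeping the two projections $P_{\perp\omega}$ straight and recognizing that the four terms in $D$ arise exactly as listed: $u|\tilde{\mathbf{x}}|^{-1}\|P_{\perp\omega}\partial_z\Pi\|^2$ from $u\,\partial_z\Pi\cdot\partial_q\omega$, $|\tilde{\mathbf{x}}|^{-1}(\partial_z\Pi\cdot\omega)(\nabla_\omega^\perp u\cdot\partial_z\Pi)$ from $(\partial_z\Pi\cdot\omega)\nabla_\omega^\perp u\cdot\partial_q\omega$, and the two $-(\partial_z u)(\partial_z\Pi\cdot\omega)$, $-u(\partial_z^2\Pi\cdot\omega)$ terms from the direct differentiation. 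Note also that one must recall $|\tilde{\mathbf{x}}|=u$ is not imposed here — the identity holds for general $|\tilde{\mathbf{x}}|$ and is only specialized to the graph at the very end, consistent with the conventions introduced around \eqref{def:omega}.
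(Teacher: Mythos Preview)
Your proposal is correct and follows essentially the same approach as the paper: differentiate the defining relations $q = z - u(\partial_z\Pi\cdot\omega)$ and $\tilde{x}_l = x_l - \Pi_l(z,t)$ in $q$, express $\partial_q\omega$ in terms of $\partial_q z$, and collapse the resulting coefficient to $1+D$. The only cosmetic difference is that the paper routes the chain rule through $\partial_{\tilde{x}_l}\omega = |\tilde{\mathbf{x}}|^{-1}P_{\perp\omega}e_{l-1}$ and then sums over $l$, whereas you compute $\partial_q\omega$ directly; the two computations are line-by-line equivalent.
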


Next we compute the $x_k$-derivatives. 
Take a $x_k$-derive on \eqref{eq:defQ} and \eqref{eq:tildxk} to obtain
\begin{align}
\Big(1-\partial_{z}u (\partial_{z}\Pi \cdot \omega)- (\partial_{z}^2 \Pi\cdot \omega)u\Big)\partial_{x_k} z=& \sum_{j=2}^5 \Big[\big( (\partial_{z}\Pi \cdot \omega)\nabla_{\omega}^{\perp}u +u\ \partial_{z}\Pi\big)\cdot \partial_{\tilde{x}_j}\omega\Big]\partial_{x_k} \tilde{x}_j,\label{eq:zxk}\\
\partial_{x_k} \tilde{x}_j=&\delta_{j,k}-\partial_{z}\Pi_j \ \partial_{x_k} z.
\end{align} 
From these two equations we solve for $ \partial_{x_k} z$ and $\partial_{x_k} \tilde{x}_j$, and then derive more identities,
\begin{lemma}
\begin{align}
 \partial_{x_k} z=&\frac{G\cdot e_{k-1}}{|{\bf{\tilde{x}}}|(1+D)},\label{eq:parxkz}\\
\partial_{x_k}|\bf{\tilde{x}}|=&\omega \cdot e_{k-1}- \frac{G\cdot e_{k-1}}{|{\bf{\tilde{x}}}|(1+D)} \omega \cdot \partial_{z}\Pi,\label{eq:parxktilx}\\
\partial_{x_k}\omega=&|{\bf{\tilde{x}}}|^{-1} \text{P}_{\perp \omega} \big[e_{k-1}-\frac{G\cdot e_{k-1}}{|{\bf{\tilde{x}}}|(1+D)}\ \partial_{z}\Pi\big].\label{xkome}
\end{align}
\end{lemma}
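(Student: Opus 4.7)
\medskip

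\noindent\textbf{Proof plan for \eqref{eq:parxkz}--\eqref{xkome}.} The three identities are a parallel of \eqref{eq:D1D2}: they describe how the variables $z$, $|\tilde{\bf x}|$, and $\omega$ depend on the Cartesian coordinate $x_k$ when $q$, the other $x_j$, and $t$ are held fixed. My plan is to first extract $\partial_{x_k} z$ from the implicit system \eqref{eq:zxk} and the line below it, and then compute $\partial_{x_k}|\tilde{\bf x}|$ and $\partial_{x_k}\omega$ from the elementary identities $|\tilde{\bf x}|^2 = \sum_{j=2}^5 \tilde{x}_j^2$ and $\omega = \tilde{\bf x}/|\tilde{\bf x}|$ by the chain rule.

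For \eqref{eq:parxkz}, the plan is as follows. Write $A := 1-\partial_z u(\partial_z\Pi\cdot\omega) - (\partial_z^2\Pi\cdot\omega)u$; equation \eqref{eq:zxk} reads $A\,\partial_{x_k}z = \sum_{j=2}^5 B_j \,\partial_{x_k}\tilde{x}_j$, with $B_j := \bigl((\partial_z\Pi\cdot\omega)\nabla_\omega^\perp u + u\,\partial_z\Pi\bigr)\cdot \partial_{\tilde{x}_j}\omega$. Substituting $\partial_{\tilde{x}_j}\omega = |\tilde{\bf x}|^{-1}P_{\perp\omega} e_{j-1}$ and using that $P_{\perp\omega}$ is self-adjoint, one recognises
\begin{align*}
B_j = |\tilde{\bf x}|^{-1}\Bigl(u\,P_{\perp\omega}\partial_z\Pi + (\partial_z\Pi\cdot\omega)\,\nabla_\omega^\perp u\Bigr)\cdot e_{j-1} = |\tilde{\bf x}|^{-1}\,G\cdot e_{j-1},
\end{align*}
where $G$ is the vector from \eqref{def:Gvec}. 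Combining this with $\partial_{x_k}\tilde{x}_j = \delta_{jk} - \partial_z\Pi_j\,\partial_{x_k} z$ yields
\begin{align*}
\bigl(A + |\tilde{\bf x}|^{-1}\,G\cdot\partial_z\Pi\bigr)\,\partial_{x_k} z = |\tilde{\bf x}|^{-1}\,G\cdot e_{k-1}.
\end{align*}
The crux of the computation is verifying the denominator identity $A + |\tilde{\bf x}|^{-1}\,G\cdot\partial_z\Pi = 1+D$. Expanding $G\cdot\partial_z\Pi = u\,|P_{\perp\omega}\partial_z\Pi|^2 + (\partial_z\Pi\cdot\omega)(\nabla_\omega^\perp u\cdot\partial_z\Pi)$ and comparing term by term with the definition of $D$ in \eqref{def:LD} produces exactly the four summands of $1+D$; this algebraic check is the only substantive step and I anticipate it being the main (though routine) obstacle.

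Given \eqref{eq:parxkz}, the remaining two identities fall out. Differentiating $|\tilde{\bf x}|^2 = \sum_{j=2}^5 \tilde{x}_j^2$ gives $\partial_{x_k}|\tilde{\bf x}| = \sum_{j=2}^5 (\omega\cdot e_{j-1})\,\partial_{x_k}\tilde{x}_j$, and inserting $\partial_{x_k}\tilde{x}_j = \delta_{jk} - \partial_z\Pi_j\,\partial_{x_k} z$ produces
\begin{align*}
\partial_{x_k}|\tilde{\bf x}| = \omega\cdot e_{k-1} - (\omega\cdot\partial_z\Pi)\,\partial_{x_k} z,
\end{align*}
which becomes \eqref{eq:parxktilx} after substituting \eqref{eq:parxkz}. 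For \eqref{xkome}, rewriting the componentwise chain rule $\partial_{x_k}\omega_j = |\tilde{\bf x}|^{-1}\partial_{x_k}\tilde{x}_j - |\tilde{\bf x}|^{-2}\tilde{x}_j\,\partial_{x_k}|\tilde{\bf x}|$ in vector form gives $\partial_{x_k}\omega = |\tilde{\bf x}|^{-1}P_{\perp\omega}\,\partial_{x_k}\tilde{\bf x}$. Packaging $\partial_{x_k}\tilde{\bf x} = e_{k-1} - \partial_z\Pi\,\partial_{x_k} z$ and substituting \eqref{eq:parxkz} yields \eqref{xkome}. No further analytic input is required: the entire lemma is a chain-rule/implicit-function-theorem computation whose only non-trivial content is the denominator identification $A + |\tilde{\bf x}|^{-1}G\cdot\partial_z\Pi = 1+D$.
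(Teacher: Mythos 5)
Your proposal is correct and follows essentially the same route as the paper: differentiate the defining relations to get the implicit system \eqref{eq:zxk} together with $\partial_{x_k}\tilde{x}_j=\delta_{j,k}-\partial_z\Pi_j\,\partial_{x_k}z$, solve for $\partial_{x_k}z$, and then obtain $\partial_{x_k}|{\bf{\tilde{x}}}|$ and $\partial_{x_k}\omega$ by the chain rule, exactly as the paper does for the $q$-derivatives in \eqref{eq:D1D2}. Your explicit verification that the coefficient equals $1+D$ via $G\cdot\partial_z\Pi=u\|P_{\perp\omega}\partial_z\Pi\|^2+(\partial_z\Pi\cdot\omega)(\nabla_\omega^{\perp}u\cdot\partial_z\Pi)$ is the routine algebra the paper leaves implicit, and it checks out.
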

Here $G=\text{P}_{\perp \omega}G\in \mathbb{R}^4$ is a vector-valued function defined in \eqref{def:Gvec}.

\subsection{Proof of (\ref{eq:kf})}

Next we compute $\partial_{x_k}u(z,\omega,t)$, $k\geq 2$. By the chain rule,
\begin{align*}
\partial_{x_k}u= &\Big[\partial_{x_k} z\partial_{z}+\partial_{x_k}\omega \cdot \nabla_{\omega}^{\perp}\Big]u
=|{\bf{\tilde{x}}}|^{-1} \nabla_{\omega}^{\perp}u\cdot  e_{k-1}+\frac{\partial_{z}u-|{\bf{\tilde{x}}}|^{-1}\nabla_{\omega}^{\perp}u\cdot \partial_{z}\Pi}{|{\bf{\tilde{x}}}|(1+D)} G\cdot e_{k-1}.
\end{align*}
This together with the expression for $\partial_{x_k}|{\bf{\tilde{x}}}|$ in \eqref{eq:parxktilx} implies that
\begin{align}\label{eq:kxu}
\partial_{x_k}\big(|{\bf{\tilde{x}}}|-u\big)=&\omega\cdot e_{k-1}-|{\bf{\tilde{x}}}|^{-1} \nabla_{\omega}^{\perp}u\cdot  e_{k-1}-\frac{(\partial_{z}\Pi\cdot \omega+\partial_{z}u)}{|{\bf{\tilde{x}}}|(1+D)} G\cdot e_{k-1}\nonumber+\frac{(\nabla_{\omega}^{\perp}u\cdot \partial_{z}\Pi)}{|{\bf{\tilde{x}}}|^2(1+D)} G\cdot e_{k-1}\nonumber\\
=&\Omega_k+\Sigma_k
\end{align} where the terms $\Omega_k$ and $\Sigma_k$ are defined in \eqref{def:OmkSik}.

Similarly, 
\begin{align*}
\partial_{q}u= &[\partial_{q} z\partial_{z}+\partial_{q}\omega\cdot \nabla_{\omega}^{\perp}]u
=\partial_{z}u-\frac{D\partial_{z}u+|{\bf{\tilde{x}}}|^{-1}\nabla_{\omega}^{\perp}u\cdot \partial_{z}\Pi}{1+D}
\end{align*}
Thus this together with the expression for $\partial_{q}|{\bf{\tilde{x}}}|$ in \eqref{eq:D1D2} implies that
\begin{align}
\partial_{q}(|{\tilde{\bf{x}}}|-u)=\Omega_1+\Sigma_1
\end{align}
where the terms $\Omega_1$ and $\Sigma_1$ are defined in \eqref{def:Om1Si1}.

\subsection{Proof of (\ref{eq:tf})}
Here we need the conventions made in \eqref{eq:convention}.

We start with considering $\partial_{t}{\bf{\tilde{x}}}.$ Compute directly from \eqref{eq:tildxk} to obtain
\begin{align}
\partial_{t}{\bf{\tilde{x}}}=&-\partial_{z}\Pi\ \partial_t z-\partial_{t}\Pi,
\end{align}
 and from the identity for $\omega$ after \eqref{eq:NewParaQ},
\begin{align}\label{eq:tomega}
\partial_{t}\omega=\frac{\partial_{t}{\bf{\tilde{x}}}}{|{\bf{\tilde{x}}}|}-{\bf{\tilde{x}}}\frac{\bf{\tilde{x}}\cdot \partial_{t}\bf{\tilde{x}}}{|{\bf{\tilde{x}}}|^3}=\frac{1}{|{\bf{\tilde{x}}}|} \text{P}_{\perp \omega}\partial_{t}{\bf{\tilde{x}}}=-\frac{1}{|{\bf{\tilde{x}}}|} \text{P}_{\perp \omega} (\partial_{z}\Pi\ \partial_t z+\partial_{t}\Pi).
\end{align}
From \eqref{eq:defQ}, we derive
\begin{align}
\partial_t z=&\Big( u_t+\partial_{z}u \partial_{t}z+\nabla_{\omega}^{\perp} u\cdot \partial_{t}\omega\Big) \partial_{z}\Pi\cdot\omega+u \partial_{z}^2 \Pi\cdot \omega \partial_t z+u \partial_{z}\partial_t \Pi\cdot \omega+u\partial_{z}\Pi \cdot \partial_{t}\omega\nonumber\\
=&(\partial_{z}\Pi\cdot\omega) u_t+\Big(\partial_{z}u(\partial_{z}\Pi\cdot\omega)+u \partial_{z}^2 \Pi\cdot \omega \Big)\partial_{t}z+\Big((\partial_{z}\Pi\cdot\omega)\nabla_{\omega}^{\perp}u+u\partial_{z}\Pi\Big)\cdot \partial_t\omega+u \partial_{z}\partial_t \Pi\cdot \omega\nonumber\\
=&(\partial_{z}\Pi\cdot\omega)\  u_t+B-D\ \partial_{t}z. \label{eq:zt}
\end{align}
where the term $B$ is defined as
\begin{align*}
B:=&-|{\bf{\tilde{x}}}|^{-1}
\big((\partial_{z}\Pi\cdot\omega)\nabla_{\omega}^{\perp}u+u\partial_{z}\Pi\big)\cdot \text{P}_{\perp \omega}\partial_t \Pi+u \partial_{z}\partial_t \Pi\cdot \omega.
\end{align*}
Solve for $\partial_{t}z$ to find
\begin{align}
\partial_{t}z=&u_t\frac{\partial_{z}\Pi\cdot\omega}{1+D}+\frac{B}{1+D}.
\end{align} Feed this to \eqref{eq:tomega} to obtain,
\begin{align}
\partial_{t}\omega=-\frac{1}{|{\bf{\tilde{x}}}|}(\partial_{t}u\frac{\partial_{z}\Pi\cdot\omega}{1+D}+\frac{B}{1+D}) \text{P}_{\perp \omega} \partial_{z}\Pi -\frac{1}{|{\bf{\tilde{x}}}|}\text{P}_{\perp \omega}\partial_{t}\Pi.
\end{align}
We are ready to compute $\frac{\partial}{\partial t}u$ and $\partial_{t} |{\bf{\tilde{x}}}|$
\begin{align*}
\frac{\partial}{\partial t}u=&\partial_{t}z\ \partial_{z}u+\partial_{t}\omega\cdot \nabla_{\omega}^{\perp}u+u_t\\
=&\Big(1+\partial_{z}u\frac{\partial_{z}\Pi\cdot \omega}{1+D}-\frac{1}{|\bf{\tilde{x}}|}\frac{(P_{\perp\omega}\partial_{z}\Pi\cdot \nabla_{\omega}^{\perp}u) \partial_{z}\Pi\cdot \omega}{1+D}\Big) u_t\\
&+\frac{B}{1+D}\partial_{z}u-\frac{1}{|\bf{\tilde{x}}|}\frac{\text{P}_{\perp \omega}\partial_{z}\Pi\cdot\nabla_{\omega}^{\perp}u}{1+D}B-\frac{1}{|\bf{\tilde{x}}|}\text{P}_{\perp \omega}\partial_{t}\Pi\cdot \nabla_{\omega}^{\perp}u,
\end{align*} and
\begin{align*}
\partial_t |{\bf{\tilde{x}}}|=&\omega\cdot \partial_{t}{\bf{\tilde{x}}}\\
=&-\frac{(\partial_{z}\Pi\cdot \omega)^2}{1+D}u_t-\omega\cdot \partial_{z}\Pi\frac{B}{1+D} -\omega\cdot \partial_{t}\Pi.
\end{align*}
Put things together to obtain
\begin{align}
\frac{\partial}{\partial t}(|{\bf{\tilde{x}}}|-u)=-\Big(1+Q_1\Big)\ u_t+Q_2
\end{align} where $Q_1$ and $Q_2$ are defined in \eqref{def:Q1Q2}.


\subsection{Proof of (\ref{eq:klf})}
We start with computing $\partial_{x_l}\partial_{x_k}\big(|{\bf{\tilde{x}}}|-u\big)$, $k,l\geq 2.$ Recall that we obtained in \eqref{eq:kxu} that $$\partial_{x_k}\big(|{\bf{\tilde{x}}}|-u\big)=\omega\cdot e_{k-1}-|{\bf{\tilde{x}}}|^{-1} \nabla_{\omega}^{\perp}u\cdot  e_{k-1}+\frac{E\cdot e_{k-1}}{|{\bf{\tilde{x}}}|(1+D)},$$ which is a function of $z,$ $\omega, $ and $|{\bf{\tilde{x}}}|$.
Then by the chain rule,
\begin{align}
\partial_{x_l}\partial_{x_k}\big(|{\bf{\tilde{x}}}|-u\big)
=&\partial_{x_l}\Big[\omega\cdot e_{k-1}-|{\bf{\tilde{x}}}|^{-1} \nabla_{\omega}^{\perp}u\cdot  e_{k-1}+\frac{E\cdot e_{k-1}}{|{\bf{\tilde{x}}}|(1+D)}\Big]\nonumber\\
=&\Big[\partial_{x_l}\omega \cdot \nabla_{\omega}^{\perp}+\partial_{x_l}z\ \partial_{z}+\partial_{x_l}|{\bf{\tilde{x}}}|\ \partial_{|{\bf{\tilde{x}}}|}\Big] \Big[\omega\cdot e_{k-1}-|{\bf{\tilde{x}}}|^{-1} \nabla_{\omega}^{\perp}u\cdot  e_{k-1}+\frac{E\cdot e_{k-1}}{|{\bf{\tilde{x}}}|(1+D)}\Big]\nonumber\\
=&\Omega_{l,k}+\Sigma_{l,k}
\end{align}
where, recall the identities for $\partial_{x_l}z$, $\partial_{x_l}|{\bf{\tilde{x}}}|$ and $\partial_{x_l}\omega$ in \eqref{eq:parxkz}, \eqref{eq:parxktilx} and \eqref{xkome},
and
$\Omega_{l,k}$ and $\Sigma_{l,k}$, $l,k\geq 2$ are defined in \eqref{def:OmSikl}.

Similarly for $\partial_{q}^2\big(|{\bf{\tilde{x}}}|-u\big),$
\begin{align}
\begin{split}
\partial_{q}^2\big(|{\bf{\tilde{x}}}|-u\big)=&\partial_{q}(-\partial_{z}u+\frac{L}{1+D})\\
=&\Big(\partial_{q}z\partial_{z}+\partial_{q}\omega\cdot \nabla_{\omega}^{\perp}+\partial_{q}|{\bf{\tilde{x}}}|\partial_{|{\bf{\tilde{x}}}|}\Big)\Big(-\partial_{z}u+\frac{L}{1+D}\Big)\\
=&\Omega_{1,1}+\Sigma_{1,1}
\end{split}
\end{align}where $\Omega_{1,1}$ and $\Sigma_{1,1}$ are defined in \eqref{def:OmSi11}.

And for $\partial_{q}\partial_{x_k}(|{\bf{\tilde{x}}}|-u)$, $k=2,3,4,5,$
\begin{align}
\partial_{q}\partial_{x_k}\big(|{\bf{\tilde{x}}}|-u\big)=&\Big(\partial_{q}z\partial_{z}+\partial_{q}\omega\cdot \nabla_{\omega}^{\perp}+\partial_{q}|{\bf{\tilde{x}}}|\partial_{|{\bf{\tilde{x}}}|}\Big)\Big(\partial_{x_k}z\partial_{z}+\partial_{x_k}\omega\cdot \nabla_{\omega}^{\perp}+\partial_{x_k}|{\bf{\tilde{x}}}|\partial_{|{\bf{\tilde{x}}}|} \Big)\big(|{\bf{\tilde{x}}}|-u\big)\nonumber\\
=&\Big(\partial_{q}z\partial_{z}+\partial_{q}\omega\cdot \nabla_{\omega}^{\perp}+\partial_{q}|{\bf{\tilde{x}}}|\partial_{|{\bf{\tilde{x}}}|}\Big)\Big(\omega\cdot e_{k-1}-|{\bf{\tilde{x}}}|^{-1} e_{k-1}\cdot \nabla_{\omega}^{\perp}u+\frac{E\cdot e_{k-1}}{|{\bf{\tilde{x}}}|(1+D)} \Big)\nonumber\\
=&\Omega_{1,k}+\Sigma_{1,k},
\end{align} where $\Omega_{1,k}$ and $\Sigma_{1,k}$ are defined in \eqref{def:OmSi1k}.

\section{Proof of Proposition \ref{prop:step2}}\label{sec:exponential}
We will prove Proposition \ref{prop:step2} in Subsection \ref{sec:PropStep2} below. Before that 
we improve the decay rates of various functions in \eqref{eq:secab}. Based on the proved decay rates, it is easy to improve them to $t^{-K}$ for any $K>0.$ But to prove they actually decay exponentially fast requires a different set of techniques. 

The first result is the following proposition. Recall the cutoff function $\chi_{R}$ from \eqref{eq:chiRy}.
\begin{proposition}\label{prop:exponential}
If \eqref{eq:secab} holds, then the parametrization $\Psi_{0,v}$ works in the region, 
\begin{align*}
\Big\{y\ \Big|\ |y| \leq (1+\epsilon)R(\tau)\Big\},
\end{align*}
for some small $\epsilon>0,$ and the function $v$ is of the form
\begin{align}\label{eq:3decom}
v(y,\omega,\tau)=\sqrt{6+\sum_{n=0}^2\alpha_{n}(\tau)H_n(y)+\sum_{k=0,1} \sum_{l=1}^4 \alpha_{k,l}(\tau)H_{k}(y)\omega_l+\eta(y,\ \omega,\tau)},
\end{align}where $\chi_{R}\eta$ satisfies the orthogonality conditions
\begin{align}\label{eq:OrthoREta}
    \chi_{R}\eta\perp_{\mathcal{G}} H_{n},\ H_{k}\omega_l, \ n=0,1,2;\ k=0,1;\ l=1,2,3,4.
\end{align}
If $X_0$ is sufficiently large, then when $\tau\geq X_0,$ the $\omega$-dependent components satisfy the estimates
\begin{align}
\begin{split}\label{eq:Xinfty}
\sum_{k=0,1}\sum_{l=1}^4|\alpha_{k,l}(\tau)|\leq & X_{0}^{-\frac{1}{2}} e^{-\frac{3}{10}(\tau-X_0)},\\
\sum_{j+|i|\leq 2}\Big\|\partial_{y}^j \nabla_{E}^i\chi_{R}(1-K_1)\eta(\cdot,\tau)\Big\|_{\mathcal{G}}\leq&  X_{0}^{-\frac{1}{2}}  e^{-\frac{3}{10}(\tau-X_0)};
\end{split}
\end{align}
for the components independent of $\omega$, but is odd in the $y$-variable,
\begin{align}\label{eq:alpha2K2phi}
    |\alpha_1(\tau)|+ \sum_{j+|i|\leq 2}\Big\|\partial_{y}^j \nabla_{E}^i \chi_{R}K_2\eta(\cdot,\tau)\Big\|_{\mathcal{G}}\lesssim &  e^{-\frac{3}{10}(\tau-X_0)};
\end{align} and for the other parts
\begin{align}
\begin{split}\label{eq:alpha12Eta}
    |\alpha_{0}(\tau)|+|\alpha_{2}(\tau)|+\sum_{j+|i|\leq 2}\Big\|\partial_{y}^j \nabla_{E}^i \chi_{R}\eta(\cdot,\tau)\Big\|_{\mathcal{G}}\lesssim    e^{-\frac{3}{10}(\tau-X_0)}.
\end{split}
\end{align}
\end{proposition}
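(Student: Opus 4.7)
The plan is to upgrade the polynomial estimates (\ref{eq:secab}) all the way to the exponential rate $e^{-\frac{3}{10}(\tau-X_0)}$ in (\ref{eq:Xinfty})--(\ref{eq:alpha12Eta}) by a bootstrap that first iterates to arbitrary polynomial decay and then ``jumps'' to exponential using the super-polynomial smallness of the cutoff error.

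First, set-up. Existence of the decomposition (\ref{eq:3decom}) with $\chi_R\eta$ satisfying the orthogonality conditions (\ref{eq:OrthoREta}) follows from a fixed-point argument (Lemma \ref{LM:fixedPoint} below is the prototype): the map $\alpha\mapsto \langle\chi_R(v^2-6-\sum\alpha_{n,k,l}H_n f_{k,l}),H_if_{j,h}\rangle_{\mathcal{G}}$ is a perturbation of a diagonal map because $\{H_n f_{k,l}\}$ are mutually $\mathcal{G}$-orthogonal. Comparing with Proposition~\ref{prop:mostgeneric} then gives the preliminary estimates
$$|\alpha_{n,k,l}(\tau)|+\sum_{j+|i|\leq 2}\|\partial_y^j\nabla_E^i\chi_R\eta(\cdot,\tau)\|_{\mathcal{G}}\lesssim\tau^{-2}.$$
Next, derive the governing equations by projecting (\ref{eq:tilv}) onto the finite-dimensional subspace. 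One obtains ODEs $(\frac{d}{d\tau}+\mu_{n,k})\alpha_{n,k,l}=NL_{n,k,l}$ with $\mu_{n,k}=\frac{n-2}{2}+\frac{k(k+2)}{6}$, and a PDE $\partial_\tau\chi_R\eta=-L\chi_R\eta+\chi_R(G+SN(\eta))+\mu_R(\eta)$, where $SN$ and $NL$ are quadratic-or-higher in $(\alpha,\eta,\partial Q)$ and where the cutoff error $\mu_R(\eta)$, being supported on $|y|\sim R(\tau)$, satisfies $\|\mu_R(\eta)(\cdot,\tau)\|_{\mathcal{G}}\lesssim e^{-cR^2(\tau)}=e^{-c\tau^{1+1/10}}$, which beats any pure exponential in $\tau$.

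The second step is to iterate to super-polynomial decay. If at some stage $|\alpha|+\|\chi_R\eta\|_{\mathcal{G}}\lesssim\tau^{-K}$, then since the nonlinearities are at least quadratic one has $|NL|+\|SN(\eta)\|_{\mathcal{G}}\lesssim\tau^{-2K}$. For the stable directions $\mu_{n,k}>0$ the ODE directly yields $|\alpha_{n,k,l}(\tau)|\lesssim\tau^{-2K}$; for the marginal direction $(n,k)=(2,0)$ and the unstable ones $(0,0),(0,1),(1,0)$ (with $\mu\leq 0$), combine the hypothesis $\alpha_{n,k,l}\to 0$ as $\tau\to\infty$ with the integral representation $\alpha_{n,k,l}(\tau)=-\int_\tau^\infty e^{-\mu_{n,k}(\tau-\sigma)}NL_{n,k,l}(\sigma)\,d\sigma$ to gain $\tau^{-2K+1}$; for $\chi_R\eta$ run an energy estimate against $L$ in the $\mathcal{G}$-norm, using the orthogonality (\ref{eq:OrthoREta}) to cancel contributions from the non-decaying spectral directions. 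Iterating gives decay at rate $\tau^{-M}$ for every $M$.

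Third, jump to exponential. Once all quantities decay faster than any power, so do $NL_{n,k,l}$ and $SN(\eta)$; together with the $e^{-c\tau^{1+1/10}}$ bound on $\mu_R(\eta)$, the right-hand sides of the governing equations are bounded by $e^{-\beta\tau}$ for every $\beta$. Plugging this into the same integral representations (for the marginal and unstable directions) and the same $\mathcal{G}$-norm energy estimate (for $\chi_R\eta$) produces exponential decay at any rate strictly below $\min_{\mu_{n,k}>0}\mu_{n,k}=\tfrac12$; the value $\tfrac{3}{10}$ in the statement is a convenient choice leaving room for the nonlinear self-interaction and for absorbing the $\mathcal{O}(\tau^{-1/4})$ errors coming from $L$ acting on the cutoff. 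The finer decompositions via $K_1,K_2$ in (\ref{eq:Xinfty})--(\ref{eq:alpha2K2phi}) simply isolate the subspaces whose sources inherit an extra small factor (either because they carry non-trivial $\omega$-dependence, where the factor $\frac{k(k+2)}{6}\geq\tfrac{1}{3}$ in $L_{k}$ gives additional gap, or because parity forces the lowest-order self-interaction term to vanish), producing the extra $X_0^{-1/2}$.

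The main obstacle will be the neutral direction $(n,k)=(1,1)$, where $\mu_{1,1}=0$ and no linear decay is available. There one must exploit both the integral representation $\alpha_{1,1,l}(\tau)=-\int_\tau^\infty NL_{1,1,l}(\sigma)\,d\sigma$ and the structural cancellation coming from an optimal choice of spatial origin and axis tilt (the precursor to the normal-form transformation (\ref{eq:2repara})), which removes the leading contribution to $NL_{1,1,l}$ that would otherwise obstruct exponential decay. The auxiliary technical difficulty is that the estimate is carried on the $\tau$-dependent region $|y|\leq(1+\epsilon)R(\tau)$, so all energy estimates and fixed-point arguments must be done uniformly in $\tau$ with careful handling of the commutator terms produced by $\chi_R$ in $\mu_R(\eta)$.
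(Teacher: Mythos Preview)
Your bootstrap scheme has a genuine gap at the ``jump to exponential'' step. The assertion that once all quantities decay faster than any power, the right-hand sides are bounded by $e^{-\beta\tau}$ for every $\beta$, is false: super-polynomial decay (say $e^{-\sqrt{\tau}}$) is not exponential, and quadratic nonlinearities preserve this distinction. More concretely, your iteration $K\mapsto 2K-1$ for the neutral mode $\alpha_2$ yields bounds $|\alpha_2(\tau)|\leq C_n\tau^{-K_n}$ with $K_n\sim 2^n$, but the constants $C_n$ satisfy a recursion $C_{n+1}\approx C_n^2/(3K_n)$, which can blow up; you cannot pass to a uniform exponential bound this way.

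The real obstruction sits precisely at $\alpha_2$ (the $(n,k)=(2,0)$ mode), whose equation $\dot\alpha_2=-\tfrac{1}{3}\alpha_2^2+D$ admits the slowly decaying family $\alpha_2\sim\frac{3}{\tau+c}$. The paper does \emph{not} iterate here; it uses a Filippas--Kohn dichotomy. One first closes the exponential estimate on the $\omega$-dependent and $y$-odd pieces (where a genuine spectral gap $\geq\tfrac13$ is available), so that the error $D$ in the $\alpha_2$-equation is controlled by an exponentially decaying functional $E(\alpha_2,\tau)$. Then one shows by contradiction that $\alpha_2^2(\tau)<10d\,E(\alpha_2(\tau),\tau)$ for all $\tau\geq X_0$: if this failed at some $\tau_1$, the equation would read $\dot\alpha_2=-\tfrac13\alpha_2^2(1+\delta)$ from $\tau_1$ onward, forcing $\alpha_2\sim(\tfrac13\tau)^{-1}$, which contradicts the standing hypothesis $|\alpha_2|\lesssim\tau^{-2}$ from (\ref{eq:secab}). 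The inequality $\alpha_2^2<10dE$ then feeds into a fixed-point comparison to produce the exponential rate.

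The $(1,1)$ direction is also more delicate than you indicate. There is no spectral gap, and the integral representation alone does not close. The paper introduces, for each $X\geq X_0$, an optimal coordinate (translation plus rotation) in which $\alpha_{k,l}^{(X)}(X)=0$, proves the exponential bound on $[X_0,X]$ in that frame by coupling the ODEs for $\alpha_{k,l}^{(X)}$ with the $\mathcal{G}$-energy for $(1-K_1)\eta^{(X)}$ (where the gap $\tfrac13$ is available), and then lets $X\to\infty$. This is how the extra $X_0^{-1/2}$ in (\ref{eq:Xinfty}) arises, not from a passive parity/harmonic bookkeeping.
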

Here the operators $K_1$ and $K_2$ are defined as, for any function $\xi,$
\begin{align}\label{def:K}
(K_1\xi)(y):=&\frac{1}{\int_{\mathbb{S}^3} 1\ dS}\int_{\mathbb{S}^3}\xi(y,\omega) dS;\\
    K_2(\xi)(y):=& \frac{1}{2\int_{\mathbb{S}^3} 1\ dS}\int_{\mathbb{S}^3}\Big(\xi(y,\omega)-\xi(-y,\omega)\Big)\  dS.
\end{align}

These estimates will be proved in Subsections \ref{sub:oddOmega}, \ref{sub:alpha2K2Phi} and \ref{subsec:alpha02Eta} below.

For later purpose we derive a governing equation for $v$.
Recall that the MCF and the rescaled MCF as defined as $$\Phi_{0,u}= \left[
\begin{array}{ccc}
z\\
u(z, \omega, t)\omega
\end{array}
\right]:=\sqrt{T-t} \left[
\begin{array}{ccc}
y\\
v(y, \omega,\tau)\omega
\end{array}
\right].$$ In Appendix \ref{sec:derivation} we derive a governing equation for the function $u$,
\begin{align}\label{eq:UPparametrization}
\partial_{t}u=u^{-2}\Delta_{\mathbb{S}^3}u+\partial_{x}^2u-\frac{3}{u}+N(u),
\end{align} where 
$N$ is nonlinear in terms of $u$ and defined in \eqref{eq:tildeN}.
This makes $v$ satisfy the equation
\begin{align}\label{eq:effeV}
    \partial_{\tau}v=v^{-2}\Delta_{\mathbb{S}^3}v+\partial_{y}^2v-\frac{1}{2}y\partial_{y}v-\frac{1}{2}v-\frac{3}{v}+N(v),
\end{align} with $N(v)$ defined as
\begin{align*}
N(v):=&-\frac{v^{-4}\sum_{k=1}^3(\nabla_{\omega}^{\perp} v\cdot e_{k})\Big(\nabla_{\omega}^{\perp} v \cdot \nabla_{\omega}^{\perp}(\nabla_{\omega}^{\perp} v\cdot e_{k})\Big)}
{1+v^{-2}|\nabla_{\omega}^{\perp} v|^2+(\partial_{y}v)^2}\\
&-\frac{(\partial_{y}v)^2\partial_{y}^2 v+2v^{-2}\partial_{y}v ( \nabla_{\omega}^{\perp} v\cdot 
\nabla_{\omega}^{\perp} \partial_{y}v)+v^{-3}|\nabla_{\omega}^{\perp} v|^2}{1+v^{-2}|\nabla_{\omega}^{\perp} v|^2+(\partial_{y}v)^2},
\end{align*} and 
$\{e_k\}_{k=1}^4\subset \mathbb{S}^3$ are the standard unit vectors defined as 
\begin{align}
e_{k}:=(\delta_{1,k},\ \delta_{2,k}, \delta_{3,k},\ \delta_{4,k})\in \mathbb{R}^4,
\end{align}


Before deriving a governing equation for $\eta$, we define functions $\tilde{v}$ and $q$ as
\begin{align*}
q(y,\omega,\tau):=&6+\sum_{n=0}^3\sum_{k=0,1}\sum_{l}\alpha_{n,k,l}(\tau)H_{n}(y)f_{k,l}(\omega),\\
\tilde{v}:=&q+\eta=v^2.
\end{align*} 
The decomposition $v=\sqrt{q+\eta}$ and the governing equation for $v$ in \eqref{eq:UPparametrization} imply that
\begin{align}
\begin{split}
    \partial_{\tau}\tilde{v}=&v^{-2}\Delta_{\mathbb{S}^2}\tilde{v}+\partial_{y}^2 \tilde{v}-\frac{1}{2}y\partial_{y}\tilde{v} +\tilde{v}-6-\frac{1}{2}v^{-4} |\nabla_{E}\tilde{v}|^2-\frac{1}{2}v^{-2}|\partial_{y}\tilde{v}|^2+2v N_1(v)\\
    =&\frac{1}{q}\Delta_{\mathbb{S}^2}\tilde{v}+\partial_{y}^2 \tilde{v}-\frac{1}{2}y\partial_{y}\tilde{v} +\tilde{v}-6-\frac{\eta}{v^2 q}\Delta_{\mathbb{S}^2}\tilde{v}-\frac{1}{2}v^{-4} |\nabla_{E}\tilde{v}|^2-\frac{1}{2}v^{-2}|\partial_{y}\tilde{v}|^2+2v N(v).
    \end{split}
\end{align} By this we derive
\begin{align}
\partial_{\tau}\eta=-L\eta+F(q)+SN(\eta),
\end{align} where the linear operator $L$ is defined as
\begin{align}\label{def:L}
L:=-\partial_{y}^2+\frac{1}{2}y\partial_{y}-\frac{1}{6}\Delta_{\mathbb{S}^3}-1,
\end{align}
the function $F(q)$ is independent of $\eta$ and is defined as 
\begin{align}
\begin{split}
F:=&-\partial_{\tau}q-Lq-6+\frac{6-q}{6q}\Delta_{\mathbb{S}^3}q-\frac{1}{2q}(\partial_{y}q)^2-\frac{1}{2q^2} |\nabla_{E}q|^2+2\sqrt{q} N(\sqrt{q})\\
=&F_1+F_2,
\end{split}
\end{align} and the terms $F_1$ and $F_2$ are defined as, 
\begin{align*}
F_1:=&\sum_{n=0}^3 (-\frac{d}{d\tau}+\frac{2-n}{2})\alpha_n H_n+\sum_{l=1}^{4}\sum_{k=0}^3(-\frac{d}{d\tau}+\frac{1-k}{2})\alpha_{k,l} H_{k}\omega_l-\frac{1}{12}\alpha_{2,0,1}^2(\partial_{y}H_2)^2,\\
F_2:=&\frac{6-q}{6q}\Delta_{\mathbb{S}^3}q-\frac{1}{2q^2} |\nabla_{E}q|^2-\frac{1}{2q}|\partial_{y}q|^2+\frac{1}{12}\alpha_{2,0,1}^2(\partial_{y}H_2)^2+2\sqrt{q} N(\sqrt{q}),
\end{align*}
the term $SN(\eta)$ contains terms nonlinear in terms of $\eta$, or small linear terms,
\begin{align}
\begin{split}
SN(\eta):=&\frac{1}{2q^2}|\nabla_{E}q|^2-\frac{1}{2}v^{-4} |\nabla_{E}\tilde{v}|^2+\frac{1}{2q}|\partial_{y}q|^2-\frac{1}{2}v^{-2}|\partial_{y}\tilde{v}|^2+2v N(v)-2\sqrt{q} N(\sqrt{q})\\
&+\frac{6-q}{6q}\Delta_{\mathbb{S}^3}\eta-\frac{\eta}{v^2 q}\Delta_{\mathbb{S}^2}\tilde{v}.
\end{split}
\end{align}

Impose the cutoff function $\chi_{R}$ onto both sides to obtain
\begin{align}\label{eq:effectChiREta}
\partial_{\tau}\chi_{R}\eta=-L\chi_{R}\eta+\chi_{R}\Big(F   +SN(\eta)\Big)+\mu_{R}(\eta),
\end{align}
and the function $\mu_{R}(\eta)$ is linear in $\eta,$
\begin{align*}
\mu_{R}(\eta):=&\frac{1}{2}\big(y\partial_{y}\chi_{R}\big)\eta+\big(\partial_{\tau}\chi_{R}\big)\eta-\big(\partial_{y}^2\chi_{R}\big)\eta-2\partial_{y}\chi_{R}\partial_{y}\eta.
\end{align*}


\subsection{Proof of (\ref{eq:Xinfty})}\label{sub:oddOmega}
Here we apply the technique of optimal coordinates, which was used in our previous papers \cite{GaKn2014, FGKO}. Specifically,
fix a large time $X_0$, then we find an optimal coordinate, for any time $X\geq X_0$, by translating the center and tilting the axis, so that the rescaled MCF takes the form
\begin{align}\label{eq:newCoor}
U^{(X)}\Big(\left[
\begin{array}{cc}
y\\
v^{(X)}(y,\omega,\tau) \omega
\end{array}
\right]+\bf{A}^{(X)}\Big)
\end{align}where ${\bf{A}}^{(X)}\in \mathbb{R}^5$ is a time-independent vector, $U^{(X)}$ is a time-independent unitary rotation. We choose $A^{(X)},$  $U^{(X)}$ such that $v^{(X)}$ can be decomposed to the form
\begin{align}\label{eq:optimalTra}
v^{(X)}(y,\omega,\tau)=\sqrt{6+ \sum_{n=0}^2\alpha^{(X)}_n(\tau)H_n(y) +\sum_{k=0,1}\sum_{l=1 }^4 \alpha^{(X)}_{k,l}(\tau) H_{k}(y)\omega_l+\eta^{(X)}(y,\omega,\tau)}
\end{align} where $\eta^{(X)}$ satisfies the orthogonality conditions
\begin{align*}
\chi_{R}\eta^{(X)}\perp_{\mathcal{G}} \ H_n, \ H_{k}\omega_l, \ n=0,1,2; \ k=0,1;\ l=1,2,3,4, 
\end{align*} and at the time $\tau=X$, 
\begin{align}
\alpha_{k,l}^{(X)}(X)=0.\label{eq:optimal}
\end{align} These can be achieved since the directions $\omega_l$, $y\omega_l$ control center of the coordinate and tilts of axis.

Since our goal is to find decay rates for $\alpha_{n,k,l}$ and $\chi_{R}\eta$ through the decay rates for $\alpha_{n,k,l}^{(X)}$ and $\chi_{R}\eta^{(X)}$, we need to compare these functions. A preliminary results is the following:
\begin{lemma}\label{LM:odd} 
There exists a constant $C$ such that for any $X\geq X_0$ and any time $\tau\in [X_0,X]$ 
\begin{align}
\sum_{k=0}^{3}\sum_{l=1}^4 |\alpha_{k,l}^{(X)}(\tau)|+\sum_{n=0}^3 |\alpha_{n}^{(X)}(\tau)|+
\Big\|\langle y\rangle^{-3}\chi_{R}\eta^{(X)}(\cdot,\tau)\Big\|_{\infty}\leq & C\tau^{-2}
\end{align}
For any fixed $\tau\in [X_0,\infty)$ we have
\begin{align}
\begin{split}\label{eq:limitCoor}
\lim_{X\rightarrow\infty}\sum_{k=0,1}\sum_{l=1}^4\Big|\alpha_{k,l}^{(X)}(\tau)-\alpha(\tau)\Big|+&\sum_{n=0}^2\Big|\alpha^{(X)}_n(\tau)-\alpha_{n}(\tau)\Big|=0,\\
\lim_{X\rightarrow \infty}\eta^{(X)}(\cdot,\tau)=&\eta(\cdot,\tau).
\end{split}
\end{align}
\end{lemma}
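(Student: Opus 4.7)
The plan is to treat the optimal coordinate change $({\bf A}^{(X)}, U^{(X)})$ as a perturbation of the identity, of size controlled by the $\tau^{-2}$ decay of the original coefficients $\alpha_{k,l}(\tau)$. First I would establish existence of $({\bf A}^{(X)}, U^{(X)})$ realizing the orthogonality condition \eqref{eq:optimal} by a standard implicit function theorem argument, of the type used in \cite{GaKn2014,ColdingMiniUniqueness}: the map sending $({\bf A}, U)$ near $(0, I)$ to the tuple $\bigl(\alpha^{(X)}_{0,l}(X), \alpha^{(X)}_{1,l}(X)\bigr)_{l=1,\ldots,4} \in \mathbb{R}^{8}$ is a local diffeomorphism, because infinitesimally, translating the center by ${\bf A}\in\mathbb{R}^4$ changes $\alpha_{0,l}$ at first order like $-\tfrac{2\sqrt{6}}{\|\omega_l\|_{\mathcal{G}}^{2}}A_l$, and tilting the axis by a small rotation changes $\alpha_{1,l}$ at first order by a nonzero constant multiple of the corresponding angle. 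Since $|\alpha_{k,l}(X)| \lesssim X^{-2}$ by \eqref{eq:secab}, the implicit function theorem produces $({\bf A}^{(X)}, U^{(X)})$ with
\begin{align*}
\bigl| {\bf A}^{(X)} \bigr| + \bigl| U^{(X)} - I \bigr| \;\lesssim\; \sum_{k=0,1}\sum_{l=1}^{4}\bigl|\alpha_{k,l}(X)\bigr| \;\lesssim\; X^{-2}.
\end{align*}

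Next I would compare the shifted decomposition \eqref{eq:optimalTra} to the original decomposition \eqref{eq:3decom} by Taylor expanding the parametrization \eqref{eq:newCoor} in the small parameter $({\bf A}^{(X)}, U^{(X)}-I)$. Since $v \approx \sqrt{6}$ and the original $\alpha_{n,k,l}$ and $\eta$ are already small, the expansion is clean: the difference between the shifted and unshifted expansion coefficients and remainders is bounded linearly by the size of the shift, i.e.
\begin{align*}
\sum_{n=0}^{2}\bigl|\alpha^{(X)}_n(\tau)-\alpha_n(\tau)\bigr| + \sum_{k=0,1}\sum_{l=1}^{4}\bigl|\alpha^{(X)}_{k,l}(\tau)-\alpha_{k,l}(\tau)\bigr| + \bigl\| \chi_R(\eta^{(X)}-\eta)(\cdot,\tau) \bigr\|_{\mathcal{G}} \;\lesssim\; X^{-2}
\end{align*}
for all $\tau \in [X_0, X]$. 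Combined with the original estimate $|\alpha_{n,k,l}(\tau)|+\|\chi_R\eta(\cdot,\tau)\|_{\mathcal{G}} \lesssim \tau^{-2}$ and the inequality $X^{-2}\le \tau^{-2}$ for $\tau\le X$, this gives the uniform $\tau^{-2}$ bound in the $\mathcal{G}$-norm (and on the coefficients) stated in the lemma.

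For the weighted pointwise bound $\|\langle y \rangle^{-3}\chi_R\eta^{(X)}(\cdot,\tau)\|_\infty \lesssim \tau^{-2}$, I would upgrade the $\mathcal{G}$-norm estimate by studying the governing equation of $\chi_R\eta^{(X)}$ analogous to \eqref{eq:effectChiREta} and invoking the propagator estimate Lemma \ref{LM:frequencyWise}: the orthogonality of $\chi_R\eta^{(X)}$ to the spherical harmonics $H_n$ and $H_k\omega_l$ with $(n,k) \in \{(0,*),(1,*),(2,0),(0,1),(1,1)\}$ places it in the range of the spectral projection to which the decay estimate \eqref{eq:propagatorMPro} applies; the pointwise input at the initial time can be obtained from the preliminary pointwise estimate \eqref{eq:pointWiseR} together with smoothing of the rescaled MCF, and the resulting weighted $L^\infty$ bound absorbs the polynomial loss against $\langle y\rangle^{-3}$.

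Finally, the convergence \eqref{eq:limitCoor} is immediate from the perturbative comparison in the previous step: as $X \to \infty$ one has $({\bf A}^{(X)}, U^{(X)}) \to (0, I)$, so the coefficient and remainder differences of order $X^{-2}$ vanish, leaving $\alpha^{(X)}_{n,k,l}(\tau) \to \alpha_{n,k,l}(\tau)$ and $\eta^{(X)}(\cdot,\tau) \to \eta(\cdot,\tau)$ for each fixed $\tau$. The main obstacle is the bookkeeping in the Taylor expansion step: one must verify that, to leading order, translation acts nondegenerately only on the $\omega_l$-directions and rotation acts nondegenerately only on the $y\omega_l$-directions (so the implicit function theorem map is invertible), while all cross-coupling into other spectral components and into the remainder $\eta^{(X)}$ is of the same smallness as $|({\bf A}^{(X)}, U^{(X)}-I)|$ and hence controlled by $X^{-2}$.
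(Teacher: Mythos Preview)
Your proposal is correct and follows essentially the same approach as the paper. The paper's proof is extremely terse: it simply observes that the original coefficients satisfy $\sum|\alpha_n|+\sum|\alpha_{k,l}|+\|\chi_R\eta\|_{\mathcal{G}}\lesssim\tau^{-2}$, infers $|U^{(X)}-I|+|{\bf A}^{(X)}|\lesssim X^{-2}$, and then declares that ``these obviously imply the desired estimates''; you have supplied the implicit-function-theorem and Taylor-expansion bookkeeping that underlies that sentence, and your use of the propagator estimate for the weighted $L^\infty$ bound is consistent with the techniques the paper imports from \cite{GZ2017,GZ2018} without spelling out here.
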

\begin{proof}
Recall that we have that
\begin{align}
\sum_{n}|\alpha_n|+\sum_{k,l}|\alpha_{k,l}(\tau)|+\|\chi_{R}\eta(\cdot,\tau)\|_{\mathcal{G}}\lesssim & \tau^{-2}.
\end{align} Thus by \eqref{eq:newCoor}, the difference between the coordinate chosen for the time $\tau=X$ and the one chosen for $\tau=\infty$ is of the order $X^{-2}$
\begin{align}
|U^{(X)}-Id|,\ \|{\bf{A}^{(X)}}\|\lesssim X^{-2}.
\end{align} 

These obviously imply the desired estimates.

\end{proof}

The next result is an important step since it provides exponential decay rates.
\begin{proposition}\label{prop:step1}
If $X_0$ is sufficiently large, then for any fixed $X\geq X_0$, and for any $\tau\in [X_0,X]$,
\begin{align}
\begin{split}\label{eq:exponential}
\sum_{k=0,1}\sum_{l=1}^4|\alpha_{k,l}^{(X)}(\tau)|+\sum_{j+|i|\leq 2}\Big\|\partial_{y}^j \nabla_{E}^i\chi_{R}(1-K_1)\eta^{(X)}(\cdot,\tau)\Big\|_{\mathcal{G}}\leq & X_{0}^{-\frac{1}{2}} e^{-\frac{3}{10}(\tau-X_0)},
\end{split}
\end{align}
\end{proposition}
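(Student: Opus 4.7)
The strategy is a majorant/bootstrap argument on the interval $[X_0,X]$, exploiting three structural facts. First, the optimal-coordinate condition $\alpha_{k,l}^{(X)}(X)=0$, $k=0,1$, $l=1,\dots,4$, furnishes favorable \emph{final} data for the unstable/neutral directions $\omega_l$ and $y\omega_l$, so we may integrate the ODEs for $\alpha_{k,l}^{(X)}$ backward from $\sigma=X$. Second, the projection $1-K_1$ removes the $\omega$-average, so $\chi_R(1-K_1)\eta^{(X)}$ lives in spherical-harmonic modes $f_{k,l}$ with $k\geq 1$; combined with the imposed orthogonalities $\chi_R\eta^{(X)}\perp_{\mathcal{G}} H_i\omega_l$ for $i=0,1$, this places it in a subspace on which $-L$ (after Gaussian conjugation) has spectral gap exceeding $\frac{1}{3}$: the smallest surviving eigenvalue of $\frac{n-2}{2}+\frac{k(k+2)}{6}$ is $\frac{1}{3}$, attained at $(n,k)=(0,2)$, and $\frac{3}{10}<\frac{1}{3}$ provides the slack. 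Third, hypothesis (\ref{eq:secab}) gives $|\alpha_n|,|\alpha_{k,l}|,\|\chi_R\eta\|_{\mathcal{G}}\lesssim\tau^{-2}$, bounding all ``background'' coefficients quantitatively, and Lemma \ref{LM:odd} transfers these bounds to the $X$-dependent quantities uniformly in $X$.

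Concretely, introduce the majorant
\begin{align*}
M(\tau):=\sup_{X_0\leq s\leq\tau}e^{\frac{3}{10}(s-X_0)}\Bigl[\sum_{k=0,1}\sum_{l=1}^{4}|\alpha_{k,l}^{(X)}(s)|+\sum_{j+|i|\leq 2}\bigl\|\partial_y^j\nabla_E^i\chi_R(1-K_1)\eta^{(X)}(\cdot,s)\bigr\|_{\mathcal{G}}\Bigr].
\end{align*}
Governing equations for $\alpha_{k,l}^{(X)}$ and $\chi_R(1-K_1)\eta^{(X)}$ are derived as in (\ref{eq:effectChiREta}) and (\ref{eq:BetaNKLeqn}). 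The key algebraic observation is that the $\omega$-anisotropic sources, extracted either by $1-K_1$ or by $\langle\cdot,H_k\omega_l\rangle_{\mathcal{G}}$, must contain at least one $\omega$-dependent factor, since a product of two $\omega$-independent quantities projects to zero. Bounding the remaining factors by the background estimate $\sigma^{-2}$, one obtains pointwise-in-time control of the sources by
\begin{align*}
C\sigma^{-2}e^{-\frac{3}{10}(\sigma-X_0)}M(\tau)+Ce^{-\frac{3}{5}(\sigma-X_0)}M(\tau)^2.
\end{align*}

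Duhamel's principle then yields two parallel estimates. For the ODEs, $\alpha_{k,l}^{(X)}(X)=0$ together with the backward integral representation $\alpha_{k,l}^{(X)}(\tau)=-\int_\tau^X e^{\frac{1-k}{2}(\tau-\sigma)}NL_{k,l}^{(X)}(\sigma)\,d\sigma$ gives a bound of the form $(CX_0^{-1/2}+CX_0^{-\epsilon}M(\tau)+CM(\tau)^2)e^{-\frac{3}{10}(\tau-X_0)}$; the first term tracks the a priori transfer of $\tau^{-2}$ data through the system, with loss arising from the non-exponential character of the background. For the PDE component, a propagator estimate analogous to Lemma \ref{LM:frequencyWise}, applied on the doubly-restricted subspace described above, yields decay at any rate strictly below $\frac{1}{3}$, and integration of the sources produces a bound of the same form. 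Combining these,
\begin{align*}
M(\tau)\leq CX_0^{-1/2}+CX_0^{-\epsilon}M(\tau)+CM(\tau)^2,
\end{align*}
and since Lemma \ref{LM:odd} guarantees $M(X_0)\lesssim X_0^{-1/2}$ and $M$ is continuous in $\tau$, a standard continuity bootstrap closes the estimate to $M(\tau)\leq CX_0^{-1/2}$ on $[X_0,X]$, which is (\ref{eq:exponential}).

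The main technical obstacle is to establish rigorously the propagator/spectral-gap statement underlying the PDE estimate: after imposing the cutoff $\chi_R$ (whose commutator with $L$ produces the correction $\mu_R$) and after Gaussian conjugation, the operator $-L$ acting on the subspace $\{\chi_R(1-K_1)\phi:\phi\perp_{\mathcal{G}}H_i\omega_l,\,i=0,1\}$ must have a true spectral gap exceeding $\frac{3}{10}$. In spirit this parallels Lemma \ref{LM:frequencyWise}, but here one must further verify that the small zeroth-order perturbation $(q^{-1}-\frac{1}{6})\Delta_{\mathbb{S}^3}$, the cutoff commutator, and the coupling with the isotropic sector (mediated by factors of $\alpha_2,\alpha_0,\alpha_1$ which decay only polynomially) do not erode the gap. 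Smallness of these corrections is secured by hypothesis (\ref{eq:secab}), by the algebraic cancellation noted above, and by working in the region $|y|\leq(1+\epsilon)R(\tau)$, $R(\tau)=8\tau^{1/2+1/20}$, where the Gaussian weight $e^{-y^2/8}$ comfortably absorbs the polynomial losses incurred.
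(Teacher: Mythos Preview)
Your strategy is essentially the paper's, and the three structural facts you isolate are exactly the ones that drive the argument. Two points deserve attention, however.

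First, your continuity bootstrap is run in the wrong variable. You define $M(\tau)=\sup_{X_0\le s\le\tau}(\cdots)$ and claim an inequality with $M(\tau)$ on both sides, but the backward representation $\alpha_{k,l}^{(X)}(\tau)=-\int_\tau^X e^{\frac{1-k}{2}(\tau-\sigma)}NL_{k,l}^{(X)}(\sigma)\,d\sigma$ requires source control on $[\tau,X]$, which $M(\tau)$ does not provide. The paper resolves this by bootstrapping in the \emph{endpoint} $X$: at $X=X_0$ the estimate is trivial (since $\alpha_{k,l}^{(X_0)}(X_0)=0$ and $\|\chi_R(1-K_1)\eta^{(X_0)}(\cdot,X_0)\|_{\mathcal{G}}\lesssim X_0^{-2}$), and assuming \eqref{eq:exponential} holds on $[X_0,X_{\max}]$ one feeds it into Lemma~\ref{LM:step1} to obtain the \emph{strictly better} bound $X_0^{-1}e^{-\frac{2}{5}(\tau-X_0)}$; continuity of $X\mapsto(U^{(X)},{\bf A}^{(X)})$ then extends the original bound past $X_{\max}$. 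Your argument becomes correct once you replace $M(\tau)$ by $M(X)$ and invoke continuity in $X$ rather than in $\tau$.

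Second, for the PDE component the paper does not use a propagator estimate analogous to Lemma~\ref{LM:frequencyWise}; it uses a direct energy inequality in the $\mathcal{G}$-inner product. Writing $\Psi_1=\sum_{j+|i|\le2}\|\partial_y^j\nabla_E^i\chi_R(1-K_1)\eta^{(X)}\|_{\mathcal{G}}^2$ and $\Psi_2=\sum_{k,l}|\alpha_{k,l}^{(X)}|$, one derives $\frac{d}{d\tau}\Psi_1\le-\frac{3}{5}\Psi_1+\tau^{-2}\Psi_2^2$ (the gap $\frac{3}{5}$ coming from twice the lowest surviving eigenvalue $\frac{1}{3}$, eroded by the $\mathcal{O}(\tau^{-1/4})$ perturbations you mention) and $\Psi_2(\tau)\lesssim\int_\tau^X\sigma^{-3/2}(\sqrt{\Psi_1}+\Psi_2)\,d\sigma$. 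This pair closes linearly---no $M^2$ term is needed---which makes the improvement step cleaner. Your Duhamel/propagator route would work too, but since the norm here is already $L^2$-based the energy method is the natural and simpler choice.
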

We will prove this proposition in the rest of the subsection.

By letting $X$ go to $\infty,$ this proposition and \eqref{eq:limitCoor} imply the desired \eqref{eq:Xinfty}.

To prepare for proving Proposition \ref{prop:step1}, we define two functions, 
\begin{align}
\begin{split}\label{eq:psi12}
\Psi_1(\tau):=&\sum_{i+|j|\leq 2}\Big\|\partial_{y}^{i}\nabla_{E}^j\chi_{R}(1-K_1)\eta^{(X)}(\cdot,\tau)\Big\|_{\mathcal{G}}^2,\\
\Psi_2(\tau):=&\sum_{k=0,1}\sum_{l=1}^4|\alpha^{(X)}_{k,l}(\tau)|.
\end{split}
\end{align}
The result is the following:
\begin{lemma}\label{LM:step1} Suppose that $X_0$ is sufficiently large. There exists a constant $C>0$ such that for any $X\geq X_0$ and any $\tau\in [X_0,X]$ the following estimates hold,
\begin{align}
\frac{d}{d\tau}\Psi_1(\tau)\leq -\frac{3}{5}\Psi_1(\tau)+\tau^{-2} \Psi_2^2(\tau),\label{eq:govPsi1}
\end{align}
and for $\alpha_{n,k},$ $n=0,1,$ 
\begin{align}
\begin{split}\label{eq:alphaOdd}
\sum_{l=1}^4|\alpha^{(X)}_{0,k}(\tau)|\leq &\int_{\tau}^{X}e^{-\frac{1}{2}(\sigma-\tau)} \sigma^{-\frac{3}{2}} \Big(\sqrt{\Psi_1(\sigma)}+\Psi_2(\sigma)\Big) \ d\sigma,\\
\sum_{l=1}^4 |\alpha^{(X)}_{1,k}(\tau)|\leq &\int^{X}_{\tau} \sigma^{-\frac{3}{2}} \Big(\sqrt{\Psi_1(\sigma)}+\Psi_2(\sigma)\Big)\ d\sigma.
\end{split}
\end{align}
\end{lemma}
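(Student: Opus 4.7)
The plan is to prove \eqref{eq:govPsi1} by an $\mathcal{G}$-energy estimate on the projected, cut-off quantity $\chi_R(1-K_1)\eta^{(X)}$, and to prove \eqref{eq:alphaOdd} by integrating backwards in $\tau$ the ODE that the orthogonality conditions \eqref{eq:OrthoREta} impose on $\alpha_{k,l}^{(X)}$, using the normalization $\alpha_{k,l}^{(X)}(X)=0$ coming from the optimal-coordinate choice \eqref{eq:optimal}.

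To obtain \eqref{eq:govPsi1}, I apply $1-K_1$ to \eqref{eq:effectChiREta}. Since the range of $K_1$ is exactly the kernel of $\Delta_{\mathbb{S}^3}$, the operator $K_1$ commutes with $L$, $\chi_R$, $\mu_R$, and every $\partial_y^i$, and the purely scalar-in-$\omega$ part of $F_1$ (the $\alpha_n H_n$ sum and the $\alpha_{2,0,1}^2(\partial_y H_2)^2$ term) is annihilated; the remaining $F_1$-piece lies in $\mathrm{span}\{H_k\omega_l : k=0,1\}$ and hence is $\mathcal{G}$-perpendicular to $\chi_R(1-K_1)\eta^{(X)}$ by \eqref{eq:OrthoREta}. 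Applying $\partial_y^i\nabla_E^j$ with $i+|j|\leq 2$, using the commutation rule $\partial_y^i L=(L+i/2)\partial_y^i$, and pairing in $\mathcal{G}$, I obtain
\begin{align*}
\tfrac{1}{2}\tfrac{d}{d\tau}\Psi_1 = -\!\!\sum_{i+|j|\leq 2}\!\!\bigl\langle (L+\tfrac{i}{2})\partial_y^i\nabla_E^j\chi_R(1-K_1)\eta^{(X)},\,\partial_y^i\nabla_E^j\chi_R(1-K_1)\eta^{(X)}\bigr\rangle_{\mathcal{G}} + \mathcal{E}.
\end{align*}
On the subspace of functions with spherical content $k\geq 1$ that are $\mathcal{G}$-perpendicular to $H_k\omega_l$ for $k=0,1$, the smallest eigenvalue of $L$ is $\tfrac{1}{3}$, attained at $H_0 f_{2,l}$, so the main term contributes at most $-\tfrac{2}{3}\Psi_1$. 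The error $\mathcal{E}$ collects: the commutators $[\nabla_E^j,K_1]$ and $[\partial_y^i\nabla_E^j,\chi_R]$ (the latter supported in $R\leq |y|\leq(1+\epsilon)R$, hence of order $e^{-R^2/5}$ in the Gaussian weight); the $(1-K_1)\mu_R$ boundary term (same exponential smallness); and the projected nonlinearity $\chi_R(1-K_1)(F_2+SN(\eta^{(X)}))$. Using Lemma \ref{LM:odd} to bound the latter pairing by $C(\tau^{-2}\Psi_2+\Psi_2^2+\tau^{-2}\sqrt{\Psi_1})\sqrt{\Psi_1}$, Cauchy-Schwarz gives $\mathcal{E}\leq\varepsilon\Psi_1+C_\varepsilon\tau^{-2}\Psi_2^2$; choosing $\varepsilon$ so that $2(\tfrac{1}{3}-\varepsilon)\geq\tfrac{3}{5}$ and $X_0$ large enough to absorb residual $\tau^{-2}$-terms produces \eqref{eq:govPsi1}.

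To obtain \eqref{eq:alphaOdd}, I differentiate $\langle\chi_R\eta^{(X)},H_k\omega_l\rangle_\mathcal{G}=0$ in $\tau$, substitute \eqref{eq:effectChiREta}, and use $LH_k\omega_l=\tfrac{k-1}{2}H_k\omega_l$, yielding
\begin{align*}
\bigl(\tfrac{d}{d\tau}+\tfrac{k-1}{2}\bigr)\alpha^{(X)}_{k,l} = \tfrac{1}{\|H_k\omega_l\|_\mathcal{G}^2}\bigl\langle \chi_R(F_2+SN(\eta^{(X)}))+\mu_R(\eta^{(X)}),\, H_k\omega_l\bigr\rangle_\mathcal{G} =: NL_{k,l}(\tau).
\end{align*}
Integrating backwards from $\tau$ to $X$ using $\alpha^{(X)}_{k,l}(X)=0$: for $k=0$ the linear coefficient is $-\tfrac{1}{2}$, giving $\alpha^{(X)}_{0,l}(\tau)=-\int_\tau^X e^{-(\sigma-\tau)/2}NL_{0,l}(\sigma)\,d\sigma$; for $k=1$ the linear coefficient vanishes, giving $\alpha^{(X)}_{1,l}(\tau)=-\int_\tau^X NL_{1,l}(\sigma)\,d\sigma$. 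Since $H_k\omega_l$ is odd in $\omega$, only the $(1-K_1)$-projection of the source pairs nontrivially, and the same dominant-term analysis as for $\mathcal{E}$ yields $|NL_{k,l}(\sigma)|\lesssim\sigma^{-3/2}(\sqrt{\Psi_1(\sigma)}+\Psi_2(\sigma))$, giving \eqref{eq:alphaOdd}.

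The main technical obstacle will be twofold. First, the covariant derivatives $\nabla_E^j$ on $\mathbb{S}^3$ do not preserve the spherical-harmonic decomposition and have only bounded commutators with $K_1$ and $\Delta_{\mathbb{S}^3}$; these commutators perturb the effective spectral gap on the reduced subspace, which is the reason \eqref{eq:govPsi1} is stated with $\tfrac{3}{5}$ rather than the sharp $\tfrac{2}{3}$. Second, the combined nonlinearity $F_2+SN(\eta^{(X)})$ contains many individual terms—products of $\alpha_n$ with $\alpha^{(X)}_{k,l}$, products of two $\alpha^{(X)}_{k,l}$, and quadratic/cubic expressions in $\eta$—and each must be either absorbed into $\Psi_1$ via Cauchy-Schwarz with a small constant, or estimated directly by $\sigma^{-3/2}(\sqrt{\Psi_1}+\Psi_2)$. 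That this case analysis closes consistently relies on the uniform preliminary bound $\tau^{-2}$ from Lemma \ref{LM:odd} being available for all of $\alpha_n^{(X)}$, $\alpha^{(X)}_{k,l}$ and $\chi_R\eta^{(X)}$ simultaneously on $[X_0,X]$.
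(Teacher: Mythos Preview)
Your proposal is correct and follows essentially the same route as the paper: an energy estimate on $\chi_R(1-K_1)\eta^{(X)}$ using the spectral gap $\tfrac{1}{3}$ at $H_0 f_{2,l}$ for \eqref{eq:govPsi1}, and backward integration of the ODE for $\alpha_{k,l}^{(X)}$ from the terminal condition $\alpha_{k,l}^{(X)}(X)=0$ for \eqref{eq:alphaOdd}. One minor technical difference worth noting: the paper realizes the $\omega$-regularity in the energy via the functionals $g_{j,l}=\langle \partial_y^j\chi_R\eta_1,\,(-\Delta_{\mathbb{S}^3}+1)^l\partial_y^j\chi_R\eta_1\rangle_{\mathcal{G}}$ rather than through $\nabla_E^j$ directly; since $(-\Delta_{\mathbb{S}^3}+1)^l$ commutes with $K_1$, $L$, and $\chi_R$, this eliminates the commutator issues you flag as a technical obstacle (and in fact $\nabla_E^j K_1=0$ for $|j|\geq 1$ anyway, so your concern there is harmless).
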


This lemma will be proved in Subsection \ref{subsec:LModd} below.

Assuming this lemma holds, we prove Proposition \ref{prop:step1}.
\begin{proof}
Here we assume that $X_0$ is sufficiently large, so that Lemma \ref{LM:step1} is applicable.

We start with observing that when $X=X_0$ the proposition holds trivially since
\begin{align*}
\sum_{k=0,1}\sum_{l}|\alpha_{k,l}^{(X_0)}(X_0)|= & 0,\\
\Big\|\chi_{R}(1-K_1)\eta^{(X_0)}(\cdot,X_0)\Big\|_{\mathcal{G}}\leq & X_{0}^{-1}.
\end{align*}
Thus by continuity there exists a $X_1> X_0$ such that $\alpha_{k,l}^{(X_1)}$ and $(1-K_1)\eta^{(X_1)}$ satisfy the estimates \eqref{eq:exponential} when $\tau\in [X_0,\ X_1].$

In order to prove that \eqref{eq:exponential} hold for any $X>X_0$ we define a constant $X_{\max}\leq \infty$ as
\begin{align}
X_{\max}=\max\Big\{X\ \Big|\ X>X_0,\ \alpha_{k,l}^{(X)}\ \text{and}\ (1-K_1)\chi_{R}\eta^{(X)}\ \text{satisfy}\  \eqref{eq:exponential}\ \text{when}\ \tau\in [X_0,X]\Big\}.
\end{align}
If $X_{\max}=\infty$ then we have the desired results. 

Next we rule out the possibility $X_{\max}<\infty.$
If $X_{\max}<\infty,$ then by continuity $\alpha_{k,l}^{(X_{\max})}$ and $(1-K_1)\chi_{R}\eta^{(X_{\max})}$ satisfy \eqref{eq:exponential} when $\tau\in [X_0,X_{\max}]$.
Feed \eqref{eq:exponential} into the right hand sides of \eqref{eq:govPsi1}-\eqref{eq:alphaOdd} to obtain better estimates, specifically when $\tau\in [X_0,X_{\max}],$
\begin{align}
\begin{split}\label{eq:subBetter}
\sum_{k=0,1}\sum_{l=1}^4|\alpha_{k,l}^{(X_{\max})}(\tau)|+
\Big\|\chi_{R}(1-K_1)\eta^{(X_{\max})}(\cdot,\tau)\Big\|_{\mathcal{G}}\leq  X_{0}^{-1}  e^{-\frac{2}{5}(\tau-X_0)}.
\end{split}
\end{align}
Thus by continuity, there exists a $X_2>X_{\max}$ such that \eqref{eq:exponential} holds when $\tau\in [X_0,X_2]$. This contradicts to the definition of $X_{\max},$ thus rules out the possibility $X_{\max}<\infty.$

\end{proof}


\subsubsection{Proof of Lemma \ref{LM:step1}}\label{subsec:LModd}
Since here $X$ is fixed, to simplify the notations we will suppress the index $X$, so that
\begin{align}
\eta(y,\omega,\tau)=\eta^{(X)}(y,\omega,\tau), \ \alpha_{n}(\tau)=\alpha_{n}^{(X)}(\tau),\ \alpha_{k,l}(\tau)=\alpha_{k,l}^{(X)}(\tau).
\end{align}
Since the effective equation is independent of the coordinate system, the equation \eqref{eq:effectChiREta} applies.

We start with proving \eqref{eq:alphaOdd}. 

From the orthogonality conditions imposed on $\chi_{R}\eta$ we derive
\begin{align}\label{eq:evenN}
\Big(\frac{d}{d\tau}-\frac{k-1}{2}\Big)\alpha_{k,l}=&\frac{1}{\|H_{k}\omega_l\|_{\mathcal{G}}^2}\Big\langle \chi_{R}\Big(F_2+SN\Big), \ H_{k}\omega_l\Big\rangle_{\mathcal{G}}+o(e^{-\frac{1}{5}R^2})\nonumber\\
=&\frac{1}{\|H_{k}\omega_l\|_{\mathcal{G}}^2}\Big\langle \chi_{R}(1-K_1)\Big(F_2+SN\Big), \ H_{k}\omega_l\Big\rangle_{\mathcal{G}}+o(e^{-\frac{1}{5}R^2}),
\end{align} where we use that $\Big\langle -\frac{1}{12}\alpha_{2}^2(\partial_{y}H_2)^2,\ H_n\omega_{l}\Big\rangle_{\mathcal{G}}=0$, the term $o(e^{-\frac{1}{5}R^2})$ is from $\Big\langle \mu_{R}(\eta),\ H_{k} \omega_l\Big\rangle_{\mathcal{G}}$ since the estimates $\sum_{k=1,2}|\partial_{y}^k\eta|\ll 1$ and that $\mu_{R}(\eta)$ is supported by the set $|y|\geq R$ imply that
\begin{align}
|\Big\langle \mu_{R}(\eta),\ H_{k}\omega_l \Big\rangle_{\mathcal{G}}|\leq \int_{|y|\geq R} e^{-\frac{1}{4}y^2}\ dy\ll e^{-\frac{1}{5}R^2},
\end{align}
and in the last step we use that $(1-K_1)H_k \omega_l=H_k \omega_l$ and the operator $K_1$ is self-adjoint.

The operator $1-K_1$ forces $\alpha_{k,l}$ and $(1-K_1)\chi_{R}\eta$ to contribute. This, together with the estimates
$|\alpha_{n}(\tau)|+|\alpha_{k,l}|+\sum_{j+|i|\leq 2}\|\partial_{y}^{j}\nabla_{E}^{i}\chi_{R}\eta\|_{\mathcal{G}}\lesssim \tau^{-2}\ll \tau^{-\frac{3}{2}}$, implies that
\begin{align}
    |\Big(\frac{d}{d\tau}-\frac{k-1}{2}\Big)\alpha_{k,l}|\ll \tau^{-\frac{3}{2}}\Psi(\tau), 
\end{align}
which, together with that $\alpha_{k,l}(X)=0$, implies the desired \eqref{eq:alphaOdd}.

Next we prove \eqref{eq:govPsi1}. 

To simplify the notations we define two functions $\eta_1$ and $g_{j,l}$ as
\begin{align}
\begin{split}
\eta_1:=(1-K_1)\eta,\\
g_{j,l}:=\Big\langle \partial_{y}^j\chi_{R}\eta_1,\ & (-\Delta_{\mathbb{S}^3}+1)^l  \partial_{y}^j\chi_{R}\eta_1\Big\rangle_{\mathcal{G}}.
\end{split}
\end{align}

Impose the linear operator $1-K_1$ onto \eqref{eq:effectChiREta}, and take a $\mathcal{G}$-inner product with $\chi_{R}\eta_1$ to obtain
\begin{align}\label{eq:gilEqn}
\frac{1}{2}\frac{d}{d\tau} g_{j,l} 
=&-\Big\langle e^{-\frac{1}{8}y^2}\partial_{y}^j\chi_{R}\eta_1,\ (-\Delta_{\mathbb{S}^3}+1)^l  (\mathcal{L}+\frac{j}{2})e^{-\frac{1}{8}y^2}\partial_{y}^j\chi_{R}\eta_1\Big\rangle+\sum_{n=1}^{3}D_{n}+o(e^{-\frac{1}{5}R^2}),
\end{align} where the operator $\mathcal{L}$ is defined as
\begin{align}
    \mathcal{L}:=e^{-\frac{1}{8}y^2} L e^{\frac{1}{8}y^2}=-\partial_{y}^2-\Delta_{\mathbb{S}^3}+\frac{1}{16}y^2-\frac{1}{4}-1,
\end{align}
and $D_{n}, \ n=1,2,$ are defined as
\begin{align*}
    D_{n}:=\Big\langle \partial_{y}^k\chi_{R}\eta_1,\ (-\Delta_{\mathbb{S}^3}+1)^l (1-K_1)\partial_{y}^{k}\chi_{R} F_n   \Big\rangle_{\mathcal{G}},
\end{align*} and $D_3$ is defined as
\begin{align*}
    D_3:=\Big\langle \partial_{y}^k\chi_{R}\eta_1,\ (-\Delta_{\mathbb{S}^3}+1)^l (1-K_1)\partial_{y}^{k}\chi_{R} SN\Big\rangle_{\mathcal{G}}.
\end{align*}

We start with estimating $D_1$. Observe that $\chi_{R}\eta\perp_{\mathcal{G}}  H_k \omega_l$, $k=0,1,$ hence for any $j$, $\partial_{y}^j\chi_{R}\eta\perp_{\mathcal{G}} \partial_{y}^j H_k \omega_l$.---- Note that if $j\geq 2$, $\partial_{y}^j H_k=0.$
Thus
\begin{align*}
    \Big\langle \partial_{y}^j   \chi_{R}\eta_1,\  (-\Delta_{\mathbb{S}^3}+1)^l \partial_{y}^j H_k \omega_l\Big\rangle_{\mathcal{G}}=(\frac{3}{2})^l\Big\langle \partial_{y}^j   \chi_{R}\eta,\   \partial_{y}^j H_k \omega_l\Big\rangle_{\mathcal{G}}=0.
\end{align*} This directly implies that 
\begin{align}
|D_1|\leq \Big|\Big\langle \partial_{y}^j   \chi_{R}\eta_1,\  (-\Delta_{\mathbb{S}^3}+1)^l \partial_{y}^j\chi_{R}H_k \omega_l\Big\rangle_{\mathcal{G}}\Big|\ll  e^{-\frac{1}{10}R^2}\Big\|\partial_{y}^j\chi_{R}\eta_1\Big\|_{\mathcal{G}}.
\end{align}

For $D_2$ the operator $1-K_1$ forces $\alpha_{k,l}$ to contribute. Since $F_2$ is nonlinear in terms of $\alpha_{n}$ and $\alpha_{k,l}$, and that $|\alpha_{n}(\tau)|+|\alpha_{k,l}(\tau)|\lesssim \tau^{-2}\ll \tau^{-\frac{3}{2}}$, 
\begin{align}
   |D_2|\ll &\tau^{-\frac{3}{2}}\sum_{k=0,1}\sum_{l}|\alpha_{k,l}|\Big\|\partial_{y}^j\chi_{R}\eta_1\Big\|_{\mathcal{G}}\leq \tau^{-\frac{3}{2}}\Psi_2 \Big\|\partial_{y}^j\chi_{R}\eta_1\Big\|_{\mathcal{G}}
\end{align} 

For $D_3$, for some terms we need to integrate by parts, otherwise just compute directly,
\begin{align}
\begin{split}\label{eq:3NL}
D_3\ll & \tau^{-\frac{1}{4}} \sum_{i\leq j}\sum_{|n|\leq |l|}\Big\langle e^{-\frac{1}{8}y^2}\partial_{y}^{i}\chi_{R}\eta_1,  \ (-\Delta_{\mathbb{S}^3}+1)^{n}\Big(-\Delta_{\mathbb{S}^3}-\partial_{y}^2+\frac{1}{16}y^2+1\Big) e^{-\frac{1}{8}y^2}\partial_{y}^{i}\chi_{R}\eta_1\Big\rangle\\
&+\Big(\tau^{-\frac{3}{2}}\Psi_2+e^{-\frac{1}{10} R^2} \Big) \Big\|(-\Delta_{\mathbb{S}^3}+1)^{\frac{l}{2}}\partial_{y}^j\chi_{R}\eta_1\Big\|_{\mathcal{G}}.
\end{split}
\end{align} 
Here the factor $\tau^{-\frac{1}{4}}$ is from that $|\partial_{y}^{j}\nabla_{E}^{l}\eta(\cdot,\tau)|\ll \tau^{-\frac{1}{4}}$ and $|\alpha_{n}(\tau)|+|\alpha_{k,l}(\tau)|\lesssim \tau^{-2}\ll \tau^{-\frac{1}{4}}.$

Returning to \eqref{eq:gilEqn}, we collect the estimates above to obtain,
\begin{align}
    \frac{1}{2}\frac{d}{d\tau}\Psi_1\leq & -\Big(1-\tau^{-\frac{1}{4}}\Big) \sum_{j,l}\Big\langle e^{-\frac{1}{8}y^2}\partial_{y}^j\chi_{R}\eta_1,\ (-\Delta_{\mathbb{S}^3}+1)^l  (\mathcal{L}+\frac{j}{2})e^{-\frac{1}{8}y^2}\partial_{y}^j\chi_{R}\eta_1\Big\rangle\nonumber\\
    &\hspace{1cm}+\Big(\tau^{-\frac{3}{2}}\Psi_2+e^{-\frac{1}{10} R^2} \Big) \sqrt{\Psi_1}\nonumber\\
    \leq & -\frac{1-\tau^{-\frac{1}{4}}}{3} \Psi_1 +\Big(\tau^{-\frac{3}{2}}\Psi_2+e^{-\frac{1}{10} R^2} \Big) \sqrt{\Psi_1},\label{eq:geqn}
\end{align}
where, we use that, by the orthogonality conditions enjoyed by $\chi_{R}\eta$,
$$\Big\langle e^{-\frac{1}{8}y^2}\partial_{y}^j\chi_{R}\eta_1,\ (-\Delta_{\mathbb{S}^3}+1)^l  (\mathcal{L}+\frac{j}{2})e^{-\frac{1}{8}y^2}\partial_{y}^j\chi_{R}\eta_1\Big\rangle\geq \frac{1}{3} g_{j,l},$$ the constant $\frac{1}{3}$ is sharp since it is the eigenvalue of eigenfunctions $e^{-\frac{1}{8}y^2}H_{0}f_{2,l}$, see \eqref{eq:specOmega} and \eqref{eq:specL0}.

What is left is to apply the Schwartz inequality to obtain the desired \eqref{eq:govPsi1}.

\subsection{Proof of (\ref{eq:alpha2K2phi}) }\label{sub:alpha2K2Phi}
We start with deriving effective equations for the functions $\alpha_1$ and $\phi$, which is defined as $$\phi(\tau):=\sum_{j\leq 2}\Big\|\partial_{y}^j\chi_{R}K_2\eta(\cdot,\tau)\Big\|_{\mathcal{G}}.$$ Similar to deriving \eqref{eq:evenN} and \eqref{eq:geqn}, we derive, for some small positive constant $\epsilon_0\ll 1$ and some function $g$ satisfying $|g(\tau)|\leq \epsilon_0$,
\begin{align}
\begin{split}\label{eq:PhiAlpha}
    \Big|(\frac{d}{d\tau}+\frac{1}{3})\phi\Big|\leq \epsilon_0 \Big[\phi+|\alpha_1|+e^{-\frac{3}{10}(\tau-X_0)}\Big],\\
    \Big|\Big(\frac{d}{d\tau}-\frac{1}{2}-g\Big)\alpha_1\Big|\leq \epsilon_0 \Big(\phi+e^{-\frac{3}{10}(\tau-X_0)}\Big).
\end{split}
\end{align} Here the first equation holds piecewisely, since $\phi$ might not be differentiable at $\tau$ when $\phi(\tau)=0$.

We rewrite the first equation as
\begin{align}
    \phi(\tau)\leq & e^{-\frac{1-3\epsilon_0}{3}(\tau-X_0)}\phi(X_0)+\epsilon_0\int_{X_0}^{\tau}e^{-\frac{1-3\epsilon_0}{3}(\tau-\sigma)} \Big(|\alpha_1(\sigma)|+e^{-\frac{3}{10}(\sigma-X_0)}\Big)\ d\sigma\nonumber\\
    \leq &100  e^{-\frac{3}{10}(\tau-X_0)} \tilde\epsilon_0+\epsilon_0\int_{X_0}^{\tau}e^{-\frac{3}{10}(\tau-\sigma)} |\alpha_1(\sigma)|\ d\sigma,\label{eq:phitau}
\end{align} where $\tilde{\epsilon}_0$ is defined as $\tilde\epsilon_0:=\phi(X_0)+\epsilon_0\ll 1;$
and for the second, since $\lim_{\tau\rightarrow \infty}\alpha_1(\tau)= 0$,
\begin{align}
|\alpha_1(\tau)|\leq & \epsilon_0\int_{\tau}^{\infty}e^{\frac{1-2\epsilon_0}{2}(\tau-\sigma)}\Big(\phi(\sigma)+e^{-\frac{3}{10}(\sigma-X_0)}\Big)\ d\sigma\leq 5\epsilon_0 e^{-\frac{3}{10}(\tau-X_0)}+\epsilon_0\int_{\tau}^{\infty}e^{\frac{3}{10}(\tau-\sigma)}\phi(\sigma)\ d\sigma.\label{eq:alpha1De}
\end{align}
Emerge \eqref{eq:phitau} and \eqref{eq:alpha1De} into one inequality,
\begin{align}
    \phi(\tau)\leq & 200 e^{-\frac{3}{10}(\tau-X_0)} \tilde\epsilon_0 +\epsilon_0^2 \int_{X_0}^{\tau}e^{-\frac{3}{10}(\tau-\sigma)} \int_{\sigma}^{\infty} e^{\frac{3}{10}(\sigma-\sigma_1)}\phi(\sigma_1)\ d\sigma_1 d\sigma\nonumber\\
    =&f(\tau)+\epsilon_0^2 H(\phi)(\tau),\label{eq:contraction}
\end{align} where $f$ and $H$ are a function and a linear operator, they are naturally defined.
By iteration,
\begin{align}
\phi(\tau)\leq f(\tau)+\epsilon_0^2 H(f)(\tau)+\epsilon_0^4 H(H(\phi))(\tau).
\end{align} Iterating infinitely many times, we obtain that, since $\lim_{n\rightarrow \infty}\epsilon_0^{n}=0,$ $$\phi(\tau)\leq \psi(\tau)$$ with $\psi$ being the unique solution to the equation
\begin{align}
    \psi=f(\tau)+\epsilon_0^2 H(\psi)
\end{align} with condition $\psi(\tau)\lesssim \tau^{-2}$ when $\tau\geq X_0$. 

On the other hand this equation has only one solution, and it decays exponentially fast, 
\begin{align}
    \psi(\tau)\leq 400 e^{-\frac{3}{10}(\tau-X_0)} \tilde\epsilon_0.
\end{align}

This and \eqref{eq:alpha1De} imply the desired results
\begin{align}
    \phi(\tau)+|\alpha_1(\tau)|\ll e^{-\frac{3}{10}(\tau-X_0)}.
\end{align}


\subsection{Proof of (\ref{eq:alpha12Eta}) }\label{subsec:alpha02Eta}

\begin{proof}
In the previous subsections we improved estimates for the part depending on $\omega$, and that odd in the $y$-variable. Now we consider the remaining parts, specifically, $\alpha_0,\ \alpha_2$ and $\eta$. The present problem is very similar to the blowup problem of nonlinear heat equation, thus the ideas in \cite{FK1992} become relevant.

We derive the following equations for $\alpha_0$, $\alpha_2$ and $\Big\|\chi_{R}\eta(\cdot,\tau)\Big\|_{\mathcal{G}}^2$: for some constants $c$ and $\epsilon_1\ll 1,$
\begin{align}
\Big| \frac{d}{d\tau}\alpha_2+\frac{1}{3} \alpha_2^2\Big|\leq & c D, \label{eq:effAlpha2}\\
|\frac{d}{d\tau}\alpha_0-\alpha_0|\leq & c\Big(|\alpha_0|^2+|\alpha_2|^2+D\Big),\label{eq:alpha0}\\
\Big\|\chi_{R}\eta(\cdot,\tau)\Big\|_{\mathcal{G}}^2\leq & c\Big[ e^{-\frac{3}{5}(\tau-X_0)} \Big\|\chi_{R}\eta(\cdot,X_0)\Big\|_{\mathcal{G}}^2+\int_{X_0}^{\tau}e^{-\frac{3}{5}(\tau-\sigma)} \big(\alpha_2^4(\sigma)+\epsilon_1 e^{-\frac{3}{10}(\sigma-X_0)}\big)\ d\sigma\Big].\label{eq:etaG}
\end{align} Here the function $D$ is defined in terms of $\alpha_2$ and $\|\chi_R\eta\|_{\mathcal{G}}$,
\begin{align}\label{def:D}
D:=|\alpha_2|^3(\tau)+\epsilon_1 e^{-\frac{3}{10}(\tau-X_0)}+\Big(|\alpha_2|(\tau)+e^{-\frac{3}{10}(\tau-X_0)}\Big)\|\chi_{R}\eta\|_{\mathcal{G}}+\|\chi_{R}\eta\|_{\mathcal{G}}^{\frac{19}{20}}.
\end{align}

The focus is to study $\alpha_2$. To simplify the problem, we emerge \eqref{eq:effAlpha2} and \eqref{eq:etaG} into one ``autonomous" inequality and find, for some large constant $d$,
\begin{align}
    \Big| \frac{d}{d\tau}\alpha_2+\frac{1}{3} \alpha_2^2\Big|\leq d E(\alpha_2,\tau),\label{eq:dealpha2}
\end{align} where the function $E$ only depends on $\alpha_2$ and $\tau$, and is defined as,
\begin{align*}
    &E(\alpha_2,\tau)\\
    :=&\Big(|\alpha_2|(\tau)+e^{-\frac{3}{10}(\tau-X_0)}\Big)\Big[ e^{-\frac{3}{5}(\tau-X_0)} \Big\|\chi_{R}\eta(\cdot,X_0)\Big\|_{\mathcal{G}}^2+\int_{X_0}^{\tau}e^{-\frac{3}{5}(\tau-\sigma)} \Big(\alpha_2^4(\sigma)+\epsilon_1 e^{-\frac{3}{10}(\sigma-X_0)}\Big)\ d\sigma\Big]^{\frac{1}{2}}\\
    &+|\alpha_2|^3(\tau)+\epsilon_1 e^{-\frac{3}{10}(\tau-X_0)}\\
    &+  \Big[ e^{-\frac{3}{5}(\tau-X_0)} \Big\|\chi_{R}\eta(\cdot,X_0)\Big\|_{\mathcal{G}}^2+\int_{X_0}^{\tau}e^{-\frac{3}{5}(\tau-\sigma)} \Big(\alpha_2^4(\sigma)+\epsilon_1 e^{-\frac{3}{10}(\sigma-X_0)}\Big)\ d\sigma\Big]^{\frac{7}{8}}.
\end{align*}

Now we are ready to improve the decay rates. We claim that, if $X_0$ is sufficiently large, then
\begin{align}
    \alpha_2^{2}(\tau)<  10 d E(\alpha_2(\tau),\tau)\ \text{for all}\ \tau\geq X_0.\label{eq:keycri}
\end{align} 

Suppose this holds, then we prove the desired results by observing 
\begin{align}
|\alpha_2(\tau)|\leq \alpha_2^{*}(\tau)\label{eq:alalstar}
\end{align} where $\alpha_2^*>0$ is the solution to the following evolution equation, for $\tau\geq X_0,$
\begin{align}
\alpha_2^*(\tau)=&10d E (\alpha_2^*(\tau),\tau),
\end{align} with the initial condition,
\begin{align*}
\alpha_2^*(X_0)=&10d E(|\alpha_2(X_0)|,X_0)\ll 1.
\end{align*} By a standard application of the fixed point theorem we prove, for some $C>0$, 
\begin{align}
\alpha_2^*(\tau)\leq C e^{-\frac{3}{10}(\tau-X_0)}.
\end{align} This, together with \eqref{eq:alalstar} and \eqref{eq:etaG}, implies that 
\begin{align}
   |\alpha_2(\tau)|+\sum_{j+|i|\leq 2}\Big\|\partial_{y}^j \nabla_{E}^i \chi_{R}\eta(\cdot,\tau)\Big\|_{\mathcal{G}}\leq e^{-\frac{3}{10}(\tau-X_0)}.
\end{align} To estimate $\alpha_0$, we rewrite \eqref{eq:alpha0} as
\begin{align*}
    |\alpha_{0}(\tau)|\lesssim  \int_{\tau}^{\infty} e^{\tau-\sigma} \Big(|\alpha_0(\tau)|^2+e^{-\frac{3}{10}(\sigma-X_0)}\Big)\ d\sigma
\end{align*} and prove the desired result
\begin{align}
    |\alpha_{0}(\tau)|\lesssim e^{-\frac{3}{10}(\tau-X_0)}.
\end{align}

What is left is to prove \eqref{eq:keycri}. We prove it by contradiction. Suppose that for some $ \tau_1\geq X_0$, 
\begin{align}
    \alpha_2^2(\tau_1)\geq 10d E(\alpha_2(\tau_1),\tau_1).
\end{align} This, \eqref{eq:dealpha2} and $|\alpha_2(\tau)|\lesssim  \tau^{-2}$, imply that, for some small constant $\delta$,
\begin{align}
\frac{d}{d\tau} \alpha_2(\tau)=-\frac{1}{3} \alpha_2^2(\tau)\Big(1+\delta\Big) \ \text{at the time}\ \tau=\tau_1.
\end{align}
Now we have an advantage, the equation $\frac{d}{d\tau}b=-\frac{1}{3}b^2(\tau)$ can be solved exactly!
It is easy to prove that, for any time $\tau\geq \tau_1$,
 \begin{align}
 \alpha_2(\tau)=\frac{1}{\frac{1}{3}(\tau-\tau_1)+\frac{1}{\alpha_2(\tau_1)}}\big(1+o(1)\big).
 \end{align} This contradicts to that $|\alpha_2(\tau)|\lesssim \tau^{-2}.$ Thus \eqref{eq:keycri} must hold!

\end{proof}

\subsection{Proof of Proposition \ref{prop:step2}}\label{sec:PropStep2}
To simplify the notations we start the rescaled MCF from the time $\tau=X_0$, hence the estimates in Proposition \ref{prop:exponential} hold for $X_0=0.$ 

We start with a preliminary result. For any integer $M$, we define $P_{\omega, M}$ to be an orthogonal projection such that, for any function $g(\omega)=\sum_{k=0}^{\infty}\sum_l g_{k,l} f_{k,l}(\omega),$
\begin{align}\label{eq:PomegaM}
    P_{\omega, M}g=\sum_{k>M}\sum_l g_{k,l} f_{k,l}.
\end{align}
\begin{lemma}\label{LM:3mPrelim}
For any $N\geq 2$, there exist unique functions $\gamma_{n,k,l}$ such that 
\begin{align}\label{eq:decomM3AfNor}
v(y,\omega,\tau)=\sqrt{6+\sum_{n=0}^3 \sum_{k=0}^{N} \sum_{l} \alpha_{n,k,l}(\tau)H_n(y)f_{k,l}(\omega)+\eta(y,\omega,\tau)}
\end{align} and $\chi_{R}\eta$ satisfies the orthogonality conditions
\begin{align*}
\chi_{R}\eta\perp_{\mathcal{G}} H_{n} f_{k,l},\ n=0,1,2,3; \ k=0,\cdots, N.
\end{align*}
The functions satisfy the following estimates,
\begin{align}
    |\alpha_{n,k,l}(\tau)|+\|\chi_{R}\eta(\cdot,\tau)\|_{\mathcal{G}}\lesssim e^{-\frac{3}{10}\tau}.\label{eq:n3Preli}
\end{align} and for any $M\leq N$, there exist some constants $C_{M,N}$ and $\phi_{M}$ such that
\begin{align}
    |\alpha_{M,k,l}(\tau)|+\|P_{\omega, M}\chi_{R}\eta(\cdot,\tau)\|_{\mathcal{G}}\leq C_{M,N}e^{-\phi_{M} \tau}\label{eq:arbiFast}
\end{align} with $\phi_{M}\rightarrow \infty$ as $M,\ N\rightarrow  \infty.$ Here $\phi_{M}$ is independent of $N$.
\end{lemma}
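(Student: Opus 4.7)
The plan is to establish Lemma \ref{LM:3mPrelim} in three stages: first, existence and uniqueness of the decomposition \eqref{eq:decomM3AfNor}; second, the preliminary uniform decay \eqref{eq:n3Preli}; third, the sharpened frequency-localized bound \eqref{eq:arbiFast} via a bootstrap in the $\omega$-spectrum.

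For existence and uniqueness, I would mimic the argument in Lemma \ref{LM:fixedPoint}. For any candidate family $\{\alpha_{n,k,l}\}_{0\leq n\leq 3,\,0\leq k\leq N,\,l}$, define the residual $\eta$ via \eqref{eq:decomM3AfNor} and form
$$G_{i,j,h}(\alpha):=\bigl\langle \chi_{R}\eta,\ H_{i}f_{j,h}\bigr\rangle_{\mathcal{G}}, \quad 0\leq i\leq 3,\ 0\leq j\leq N.$$
Since $v^{2}-6$ is affine in $\alpha$, the Jacobian of $G$ is a Gram-like matrix whose diagonal entries are $\|H_{i}f_{j,h}\|_{\mathcal{G}}^{2}>0$ and whose off-diagonal entries are exponentially small because $(1-\chi_R)$ is supported in $|y|\gtrsim R$. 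The implicit function theorem then yields unique $\alpha_{n,k,l}$ with $G\equiv 0$.

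For the preliminary bound \eqref{eq:n3Preli}, I would compare with the decomposition of Proposition \ref{prop:exponential}, identifying $1=f_{0,1}$ and $\omega_{l}=f_{1,l}$. The old decomposition contains exactly the modes $H_{n}f_{0,1}$ ($n\leq 2$) and $H_{k}f_{1,l}$ ($k\leq 1$); the new one contains strictly more. The extra coefficients are recovered from the old remainder $\eta^{\mathrm{old}}$ by the $\mathcal{G}$-inner product with the corresponding $H_{n}f_{k,l}$, and by \eqref{eq:Xinfty}, \eqref{eq:alpha2K2phi}, \eqref{eq:alpha12Eta} each is bounded by $\|\chi_{R}\eta^{\mathrm{old}}\|_{\mathcal{G}}\lesssim e^{-3\tau/10}$. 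The new remainder $\eta$ differs from $\eta^{\mathrm{old}}$ by the extracted modes, so its $\mathcal{G}$-norm satisfies the same bound, while the overlapping coefficients agree with the old ones modulo $e^{-R^{2}/5}$ errors coming from $\chi_R$ versus $1_{|y|\leq R}$.

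For the sharpened bound \eqref{eq:arbiFast}, the key structural observation is that $L=-\partial_{y}^{2}+\tfrac{1}{2}y\partial_{y}-\tfrac{1}{6}\Delta_{\mathbb{S}^{3}}-1$ commutes with $P_{\omega,M}$ since $-\Delta_{\mathbb{S}^{3}}$ is diagonal in the basis $\{f_{k,l}\}$. Applying $P_{\omega,M}$ to \eqref{eq:effectChiREta} yields
$$\partial_{\tau}P_{\omega,M}\chi_{R}\eta=-L\,P_{\omega,M}\chi_{R}\eta+P_{\omega,M}\chi_{R}\bigl(F_{2}+SN(\eta)\bigr)+P_{\omega,M}\mu_{R}(\eta),$$
because $P_{\omega,M}F_{1}=0$ as soon as $M\geq N$. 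On the range of $P_{\omega,M}$ the operator $L$ has spectral gap at least
$$\lambda_{M}:=\frac{(M+1)(M+3)}{6}-1,$$
which tends to $\infty$ as $M\to\infty$. Since $F_{2}$ and $SN(\eta)$ are at least quadratic in quantities of size $e^{-3\tau/10}$, an energy estimate of the type \eqref{eq:geqn} combined with the Schwarz inequality gives
$$\|P_{\omega,M}\chi_{R}\eta(\cdot,\tau)\|_{\mathcal{G}}\leq C_{M,N}\,e^{-\min(\lambda_{M},\,3/5)\tau}.$$
If $\lambda_{M}>3/5$, feeding this improved bound back into $F_{2}+SN(\eta)$ roughly doubles the source decay rate, and after finitely many iterations one saturates at the ceiling $\phi_{M}=\lambda_{M}$. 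The individual $\alpha_{M,k,l}$ are recovered from the orthogonality conditions via the governing equation, inheriting the same rate.

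The main obstacle will be the cross-terms in the bootstrap: the nonlinearities mix low-$\omega$-frequency factors (which decay only at the slow rate $e^{-3\tau/10}$) with high-$\omega$-frequency factors, and one must verify that $P_{\omega,M}$ of such products genuinely inherits the faster decay of the high-frequency factor without catastrophic Sobolev loss on $\mathbb{S}^{3}$. Controlling this distribution of modes is what forces the constant $C_{M,N}$ to depend on both $M$ and $N$, but it does not degrade the rate $\phi_{M}\to\infty$.
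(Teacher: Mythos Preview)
Your approach is essentially the same as the paper's: fixed-point argument for existence and uniqueness, comparison with Proposition \ref{prop:exponential} for the preliminary bound \eqref{eq:n3Preli}, and an energy estimate exploiting the growing spectral gap of $L$ on the range of $P_{\omega,M}$ together with a bootstrap for \eqref{eq:arbiFast}. The paper's proof is itself only a sketch (it explicitly skips the details of the bootstrap), and your outline matches its structure closely.

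Two small corrections are worth noting. First, the equation \eqref{eq:effectChiREta} you invoke governs the remainder from the \emph{old} decomposition of Proposition \ref{prop:exponential}, not the new $\eta$ of \eqref{eq:decomM3AfNor}; the paper rederives the governing equation with source terms $G_{1},G_{2}$ adapted to the enlarged set of extracted modes, and you should do the same. Second, your claim that $P_{\omega,M}F_{1}=0$ once $M\geq N$ is correct but only covers the case $M=N$; for $M<N$ the projected linear part $P_{\omega,M}G_{1}$ contains the coefficients $\alpha_{n,k,l}$ with $M<k\leq N$, and these must be estimated simultaneously with $P_{\omega,M}\chi_{R}\eta$ in the bootstrap (the paper handles this by noting that the $\alpha_{n,k,l}$ with large $k$ inherit fast decay from their linear rate $\tfrac{n-2}{2}+\tfrac{k(k+2)}{6}$ via \eqref{eq:3Others}). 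Finally, the paper explicitly uses that $e^{-R^{2}/10}$ decays faster than any exponential $e^{-K\tau}$, which is what prevents the cutoff term $\mu_{R}(\eta)$ from capping $\phi_{M}$; you use this implicitly but it is worth stating.
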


It is easy to prove \eqref{eq:n3Preli} since, compared to Proposition \ref{prop:exponential}, we just expand the function $\eta$.

What is left is to prove \eqref{eq:arbiFast} by studying the governing equations.

To simplify notations we define a functions $q$ as
\begin{align*}
q(y,\omega,\tau):=&6+\sum_{n=0}^3\sum_{k=0}^{N}\sum_{l}\alpha_{n,k,l}(\tau)H_{n}(y)f_{k,l}(\omega).
\end{align*} 
From the governing equation of $v$ in (\ref{eq:effeV}) we derive
\begin{align}
\partial_{\tau}\chi_{R}\eta=-L\chi_{R}\eta+\chi_{R}\Big(G   +SN(\eta)\Big)+\mu_{R}(\eta),
\end{align}here $G=G_1+G_2$ and, 
\begin{align*}
G_1:=&-\sum_{n=0}^3\sum_{k=0}^N \sum_{l}\Big(\frac{d}{d\tau}+\frac{n-2}{2}+\frac{k(k+2)}{6}\Big)\alpha_{n,k,l} H_{n}(y)f_{k,l}(\omega),\\
G_2:=&\frac{6-q}{6q}\Delta_{\mathbb{S}^3}q-\frac{1}{2q^2} |\nabla_{E}q|^2-\frac{1}{2q}|\partial_{y}q|^2+2\sqrt{q} N(\sqrt{q}),
\end{align*}
the term $SN(\eta)$ contains terms nonlinear in terms of $\eta$, or small linear terms,
\begin{align}
\begin{split}
SN(\eta):=&\frac{1}{2q^2}|\nabla_{E}q|^2-\frac{1}{2}v^{-4} |\nabla_{E}v^2|^2+\frac{1}{2q}|\partial_{y}q|^2-\frac{1}{2}v^{-2}|\partial_{y}v^2|^2+2v N(v)-2\sqrt{q} N(\sqrt{q})\\
&+\frac{6-q}{6q}\Delta_{\mathbb{S}^3}\eta-\frac{\eta}{v^2 q}\Delta_{\mathbb{S}^2}v^2.
\end{split}
\end{align}
Through the orthogonality conditions enjoyed by $\chi_{R}\eta$ in \eqref{eq:decomM3AfNor}, we derive
\begin{align}
\begin{split}\label{eq:diffAlpha100}
\Big(\frac{d}{d\tau}+\frac{n-2}{2}+\frac{k(k+2)}{6}\Big)\alpha_{n,k,l}
=\frac{1}{\| H_n f_{k,l}\|^2_{\mathcal{G}}}\Big\langle G_2+SN+\mu_R(\eta),\ H_n f_{k,l}\Big\rangle_{\mathcal{G}}=:NL_{n,k,l}
\end{split}
\end{align} where the function $NL_{n,k,l}$ is naturally defined.

To prepare for finding the decay estimates for $\alpha_{n,k,l},$ we rewrite these equations.
Here we need to consider two (and only two) different cases separately: $\frac{n-2}{2}+\frac{k(k+2)}{6}\leq 0$ and $\frac{n-2}{2}+\frac{k(k+2)}{6}\geq \frac{1}{3}.$
When $\frac{n-2}{2}+\frac{k(k+2)}{6}\leq 0$, which includes $(n,k)=(0,0),\  (1,0),\  (2,0),\  (0,1),\  (1,1)$, since $\lim_{\tau\rightarrow \infty}\alpha_{n,k,l}(\tau)=0,$ we rewrite \eqref{eq:DiffAlBe} as
\begin{align}\label{eq:governLe0}
       \alpha_{n,k,l}(\tau)=&-\int_{\tau}^{\infty}e^{-(\frac{n-2}{2}+\frac{k(k+2)}{6})(\tau-\sigma)} NL_{n,k,l}\ d\sigma.
\end{align} 
When $\frac{n-2}{2}+\frac{k(k+2)}{6}\geq \frac{1}{3},$
\begin{align}\label{eq:3Others}
\alpha_{n,k,l}(\tau)=e^{-(\frac{n-2}{2}+\frac{k(k+2)}{6})\tau}+\int_{0}^{\tau} e^{-(\frac{n-2}{2}+\frac{k(k+2)}{6})(\tau-\sigma)}NL_{n,k,l}\ d\sigma.
\end{align}

To control the remainder $\eta$, we define a function $G_{n,m}^{(M)}$
\begin{align}
    G_{n,m}^{(M)}:=\Big\langle (-\Delta_{\mathbb{S}^3}+1)^{m}\partial_{y}^n P_{\omega,M}\chi_{R}\eta,\partial_{y}^n\chi_{R}\eta\Big\rangle_{\mathcal{G}}.
\end{align} Similar to deriving \eqref{eq:geqn}, for some constant $E_{M}$ satisfying $\lim_{M\rightarrow \infty}E_{M}= \infty,$
\begin{align}
\begin{split}\label{eq:gmmm}
   \Big( \frac{1}{2}\frac{d}{d\tau}+E_{M}\Big)G_{n,m}^{(M)}\leq & |\Big\langle (-\Delta_{\mathbb{S}^3}+1)^{m}\partial_{y}^n P_{\omega,M}\chi_{R}\eta,\  \partial_{y}^n\chi_{R}\Big(G_2+SN\Big)\Big\rangle_{\mathcal{G}}|\\
   &\hspace{2cm} + [G_{n,m}^{(M)}]^{\frac{1}{2}} \ o(e^{-\frac{1}{10}R^2}).
\end{split}
\end{align}

We are ready to prove \eqref{eq:arbiFast}. The idea is simple: \eqref{eq:governLe0}-\eqref{eq:gmmm} show that, as $k$ and $M$ increase, the functions $\alpha_{n,k,l}$ and $G_{n,m}^{(M)}$ decay faster, and moreover they interact with the lower frequency parts $\alpha_{m,i,j}$, $i<k,$ and $(1-P_{\omega,M})\chi_{R}\eta$ through the nonlinearity, in a favorable way. 

Also the definition $R(\tau)=8\tau^{\frac{1}{2}+\frac{1}{20}}$ makes $e^{-\frac{1}{10}R^2}$ decay faster than $e^{-K\tau}$ for any $K>0.$

By these ideas we prove \eqref{eq:arbiFast}. The detailed proof is easy, but tedious. We skip the details.

Now we are ready to prove the desired Proposition \ref{prop:step2}.\\
{\bf{Proof of Proposition \ref{prop:step2}}}
\begin{proof}
Define a function $G_{n,m}$, with $n+\frac{m}{2}\leq 2$, as 
\begin{align}
    G_{n,m}:=\Big\langle (-\Delta_{\mathbb{S}^3}+1)^{m}\partial_{y}^n \chi_{R}\eta,\partial_{y}^n\chi_{R}\eta\Big\rangle_{\mathcal{G}}.\label{eq:gnm0}
\end{align} Similar to deriving \eqref{eq:geqn},
\begin{align}
\begin{split}\label{eq:3gnm}
   \Big( \frac{1}{2}\frac{d}{d\tau}+1+\mathcal{O}(\tau^{-\frac{1}{4}})\Big)G_{n,m}\leq |\Big\langle (-\Delta_{\mathbb{S}^3}+1)^{m}\partial_{y}^n\chi_{R}\eta,\  \partial_{y}^n\chi_{R}    G_2\Big\rangle_{\mathcal{G}}|
   + G_{n,m}^{\frac{1}{2}} \ o(e^{-\frac{1}{10}R^2}).
\end{split}
\end{align} Here the orthogonality conditions enjoyed by $\chi_{R}\eta$ cancel contributions from the slowly decaying terms of $G_2$, provided that $N$ is large enough. This, (\ref{eq:arbiFast}) and the definition of $G_2$ imply 
\begin{align}
|\Big\langle (-\Delta_{\mathbb{S}^3}+1)^{m}\partial_{y}^n\chi_{R}\eta,\  \partial_{y}^n\chi_{R}    G_2\Big\rangle_{\mathcal{G}}|\lesssim G_{n,m}^{\frac{1}{2}}  e^{-\tau}.\label{eq:3mtau}
\end{align}

Now we improve decay estimates for $\alpha_{n,k,l}$ and $G_{n,m}$.

From \eqref{eq:n3Preli} we derive, for all the tuples $(n,k,l)$,
\begin{align}\label{eq:prelimNL}
    |NL_{n,k,l}(\tau)|\lesssim & e^{-\frac{3}{5}\tau}.
\end{align}

The slowest decaying functions are $\alpha_{0,2,l}$: \eqref{eq:3Others} and \eqref{eq:prelimNL} imply sharp estimates
\begin{align}\label{eq:02Sharp}
    |\alpha_{0,2,l}(\tau)|=|e^{-\frac{1}{3}\tau}\Big(\alpha_{0,2,l}(0)+\int_{0}^{\tau} e^{\frac{1}{3}\tau} NL_{0,2,l}(\sigma)\ d\sigma \Big)|\lesssim e^{-\frac{1}{3}\tau},
\end{align} 
When $(n,k)=(3,0)$ and $(2,1)$, $\alpha_{n,k,l}$ also decay slowly,
\begin{align}
    \alpha_{n,k,l}(\tau)=e^{-\frac{1}{2}\tau}\Big(\alpha_{n,k,l}(0)+\int_{0}^{\tau} e^{\frac{1}{2}\sigma} NL_{n,k,l}(\sigma)\ d\sigma\Big)=\mathcal{O}(e^{-\frac{1}{2}\tau}).\label{eq:3021}
\end{align}

For the others, we have $\frac{n-2}{2}+\frac{k(k+2)}{6}\geq \frac{5}{6}$ and the equality holds at $(n,k)=(1,2).$ The governing equations \eqref{eq:governLe0} and \eqref{eq:3Others} and the estimate \eqref{eq:prelimNL} imply that
\begin{align}\label{eq:preNKL}
    |\alpha_{n,k,l}(\tau)|\lesssim & e^{-\frac{3}{5}\tau},\ \text{if}\ (n,k)\not=  (0,2),\ (3,0),\ (2,1).
\end{align}

Except for the sharp estimates obtained in \eqref{eq:02Sharp} and \eqref{eq:3021}, the other ones can be improved.

If $N$ is large enough, then we feed \eqref{eq:02Sharp}-\eqref{eq:preNKL} into \eqref{eq:governLe0} and \eqref{eq:3Others}, and apply Lemma \ref{LM:3mPrelim} and \eqref{eq:3mtau}, to find the desired estimates:
for any constant $\delta_0>0$, there exists a $C_{\delta_0}>0$ such that
\begin{align}
    G_{n,m}^{\frac{1}{2}}(\tau)\leq C_{\delta_0} e^{-(1-\delta_0)\tau},\label{eq:gnm3}
\end{align} and for $n=0,1,$
\begin{align}
\begin{split}\label{eq:matFin}
    e^{\frac{1}{3}\tau}|\alpha_{0,k,l}(\tau)|+e^{\frac{5}{6}\tau}|\alpha_{1,k,l}(\tau)| \lesssim & 1, \ \text{for any}\ k,
    \end{split}
\end{align}
and for $n=2,3,$
\begin{align}
    \begin{split}
    |\alpha_{2,k,l}|\lesssim & e^{-\tau},\ \text{for any}\ k\not=1,\\
    |\alpha_{3,k,l}|\lesssim & e^{-\tau} (1+\tau), \ \text{for any}\ k\not=0,
\end{split}
\end{align} 
Here the estimates for $\alpha_{3,1,l}$ are sharp: through $-\frac{1}{2q}|\partial_{y}q|^2$ in $G_2$, $NL_{3,1,l}$ contains
\begin{align}
    -\frac{1}{\|H_3\omega_l\|_{\mathcal{G}}^2}\Big\langle q^{-1} \Big(\partial_{y} \alpha_{3,0,1} H_3\Big)\Big(\partial_{y}\alpha_{2,1,l} H_2 \omega_l\Big), H_3 \omega_l\Big\rangle_{\mathcal{G}}=\mathcal{O}(e^{-\tau}).
\end{align} This together with the equation for $\alpha_{3,1,l}$ implies that $\alpha_{3,1,l}(\tau)=\mathcal{O}(e^{-\tau}(1+\tau)).$

What is left is to prove the refined form \eqref{eq:d21Order} for $\alpha_{2,1,l}$.  
\eqref{eq:matFin} and \eqref{eq:gnm3} imply that $$|NL_{2,1,l}|\lesssim e^{-\tau}.$$ This, together with
\eqref{eq:3021} above, implies that, for $l=1,2,3,4,$
\begin{align}
\begin{split}
    \alpha_{2,1,l}(\tau)=&e^{-\frac{1}{2}\tau} \Big(\alpha_{2,l,0}(0)+\int_{0}^{\infty}e^{\frac{1}{2}\sigma} NL_{2,1,l}(\sigma) \ d\sigma\Big)-
    e^{-\frac{1}{2}\tau}\int_{\tau}^{\infty}e^{\frac{1}{2}\sigma} NL_{2,1,l}(\sigma) \ d\sigma\\
    =& d_{2,l} e^{-\frac{1}{2}\tau}+\mathcal{O}(e^{-\tau})
\end{split}
\end{align} where the constants $d_{2,l}$ are naturally defined.

\end{proof}


\section{Proof of Lemma \ref{LM:frequencyWise}}
\label{sec:propagator}
We start with proving \eqref{eq:propagatorNoPro}. The key step is to find an integral kernel for the propagator.

Here we use the path integral technique, see \cite{BrKu, DGSW}, to find that, for any function $g$,
\begin{align}
U(\tau,\sigma)g(y)=e^{\tau-\sigma}\int_{\mathbb{R}^3}K_{\tau-\sigma}(y,z) \langle e^{-V}\rangle(y,z)e^{-\frac{1}{8}|z|^2} g(z)\ dz.\label{eq:IntKernel}
\end{align} where the function $\langle e^{-V}\rangle(y,z)$ is defined in terms of path integral
\begin{align*}
\langle e^{-V}\rangle(y,z):=\int e^{-\int_{\sigma}^{\tau} V(\omega_0(s)+\omega(s),s)\ ds} d\mu(\omega),
\end{align*} and $K_{\tau-\sigma}(y,z)e^{-\frac{1}{8}|z|^2}$ is the integral kernel of the operator $e^{-(\tau-\sigma)\mathcal{L}_0}$ given by Mehler's formula,
\begin{align}\label{eq:meh}
K_{\tau-\sigma}(y,z):=2\sqrt{2}\pi  (1-e^{-(\tau-\sigma)})^{-\frac{1}{2}} e^{\frac{1}{8}y^2} e^{-\frac{|y-e^{-\frac{\tau-\sigma}{2}}z|^2}{4(1-e^{-(\tau-\sigma)})}},
\end{align}
and $d\mu(\omega)$ is a probability measure on the continuous paths $\omega:[\sigma,\tau]\rightarrow \mathbb{R}^{3}$ with $\omega(\sigma)=\omega(\tau)=0$, and $\omega_0(s)$ is a path with $\omega_0(\sigma)=z$ and $\omega_0(\tau)=y$ defined as
\begin{align*}
\omega_0(s)=e^{\frac{1}{2}(\tau-s)} \frac{e^{\sigma}-e^{s}}{e^{\sigma}-e^{\tau}}y+e^{\frac{1}{2}(\sigma-s)} \frac{e^{\tau}-e^{s}}{e^{\tau}-e^{\sigma}}z.
\end{align*} Here $\mathcal{L}_0$ is a linear operator defined as
\begin{align}
\mathcal{L}_0:=-\partial_{y}^2+\frac{1}{16}y^2-\frac{1}{4}.
\end{align}
Since $V$ is nonnegative, $0\leq \langle e^{-V}\rangle\leq 1$, 
\begin{align}
|U(\tau,\sigma)g|\leq e^{-(\tau-\sigma)(\mathcal{L}_0-1)}|g|.
\end{align} This implies the desired \eqref{eq:propagatorNoPro}, see e.g. \cite{DGSW}, for any $k\geq 0,$
\begin{align}
\|\langle y\rangle^{-k} e^{\frac{1}{8}|y|^2}U(\tau,\sigma)g\|_{\infty}\leq \|\langle y\rangle^{-k} e^{\frac{1}{8}|y|^2}e^{-(\tau-\sigma)(\mathcal{L}_0-1)}g\|_{\infty} \lesssim e^{\tau-\sigma}\|\langle y\rangle^{-k} e^{\frac{1}{8}|y|^2}g\|_{\infty}.\label{eq:fi123}
\end{align}

Next we sketch a proof for \eqref{eq:propagatorMPro}. 
We will only prove the estimates for the cases $\tau-\sigma\geq 1$. When $\tau-\sigma<1$, we obtain the desired result by applying the maximum principle.

To cast the problem into a convenient form, we define a function $g$ as
$$g(y,\tau):=U_{n}(\tau,\sigma) P_{n} g= P_{n} U_{n}(\tau,\sigma) P_{n} g,$$ then $g(y,\tau)=P_{n} g(y,\tau)$ is the solution to the equation
\begin{align}
\begin{split}\label{eq:linearEvo}
\partial_{\tau}g(y,\tau)=&-P_n(\mathcal{L}_0+V)P_n g(y,\tau)\\
=&-(\mathcal{L}_0+V) g(y,\tau)+(1-P_{n})V g(y,\tau)\\
=&-(\mathcal{L}_0+V) g(y,\tau)+(1-P_{n})\Big(V(y,\tau)-V(0,\tau)\Big) g(y,\tau)
\end{split}
\end{align}here we use that $P_{n}$ commutes with $\mathcal{L}_0$, and that $(1-P_n) g=0$ in the last step. 
Rewrite \eqref{eq:linearEvo} by applying Duhamel's principle to obtain, for any times $\tau\geq \sigma,$ 
\begin{align}
g(y,\tau)=U(\tau,\sigma) P_{n}g(y,\sigma)+\int_{\sigma}^{\tau} U(\tau,s) (1-P_{n})\tilde{V} g(y,s)\ ds,\label{eq:durhamelFin}
\end{align} where $\tilde{V}$ is a function defined as
\begin{align*}
\tilde{V}(y,\tau):=V(y,\tau)-V(0,\tau).
\end{align*}

Recall that $V$ satisfies, for some $\epsilon_0\ll 1,$
\begin{align}
     \sup_{y}\Big\{ \Big|\partial_{y}V(y,\tau)\Big|+ \langle y\rangle^{-5}\Big|V(y,\tau)-V(0,\tau)\Big|\Big\}\leq \epsilon_0 (1+\tau)^{-\frac{2}{5}}.\label{eq:finiteRe}
\end{align}
We claim that the two terms in (\ref{eq:durhamelFin}) satisfy the following estimates: when $\tau\geq \sigma$ and
\begin{align}
e^{\frac{n}{2}(\tau-\sigma)}\leq \epsilon_0^{\frac{1}{2}} (1+\sigma)^{-\frac{1}{5}},\label{eq:length}
\end{align} there exists some constant $C$ independent of $\tau$, $\sigma$ and $g$, such that
\begin{align}
\Big\|\langle y\rangle^{-n-1} e^{\frac{1}{8}|y|^2}U(\tau,\sigma) P_{n}g(\cdot,\sigma)\Big\|_{\infty}\leq C e^{-\frac{n-1}{2}(\tau-\sigma)}&  \Big\|\langle y\rangle^{-n-1} e^{\frac{1}{8}|y|^2}g(\cdot,\sigma)\Big\|_{\infty}, \label{eq:firsInt}
\end{align} and 
\begin{align}
\begin{split}\label{eq:secoInt}
\Big\|\langle y\rangle^{-n-1} e^{\frac{1}{8}|y|^2}&\int_{\sigma}^{\tau} U(\tau,s) (1-P_{n}) \tilde{V} g(\cdot,s)\ ds\Big\|_{\infty}\\
\leq & C\epsilon_0^{\frac{1}{2}}\int_{\sigma}^{\tau} e^{-\frac{n-1}{2}(\tau-s)} \Big\|\langle y\rangle^{-n-1}e^{\frac{1}{8}|y|^2}g(\cdot,s)\Big\|_{\infty}\ ds. 
\end{split}
\end{align}
The claims will be proved in subsection \ref{sub:twoClaims} below.

Suppose the claims hold, then they and \eqref{eq:durhamelFin} imply that, under the condition (\ref{eq:length}),
\begin{align}
\begin{split}\label{eq:tauSigma}
\Big\|\langle y\rangle^{-n-1} e^{\frac{1}{8}|y|^2} g(\cdot,\tau)\Big\|_{\infty}\leq 2 C& e^{-\frac{n-1}{2}(\tau-\sigma)} \Big\|\langle y\rangle^{-n-1} e^{\frac{1}{8}|y|^2}g(\cdot,\sigma)\Big\|_{\infty}.
\end{split}
\end{align}

This is not the desired \eqref{eq:propagatorMPro}, but implies it: for any interval $[\sigma,\ \tau]$ we divide it into finitely many subintervals such that each of them satisfies the condition \eqref{eq:length}, then apply \eqref{eq:tauSigma} on each interval, and then put them together to obtain the desired estimate. An important observation is that as $\sigma$ increases, we can choose a larger time interval $[\sigma,\ \tau]$ to satisfy the condition \eqref{eq:length}.

\subsection{Proof of (\ref{eq:firsInt}) and (\ref{eq:secoInt})}\label{sub:twoClaims}
We start with proving the easier \eqref{eq:firsInt}. 

Recall that $1-P_n$ is the orthogonal projection onto the subspace spanned by $\{e^{-\frac{1}{8}y^2}H_{j}, \ j=0,\cdots,n\}.$
This, together with \eqref{eq:propagatorNoPro} and \eqref{eq:finiteRe}, implies that
\begin{align*}
    \Big\|\langle y\rangle^{-n-1}\int_{\sigma}^{\tau} U(\tau,s) (1-P_{n})\tilde{V} g(y,s)\ ds\Big\|_{\infty}\lesssim \epsilon_0\int_{\sigma}^{\tau}e^{\tau-\sigma} (1+\sigma)^{-\frac{2}{5}}  \Big\|\langle y\rangle^{-n-1}e^{\frac{1}{8}|y|^2}g(\cdot,s)\Big\|_{\infty}\ ds.
\end{align*}
From here we apply \eqref{eq:length} to obtain the desired \eqref{eq:firsInt}.

Next we prove \eqref{eq:secoInt}. The trick is to integrate by parts and observe that $\partial_{z}^{k}K_{\tau-\sigma}(y,z)$ has a favorably decaying factor $e^{-k\frac{\tau-\sigma}{2}}$ for any $k\in \mathbb{N}.$ Integrate by parts in $z$ to obtain
\begin{align}
    \int_{-\infty}^{\infty} K_{\tau-\sigma}(y,z) \langle e^{-V}\rangle(y,z) e^{-\frac{1}{8}z^2} & g(z) \ dz
    = \int_{-\infty}^{\infty} \Big(\partial_{z}K_{\tau-\sigma}(y,z)\Big) \langle e^{-V}\rangle(y,z) g^{-1}(z)\ dz+D_0\nonumber\\
    =&\int_{-\infty}^{\infty} \Big(\partial_{z}^{n+1} K_{\tau-\sigma}(y,z)\Big) \langle e^{-V}\rangle(y,z) g^{-n-1}(z)\ dz+\sum_{k=0}^{n}D_k,\label{eq:n1IntegBParts}
\end{align}
where the terms $D_k$, for any integer $k\geq 0,$ are defined as
\begin{align}
    D_{k}:=\int_{-\infty}^{\infty} \Big(\partial_{z}^{k}K_{\tau-\sigma}(y,z)\Big) \Big(\partial_{z}\langle e^{-V}\rangle(y,z)\Big) g^{-k-1}(z)\ dz,
\end{align}   
and $g^{-l}$, for any $l\in \mathbb{N}$, are functions defined as
\begin{align*}
    g^{-l}(z)=\int_{z}^{\infty}\int_{z_1}\cdots \int_{z_{l-1}} e^{-\frac{1}{8}z_l^2} g(z_l)\ dz_l\cdots dz_1.
\end{align*}

Recall that $g\perp e^{-\frac{1}{8}y^2} H_{k},\ k=0,\cdots, n$. Thus
$g^{-l}\perp  e^{-\frac{1}{8}y^2}H_{0},\cdots,e^{-\frac{1}{8}y^2} H_{n-l}$ if $l\leq n$,
and
\begin{align*}
    g^{-j}(z)=&-\int^{z}_{-\infty}\int_{z_1}\cdots \int_{z_{l-1}} e^{-\frac{1}{8}z_j^2} g(z_j)\ dz_j\cdots dz_1,\ \text{if}\ j=1,\cdots, n+1,
\end{align*} and furthermore by L'Hopital's rule,
\begin{align}
    |g^{-l}(z)|\leq & (1+|z|)^{n+1-l} e^{-\frac{1}{4}z^2} \Big\|\langle y\rangle^{-n-1} e^{\frac{1}{8}y^2} g  \Big\|_{\infty}.
\end{align}
This, together with $$|\partial_{z}\langle e^{V}\rangle(y,z)|\lesssim \epsilon_0 (1+\sigma)^{-\frac{2}{5}},$$ and the techniques used in proving \eqref{eq:propagatorNoPro}, implies the desired result. 
For more details see \cite{BrKu, DGSW}.


\def\cprime{$'$}

\end{document}